\newcommand{\C}{\mathbb{C}}
\newcommand{\F}{\mathbb{F}}
\newcommand{\N}{\mathbb{N}}
\newcommand{\Z}{\mathbb{Z}}
\newcommand{\Q}{\mathbb{Q}}
\newcommand{\GL}{\mathrm{GL}}
\newcommand{\SL}{\mathrm{SL}}
\newcommand{\SU}{\mathrm{SU}}
\newcommand{\PGU}{\mathrm{PGU}}
\newcommand{\Sp}{\mathrm{Sp}}
\newcommand{\PSL}{\mathrm{PSL}}
\newcommand{\PSU}{\mathrm{PSU}}
\newcommand{\GO}{\mathrm{GO}}
\newcommand{\SO}{\mathrm{SO}}
\newcommand{\PCO}{\mathrm{PCO}}
\newcommand{\Spin}{\mathrm{Spin}}
\newcommand{\Stab}{\mathrm{Stab}}
\newcommand{\Spec}{\mathrm{Spec}\,}
\newcommand{\Frob}{\mathrm{Frob}}
\newcommand{\Tr}{\mathrm{Tr}}
\newcommand{\Cl}{\mathrm{Cl}}
\newcommand{\Aut}{\mathrm{Aut}}
\newcommand{\vG}{\underline{G}}
\newcommand{\vS}{\underline{S}}
\newcommand{\vW}{\underline{W}}
\newcommand{\vX}{\underline{X}}
\newcommand{\vY}{\underline{Y}}
\newcommand{\vZ}{\underline{Z}}
\newcommand{\schG}{\mathcal{G}}
\newcommand{\schX}{\mathcal{X}}
\newcommand{\schS}{\mathcal{S}}
\newcommand{\schY}{\mathcal{Y}}
\newcommand{\schZ}{\mathcal{Z}}
\newcommand{\cD}{\mathcal{D}}
\newcommand{\cL}{\mathcal{L}}
\newcommand{\St}{\mathsf {St}}
\newcommand{\ZB}{\mathbf{Z}}
\newcommand{\Irr}{\mathrm{Irr}}
\newcommand{\e}{\epsilon}
\newcommand{\gam}{\gamma}
\newcommand{\supp}{\mathsf {supp}}
\newcommand{\diag}{\mathrm {diag}}
\newcommand{\AAA}{\mathsf{A}}
\newcommand{\SSS}{\mathsf{S}}
\renewcommand{\Pr}{\mathbf{P}}
\newcommand{\bfU}{\mathbf{U}}
\newcommand{\sgn}{\mathsf{sgn}}
\newcommand{\tw}[1]{{}^#1\!}
\renewcommand{\mod}{\bmod \,}
\newtheorem{theor}{Theorem}
\newtheorem{thm}{Theorem}[section]
\newtheorem{quest}[theor]{Question}
\newtheorem{cor}[thm]{Corollary}
\newtheorem{prop}[thm]{Proposition}
\newtheorem{defn}[thm]{Definition}
\newtheorem*{thmA}{Theorem A}
\newtheorem*{thmB}{Theorem B}
\newtheorem{lem}[thm]{Lemma}
\newtheorem*{claim*}{Claim}
\numberwithin{equation}{section}
\begin{document}

\author{Michael Larsen}
\address{Department of Mathematics\\
    Indiana University\\
    Bloomington, IN 47405\\
    U.S.A.}
\email{mjlarsen@indiana.edu}

\author{Aner Shalev}
\address{Einstein Institute of Mathematics\\
    Hebrew University\\
    Givat Ram, Jerusalem 91904\\
    Israel}
\email{shalev@math.huji.ac.il}

\author{Pham Huu Tiep}
\address{Department of Mathematics\\ Rutgers University\\ Piscataway, NJ 08854\\ U.S.A.}
\email{tiep@math.rutgers.edu}

\title{Products of normal subsets and derangements}

\subjclass[2010]{Primary 20D06; Secondary 20F69, 20G40, 20P05, 20B15, 20C33}

\thanks{ML was partially supported by NSF grant DMS-1702152.
AS was partially supported by ISF grant 686/17 and the Vinik Chair of mathematics which he holds.
PT was partially supported by NSF grant DMS-1840702 and the Joshua Barlaz Chair in Mathematics.
All three authors were partially supported by BSF grant 2016072.}

\maketitle

\centerline{{ {\it Dedicated to Bob Guralnick on the occasion of his seventieth birthday}}}

\begin{abstract}
In recent years there has been significant progress in the study of products of subsets of finite groups and of finite simple
groups in particular.
In this paper we consider which families of finite simple groups $G$ have the property that for each $\epsilon > 0$ there exists
$N > 0$ such that, if $|G| \ge N$ and $S, T$ are normal subsets of $G$ with at least $\epsilon |G|$ elements each, then every non-trivial element of $G$ is the product of an element of $S$ and an element of $T$.

We show that this holds in a strong sense for finite simple groups of Lie type of bounded rank, while it does not hold for alternating groups
or groups of the form $\PSL_n(q)$ where $q$ is fixed and $n\to \infty$.

Our second main result is that any element in a transitive permutation representation of a sufficiently large finite simple group
is a product of two derangements. 
%The proof applies our results on normal subsets as well as various additional tools.
\end{abstract}

%\maketitle

\tableofcontents

\section{Introduction}

In the past two decades there has been considerable interest in the products of subsets of finite groups, especially
(nonabelian) finite simple groups. The so-called Gowers trick (see \cite{G} and \cite{NP}), which is part of the theory
of quasi-random groups, shows that the product of three large subsets of a finite group $G$ is the whole of $G$
(where large is defined in terms of $|G|$ and the minimal degree of a non-trivial irreducible representation of $G$).
See Section 7 below for details and consequences.

The celebrated Product Theorem of \cite{BGT} and \cite{PS}, which is part of the deep theory of approximate subgroups
originating in \cite{H1} and \cite{Hr}, shows that for finite simple groups $G$ of Lie type and bounded rank there exists $\e > 0$
(depending only on the rank of $G$) such that for every subset $A \subseteq G$ which generates $G$, either
$|A^3| \ge |A|^{1+\e}$ or $A^3 = G$.

Note that both the Gowers trick and the Product Theorem deal with products of three (or more) subsets. Much less is known
about products of two subsets, which is the main topic of this paper.

As for normal subsets, a longstanding related conjecture of Thompson asserts that every
finite simple group $G$ has a conjugacy class $C$ such that $C^2 = G$. In spite of considerable efforts
(see \cite{EG} and the references therein) and the proof of the related Ore Conjecture (see \cite{LOST}), Thompson's
Conjecture is still open for groups of Lie type over fields with $q \le 8$ elements. A weaker result, that all
sufficiently large finite simple groups $G$ have conjugacy classes $C_1, C_2$ such that $C_1C_2 \supseteq G \smallsetminus \{ e \}$
is obtained in \cite{LST1}; this was improved in \cite{GM}, where the same conclusion is proved for all finite
simple groups. See also \cite{Sh2}, where it is shown that, for finite simple groups $G$ and random elements
$x, y \in G$, the sizes of $x^G y^G$ and of $(x^G)^2$ are $(1-o(1))|G|$. This may be viewed as a probabilistic
approximation to Thompson's Conjecture.

For normal subsets $S$ (not equal to $\emptyset, \{ e \}$) of arbitrary finite simple groups $G$, the minimal $k>0$
such that $S^k = G$ is determined in \cite{LiSh1} up to an absolute multiplicative constant. In \cite{LSSh} it is shown
that the product of two small normal subsets of finite simple groups has size which is close to the product of their sizes.
However, this says nothing about products of two large normal subsets.

An interesting context in which the products of normal subsets of finite simple groups play a role is the Waring problem
for finite simple groups; see for instance \cite{Sh3, LS1, LS2, LOST, LST1, GT2, GLOST, LST2}, the references therein, and the monograph \cite{S}
on word width.

By a \emph{word} we mean an element $w$ of some free group $F_d$. A word $w$ and a group $G$ give rise to a word map $w: G^d \to G$
induced by substitution; its image, denoted by $w(G)$, is a normal subset of $G$ (hence a union of conjugacy classes).
The main result of \cite{LST1} is that, for non-trivial words $w_1, w_2 \in F_d$, and all sufficiently large finite simple groups $G$
we have
\begin{equation}
\label{Waring}
w_1(G)w_2(G)=G.
\end{equation}

There are various results showing that word maps $w \ne 1$ on finite simple groups $G$ have large image, see \cite{L, LS1, LS2, NP}.
In particular, it is shown in \cite{L} that $|w(G)| \ge |G|^{1-\e}$ for any $\e > 0$ provided $|G| \ge N(\e)$, and that for $G$ of Lie type
and bounded rank, there exists $\e > 0$ (depending only on the rank of $G$) such that for all words $w \ne 1$ we have $|w(G)| \ge \e|G|$.
We would like to understand to what extent \eqref{Waring} can be extended to products of arbitrary large normal subsets of finite simple groups.

Let $\epsilon > 0$ be a constant.
Let $G$ be a finite simple group and $S$ and $T$ normal subsets of $G$ such that
$|S|,\,|T|>\epsilon |G|$.  We are particularly interested in the following  questions:

\begin{quest}
\label{Weak}
Does every element in $G\smallsetminus\{e\}$ lie in $ST$ if $|G|$ is sufficiently large?
\end{quest}

\begin{quest}
\label{Strong}
Does the ratio between the number of representations of each $g\in G\smallsetminus \{e\}$ and
$\frac{|S|\,|T|}{|G|}$ tend uniformly to $1$ as $|G|\to \infty$?
\end{quest}

\begin{quest}
\label{Equal}
What happens in the special case $S=T$?
\end{quest}

We exclude the identity in Questions \ref{Weak} and \ref{Strong} because  every conjugacy class $C$ in a non-trivial finite group $G$ satisfies $|C| = \frac{|G|}{n}$ for some $n\ge 2$, and therefore
each such group has a normal subset $S$ with $\frac{|G|}3 \le |S| \le \frac{2|G|}3$.
Setting $T = G \smallsetminus S^{-1}$, we have $|T| \ge \frac{|G|}3$, and $e\not\in ST$.

If $G$ is non-trivial and we do not assume that $S, T \subseteq G$ are normal subsets, then we may choose $S, T \subseteq G$ of size at least
$\bigl\lfloor \frac{|G|}2\bigr\rfloor$ such that $ST \not\supseteq G \smallsetminus \{ e \}$; indeed, fix $g\in G\smallsetminus\{e\}$, choose $S$ of the specified size, and let $T = G \smallsetminus S^{-1}g$.

Our results about these questions are summarized below.  An affirmative answer to Question~\ref{Strong} implies an affirmative answer to Question~\ref{Weak} (and, of course, the same holds in the special case $S=T$).

\begin{thmA}\label{main}
\begin{enumerate}[\rm(i)]
\item The answers to Questions~\ref{Weak} and~\ref{Strong} are negative if $G$ is allowed to range over all finite simple groups,
or even just over the alternating groups, or just over all projective special linear groups.

\item In the $S=T$ case, the answer to Question~\ref{Strong} is still negative for alternating groups.

\item In the $S=T$ case, the answer to Question~\ref{Weak} is positive for alternating groups.

\item  If $G$ is a group of Lie type of bounded rank, then the answers to Questions~\ref{Weak} and \ref{Strong} are both positive.
\end{enumerate}
\end{thmA}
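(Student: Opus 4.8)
The unifying tool is harmonic analysis on $G$. For normal subsets $S,T\subseteq G$ and $g\in G$, writing $\widehat{S}(\chi)=\sum_{s\in S}\chi(s)$, one has the Frobenius-type formula
\[
  N(g):=\#\{(s,t)\in S\times T: st=g\}
       =\frac{|S|\,|T|}{|G|}
        +\frac1{|G|}\sum_{\chi\ne 1}\frac{\widehat{S}(\chi)\,\widehat{T}(\chi)\,\overline{\chi(g)}}{\chi(1)},
\]
the sum being over nontrivial $\chi\in\Irr(G)$; the first term is the ``expected'' count, and Questions~\ref{Weak} and~\ref{Strong} ask, of the error term $E(g)$, whether $|E(g)|<|S||T|/|G|$ and whether $E(g)=o(|S||T|/|G|)$ uniformly. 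I would use throughout that $|\chi(g)|\le\chi(1)$ and the orthogonality identity $\sum_{\chi}|\widehat{S}(\chi)|^2=|G|\,|S|$ (which relies on $S$ being a union of conjugacy classes). For part~(iv), pulling $\max_{\chi\ne1}|\chi(g)|/\chi(1)$ out of the sum and applying Cauchy--Schwarz together with this identity yields $|E(g)|\le\bigl(\max_{\chi\ne1}|\chi(g)|/\chi(1)\bigr)(|S||T|)^{1/2}$, so $E(g)$ is at most $\epsilon^{-1}\max_{\chi\ne1}(|\chi(g)|/\chi(1))$ times the main term. For groups of Lie type of rank $\le r$ there is $\alpha=\alpha(r)>0$ with $|\chi(g)|\le\chi(1)^{1-\alpha}$ for every $e\ne g$ and every $\chi$---the requisite power-saving bounds on character values are known (Gluck; Liebeck--Shalev; and sharper recent estimates)---so $\max_{\chi\ne1}|\chi(g)|/\chi(1)\le d(G)^{-\alpha}$, where $d(G)$ is the least nontrivial character degree; since $d(G)\to\infty$ as $|G|\to\infty$ (Landazuri--Seitz), this settles Question~\ref{Strong}, hence Question~\ref{Weak}, affirmatively for bounded rank. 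I expect no real obstacle here beyond the character bound.

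For the negative parts~(i) and~(ii) I would manufacture explicit normal subsets out of a single low-degree character whose values violate any power saving. For $A_n$ the candidate is the standard character $\chi_{\mathrm{std}}$ of degree $n-1$, with $\chi_{\mathrm{std}}(\sigma)=f(\sigma)-1$ where $f$ counts fixed points; for $\PSL_n(q)$ with $q$ fixed, the nontrivial constituent $\rho$ of the permutation character on $\mathbb{P}^{n-1}(\F_q)$, of degree $\asymp q^{\,n-1}$, which on a transvection $t$ satisfies $|\rho(t)|\asymp\rho(1)$. Separating $S$ and $T$ according to the sign (or size) of this character---for instance $S=\{\sigma\in A_n:f(\sigma)\ge1\}$ and $T=\{\sigma:f(\sigma)=0\}$, both of density bounded away from $0$---makes its term in $N(g)$ a genuinely negative quantity of order $|S||T|/|G|$ for a suitable small element $g$ (a $3$-cycle, respectively a transvection), so that the ratio in Question~\ref{Strong} stays bounded away from $1$; this already gives (ii) and the negative answer to Question~\ref{Strong} in (i). To get the negative answer to Question~\ref{Weak} in (i) one must drive the cancellation all the way to $N(g)=0$; this I would arrange by exploiting the precise list of low-degree characters (classified, in the $A_n$ case, by Rasala) to fine-tune $S$, $T$ and $g$, and then package the construction uniformly over all finite simple groups. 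Making the cancellation exact, rather than merely ``$N(g)$ small'', is the chief difficulty on the negative side.

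Part~(iii) rests on the observation that when $S=T$ the dangerous terms have a definite sign. Replacing $S$ by $S\cap S^{-1}$ (still of density $\ge\epsilon/2$, which only weakens the lower bound sought), one gets $N(g)=\frac{|S|^2}{|G|}+\frac1{|G|}\sum_{\chi\ne1}\chi(1)\,\omega_\chi(S)^2\,\overline{\chi(g)}$ with $\omega_\chi(S)=\widehat{S}(\chi)/\chi(1)$ and each $\omega_\chi(S)^2\ge0$, so only characters with $\chi(g)<0$ can destroy the main term. For $g\ne e$ in $A_n$ I would split $\Irr(A_n)$ into the $O_C(1)$ characters of degree $\le n^{C}$ and the rest: for the large-degree characters, estimates of Fomin--Lulov / Larsen--Shalev / Roichman type make $|\chi(g)|/\chi(1)$ negligible whenever the support of $g$ is not too small, while if $g$ has small support the large-degree characters with $\chi(g)<0$ still have negligible ratio and those with large ratio (the near-rectangular two-row characters) have $\chi(g)>0$ and so only increase $N(g)$; the finitely many small-degree characters are handled directly, their values on a fixed $g\ne e$ being themselves small by the same support considerations together with $|\widehat{S}(\chi)|\le(|G||S|)^{1/2}$. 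This yields $N(g)=\frac{|S|^2}{|G|}(1+o(1))>0$ for every $g\ne e$, i.e.\ the positive answer to Question~\ref{Weak} when $S=T$. This case analysis---bounding, uniformly in $g\ne e$ and in an adversarial normal subset $S$, the negative contribution of characters of large degree that are not uniformly small on $g$---is the hardest part of the theorem. As a byproduct, since the derangements of any transitive action of $G$ form a normal subset of density at least an absolute constant $\delta>0$ (Fulman--Guralnick), the $S=T$ cases of (iii) and (iv) immediately yield the companion result that every element of such an action is a product of two derangements.
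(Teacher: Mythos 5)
Your part (iv) is correct and takes a genuinely more elementary route than the paper.  Because $S$ and $T$ are normal, the Frobenius expansion of $N(g)$, the identity $\sum_\chi|\widehat S(\chi)|^2=|G|\,|S|$, Cauchy--Schwarz, and Gluck's bound $|\chi(g)|/\chi(1)\le aq^{-1/2}$ for non-central $g$ do give relative error at most $a\epsilon^{-1}q^{-1/2}$, uniformly in $g$ and in $S,T$; one only has to work in the universal central extension and observe that for bounded rank $|G|\to\infty$ forces $q\to\infty$.  The paper instead builds the Lang--Weil/``respects products'' machinery of Sections 2--4 (Theorem~\ref{Induction}) because it proves the stronger Theorem~\ref{Main-General} for \emph{arbitrary}, not necessarily normal, subsets $S,T$, where the character-sum identity is unavailable.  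For Theorem A(iv) as stated, your argument suffices.

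The negative parts have a genuine gap.  For (i) you obtain only the negative answer to Question~\ref{Strong}; the negative answer to Question~\ref{Weak} requires $ST\not\supseteq G\smallsetminus\{e\}$, i.e.\ $N(g)=0$ exactly, and the route you defer to (forcing exact cancellation of the character sum by tuning $S,T,g$ against Rasala's list) is not a viable plan -- you would need an enormous signed sum of algebraic integers to vanish identically.  The paper's device is combinatorial, not character-theoretic: an invariant that changes by a bounded amount when one multiplies by the target element.  For $\AAA_n$ it is the cycle count $p(\sigma)$, which satisfies $p(\tau)-p(\sigma)\in\{-2,0,2\}$ whenever $\sigma\tau$ is a $3$-cycle; taking $S_n,T_n$ to be unions of residue classes of $p$ modulo a slowly growing odd $m_n$ (equidistributed by the generating function $(1-x)^{-\zeta}$) excludes every $3$-cycle from $S_nT_n$ while realizing any prescribed densities with $s+t\le1$.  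For $\SL_n(q)$ it is the dimension of the fixed space: with $S_n=\SL_n(\F_q)_8$ and $T_n=\SL_n(\F_q)_{10}$ no product can be a transvection.  For (ii), note that your displayed witness has $S\ne T$, so it does not address the $S=T$ claim; the paper takes $S=T=\cD(\AAA_n)$ and computes that a $3$-cycle has $\sim n!/2e$ representations against the expected $n!/2e^2$ (the deviation is upward, not ``genuinely negative'').

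Part (iii) as proposed is also incomplete.  The reduction to $S=S^{-1}$ via $S\cap S^{-1}$ can fail for general normal sets (the intersection can be empty), though in $\AAA_n$ it is essentially harmless; more seriously, the two claims carrying the whole argument are unproved and are exactly the hard content: that among large-degree characters every $\chi$ with $\chi(g)<0$ has negligible ratio on small-support $g$, and that the low-degree characters cannot destroy positivity.  The latter is delicate because a single low-degree character can contribute a term of the \emph{same order} as the main term (e.g.\ $\chi^{(n-1,1)}$ on a $3$-cycle with $S$ the set of derangements contributes $\sim|S|^2/|G|$), so smallness of $|\widehat S(\chi)|$ is not enough and one is forced into a sign analysis you have not supplied.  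The paper avoids all of this: by \cite[Corollaries 6.5 and 1.11]{LS2} a random $\sigma$ in any normal $W$ with $|W|\ge e^{-n^\alpha}|\AAA_n|$ satisfies $(\sigma^{\SSS_n})^2=\AAA_n$ with high probability, and Erd\H{o}s--Tur\'an controls the exceptional split classes, so a single class $\sigma^{\AAA_n}\subseteq W$ already has $(\sigma^{\AAA_n})^2=\AAA_n$ (Theorem~\ref{S equals T}).  Finally, your closing remark yields Theorem B only for alternating groups and bounded-rank groups of Lie type; as the paper emphasizes, the classical groups of unbounded rank are not covered by Theorem A and require the separate analysis of Sections 9--10.
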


Our proof of part (iv) depends on a result which may be of independent interest, concerning the number of points in a finite product set
inside a product variety which lie on a subvariety of the product variety.  See Theorem~\ref{Induction} below.

Applications of Theorem A to word maps and to permutation groups are given in Sections 8 and 10.
Our main application concerns derangements (namely fixed-point-free permutations) in finite simple transitive permutation groups.
Let $G$ be a permutation group on a finite set $X$ of size $n$. Denote by $\cD(G) = \cD(G,X)$ the set of derangements in $G$, and let
$\delta(G) = \delta(G,X) = |\cD(G,X)|/|G|$ be the proportion of derangements in $G$. If $G$ is transitive, and $H < G$ is a
point stabilizer, we identify $X$ with the set of left cosets $G/H$ and write $\cD(G,H)$ and $\delta(G,H)$ for $\cD(G,G/H)$
and $\delta(G,G/H)$ respectively. Note that $\cD(G,H) = G \setminus \cup_{g \in G} H^g$.

The study of derangements goes back three centuries to 1708, when Monmort showed that the symmetric group $\SSS_n$ (in its natural action on
$\{ 1, 2,\ldots , n \})$ satisfies $\delta(\SSS_n) \to 1/e$ as $n \to \infty$.
In the 1870s Jordan proved that every finite transitive permutation group of degree $n > 1$ contains a derangement. Since then
derangements have been studied extensively and have proved useful in various areas of mathematics, including group theory, graph theory,
probability, number theory and topology. See the book \cite{BG} for background and new results.

The Classification of Finite Simple Groups has revolutionized the study of derangements, and various powerful results
have been obtained. These include the well-known result of Fein, Kantor and Schacher \cite{FKS}, strengthening Jordan's theorem,
that every finite transitive permutation group of degree $n>1$ has a derangement of prime power order. The question of the
existence of derangements of prime order is discussed extensively in \cite{BG}.

In recent years there has been considerable interest in invariable generation of groups, which has sparked renewed interest in derangements.
Recall that a group $G$ (finite or infinite) is said to be invariably generated by a subset $S \subseteq G$ if, whenever we replace
each $s \in S$ by any conjugate $s^g$ of $s$ (where $g \in G$ depends on $s$), we obtain a generating set for $G$. It is easy
to see that $G$ is invariably generated by $G$ if and only if whenever $G$ acts transitively on some set $X$ with $|X| > 1$ we have
$\cD(G,X) \ne \emptyset$. This in turn is equivalent to $\cup_{g \in G} H^g \subsetneq G$ for every proper subgroup $H < G$.
Thus finite groups are invariably generated by themselves, but some infinite groups are not.

For a finite group $G$ and a positive integer $k$, let $P_I(G,k)$ denote the probability that $k$ randomly chosen elements
of $G$ invariably generate $G$. The study of these probabilities is motivated by computational Galois theory, see e.g. 
\cite{D}, \cite{LP}, \cite{KLSh}, \cite{PPR},  and \cite{EFG}. The latter two papers show that $P_I(\SSS_n,4)$ is bounded away from zero, while
$P_I(\SSS_n,3)$ is not.

It is easy to see (see for instance \cite[2.3]{KLSh}) that $1 - P_I(G,k) \le \sum_H (1-\delta(G,H))^k$, where $H$ ranges over a set
of representatives of the conjugacy classes of the maximal subgroups of $G$. Thus the study of derangements and their proportions 
has applications to invariable generation and related topics.

A lower bound of the form $1/n$ on the proportion of derangements in arbitrary transitive permutation groups $G$ of degree $n$ was provided
in \cite{CC}. This bound is sharp.  It is attained if and only if $G$ is a Frobenius group of degree
$n(n-1)$. If $n \ge 7$ and $G$ is not a Frobenius group of size $n(n-1)$ or $n(n-1)/2$ than a better lower bound of the form $\delta(G) > 2/n$
was subsequently provided in \cite{GW}, with a number-theoretic application.

The case where the transitive permutation group $G$ is simple has been studied thoroughly in the past two decades by Fulman and Guralnick
\cite{FG1, FG2, FG3}, proving a conjecture of Boston and Shalev that $\delta(G) \ge \e$ for some fixed $\e > 0$.
Thus the set of derangements in such a group is a large normal subset, and our results on the square of such subsets may be applied.

Our main result concerning derangements is the following.

\begin{thmB}\label{main2}
Let $G$ be a finite simple transitive permutation group of sufficiently large order. Then every element of $G$ is a product of two derangements.
\end{thmB}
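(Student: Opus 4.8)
The plan is to reduce Theorem B to Theorem A(iv) for groups of Lie type of bounded rank, and to handle the remaining families (alternating groups and classical groups of unbounded rank) separately using the combinatorial structure of derangements. First I would recall from the Fulman--Guralnick theorem \cite{FG1,FG2,FG3} that for a finite simple transitive group $G$ with point stabilizer $H$, we have $\delta(G,H) \ge \e_0$ for an absolute constant $\e_0 > 0$; hence $\cD := \cD(G,H)$ is a normal subset with $|\cD| \ge \e_0|G|$. Since $e \notin \cD$, the trivial element of $G$ is never a derangement, so Theorem B asserts precisely that $g \in \cD \cdot \cD$ for every $g \in G$, including $g = e$ (note $\cD = \cD^{-1}$, so $e \in \cD\cdot\cD$ amounts to $\cD \ne \emptyset$, which holds by Jordan's theorem). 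Thus the substantive claim is $g \in \cD^2$ for all $g \ne e$.

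For $G$ of Lie type of bounded rank, this is immediate: apply Theorem A(iv) with $S = T = \cD$ and $\epsilon = \e_0$, which gives that every $g \in G \smallsetminus\{e\}$ lies in $\cD^2$ once $|G|$ is large enough. So the whole difficulty is concentrated in the two families where Theorem A fails, namely $G = \AAA_n$ and $G = \mathrm{PSL}_n(q)$ (and the other classical groups) with the rank tending to infinity. Here the point is that, although arbitrary large normal subsets $S,T$ of these groups need not satisfy $ST \supseteq G\smallsetminus\{e\}$, the specific normal subset $\cD$ of derangements is far from arbitrary, and one should exploit its explicit description. For $G = \AAA_n$ acting on a primitive set, a point stabilizer $H$ is either intransitive, imprimitive, or primitive on $\{1,\dots,n\}$; in the primitive case $|H|$ is tiny, so $|\cD| = (1-o(1))|G|$ and then $\cD^2 \supseteq G\smallsetminus\{e\}$ follows from the Gowers trick together with a direct argument (or from part (iii) of Theorem A in the $S=T$ case, once one checks the $e$-issue does not obstruct). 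The genuinely hard cases are the intransitive and imprimitive actions of $\AAA_n$ and $\SSS_n$, and the analogous subspace and parabolic actions of the classical groups, where $|\cD|$ is only a constant proportion of $|G|$ and Theorem A does not apply.

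For these hard cases the strategy I would pursue is to use the character-theoretic/combinatorial identity expressing the number of ways to write $g$ as a product of two elements of $\cD$, namely $\frac{1}{|G|}\sum_{\chi \in \Irr(G)} \frac{\widehat{\cD}(\chi)^2 \overline{\chi(g)}}{\chi(1)}$ where $\widehat{\cD}(\chi) = \sum_{x\in\cD}\chi(x)$, and to show this is positive. The trivial character contributes $|\cD|^2/|G| \ge \e_0^2|G|$, so it suffices to bound $\sum_{\chi \ne 1} \frac{|\widehat{\cD}(\chi)|^2 |\chi(g)|}{\chi(1)}$ strictly below this. One has $\widehat{\cD}(\chi) = \sum_{x\in\cD}\chi(x)$, and since $\cD = G \smallsetminus \bigcup_{g} H^g$, inclusion--exclusion on the cosets gives $\widehat{\cD}(\chi)$ in terms of the permutation character $\pi = \mathrm{Ind}_H^G 1$ and its powers; in particular $\widehat{\cD}(\chi)$ is controlled by the multiplicities $\langle \chi, \pi\rangle$ and the fixed-point ratios $|x^G \cap H|/|x^G|$. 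For $\AAA_n$ and $\SSS_n$ in their intransitive/imprimitive actions the characters $\chi$ with small degree are understood (Fulman--Guralnick and prior work compute the relevant $\widehat{\cD}(\chi)$ precisely, and low-degree characters are labelled by partitions with few rows or columns), so one estimates the sum by splitting off the small-degree $\chi$ (where $|\widehat{\cD}(\chi)|$ may be comparable to $|\cD|$ but the number of such $\chi$ is bounded and $\chi(g)/\chi(1)$ is small by fixed-point-ratio bounds) from the bulk of $\chi$ of large degree (where $\chi(1)$ is so large that even the trivial bound $|\widehat{\cD}(\chi)| \le |\cD|$ and $|\chi(g)| \le \chi(1)$ make the contribution negligible after summing $\sum_\chi 1 = k(G)$ against $1/\chi(1)$). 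The same dichotomy, with Deligne--Lusztig theory and the character-degree gaps of \cite{L} replacing the partition combinatorics, handles the classical groups of unbounded rank.

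I expect the main obstacle to be the intransitive action of $\SSS_n$ on $k$-subsets for $k$ of moderate size (say $k$ growing slowly with $n$), and the parallel action of $\mathrm{PSL}_n(q)$ on $k$-dimensional subspaces: here $\delta(G,H)$ is bounded below only by a small constant, the derangement indicator correlates nontrivially with several low-degree irreducibles simultaneously, and one cannot afford a crude bound on $|\widehat{\cD}(\chi)|$ for those $\chi$. Controlling this will require the sharp asymptotics for $\widehat{\cD}(\chi)$ in terms of the cycle-index generating function (for $\SSS_n$) or the analogous $q$-analogue, together with the uniform fixed-point-ratio estimates of Guralnick--Magaard and Liebeck--Shalev, and a careful verification that the signs work out so that the off-trivial part of the character sum is genuinely $o(|G|)$ rather than merely $O(|G|)$. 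If even this fails to give strict positivity for a bounded list of exceptional $(G,H)$, those can be dispatched by the hypothesis that $|G|$ is sufficiently large together with an explicit computation.
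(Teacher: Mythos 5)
Your reductions at the start are sound and match the paper's: Fulman--Guralnick gives $|\cD|\ge\e_0|G|$, the identity is trivially in $\cD^2$, and Theorem A(iv) disposes of bounded rank Lie type. But two things go wrong after that. First, you have mislocated the difficulty for alternating groups: Theorem A(iii) (Theorem~\ref{S equals T}) states that \emph{any} normal subset $W\subseteq\AAA_n$ with $|W|\ge e^{-n^\alpha}|\AAA_n|$ satisfies $W^2=\AAA_n$, with no reference to which action produced $W$; combined with the Boston--Shalev bound this dispatches every transitive action of $\AAA_n$ at once, including the intransitive and imprimitive point stabilizers you flag as ``genuinely hard.'' (The paper also needs a reduction you omit for the classical groups: by \cite[Theorem 1.7]{FG3} the union of irreducible subgroups has density $o(1)$, so for non-subspace actions $|\cD|>|G|/2$ and $\cD^2=G$ trivially; and by \cite{FG2} the derangement proportion tends to $1$ for subspace actions on subspaces of growing dimension. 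This is what legitimately confines the problem to subspace actions of \emph{bounded} dimension.)

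Second, and more seriously, your proposed estimate of $\sum_{\chi\neq 1}|\widehat{\cD}(\chi)|^2|\chi(g)|/\chi(1)$ over the full derangement set cannot be closed in the remaining cases, which are classical groups of unbounded rank over \emph{bounded} fields acted on by stabilizers of bounded-dimensional subspaces, with $g$ of bounded support (e.g.\ a transvection). Your ``bulk'' bound is off by a factor of $k(G)$: using $|\widehat{\cD}(\chi)|\le|\cD|$ and $|\chi(g)|\le\chi(1)$ gives $k(G)|\cD|^2$, not $o(|\cD|^2)$; the correct global input is $\sum_\chi|\widehat{\cD}(\chi)|^2=|G|\,|\cD|$, and then one needs $|\chi(g)|/\chi(1)<\delta(G)$ uniformly over the large-degree characters, which fails for bounded $\supp(g)$ and bounded $q$ (Gluck's bound and the bounds of \cite{GLT1} only give a constant $<1$ there). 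Meanwhile, for low-degree constituents of the permutation character one has exactly $\widehat{\cD}(\chi)=-|\cD|$-type behaviour, so those terms are of full size $|\cD|^2$ and everything hinges on signs you have not controlled. The paper's proof takes a structurally different route precisely to avoid this: it replaces $\cD\cdot\cD$ by $x^G\cdot y^G$ for two \emph{explicitly chosen} regular semisimple derangements $x,y$ lying in weakly (or centrally) orthogonal maximal tori, so that by \cite[Proposition 2.2.2]{LST1} and the symbol/hook analysis of Section~9 only a bounded explicit list of unipotent characters has $\chi(x)\chi(y)\neq 0$, each with $|\chi(x)\chi(y)|$ bounded; positivity of the Frobenius sum then follows for all $g$ of large support, and elements of bounded support are handled separately by decomposing $V=U\perp W$ and writing $g=(zx_3)^u(z^{-1}y_3)^v$. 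Without this (or an equally sharp substitute for the small-support regime), your argument does not go through.
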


Theorem A in itself does not imply Theorem B, since the answer to Question 1 in the case $S=T$ is positive only for certain families of finite simple groups. However, it does imply Theorem B for alternating groups and for groups of Lie type of bounded rank (note that we always have
$e \in \cD(G)^2$, since $\cD(G) = \cD(G)^{-1}$).
The remaining cases of classical groups of unbounded rank are rather challenging and require additional tools; these include
some results from \cite{FG1, FG2, FG3}, the theory of maximal subgroups of classical groups (see \cite{KL}),
as well as new results in Representation theory of classical groups (see Section 9 below).

%\bigskip
Our paper is organized as follows. Sections 2 and 3 are devoted to algebro-geometric results that are needed
in the proof of part (iv) of Theorem A, which is carried out in Section 4. In Section 5 we prove part (i) of Theorem A for
special linear groups. Section 6 is devoted to alternating groups and contains proofs of parts (i), (ii) and (iii) of Theorem A. 
In Section 7 we discuss products of three normal subsets. An application to word maps is presented in Section 8. Section 9 is 
devoted to representation-theoretic results which are
required in the proof of Theorem B and may have some independent interest. Theorem B is then proved in Section 10.
Finally, in Section 11 we show that the conclusion of Theorem B holds for {\it all} simple alternating groups.

\section{The Lang-Weil estimate}
By a \emph{variety} $\vX$ over a field $k$, we mean a separated geometrically irreducible scheme of finite type over $k$.  By the Lang-Weil theorem, if $k=\F_q$, then
\begin{equation}
\label{Lang-Weil}
\bigm||\vX(\F_{q^m})| - q ^{m\dim\vX}\bigm| \le Bq^{m(\dim\vX-1/2)}
\end{equation}
for some constant $B$ depending on $\vX$ but not on $m$.
We will need a number of variants of this statement; the reader who is willing to accept them can skip the remainder of this section.

For any separated scheme of finite type,
the left hand side can be computed using the Lefschetz trace formula \cite[Rapport, Th\'eor\`eme~3.2]{SGA 4.5}:
\begin{equation}
\label{LTF}
|\vX(\F_{q^m})| = \sum_{i=0}^{2\dim \vX} (-1)^i \Tr(\Frob_{q^m}\vert H^i_c(\bar\vX,\Q_\ell)).
\end{equation}

Let $d := \dim \vX$.
We fix an embedding $\iota\colon \Q_\ell\to \C$.  A well-known theorem of Deligne \cite[Th\'eor\`eme~3.3.4]{Weil II} asserts that each eigenvalue of $\Frob_q$ acting on
$H^i_c(\bar\vX,\Q_\ell)$ has absolute value $q^{w/2}$ for some non-negative integer $w\le i$.
In particular, the only $i$ for which $H^i(\bar\vX,\Q_\ell)$ has an eigenvalue
of absolute value $\ge q^d$  is $i=2d$.  If these eigenvalues are $\alpha_1q^d,\ldots,\alpha_kq^d$ (with repetitions allowed), then each $\alpha_i$ has absolute value $1$, and \eqref{Lang-Weil} implies
$$\lim_{m\to \infty} \bigl(\alpha_1^m+\cdots+\alpha_k^m\bigr)  = 1,$$
which implies $k=1$ and $\alpha_1 = 1$.
(In fact, geometric irreducibility implies that
$H^{\dim \vX}(\bar\vX,\Q_\ell)$ is $1$-dimensional and the trace map is an isomorphism.)
Thus, in \eqref{Lang-Weil}, the $q^{m\dim\vX}$ term cancels the contribution of $i=2\dim\vX$ in \eqref{LTF}, and $B$ can be taken to be  the sum of the compactly supported  Betti numbers of $\bar{\vX}$.
Note that $B$ depends only on $\bar\vX$, so this estimate holds uniformly for all Galois twists of $\vX$.

If $\vX$ ranges over the (geometrically irreducible) fibers of a morphism $\pi\colon \schX\to \schS$ between schemes of finite type over $\Z$, then $B$ is bounded uniformly among all such fibers.  This is a consequence of the proper base change theorem \cite[Arcata, IV, Th\'eor\`eme~5.4]{SGA 4.5} (which identifies the $i$th \'etale cohomology group with compact support of a geometric fiber with the corresponding fiber of $R^i\pi_!\Q_\ell$), Nagata's compactification theorem (\cite[Arcata, IV, (5.3)]{SGA 4.5}), and the constructibility \cite[Finitude, Th\'eor\`eme~1.1]{SGA 4.5} of the sheaves $R^i\pi'_*j_!\Q_\ell = R^i \pi_! \Q_\ell$ for a compactification
$$\xymatrix{\schX\ar[rr]^j\ar[dr]_\pi&& \schX'\ar[dl]^{\pi'} \\ &\schS&}$$
As a consequence, there exists $B$ such that for all $q$, all points $s\in \schS$ with finite residue field $k(s) = \F_q$, all varieties $\vX$ of the form
$\vX = \schX\times_{\schS} k(s)$, and all positive integers $m$,
\begin{equation}
\label{Uniform}
\bigm| |\vX(\F_{q^m})| - q^{m\dim \vX} \bigm| \le Bq^{m(\dim \vX-1/2)}.
\end{equation}

Given any integer $r$, there are only finitely many root systems of rank $r$, and for each root system $\Phi$, there exists a Chevalley group scheme $\schG$ over $\Z$, that is, a smooth group scheme over $\Spec \Z$, whose fiber over each field $F$ is the connected, simply connected, split  semisimple algebraic groups over $F$ with root system $\Phi$. Thus, we can uniformly bound the sum of compactly supported Betti numbers for all semisimple
groups of rank $r$ over all algebraically closed fields.

Suppose $\vX$ is a variety over $\F_q$ and $F\colon \vX\to \vX$ is an endomorphism of varieties over $\F_q$ such that $F^2 = \Frob_q$.  Then for $f\in \N$ sufficiently large,
\begin{equation}
\label{Suzuki-Ree estimate}
\bigm| |\vX(\bar\F_q)^{F^{2f+1}}| - q^{(f+1/2){\dim \vX}}\bigm| < B q^{(f+1/2)(\dim \vX-1/2)}.
\end{equation}
This follows from Fujiwara's extension of the Lefschetz trace formula \cite{Varshavsky}.
This allows us to treat Suzuki and Ree groups on the same footing as the other finite simple groups of Lie type.

If $\vZ$ is a variety and $\vW$ is a proper closed subvariety, then $\dim\vW \le \dim \vZ-1$, so
$$|\vW(\F_q)| \le Bq^{\dim \vZ-1},$$
where $B$ is the sum of Betti numbers of $\vW$.  As $\vZ$ and $\vW$ range over the fibers of a morphism of finite type over $\Z$, the constant $B$ can be bounded uniformly as before.

If $\pi\colon \vZ\to \vS$ is a dominant morphism of $\F_q$-varieties whose generic fiber is geometrically irreducible,
then there exists a proper closed subscheme $\vW$ of $\vZ$ such that the restriction of $\pi$ to the complement of $\vW$ is geometrically irreducible \cite[Corollaire~9.7.9]{EGA IV 3}.
If $B$ denotes the maximum sum of Betti numbers of any fiber of $\pi|_{\vZ\smallsetminus \vW}$, $B'$ denotes the sum of Betti numbers of $\vW$, and $B''$ denotes the sum of Betti numbers of $\vS$, then for all $S\subset \vS(\F_{q^m})$,
\begin{align*}
\bigm||\pi^{-1}(S)| - |S| q^{m(\dim \vZ-\dim \vS)}\bigm|
		&\le B |S| q^{m(\dim \vZ-\dim \vS)-1/2} + B' q^{m(\dim\vZ-1)}\\
		&\le (B+BB''+B')q^{m(\dim \vZ)-1/2}.
\end{align*}

\bigskip

\section{Morphisms which respect products}
If $\pi\colon \vZ\to \vS$ is a morphism of varieties over $\F_q$, we denote by $\pi_m$ the function $\vZ(\F_{q^m})\to \vS(\F_{q^m})$ that it determines.
Let $S_m \subset \vS(\F_{q^m})$.  We have seen that if $\pi$ has geometrically irreducible generic fiber, then
$$|\pi_m^{-1}(S_m)| = q^{m(\dim \vZ-\dim \vS)} |S_m| + O(q^{m((\dim \vZ)-1/2)}).$$
Applying Lang-Weil for $\vS$ and $\vZ$, this estimate can be expressed equivalently as
\begin{equation}
\label{uniformity}\frac{|\pi_m^{-1}(S_m)|}{|\vZ(\F_{q^m})|} = \frac{|S_m|}{|\vS(\F_{q^m})|} + O(q^{-m/2}).
\end{equation}

All we actually need from the estimate \eqref{uniformity} is the weaker version
\begin{equation}
\label{weak-uniformity}
\frac{|\pi_m^{-1}(S_m)|}{|\vZ(\F_{q^m})|} = \frac{|S_m|}{|\vS(\F_{q^m})|} + o(1),
\end{equation}
or, equivalently,
\begin{equation}
\label{weak-uniformity 2}
\frac{|\pi_m^{-1}(S_m)|}{q^{m\dim\vZ} } = \frac{|S_m|}{q^{m\dim\vS}} + o(1).
\end{equation}

Conversely, if \eqref{weak-uniformity} holds for all $S_m$, then $\pi$ is generically geometrically irreducible \cite[Proposition~2.1]{LST2}.

Now, let $\vX$, $\vY$, and $\vZ$ denote varieties over $\F_q$ and $\pi\colon \vZ\to \vX\times \vY$
a morphism of $\F_q$-varieties.  By Lang-Weil for $\vX$, $\vY$, or $\vZ$, we mean the $o(1)$ form of the error term rather than the $O(q^{-m/2})$ form.

\begin{defn}
We say $\pi$ \emph{respects products} if, as $m\to \infty$, for all $X_m\subset \vX(\F_{q^m})$ and $Y_m\subset \vY(\F_{q^m})$, we have
\begin{equation}
\label{alpha-beta}
\frac{|\pi_m^{-1}(X_m\times Y_m)|}{q^{m\dim \vZ}} = \frac{|X_m\times Y_m|}{q^{m\dim \vX\times \vY}}
+ o(1).
\end{equation}
\end{defn}

In particular, $\pi$ respects products if it has geometrically irreducible generic fiber.
The converse is not true, but we have the following weaker statement.
Let $\pi_{\vX}$ and $\pi_{\vY}$ denote the compositions of $\pi$ with the projection morphisms from $\vX\times\vY$ to $\vX$ and $\vY$ respectively.

\begin{lem}
\label{each}
If $\pi$ respects products, then $\pi_{\vX}$ and $\pi_{\vY}$ are generically geometrically irreducible.
\end{lem}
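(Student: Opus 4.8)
The plan is to reduce this to the irreducibility criterion quoted just before the definition: by \cite[Proposition~2.1]{LST2}, a morphism $\rho\colon \vZ\to \vS$ of $\F_q$-varieties with geometrically irreducible target is generically geometrically irreducible as soon as the estimate \eqref{weak-uniformity 2} holds for $\rho$ and every family $S_m\subset \vS(\F_{q^m})$. So it suffices to verify \eqref{weak-uniformity 2} for $\pi_{\vX}$; the argument for $\pi_{\vY}$ is identical by symmetry.

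First I would record the set-theoretic identity $(\pi_{\vX})_m^{-1}(S_m) = \pi_m^{-1}\bigl(S_m\times \vY(\F_{q^m})\bigr)$ for any $S_m\subset \vX(\F_{q^m})$, which holds because $\pi_{\vX}$ is $\pi$ composed with the first projection. Then I apply the hypothesis that $\pi$ respects products, taking $X_m = S_m$ and $Y_m = \vY(\F_{q^m})$ in \eqref{alpha-beta}, to get
$$\frac{|(\pi_{\vX})_m^{-1}(S_m)|}{q^{m\dim \vZ}} = \frac{|S_m|\,|\vY(\F_{q^m})|}{q^{m\dim \vX}\,q^{m\dim \vY}} + o(1).$$
By Lang-Weil for $\vY$ we have $|\vY(\F_{q^m})| = q^{m\dim \vY}(1+o(1))$, and since $|S_m|\le |\vX(\F_{q^m})| = q^{m\dim \vX}(1+o(1))$ the ratio $|S_m|/q^{m\dim \vX}$ is bounded; hence the right-hand side is $|S_m|/q^{m\dim \vX} + o(1)$. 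This is exactly \eqref{weak-uniformity 2} for $\pi_{\vX}$, for every family $(S_m)$.

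Finally, invoking the converse direction of \cite[Proposition~2.1]{LST2} yields that $\pi_{\vX}$ is generically geometrically irreducible, and running the same argument with the roles of $\vX$ and $\vY$ interchanged handles $\pi_{\vY}$. There is no serious obstacle in this proof; the only point needing a little care is that the $o(1)$ error terms are uniform in the chosen subsets, so that the replacement $|\vY(\F_{q^m})| = q^{m\dim\vY}(1+o(1))$ together with the boundedness of $|S_m|/q^{m\dim\vX}$ may be combined to absorb everything into a single $o(1)$.
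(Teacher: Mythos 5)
Your proof is correct and is essentially the paper's argument: the paper likewise specializes one factor to the full set of rational points (there $X_m = \vX(\F_{q^m})$, yielding the statement for $\pi_{\vY}$ first), observes that \eqref{alpha-beta} then reduces to \eqref{weak-uniformity 2}, and invokes the converse direction of \cite[Proposition~2.1]{LST2} together with symmetry. Your extra care about combining the Lang--Weil estimate for $\vY$ with the boundedness of $|S_m|/q^{m\dim\vX}$ is exactly the bookkeeping the paper leaves implicit.
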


\begin{proof}
By specializing to the case $X_m = \vX(\F_{q^m})$, \eqref{alpha-beta} becomes \eqref{weak-uniformity 2}, which implies
that $\pi_{\vY}$ is generically geometrically irreducible.
By symmetry, the same is true for $\pi_{\vX}$ as well.
\end{proof}

Note that just because $\pi_{\vX}$ and $\pi_{\vY}$ are generically geometrically irreducible, it is not necessarily the case that $\pi$ respects products. For example, if $\vX=\Spec \F_q[x]$, $\vY = \Spec \F_q[y]$, and
$$\vZ = \Spec \F_q[x,y,z]/(z^2-xy),$$
$\pi$ corresponds to the obvious homomorphism
$$\F_q[x]\otimes\F_q[y] \to \F_q[x,y,z]/(z^2-xy),$$
and $X_m = Y_m$ is the set of squares of elements of $\F_{q^m}^\times$, then the left hand side of \eqref{alpha-beta} approaches $1/2$, while the right hand side is $1/4+o(1)$.

However,  in many cases, the converse of Lemma~\ref{each} does hold.  Suppose that $\pi_{\vY}$ is flat
with geometrically irreducible generic fiber.  As flatness is preserved by base change and the composition of flat morphisms is flat,
 $\vZ\times_{\vY}\vZ$ is flat over
$\vY$, and this remains true after base change from $\F_q$ to $\bar\F_q$.  By \cite[2.4.6]{EGA IV 2}, therefore, every geometric component of  $\vZ\times_{\vY}\vZ$ dominates $\vY\times_{\Spec\F_q}\Spec\bar\F_q$.
However, the generic fiber of $\vZ\times_{\vY}\vZ$ is geometrically irreducible \cite[Corollaire~4.5.8]{EGA IV 2}, so there is only one geometric component, and $\vZ\times_{\vY}\vZ$ is therefore a variety.
%\cite[Tag 01VA, Tag 01VB ]{Stacks}.

\begin{thm}
\label{Induction}
Assume $\vZ$ is flat over $\vY$.
Let
$\psi\colon \vZ\times_{\vY}\vZ\to \vX\times \vX$ denote the morphism of varieties given by
$\psi(z_1,z_2) = (\pi_{\vX}(z_1),\pi_{\vX}(z_2))$.
If  $\psi$ respects products and $\pi_{\vX}$ and $\pi_{\vY}$
have geometrically irreducible generic fiber,
then $\pi$ respects products.
\end{thm}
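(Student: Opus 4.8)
The plan is to compute the left-hand side of \eqref{alpha-beta} for $\pi$ via a second-moment (Cauchy--Schwarz-type) argument, using $\psi$ to control the correlation between fibers over different points of $\vX$. Fix arbitrary subsets $X_m\subset\vX(\F_{q^m})$ and $Y_m\subset\vY(\F_{q^m})$. The key identity is that, writing $f(z) = 1$ if $\pi_{\vX}(z)\in X_m$ and $\pi_{\vY}(z)\in Y_m$ and $f(z)=0$ otherwise, one has
\[
\sum_{(z_1,z_2)\in(\vZ\times_{\vY}\vZ)(\F_{q^m})} f(z_1)f(z_2)
= \sum_{y\in Y_m}\Bigl(\,\sum_{z\in\pi_{\vY,m}^{-1}(y)}\mathbf{1}_{\pi_{\vX}(z)\in X_m}\Bigr)^{\!2}
=: \sum_{y\in Y_m} a_y^2,
\]
while $|\pi_m^{-1}(X_m\times Y_m)| = \sum_{y\in Y_m} a_y$. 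So the strategy is: first estimate $\sum_y a_y$ and $\sum_y a_y^2$ from above and below, then deduce that the $a_y$ are concentrated near their average.

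First I would apply the hypothesis that $\psi$ respects products to the pair of subsets $X_m\times X_m\subset(\vX\times\vX)(\F_{q^m})$. Since $\vZ\times_{\vY}\vZ$ is a variety (as established in the paragraph preceding the theorem), $\psi$ respecting products gives
\[
\frac{1}{q^{m\dim(\vZ\times_{\vY}\vZ)}}\sum_{y\in\vY(\F_{q^m})} a_y^2
= \frac{|\pi_{\vY,m}^{-1}(\vY(\F_{q^m}))\text{-restricted count}|}{\cdots}
\]
— more precisely, $|\psi_m^{-1}(X_m\times X_m)| = \sum_{y\in\vY(\F_{q^m})} a_y^2$ where now $a_y$ counts preimages in $\pi_{\vY,m}^{-1}(y)$ landing in $X_m$, and $\psi$ respecting products yields this is $\frac{|X_m|^2}{q^{2m\dim\vX}}\,q^{m\dim(\vZ\times_{\vY}\vZ)} + o(q^{m\dim(\vZ\times_{\vY}\vZ)})$. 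Using flatness of $\vZ$ over $\vY$, $\dim(\vZ\times_{\vY}\vZ) = 2\dim\vZ - \dim\vY$, so this is the expected main term. Next I would use that $\pi_{\vX}$ has geometrically irreducible generic fiber, hence respects products, together with $\pi_{\vY}$ having geometrically irreducible generic fiber, to compute $\sum_{y\in Y_m} a_y = |\pi_m^{-1}(X_m\times Y_m)|$: by \eqref{weak-uniformity 2} applied to $\pi_{\vX}$, $\sum_{y\in\vY(\F_{q^m})} a_y = |\pi_{\vX,m}^{-1}(X_m)| = \frac{|X_m|}{q^{m\dim\vX}}q^{m\dim\vZ} + o(q^{m\dim\vZ})$, and similarly for the complementary count, so the fibers over $\vY$ all behave uniformly.

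The main step — and the main obstacle — is the concentration argument: from $\sum_{y} a_y^2$ being close to its "diagonal" expected value and $\sum_y a_y$ being close to expected, I need to conclude that for a density-$|Y_m|/q^{m\dim\vY}$ proportion of $y$, the value $a_y$ is close to $\frac{|X_m|}{q^{m\dim\vX}}q^{m(\dim\vZ-\dim\vY)}$, and that this persists after restricting to $y\in Y_m$. The clean way to do this is to run the same two computations but with $X_m$ replaced by its complement $\vX(\F_{q^m})\smallsetminus X_m$; combining the four estimates (sum and sum-of-squares, for $X_m$ and its complement) forces, via the identity $\sum_y a_y(c - a_y) \ge 0$ with $c$ the expected fiber size, that the variance $\sum_y (a_y - \text{expected})^2$ over \emph{all} $y\in\vY(\F_{q^m})$ is $o(q^{2m(\dim\vZ-\dim\vY)}\cdot q^{m\dim\vY}) = o(q^{m(2\dim\vZ-\dim\vY)})$. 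Once the variance is this small, summing only over $y\in Y_m$ changes $\sum a_y$ by at most $\sqrt{|Y_m|}\cdot\sqrt{\text{variance}} = o(q^{m\dim\vZ})$ by Cauchy--Schwarz, which is exactly \eqref{alpha-beta} for $\pi$. I expect the bookkeeping of error terms and the verification that every invocation of "respects products" is legitimate (in particular that $\vZ\times_\vY\vZ$ really is a variety, which is why flatness is assumed) to be where the care is needed, but no single step should be deep.
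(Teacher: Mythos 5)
Your proposal is correct and is essentially the paper's argument: both use the hypothesis that $\psi$ respects products to control the second moment $\sum_{y}|\pi_m^{-1}(X_m\times\{y\})|^2$, the geometric irreducibility of the generic fiber of $\pi_{\vX}$ to control the first moment $|Z_m|$, and a Cauchy--Schwarz step to transfer this to the sum over $y\in Y_m$ --- the paper packages that step as a two-block inequality over $Y_m\sqcup Y_m^c$ producing a cross-term $\Delta_m$, whereas you bound the full variance and then restrict to $Y_m$, which amounts to the same thing. One caution: the ``identity'' $\sum_y a_y(c-a_y)\ge 0$ is false in general and the detour through the complement of $X_m$ is unnecessary, since the bound $\sum_y(a_y-c)^2=o\bigl(q^{m(2\dim\vZ-\dim\vY)}\bigr)$ follows directly by expanding the square and inserting your first- and second-moment estimates together with Lang--Weil for $\vY$.
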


\begin{proof}
Let $X_m\subset \vX(\F_{q^m})$ and $Y_m\subset \vY(\F_{q^m})$ be subsets, $Y_m^c$ the complement of $Y_m$ in $\vY(\F_{q^m})$,
and $Z_m := \pi_{\vX\,m}^{-1}(X_m) = \pi^{-1}(X_m\times \vY(\F_{  q^m}))$.  As $\pi_{\vX}$ has geometrically irreducible generic fiber,
\begin{equation}
\label{Zm bound}
\frac{|Z_m|}{q^{m\,\dim\vZ}} = \frac{|X_m|}{q^{m\,\dim \vX}} + o(1).
\end{equation}
Since $X_m\subset \vX(\F_{q^m})$,
\begin{equation}
\label{Z squared}
\frac{|Z_m|^2}{q^{2m\,\dim\vZ}} = \frac{|X_m|^2}{q^{2m\,\dim\vX}} + o(1).
\end{equation}

Let
%
%$$\Delta_m := \frac{|\pi_m^{-1}(X_m\times Y_m)|}{|Y_m|} - \frac{|\pi_m^{-1}(X_m\times Y_m^c)|}{|Y_m^c|}.$$
%
\begin{align*}
\Delta_m& := |\pi_m^{-1}(X_m\times Y_m)|\,|Y_m^c| - |\pi_m^{-1}(X_m\times Y_m^c)|\,|Y_m|\\
&=|\pi_m^{-1}(X_m\times Y_m)|\,|\vY(\F_{q^m})| - |\pi_m^{-1}(X_m\times \vY(\F_{q^m})|\,|Y_m|\\
&=|\pi_m^{-1}(X_m\times Y_m)|\,|\vY(\F_{q^m})| - |Z_m|\,|Y_m|.
\end{align*}
We aim to prove an $o(1)$ bound for
\begin{equation}
\label{XY error}
\begin{split}
\frac{|\pi_m^{-1}(X_m\times Y_m)|}{|\vZ(\F_{q^m})|} &- \frac{|X_m\times Y_m|}{|(\vX\times \vY)(\F_{q^m})|} \\
& \hskip -30pt= \frac{|\pi_m^{-1}(X_m\times Y_m)|\,|(\vX\times \vY)(\F_{q^m})| - |X_m\times Y_m|\,|\vZ(\F_{q^m})|}{| (\vX\times \vY\times \vZ)(\F_{q^m})|} \\
&\hskip -30pt= \frac{\Delta_m|\vX(\F_{q^m})| + |Y_m|(|\vX(\F_{q^m})|\,|Z_m| - |X_m|\,|\vZ(\F_{q^m})|)} {|(\vX\times \vY\times \vZ)(\F_{q^m})|} \\
& \hskip -30pt=\frac{\Delta_m}{|(\vY\times \vZ)(\F_{q^m})|} + \frac{|Y_m|}{|\vY(\F_{q^m})|}\Bigl(\frac{|Z_m|}{\vZ(\F_{q^m})|} - \frac{|X_m|}{|\vX(\F_{q^m})|}\Bigr).
\end{split}
\end{equation}
By \eqref{Zm bound} and Lang-Weil for $\vY$ and $\vZ$, this expression can be written
$$\frac{\Delta_m}{q^{m(\dim \vY+\dim \vZ)}} + o(1).$$
It suffices, therefore, to prove that
\begin{equation}
\label{Delta bound}
\Delta_m = o\bigl(q^{m(\dim \vY+\dim \vZ)}\bigr).
\end{equation}
%Thus,
%%
%\begin{align*}
%|\Delta_m| &= \frac{\bigm| |\pi_m^{-1}(X_m\times Y_m)|\,|Y_m^c| - |\pi_m^{-1}(X_m\times Y_m^c)|\,|Y_m| \bigm|}{|Y_m|\,|Y_m^c|} \\
%&\ge \biggm|\frac{|\pi_m^{-1}(X_m\times Y_m)|}{|Y_m|} - \frac{|Z_m|}{|\vY(\F_{q^m})|}  \biggm|.
%\end{align*}
%%
%As $\pi_{\vX}$ has geometrically irreducible generic fiber,
%%
%$$\frac{|Z_m|}{|\vY(\F_{q^m})|}= \frac{|Z_m|}{|\vZ(\F_{q^m})|}\frac{|\vZ(\F_{q^m})|}{|\vY(\F_{q^m})|} = \bigl(q^{-m\dim\vX}|X_m| +o(1)\bigr)q^{m(\dim \vZ-\dim \vY)},$$
%%
%so it suffices to prove
%%
%$$\Delta_m = o(q^{m(\dim\vZ-\dim\vY)}).$$

We have
\begin{align*}
\psi_m^{-1}&(X_m\times X_m) \\
                             &= \{(z_1,z_2,y)\in Z_m\times Z_m\times \vY(\F_{q^m})\mid
                             \pi_{\vY}(z_1) = \pi_{\vY}(z_2) = y\},
\end{align*}
so the cardinality of the left hand side is
%`
\begin{equation}
\label{long-split}
\begin{split}
\sum_{y\in\vY(\F_{q^m})} &|\pi_m^{-1}(X_m\times \{y\})|^2 \\
&= \sum_{y\in Y_m} |\pi_m^{-1}(X_m\times \{y\})|^2 + \sum_{y\in Y_m^c}  |\pi_m^{-1}(X_m\times \{y\})|^2 \\
&\ge \frac{\Bigl(\sum_{y\in Y_m} |\pi_m^{-1}(X_m\times \{y\})| \Bigr)^2}{|Y_m|} + \frac{\Bigl(\sum_{y\in Y_m^c} |\pi_m^{-1}(X_m\times \{y\})| \Bigr)^2}{|Y_m^c|} \\
&= \frac{|\pi_m^{-1}(X_m\times Y_m)|^2}{|Y_m|}
  + \frac {|\pi_m^{-1}(X_m\times Y_m^c)|^2}{|Y_m^c|} \\
&= \frac{(|\pi_m^{-1}(X_m\times Y_m)|+|\pi_m^{-1}(X_m\times Y_m^c)|)^2+\frac{\Delta_m^2}{|Y_m|\,|Y_m^c|}}{|Y_m|+|Y_m^c|} \\
&= \frac{|Z_m|^2+\frac{\Delta_m^2}{|Y_m|\,|Y_m^c|}}{|\vY(\F_{q^m})|}  \\
&= \frac{q^{2m(\dim \vZ-\dim \vX)}|X_m|^2+\frac{\Delta_m^2}{|Y_m|\,|Y_m^c|}}{q^{m\dim \vY}} + o(q^{m(2\dim \vZ-\dim\vY)}),
\end{split}
\end{equation}
by Cauchy-Schwartz, \eqref{Z squared}, and Lang-Weil for $\vY$.   As $\psi$ respects products,
\begin{equation}
\label{XX bound}
\frac{|\psi_m^{-1}(X_m\times X_m)|}{q^{m(2\dim\vZ-\dim\vY)}} = \frac{|X_m|^2}{q^{2m\dim\vX}} + o(1).
\end{equation}
Thus \eqref{long-split} implies
$$\frac{\Delta_m^2}{|Y_m|\,|Y_m^c|} = o(q^{2m\,{\dim \vZ}}),$$
which, by Lang-Weil for $\vY$, gives \eqref{Delta bound}.
\end{proof}

Note that the implicit bound of \eqref{XY error} can be expressed in terms of the implicit bounds in the Lang-Weil estimates of $\vX$, $\vY$, and $\vZ$ and those in
\eqref{Zm bound} and \eqref{XX bound}.  The uniformity \eqref{Uniform} in Lang-Weil estimates for families over a scheme of finite type over $\Z$ implies the following.
Let
$$\xymatrix{\schZ\ar[rr]^\pi\ar[dr]&&\schX\times\schY\ar[dl] \\ &\schS&}$$
be a morphism of schemes of finite type over $\Z$ for which the corresponding morphism
$\pi_{\schY}\colon \schZ\to \schY$ is flat.
For each point $s$ with finite residue field $k(s) = \F_q$, we consider the specialization $\vZ\to \vX\times \vY$ of $\pi$.  Assuming that for some family of such morphisms we have a uniform $o(1)$ error bound for \eqref{XX bound}, then we have a uniform $o(1)$ error bound in \eqref{alpha-beta} for all members of the family of morphisms.  As Betti numbers depend only on cohomology after base change to $\bar\F_q$, we also have a uniform $o(1)$ error bound in  \eqref{alpha-beta} for morphisms obtained from members of the family by Galois twisting.

The estimate \eqref{Suzuki-Ree estimate} gives a uniform $o(1)$ bound of type \eqref{alpha-beta} in the setting of Suzuki and Ree groups.
Explicitly, let  $\pi\colon \vZ\to \vX\times \vY$ be a morphism of $\F_q$-varieties,
and let $\psi\colon \vZ\times_{\vY}\vZ\to \vX\times \vX$ be defined as before.
Suppose $F_X$, $F_Y$ and $F_Z$ are endomorphisms
of $\vX$, $\vY$, and $\vZ$  as $\F_q$-varieties such that $F_X^2$, $F_Y^2$, and $F_Z^2$ are the $q$-Frobenius morphisms on $\vX$, $\vY$, and $\vZ$ respectively.
Suppose further that the diagram
$$\xymatrix{\vZ\ar[r]\ar[d]_{F_Z}&\vX\times \vY\ar[d]^{F_X\times F_Y}\\
\vZ\ar[r]&\vX\times \vY}$$
commutes.  For $f$ a non-negative integer, let
$$\pi_f\colon \vZ(\bar\F_q)^{F^{2f+1}} \to \vX(\bar\F_q)^{F^{2f+1}}\times \vY(\bar\F_q)^{F^{2f+1}},$$
denote the obvious restriction of $\pi$, and likewise for
$$\psi_f\colon (\vZ\times_{\vY}\vZ)(\bar\F_q)^{F^{2f+1}} \to \vX(\bar\F_q)^{F^{2f+1}}\times \vX(\bar\F_q)^{F^{2f+1}}.$$
Let $X$ and $Y$ denote subsets of $\vX(\bar\F_q)^{F^{2f+1}}$ and $\vY(\bar\F_q)^{F^{2f+1}}$.  Then
$$\frac{|\psi_f^{-1}(X\times X)|}{q^{(f+1/2)\dim \vZ\times_{\vY}\vZ}} = \frac{|X\times X|}{q^{(f+1/2)\dim \vX\times \vX}} + o(1)$$
implies
\begin{equation}
\label{Suzuki-Ree product}
\frac{|\pi_f^{-1}(X\times Y)|}{q^{(f+1/2)\dim \vZ}} = \frac{|X\times Y|}{q^{(f+1/2)\dim \vX\times \vY}} + o(1).
\end{equation}

In applying Theorem~\ref{Induction} and its variants, we are always in the situation that $\pi_{\vY}$ is a projection map from a product variety to one of its factors.  It is therefore flat (since every morphism to the spectrum of a field is flat, and flatness respects base change.)

\bigskip

\section{Equidistribution for bounded rank groups of Lie type}

In this section, we show that Questions~\ref{Weak} and \ref{Strong} have an affirmative answer if one restricts to finite simple groups of bounded rank.
Throughout the section, $\vG$ denotes a simply connected simple algebraic group over $\F_q$.

\begin{thm}
If   $c\in \vG(\F_{q^m})$ is not central then for every integer $n\ge 2\dim \vG$, the morphism
$$\phi\colon\vG^{2n}\to \vG$$
given by
$$\phi(x_1,y_1,\ldots,x_n,y_n) = x_1cx_1^{-1} y_1c^{-1}y_1^{-1} \cdots x_ncx_n^{-1}y_nc^{-1} y_n^{-1}$$
has geometrically irreducible generic fiber.
\end{thm}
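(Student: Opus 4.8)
The plan is to reduce the statement to the fact that a single commutator map has a dominant, geometrically-irreducible-generic-fiber behaviour, and then bootstrap up to $n$ factors using the ``respects products'' machinery of Section~3. Let $\kappa\colon \vG\times\vG\to\vG$ be the map $\kappa(x,y)=xcx^{-1}yc^{-1}y^{-1}$, i.e. the product of the two conjugates $xcx^{-1}$ and $yc^{-1}y^{-1}$ of the noncentral elements $c$ and $c^{-1}$. First I would recall that for $c$ noncentral, the conjugacy class $C=c^{\vG}$ is a variety of positive dimension, and that the product map $C\times C^{-1}\to\vG$ (equivalently $\kappa$, after composing with the surjections $\vG\to C$, $\vG\to C^{-1}$) is dominant with geometrically irreducible generic fiber; this is a standard fact (the image of $C\cdot C^{-1}$ is a union of classes, it generates $\vG$ since $\vG$ is simple, and one gets irreducibility of the generic fiber from the connectedness of $\vG$ together with a dimension count, or one cites the relevant statement from \cite{LST1}, \cite{LST2}, or \cite{GT2}). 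The precise lower bound $n\ge 2\dim\vG$ suggests the intended route is: each commutator $x_icx_i^{-1}y_ic^{-1}y_i^{-1}$ ranges over a set whose ``fiber dimension defect'' is controlled, and after $2\dim\vG$ of them the accumulated image has swept out all of $\vG$ with good fibers. I would phrase this as: the map $\vG^{2k}\to\vG$ sending $(x_1,y_1,\dots,x_k,y_k)$ to the product of the $k$ commutators is dominant once $k$ is a bounded multiple of $\dim\vG$, and geometrically-irreducible-generic-fiber for $k\ge 2\dim\vG$.

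The inductive step is where Theorem~\ref{Induction} enters. Write $\phi=\phi_n$ for the $n$-fold map and suppose inductively that $\phi_{n-1}\colon\vG^{2(n-1)}\to\vG$ has geometrically irreducible generic fiber. Set $\vX=\vG$ (the target of $\phi_{n-1}$), $\vY=\vG$ (the target of $\kappa$, viewed via its own projection), and let $\vZ\subset\vG^{2(n-1)}\times(\vG\times\vG)$ be the graph-type variety fibered over $\vY=\vG$ so that $\pi_{\vY}$ is literally a projection (hence flat, as the excerpt notes), with $\pi_{\vX}$ recording the value of $\phi_{n-1}$. Then $\phi_n$ factors as $\vZ\to\vX\times\vX\xrightarrow{\text{mult}}\vX=\vG$, and to apply the theorem I need: (a) $\pi_{\vX}$ and $\pi_{\vY}$ have geometrically irreducible generic fiber — this is the induction hypothesis for $\pi_{\vX}$ and the single-commutator statement for $\pi_{\vY}$; and (b) the auxiliary map $\psi\colon\vZ\times_{\vY}\vZ\to\vX\times\vX$ respects products. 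For (b), $\vZ\times_{\vY}\vZ$ parametrizes pairs of $\phi_{n-1}$-inputs sharing a common last commutator value, and $\psi$ forgets that last commutator and returns the two $\phi_{n-1}$-values; since the two copies are independent once we fix the shared $y\in\vY$ and since $\phi_{n-1}$ already has geometrically irreducible generic fiber, $\psi$ has geometrically irreducible generic fiber too, hence respects products (as observed right after the definition of ``respects products''). Theorem~\ref{Induction} then gives that $\phi_n$'s defining morphism $\pi$ respects products; combined with the multiplication map $\vG\times\vG\to\vG$ being geometrically-irreducible-generic-fiber (an easy direct check), one concludes $\phi_n$ has geometrically irreducible generic fiber. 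One must be a little careful that ``respects products'' plus a geometrically-irreducible-generic-fiber map downstream really does yield geometrically-irreducible-generic-fiber for the composite — this uses the converse direction quoted from \cite[Proposition~2.1]{LST2}, that the $o(1)$-equidistribution \eqref{weak-uniformity} characterizes generic geometric irreducibility.

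The main obstacle, and the part I would spend the most care on, is the base case together with pinning down exactly when the running count of commutators makes the map dominant: one needs a robust statement that $x\mapsto xcx^{-1}$ sweeps out the class $C$, that $C\cdot C^{-1}$ is ``large'' (positive-dimensional, generating), and that taking $2\dim\vG$ products forces surjectivity of $\phi$ onto $\vG$ with the generic fiber being a single geometric component — not just dominance. The cleanest way is probably: show the image of $\phi_k$ is a subvariety invariant under conjugation and containing $e$ whose dimension strictly increases with $k$ until it is all of $\vG$, using that $C$ is not contained in a proper closed conjugation-invariant subvariety through $e$ (here simplicity of $\vG$ is essential), so that $\dim\vG$ steps suffice for dominance and a further factor of $2$ (or appeal to \cite[Corollaire~9.7.9]{EGA IV 3} as used in Section~2) upgrades dominance to geometrically-irreducible-generic-fiber. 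A secondary technical point is keeping track of geometric (as opposed to merely arithmetic) irreducibility throughout, i.e. working over $\bar\F_q$; but this is automatic because everything in sight — $\vG$, $C$, the commutator maps — is already defined and geometrically irreducible over the prime field up to the twisting that \eqref{Uniform} handles uniformly.
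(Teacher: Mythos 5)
Your proposal diverges fundamentally from the paper's argument, and it has a gap that I do not think can be repaired along the lines you sketch. The base case fails: for a non-central $c$ the single map $\kappa(x,y)=xcx^{-1}yc^{-1}y^{-1}$ is in general not even dominant, let alone generically geometrically irreducible. Its image lies in the closure of $C\cdot C^{-1}$ where $C=c^{\vG}$, and $\dim(C\cdot C^{-1})\le 2\dim C$, which is far less than $\dim\vG$ when $c$ is, say, a long root element in a group of large rank. This is precisely why the theorem requires $n\ge 2\dim\vG$ commutator factors rather than one. Moreover, the inductive engine you invoke runs in the wrong direction for your purposes: Theorem~\ref{Induction} transfers good behavior of $\psi$ (built from the fiber product, i.e.\ from \emph{twice} as many commutators) to good behavior of $\pi$, so in the paper it is used only contrapositively in the proof of Theorem~\ref{Products}, descending from $\theta^{2^i}$ with $i$ large down to $\theta^1$ — and the input at the top of that descent is exactly the present theorem, supplied by Corollary~\ref{Key-cor}. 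Using that machinery to climb from $\phi_{n-1}$ to $\phi_n$ would make the argument circular. Finally, ``respects products'' is strictly weaker than having geometrically irreducible generic fiber (the paper gives an explicit counterexample after Lemma~\ref{each}), so your concluding step also needs more than you provide.

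The paper's actual proof is character-theoretic and does not use Section~3 at all. One reduces, via the equidistribution criterion of \cite[Proposition~2.1]{LST2}, to showing that every fiber $\phi_m^{-1}(g)$ has $(1+o(1))q^{m(2n-1)\dim\vG}$ points. The Frobenius formula \eqref{Frobenius} expresses the fiber count as $|\vG(\F_{q^m})|^{2n-1}\bigl(1+\sum_{\chi\neq 1}|\chi(C)|^{2n}\bar\chi(g)/\chi(1)^{2n-1}\bigr)$, and Gluck's bound $|\chi(c)|/\chi(1)\le aq^{-m/2}$ for non-central $c$ makes each non-trivial term $O(q^{m\dim\vG-mn})$; since the number of characters is $o(q^{m\dim\vG})$, the error sum is $o(1)$ exactly when $n\ge 2\dim\vG$. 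That is where the threshold comes from — it is an artifact of the character estimate, not of a dimension-sweeping argument on images of products of classes.
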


\begin{proof}
It suffices to prove that, fixing $n$,
\begin{equation}
\label{fiber-size}
|\phi_m^{-1}(g)| = (1+o(1)) q^{m(2n-1)\dim\vG}
\end{equation}
for all $g\in \vG(\F_{q^m})$ as $m\to\infty$.
A well-known theorem of Frobenius,
asserts that if $C_1,\ldots,C_k$ are conjugacy classes in a finite group $G$ and $g\in G$, then the number
of elements in the  set
$$\{(g_1,\ldots,g_k)\in C_1\times \cdots\times C_k\mid g_1\cdots g_k = g\}$$
is
\begin{equation}
\label{Frobenius}
\frac{|C_1|\cdots |C_k|}{|G|}\sum_\chi \frac{\chi(C_1)\cdots\chi(C_k)\bar\chi(g)}{\chi(1)^{k-1}},
\end{equation}
where the sum is taken over irreducible characters $\chi$ of $G$.  Thus,
if $C$ is a conjugacy class in $\vG(\F_{q^m})$,  the number of representations
$$|\{(x_1,y_1,\ldots,x_n,y_n)\in C^{2n}\mid x_1 y_1^{-1}\cdots x_n y_n^{-1} = g\}|$$
is given by
$$\frac{|C|^{2n}}{|\vG(\F_{q^m})|}\sum_\chi \frac{|\chi(C)|^{2n}\bar\chi(g)}{\chi(1)^{2n-1}},$$
Therefore,
$$|\phi_m^{-1}(g)| = |\vG(\F_{q^m})|^{2n-1}\biggl(1+\sum_{\chi\neq 1}\frac{|\chi(C)|^{2n}\bar\chi(g)}{\chi(1)^{2n-1}}\biggr).$$

By a theorem of David Gluck \cite{Gluck}, for every non-central element $x\in \vG(\F_{q^m})$
and every non-trivial irreducible character $\chi$, we have
$$\frac{|\chi(x)|}{\chi(1)} \le aq^{-m/2},$$
where $a$ is an absolute constant.  As
$$|\chi(1)\bar\chi(g)|\le \chi(1)^2 \le |\vG(\F_{q^m})| = (1+o(1))q^{m\dim \vG},$$
we have
$$\frac{|\chi(C)|^{2n}\bar\chi(g)}{\chi(1)^{2n-1}}= (1+o(1))a^{2n}q^{m\dim \vG-mn}.$$
The total number of irreducible characters is $o(|\vG(\F_{q^m})|) = o(q^{m\dim\vG})$,
so $n\ge 2\dim\vG$ implies \eqref{fiber-size}.

\end{proof}

\begin{cor}
\label{Key-cor}
With notations as above,
If $\theta^n\colon  \vG^{2n}\times \vG\to \vG\times \vG$ is defined by
$$\theta^n(x_1,y_1,\ldots,x_n,y_n,g) = (\phi(x_1,y_1,\ldots,x_n,y_n)g,g),$$
then $\theta^n$ is generically geometrically irreducible.
\end{cor}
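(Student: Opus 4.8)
The plan is to deduce Corollary~\ref{Key-cor} from the theorem just proved, using the fact that $\phi$ has geometrically irreducible generic fiber. The key observation is that $\theta^n$ factors through a composition of maps each of which is transparently generically geometrically irreducible (or is an isomorphism), so that the composition is as well. Concretely, write $\theta^n$ as the composite
\begin{equation*}
\vG^{2n}\times\vG \xrightarrow{\ \phi\times\mathrm{id}\ } \vG\times\vG \xrightarrow{\ \mu\ } \vG\times\vG,
\end{equation*}
where $\mu(h,g) = (hg,g)$. The second map $\mu$ is an automorphism of the variety $\vG\times\vG$ (with inverse $(u,g)\mapsto(ug^{-1},g)$), hence certainly has geometrically irreducible generic fiber. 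So it suffices to show $\phi\times\mathrm{id}_{\vG}$ is generically geometrically irreducible, and that generically geometrically irreducible morphisms are closed under composition.

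For the first point: the generic fiber of $\phi\times\mathrm{id}_{\vG}$ over a point $(h,g)$ of the image is $\phi^{-1}(h)\times\{g\}$, which is geometrically irreducible because $\phi^{-1}(h)$ is (for generic $h$, by the theorem). More carefully, I would argue via the $o(1)$-equidistribution characterization recorded after \eqref{weak-uniformity}: since $\phi$ has geometrically irreducible generic fiber, \eqref{weak-uniformity} holds for $\phi$; for $\phi\times\mathrm{id}_{\vG}$ and a subset $W_m\subseteq(\vG\times\vG)(\F_{q^m})$, one slices $W_m$ over the second coordinate and applies \eqref{weak-uniformity} for $\phi$ to each slice, summing up to get \eqref{weak-uniformity} for $\phi\times\mathrm{id}_{\vG}$; then the converse direction of that characterization (\cite[Proposition~2.1]{LST2}) gives generic geometric irreducibility. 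The same slicing argument handles closure under composition with the isomorphism $\mu$, or one simply notes that an isomorphism of varieties trivially preserves the property and that composing a generically geometrically irreducible morphism with an isomorphism on the target stays generically geometrically irreducible since fibers are literally unchanged.

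Alternatively, and perhaps more cleanly, I would establish the fiber count directly: for $(u,g)\in(\vG\times\vG)(\F_{q^m})$,
\begin{equation*}
(\theta^n)_m^{-1}(u,g) = \{(x_1,y_1,\ldots,x_n,y_n)\in\vG(\F_{q^m})^{2n}\mid \phi(x_1,\ldots,y_n) = ug^{-1}\}\times\{g\},
\end{equation*}
so $|(\theta^n)_m^{-1}(u,g)| = |\phi_m^{-1}(ug^{-1})| = (1+o(1))q^{m(2n-1)\dim\vG}$ uniformly, by \eqref{fiber-size} in the proof of the theorem. Since $\dim(\vG^{2n}\times\vG) - \dim(\vG\times\vG) = (2n-1)\dim\vG$, this is exactly the equidistribution statement \eqref{weak-uniformity 2} for $\theta^n$ (applied to singletons and then to arbitrary $W_m$ by summation), and the converse in \cite[Proposition~2.1]{LST2} yields that $\theta^n$ is generically geometrically irreducible.

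The main obstacle is essentially bookkeeping rather than substance: one must be careful that the uniformity in the $o(1)$ term in \eqref{fiber-size} is genuinely uniform over all target points (which it is, as the proof of the theorem shows the bound holds "for all $g\in\vG(\F_{q^m})$"), so that summing over a subset $W_m$ still produces an $o(1)$ error after normalization. Given that uniformity, the deduction is immediate; the only real content was already in the theorem.
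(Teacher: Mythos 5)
Your proposal is correct and, in its second ("alternatively, and perhaps more cleanly") form, is essentially identical to the paper's own proof: the paper also computes $|(\theta^n_m)^{-1}(g_1,g_2)| = |\phi_m^{-1}(g_1g_2^{-1})| = (1+o(1))q^{m(2n-1)\dim\vG}$ uniformly and invokes the equidistribution characterization of generic geometric irreducibility. The factorization through $\phi\times\mathrm{id}$ and the automorphism $(h,g)\mapsto(hg,g)$ is just a structural rephrasing of the same fiber computation.
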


\begin{proof}
We have
$$|(\theta^n_m)^{-1}(g_1,g_2)| = |\phi_m^{-1}(g_1 g_2^{-1})|.$$
By \eqref{fiber-size}, the right hand side is always
$$(1+o(1))q^{m(2n-1)\dim \vG} =  (1+o(1))q^{m(\dim \vZ - \dim \vX\times \vY)}.$$
The corollary follows from \eqref{uniformity}.
\end{proof}

\begin{thm}
\label{Products}
Let $\vX=\vY = \vG$ and $\vZ =  \vG\times\vG$.  Let $\pi\colon \vZ\to \vX\times \vY$
be defined by $\pi(x,g) = (xcx^{-1}g,g)$.  Then $\pi$ respects products.
\end{thm}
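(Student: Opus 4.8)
The plan is to deduce the statement for $\pi$ from Corollary~\ref{Key-cor} by iterating Theorem~\ref{Induction}: each application of the latter can be arranged so that it replaces the morphism $\theta^n$ by one isomorphic to $\theta^{2n}$, and after finitely many steps the parameter exceeds $2\dim\vG$, at which point Corollary~\ref{Key-cor} applies. Two trivial observations are used throughout. First, respecting products is invariant under precomposition with an isomorphism of the source variety (the target and its product decomposition being untouched), since such an isomorphism is bijective on $\F_{q^m}$-points for all $m$ and preserves dimension. Second, by Corollary~\ref{Key-cor} together with the remark following the definition of ``respects products'' that a morphism with geometrically irreducible generic fibre respects products, $\theta^n$ respects products whenever $n\ge 2\dim\vG$ (and in the Suzuki and Ree cases likewise, via \eqref{Suzuki-Ree product} and \eqref{Suzuki-Ree estimate}). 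Writing $\phi_n(z_1,\dots,z_{2n}):=z_1cz_1^{-1}z_2c^{-1}z_2^{-1}\cdots z_{2n-1}cz_{2n-1}^{-1}z_{2n}c^{-1}z_{2n}^{-1}$, so that $\phi$ of Section~4 is $\phi_n$ and $\theta^n(z_1,\dots,z_{2n},g)=(\phi_n(z_1,\dots,z_{2n})g,g)$, inversion term by term gives the identity
\[
\phi_n(z_1,\dots,z_{2n})\,\phi_n(z_1',\dots,z_{2n}')^{-1}=\phi_{2n}(z_1,\dots,z_{2n},z_{2n}',z_{2n-1}',\dots,z_1'),
\]
which is what drives the doubling.

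First I would apply Theorem~\ref{Induction} to $\pi$ itself, taking $\vX,\vY$ to be the first and second factors of the target. Then $\pi_{\vY}\colon(x,g)\mapsto g$ is a coordinate projection, hence flat, and $\pi_{\vX}\colon(x,g)\mapsto xcx^{-1}g$ has, over each point $t$, the fibre $\{(x,(xcx^{-1})^{-1}t)\}\cong\vG$; both have geometrically irreducible generic fibre. So it suffices to show that $\psi\colon\vZ\times_{\vY}\vZ\to\vX\times\vX$, which on the coordinates $(x_1,x_2,g)$ of $\vZ\times_{\vY}\vZ\cong\vG^3$ is $(x_1cx_1^{-1}g,\,x_2cx_2^{-1}g)$, respects products. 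The substitution $g\mapsto x_2c^{-1}x_2^{-1}g$ is an automorphism of $\vG^3$, and it carries $\psi$ to $(x_1cx_1^{-1}x_2c^{-1}x_2^{-1}g,\,g)=\theta^1(x_1,x_2,g)$; by the first observation, $\pi$ respects products provided $\theta^1$ does.

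Next comes the doubling step: for each $n\ge 1$, apply Theorem~\ref{Induction} to $\theta^n$, again with $\vX,\vY$ the two factors of the target. Here $\theta^n_{\vY}$ is the projection to the last $\vG$-factor (flat), and $\theta^n_{\vX}\colon(z_1,\dots,z_{2n},g)\mapsto\phi_n(z_1,\dots,z_{2n})g$ has fibres that are graphs of morphisms $\vG^{2n}\to\vG$, hence geometrically irreducible; so it suffices to show that the resulting $\psi$, which on the coordinates $(z_1,\dots,z_{2n},z_1',\dots,z_{2n}',g)$ of the fibre product $\cong\vG^{4n+1}$ is $(\phi_n(z_1,\dots,z_{2n})g,\,\phi_n(z_1',\dots,z_{2n}')g)$, respects products. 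The substitution $g\mapsto\phi_n(z_1',\dots,z_{2n}')^{-1}g$ is an automorphism of the source, and together with the displayed identity it carries $\psi$ to $(\phi_{2n}(z_1,\dots,z_{2n},z_{2n}',\dots,z_1')g,\,g)$, which is $\theta^{2n}$ up to a permutation of the source coordinates. Hence $\theta^n$ respects products provided $\theta^{2n}$ does. Fixing $j$ with $2^j\ge 2\dim\vG$, iterating this reduces ``$\theta^1$ respects products'' to ``$\theta^{2^j}$ respects products'', which holds by the second observation; combined with the previous paragraph this proves the theorem.

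The only real content beyond invoking Theorem~\ref{Induction} and Corollary~\ref{Key-cor} is the bookkeeping that makes the iteration close up: the word identity for $\phi_n\phi_n^{-1}$, and the observation that the change of variables needed at each stage --- ``multiply the last coordinate by a morphism of the remaining ones'' --- is a genuine automorphism of the source, so that respecting products is preserved. The hypotheses of Theorem~\ref{Induction} are automatic in every instance, since $\pi_{\vY}$ is always a coordinate projection (hence flat) and $\pi_{\vX}$ always has fibres that are graphs of morphisms, hence geometrically irreducible. I expect the verification of the word identity and of the automorphism property to be the only steps requiring care, but they are entirely routine.
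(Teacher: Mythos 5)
Your proposal is correct and follows essentially the same route as the paper: reduce $\pi$ to $\theta^1$ via Theorem~\ref{Induction} and an isomorphism of the fibre product with $\vG^2\times\vG$, then iterate the doubling $\theta^n\rightsquigarrow\theta^{2n}$ (the paper's $\xi^n,\eta^n$ play the role of your substitution plus the word identity for $\phi_n\phi_n^{-1}$), and finish with Corollary~\ref{Key-cor} once $2^j\ge 2\dim\vG$. Your explicit reversal of the primed arguments in the identity $\phi_n(z)\phi_n(z')^{-1}=\phi_{2n}(z_1,\dots,z_{2n},z_{2n}',\dots,z_1')$ is exactly the bookkeeping the paper's map $\xi^n$ is meant to encode, so nothing further is needed.
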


\begin{proof}

The isomorphism $\omega\colon  \vZ\times_{\vG}\vZ\to  \vG^2\times\vG$ defined by
$$\omega((x_1,g),(x_2,g)) = (x_1,x_2,x_2cx_2^{-1}g)$$
makes the diagram
$$\xymatrix{\vZ\times_{\vG}\vZ \ar[rr]^\omega\ar[dr]^\psi&&\vG^2\times\vG\ar[dl]_{\theta^1} \\
&\vG\times \vG}$$
commute.  By Theorem~\ref{Induction}, if $\pi$ does not respect products, then $\theta^1$ does not respect them either.

For $n\ge 1$, we define
$$\xi^n\colon (\vG^{2n}\times\vG)\times_{\vG} (\vG^{2n}\times\vG)\to (\vG^{4n}\times\vG)$$
by
\begin{align*}
\xi^n((x_1,y_1,&\ldots,x_n,y_n,g),(x_{n+1},y_{n+1}\ldots,x_{2n},y_{2n},g)) \\
&= (x_1,y_1,\ldots,x_{2n},y_{2n}, \phi(x_{n+1},y_{n+1},\ldots,y_{2n})^{-1}g)
\end{align*}
and
$$\eta^n\colon (\vG^{2n}\times\vG)\times_{\vG} (\vG^{2n}\times\vG)\to \vG\times \vG$$
by
\begin{align*}
\eta^n((x_1,y_1,\ldots,x_n,y_n,g),&(x_{n+1},y_{n+1}\ldots,x_{2n},y_{2n},g)) \\
&= (\phi(x_1,y_1,\ldots, y_n)g,\phi(x_{n+1},y_{n+1},\ldots, y_{2n})g),
\end{align*}
the diagram
$$\xymatrix{(\vG^{2n}\times\vG)\times_{\vG} (\vG^{2n}\times\vG) \ar[rr]^(.6){\xi^n}\ar[dr]^{\eta^n}&&\vG^{4n}\times\vG\ar[dl]_{\theta^{2n}} \\
&\vG\times \vG}$$
commutes.  Applying Theorem~\ref{Induction} in the case $\vX = \vY = \vG$, $\vZ := \vG^{2n}\times \vG$, and $\pi=\theta^n$,
so $\pi_{\vX}$ and $\pi_{\vY}$ are both given by
composing $\theta^n$ with projection to the first coordinate, and therefore $\psi$ is $\eta^n$,
we deduce that if $\theta^n$ does not respect products, $\theta^{2n}$ does not respect them either.
Thus if $\theta^1$ does not respect products, by induction $\theta^{2^i}$ does not respect them
either.

By Corollary~\ref{Key-cor}, for $i$ sufficiently large, $\theta^{2^i}$ is generically geometrically irreducible and therefore does respect products.  The theorem follows.
\end{proof}

\begin{thm}
\label{Main-General}
Given a  simply connected simple algebraic group $\vG$ over $\F_q$ and $\epsilon > 0$, there exists $M$ such that if $m > M$, $S$ and $T$ are subsets of $\vG(\F_{q^m})$ with at least $\epsilon q^{m\dim \vG}$ elements, and $C$ is a non-central conjugacy class of $\vG(\F_{q^m})$, then
the number of pairs $(s,t)\in S\times T$ with $st^{-1}\in C$ satisfies
\begin{equation}
\label{total-reps-SR}
1-\epsilon < \frac{|\{(s,t)\in S\times T\mid st^{-1}\in C\}|\, |\vG(\F_{q^m})|}
{|S|\, |T|\, |C|} < 1+\epsilon.
\end{equation}
\end{thm}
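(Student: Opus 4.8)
The plan is to deduce Theorem~\ref{Main-General} from Theorem~\ref{Products} together with the Lang--Weil/uniformity machinery of Sections 2 and 3, applied to the specific morphism $\pi\colon \vZ=\vG\times\vG\to\vX\times\vY=\vG\times\vG$ given by $\pi(x,g)=(xcx^{-1}g,g)$, where $c$ is chosen so that the $F^{2f+1}$-fixed points (or simply the $\F_{q^m}$-points, in the split case) of the conjugacy class through $c$ give exactly $C$. First I would unwind what ``$\pi$ respects products'' says concretely: for $X_m=S$ and $Y_m=\{h : (x,h)\in\text{something}\}$ the preimage $\pi_m^{-1}(X_m\times Y_m)$ counts pairs $(x,g)\in\vG(\F_{q^m})^2$ with $g\in Y_m$ and $xcx^{-1}g\in X_m$. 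The key observation is that for fixed $g$, as $x$ ranges over $\vG(\F_{q^m})$, the element $xcx^{-1}$ ranges over the conjugacy class $C$, each value hit exactly $|C_{\vG(\F_{q^m})}(c)|=|\vG(\F_{q^m})|/|C|$ times; hence $|\pi_m^{-1}(X_m\times Y_m)|=\frac{|\vG(\F_{q^m})|}{|C|}\cdot|\{(s,g)\in X_m\times Y_m : sg^{-1}\in C\}|$. Taking $Y_m=T^{-1}$ (the set of inverses of elements of $T$) and $X_m=S$, the set $\{(s,g): sg^{-1}\in C, s\in S, g\in T^{-1}\}$ is in bijection with $\{(s,t)\in S\times T : st^{-1}\in C\}$, which is precisely the quantity in \eqref{total-reps-SR}.

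Next I would plug this identity into the defining equation \eqref{alpha-beta} of ``respects products.'' That equation reads $\frac{|\pi_m^{-1}(X_m\times Y_m)|}{q^{m\dim\vZ}}=\frac{|X_m|\,|Y_m|}{q^{m\dim\vX\times\vY}}+o(1)$, i.e. $\frac{|\pi_m^{-1}(S\times T^{-1})|}{q^{2m\dim\vG}}=\frac{|S|\,|T|}{q^{2m\dim\vG}}+o(1)$ since $|T^{-1}|=|T|$. Substituting the counting identity gives $\frac{|\vG(\F_{q^m})|}{|C|}\cdot\frac{N}{q^{2m\dim\vG}}=\frac{|S|\,|T|}{q^{2m\dim\vG}}+o(1)$, where $N=|\{(s,t)\in S\times T : st^{-1}\in C\}|$, hence $N=\frac{|S|\,|T|\,|C|}{|\vG(\F_{q^m})|}+o(q^{2m\dim\vG}\cdot|C|/|\vG(\F_{q^m})|)$. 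Using Lang--Weil ($|\vG(\F_{q^m})|=(1+o(1))q^{m\dim\vG}$) and the hypotheses $|S|,|T|\ge\epsilon q^{m\dim\vG}$, the main term $\frac{|S|\,|T|\,|C|}{|\vG(\F_{q^m})|}$ is of order at least $\epsilon^2 q^{m\dim\vG}|C|$, while the error is $o(q^{m\dim\vG}|C|)$; dividing, the ratio $\frac{N\,|\vG(\F_{q^m})|}{|S|\,|T|\,|C|}$ equals $1+o(1)$ uniformly, and choosing $M$ large enough that the $o(1)$ is below $\epsilon$ yields \eqref{total-reps-SR}. The Suzuki--Ree cases are handled identically using \eqref{Suzuki-Ree product} in place of \eqref{alpha-beta}, and the uniformity over the (finitely many) root systems of bounded rank and over Galois twists — hence over all $q$ and all forms of $\vG$ — comes from the discussion following Theorem~\ref{Induction} about families over a scheme of finite type over $\Z$.

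The one genuine subtlety — and the step I expect to require the most care — is the uniformity of the $o(1)$ in $m$ \emph{and} its independence from $\vG$, $q$, and the choice of non-central class $C$. Theorem~\ref{Products} as stated is for a fixed $\vG$ over a fixed $\F_q$ and a fixed $c$, and its $o(1)$ a priori depends on $c$; I would need to observe that $c$ varies in a constructible family (a fixed group scheme $\schG$ over $\Z$, with $c$ ranging over a scheme parametrizing non-central elements), so the morphisms $\pi$, $\psi$, $\theta^n$, $\xi^n$, $\eta^n$, $\omega$ all arise as specializations of a single morphism of schemes of finite type over $\Z$ (or finitely many such, one per root system of rank $\le r$); then the uniformity paragraph after Theorem~\ref{Induction}, together with the fact that Corollary~\ref{Key-cor}'s geometric irreducibility holds once $n\ge 2\dim\vG$ with Betti numbers bounded in terms of the rank only (via the Chevalley group scheme discussion and Gluck's bound with an absolute constant $a$), gives a single $M$ depending only on $\epsilon$ and the rank bound. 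I would also remark that passing from $T$ to $T^{-1}$ is harmless since inversion is an $\F_{q^m}$-automorphism of $\vG$, and that the identification of $C$ with an $F^{2f+1}$-stable class in the Suzuki--Ree setting is exactly what \eqref{Suzuki-Ree product} is designed to accommodate.
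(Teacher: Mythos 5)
Your proposal is correct and follows essentially the same route as the paper: reduce the pair count to the count of solutions of $st^{-1}=xcx^{-1}$ (which is $|\vG(\F_{q^m})|/|C|$ times the pair count), apply Theorem~\ref{Products} to get that this solution count is $|S|\,|T|+o(q^{2m\dim\vG})$, and invoke the uniformity of the $o(1)$ in $c$ via the constructible-family discussion following Theorem~\ref{Induction}. One trivial slip: with $Y_m=T^{-1}$ the condition $sg^{-1}\in C$ becomes $st\in C$ rather than $st^{-1}\in C$, so you should simply take $Y_m=T$ (as the paper implicitly does); this is immaterial since $|T|=|T^{-1}|$ and the hypotheses are stable under inversion.
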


\begin{proof}
If $c\in C$, the number of such pairs is $|\vG(\F_{q^m})|^{-1}|C|$ times the number of solutions of $st^{-1} = x c x^{-1}$, $s\in S$, $t\in T$, $x\in \vG(\F_{q^m})$.
Theorem~\ref{Products} implies the the number of such solutions is asymptotic to
$|S|\,|T|$
as $m\to\infty$, which gives the theorem.
\end{proof}

Note that $T^{-1}$ is normal, and $|T| = |T^{-1}|$, so the theorem gives equivalently
$$1-\epsilon < \frac{|\{(s,t)\in S\times T\mid st\in C\}|\, |\vG(\F_{q^m})|}
{|S|\, |T|\, |C|} < 1+\epsilon.
$$
Note also that as the error $o(1)$ in Theorem~\ref{Induction} is uniform over all finite simple groups of bounded rank and all choices of $c$,
the same is true for Theorem~\ref{Main-General}.

By the comments following the proof of Theorem~\ref{Products}, we have the following ``Suzuki-Ree'' version of Theorem~\ref{Main-General}:
\begin{thm}
\label{Main-Suzuki-Ree}
Given a  simply connected simple algebraic group $\vG$ over $\F_q$ and an endomorphism $F$ of $\vG$ such that
$F^2 = \Frob_q$, for all $\epsilon > 0$, there exists $M$ such that if $f > M$, $S$ and $T$ are subsets of $G := \vG(\bar\F_q)^{F^{2f+1}}$ with at least $\epsilon q^{(f+1/2)\dim \vG}$ elements, and $C$ is a non-central conjugacy class of $G$, then
the number of pairs $(s,t)\in S\times T$ with $st^{-1}\in C$ satisfies
\begin{equation}
\label{total-reps}
1-\epsilon < \frac{|\{(s,t)\in S\times T\mid st^{-1}\in C\}|\, |G|}
{|S|\, |T|\, |C|} < 1+\epsilon.
\end{equation}
\end{thm}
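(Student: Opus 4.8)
The plan is to carry the proof of Theorem~\ref{Main-General} over to the twisted setting, reading the machinery behind Theorem~\ref{Induction} with its Suzuki--Ree incarnation \eqref{Suzuki-Ree product} in place of Theorem~\ref{Induction} itself, and the Lang--Weil estimates with \eqref{Suzuki-Ree estimate} in place of \eqref{Uniform}. As there, the argument splits into a geometric part, which establishes that the relevant ``product map'' respects products in the sense appropriate to the finite groups $\vG(\bar\F_q)^{F^{2f+1}}$, and a formal part, which converts this into the character count \eqref{total-reps} by way of Frobenius's class equation \eqref{Frobenius}. Only the geometric part has content.

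For the geometric part one proves the Suzuki--Ree analogue of Theorem~\ref{Products}: with $\vX=\vY=\vG$, $\vZ=\vG\times\vG$ and $\pi(x,g)=(xcx^{-1}g,g)$, for all sufficiently large $f$, every non-central $c\in\vG(\bar\F_q)^{F^{2f+1}}$, and all subsets $X,Y\subseteq\vG(\bar\F_q)^{F^{2f+1}}$,
\begin{equation*}
\frac{|\pi_f^{-1}(X\times Y)|}{q^{(f+1/2)\dim\vZ}}=\frac{|X\times Y|}{q^{(f+1/2)\dim(\vX\times\vY)}}+o(1),
\end{equation*}
with the $o(1)$ uniform in $c$, $X$ and $Y$. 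The proof is the induction used for Theorem~\ref{Products}. To make the conjugating constant $c$ compatible with the relevant powers of $F$ (which it need not be merely as an element of $\vG(\bar\F_q)^{F^{2f+1}}$), one treats $c$ as a variable: replace $\pi$, $\theta^n$, $\xi^n$, $\eta^n$, $\omega$ and $\psi$ by the evident families over the copy of $\vG$ parametrising $c$. These are morphisms of $\F_q$-schemes ($\vG$ being split over $\F_q$) carrying the endomorphism $F$ with $F^2=\Frob_q$, and every commuting triangle in the proof of Theorem~\ref{Products} persists together with these endomorphisms. The maps $(\theta^n)_{\vX}$, $(\theta^n)_{\vY}$, $\pi_{\vX}$ and $\pi_{\vY}$ become, after a shear automorphism of the source absorbing $\phi(\dots)$ (resp.\ $xcx^{-1}$), the projection of a product onto a factor, hence have geometrically irreducible generic fibre with no reference to any twist. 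The base of the induction is Corollary~\ref{Key-cor}: for $2^i\ge 2\dim\vG$ the $\F_q$-morphism $\theta^{2^i}$ has geometrically irreducible generic fibre, and this is a property of the variety alone; so by \eqref{Suzuki-Ree estimate} applied to its fibres, in family form over the $c$-parameter, $\theta^{2^i}$ respects products in the twisted sense, uniformly over non-central $c$. Descending the tower $\theta^{2^i}\to\cdots\to\theta^1\to\pi$ by the twisted Induction \eqref{Suzuki-Ree product}, exactly as in the proof of Theorem~\ref{Products}, yields the displayed estimate.

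The formal part then finishes the theorem as in the proof of Theorem~\ref{Main-General}. Fix $c\in C$ and put $G=\vG(\bar\F_q)^{F^{2f+1}}$. Each pair $(s,t)\in S\times T$ with $st^{-1}\in C$ is covered by exactly $|C_G(c)|=|G|/|C|$ triples $(s,t,x)\in S\times T\times G$ with $st^{-1}=xcx^{-1}$, and these triples are in bijection with $\pi_f^{-1}(S\times T)$ via $(s,t,x)\mapsto(x,t)$, so
\begin{equation*}
|\{(s,t)\in S\times T\mid st^{-1}\in C\}|=\frac{|C|}{|G|}\,|\pi_f^{-1}(S\times T)|.
\end{equation*}
Since $\dim\vZ=\dim(\vX\times\vY)=2\dim\vG$, the estimate above reads $|\pi_f^{-1}(S\times T)|=|S|\,|T|+o\bigl(q^{(2f+1)\dim\vG}\bigr)$; dividing by $|S|\,|T|\ge\epsilon^2 q^{(2f+1)\dim\vG}$ shows that the ratio in \eqref{total-reps} equals $|\pi_f^{-1}(S\times T)|/(|S|\,|T|)=1+o(1)$ as $f\to\infty$, uniformly over all admissible $S,T$ and over all non-central $C$. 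Choosing $M=M(\vG,F,\epsilon)$ so that this $o(1)$ is below $\epsilon$ once $f>M$ gives \eqref{total-reps}.

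The step I expect to be the real work is the uniformity bookkeeping in the geometric part: one must check that Fujiwara's trace formula, and hence \eqref{Suzuki-Ree estimate}, holds with a Betti-number-controlled error that is uniform across the fibres of the $c$-families of all the morphisms $\theta^n$, $\xi^n$, $\eta^n$, $\psi$ entering the induction — the twisted counterpart of the uniformity \eqref{Uniform} for families of finite type over $\Z$ — and that $F$ is simultaneously available on every one of these families, which it is once $c$ has been made a coordinate. Granting this, nothing new beyond the earlier sections is required; in particular Gluck's estimate \cite{Gluck} enters only through Corollary~\ref{Key-cor}, which has already been proved over $\F_q$, so the twisted point counts need no fresh character-value bound.
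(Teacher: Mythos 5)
Your proof is correct and follows the paper's intended argument: the paper establishes this theorem simply by invoking ``the comments following the proof of Theorem~\ref{Products}'', i.e., by rerunning the induction of Theorem~\ref{Products} with the twisted implication \eqref{Suzuki-Ree product} in place of Theorem~\ref{Induction} and the Fujiwara-based estimate \eqref{Suzuki-Ree estimate} in place of Lang--Weil, taking the (purely geometric) generic irreducibility from Corollary~\ref{Key-cor} as the base case, and then concluding as in Theorem~\ref{Main-General}. Your extra care about $c$ not being $F$-fixed (absorbing it as a coordinate so that $F$ acts on the whole family) fills in a detail the paper leaves implicit, but the route is the same.
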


\begin{thm}
Let $r$ and $\epsilon > 0$ be fixed.  If $G$ is the universal central extension of a finite simple group of Lie type of rank $r$
and $S$ and $T$ are normal subsets with at least $\epsilon |G|$ elements each, the number of representations of any non-central element $c$ as $st$, $s\in S$ and $t\in T$, is
$$(1+o(1))\frac{|S|\, |T|}{|G|}.$$
\end{thm}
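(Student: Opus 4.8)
The plan is to deduce this from Theorems~\ref{Main-General} and~\ref{Main-Suzuki-Ree}, which already establish the corresponding equidistribution statement for $st^{-1}$ in a prescribed non-central conjugacy class. Two things then remain: \textbf{(a)} to check that every group $G$ occurring in the statement is covered by one of those two theorems \emph{with a uniform error term}, and \textbf{(b)} to pass from ``the product $st$ lies in the class $C$'' to ``the product $st$ equals the prescribed element $c$'', using normality of $S$ and $T$.

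For \textbf{(a)}: for all but finitely many finite simple groups $\bar G$ of Lie type of rank $r$, the universal central extension is $G = \vG^F$ for a simply connected simple algebraic group $\vG$ and a Steinberg endomorphism $F$, and the finitely many $\bar G$ with exceptional Schur multiplier have bounded order, hence are irrelevant to an asymptotic statement. For the untwisted types and for the twisted types ${}^2A_n$, ${}^2D_n$, ${}^3D_4$, ${}^2E_6$ one has $G = \vG(\F_q)$, where $\vG$ is a possibly non-split simply connected simple algebraic group of rank $r$ over a finite field $\F_q$, obtained from the split Chevalley group of that rank by Galois twisting; by the uniformity of the Lang--Weil estimates over Chevalley group schemes over $\Z$ and their Galois twists (Section~2), and by the remark following Theorem~\ref{Main-General}, the $o(1)$ error in \eqref{total-reps-SR} is bounded uniformly over all such $\vG$ and $\F_q$. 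The Suzuki--Ree types ${}^2B_2$, ${}^2G_2$, ${}^2F_4$ are handled the same way using Theorem~\ref{Main-Suzuki-Ree}. Since there are finitely many root systems of rank $r$ and finitely many twisting types, combining these gives: for every such $G$ with $|G|$ large, every non-central conjugacy class $C$ of $G$, and all normal subsets $S,T$ with $|S|,|T|\ge\epsilon|G|$,
\[
\frac{|\{(s,t)\in S\times T\mid st\in C\}|\,|G|}{|S|\,|T|\,|C|} = 1+o(1),
\]
where we have replaced $T$ by the normal subset $T^{-1}$ to pass from $st^{-1}\in C$ to $st\in C$ as in the remark after Theorem~\ref{Main-General}, and where the size hypothesis of Theorems~\ref{Main-General} and~\ref{Main-Suzuki-Ree} follows from $|S|,|T|\ge\epsilon|G|$ together with Lang--Weil for $\vG$ (after harmlessly shrinking $\epsilon$).

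For \textbf{(b)}: fix a non-central $c\in G$ and set $C = c^G$. Since $\vG$ is connected and simply connected, $Z(G) = Z(\vG)^F$ is precisely the set of central elements, so $C$ is a non-central conjugacy class. Normality of $S$ and $T$ means that for each $g\in G$ the map $(s,t)\mapsto(gsg^{-1},gtg^{-1})$ is a bijection of $\{(s,t)\in S\times T\mid st=c\}$ onto $\{(s,t)\in S\times T\mid st=gcg^{-1}\}$, so the number of representations $st=c'$ with $s\in S$, $t\in T$ does not depend on the choice of $c'\in C$; hence
\begin{align*}
|\{(s,t)\in S\times T\mid st=c\}| &= \frac{1}{|C|}\,|\{(s,t)\in S\times T\mid st\in C\}| \\
&= \frac{1}{|C|}(1+o(1))\frac{|S|\,|T|\,|C|}{|G|} = (1+o(1))\frac{|S|\,|T|}{|G|},
\end{align*}
which is the assertion.

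The only real work is in \textbf{(a)}: the bookkeeping that every universal central extension of a rank-$r$ simple group of Lie type, apart from finitely many, is one of the groups $\vG(\F_q)$ (split or twisted) or Suzuki--Ree groups to which Theorems~\ref{Main-General} and~\ref{Main-Suzuki-Ree} apply, and that the error term is uniform across these finitely many families; the twisted types are absorbed via Galois twists of split Chevalley groups, exactly as the uniformity in Section~2 was arranged to permit. Step \textbf{(b)} is the short, standard conjugation argument above, and involves no difficulty.
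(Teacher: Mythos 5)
Your proposal is correct and follows essentially the same route as the paper: the paper's own proof is a two-sentence reduction to Theorems~\ref{Main-General} and~\ref{Main-Suzuki-Ree} (together with the remarks following Theorem~\ref{Main-General} on the $st\in C$ reformulation and on uniformity), and your steps (a) and (b) simply make explicit the bookkeeping and the standard conjugation argument that the paper leaves implicit.
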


\begin{proof}
With finitely many exceptions, the universal central extension $G$ of a finite simple group of Lie type is either of the form $\vG(\F_{q^m})$, where $\vG$ is a simply connected simple algebraic group over $\F_q$,
or is a Ree or Suzuki group.  In the former case, the theorem is just Theorem~\ref{Main-General}; in the latter case, it is Theorem~\ref{Main-Suzuki-Ree}.
\end{proof}

\begin{thm}\label{Q12bounded}
Questions~\ref{Weak} and \ref{Strong} have an affirmative answer for finite simple groups $G$ of Lie type of bounded rank.

\end{thm}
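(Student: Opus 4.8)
The plan is to deduce Theorem~\ref{Q12bounded} from the equidistribution statement just proved for universal central extensions, using the fact that finite simple groups of bounded rank are quotients of their universal central extensions by central subgroups of bounded order. First I would observe that it suffices to give an affirmative answer to Question~\ref{Strong}, since (as noted after Theorem~A) an affirmative answer to Question~\ref{Strong} forces one to Question~\ref{Weak}. So fix $\epsilon>0$ and let $G$ be a finite simple group of Lie type of rank $r$ (a fixed bound), with normal subsets $S,T$ satisfying $|S|,|T|\ge\epsilon|G|$, and let $g\in G\smallsetminus\{e\}$ be arbitrary.

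Next I would pass to the universal central extension. Let $\tilde G$ be the universal central extension of $G$, with central subgroup $Z\le\tilde G$ such that $\tilde G/Z\cong G$; since $r$ is bounded, $|Z|$ is bounded by an absolute constant $z=z(r)$ (Schur multipliers of simple groups of Lie type of bounded rank are bounded). Lift $S$ and $T$ to their full preimages $\tilde S,\tilde T\subseteq\tilde G$; these are normal subsets with $|\tilde S|=|Z||S|\ge\epsilon|\tilde G|$ and likewise for $\tilde T$. Pick any $\tilde g\in\tilde G$ mapping to $g$; since $g\neq e$ and $Z$ is central while $\tilde G/Z$ is simple (hence centerless when nonabelian), $\tilde g$ is non-central in $\tilde G$, and more strongly every element of the coset $\tilde gZ$ is non-central. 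Now apply the previous theorem (the one combining Theorems~\ref{Main-General} and~\ref{Main-Suzuki-Ree} for universal central extensions): for $|G|$, hence $|\tilde G|$, sufficiently large, the number of representations of each non-central $c\in\tilde G$ as a product $st$ with $s\in\tilde S$, $t\in\tilde T$ is $(1+o(1))|\tilde S||\tilde T|/|\tilde G|$, with the $o(1)$ uniform over $c$ and over all such $G$ of rank $r$.

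To finish, I would sum over the fiber: the number of representations of $g$ as a product of an element of $S$ and an element of $T$ in $G$ equals $|Z|^{-1}$ times the number of pairs $(s,t)\in\tilde S\times\tilde T$ with $st\in\tilde gZ$, which by the above is
\[
|Z|^{-1}\sum_{c\in\tilde gZ}(1+o(1))\frac{|\tilde S||\tilde T|}{|\tilde G|}
= (1+o(1))\frac{|\tilde S||\tilde T|}{|\tilde G|}
= (1+o(1))\frac{|Z|^2|S||T|}{|Z||G|}\cdot|Z|^{-1}
= (1+o(1))\frac{|S||T|}{|G|},
\]
using $|\tilde gZ|=|Z|$ and $|\tilde G|=|Z||G|$. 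Thus the ratio between the number of representations of $g$ and $|S||T|/|G|$ tends uniformly to $1$, which is precisely an affirmative answer to Question~\ref{Strong} (and a fortiori to Question~\ref{Weak}).

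The main obstacle is really just bookkeeping rather than a genuine difficulty: one must be careful that the quantifiers match up — the $o(1)$ in the universal-central-extension theorem is as $|G|\to\infty$ among groups of a fixed rank $r$, and since there are finitely many root systems of each rank $r$, uniformity over the finitely many types combined with uniformity over the (bounded) central subgroup $Z$ and over the non-central target element gives the required uniform statement for fixed rank. One should also note the finitely many small exceptions where $\tilde G$ is not of the form $\vG(\F_{q^m})$ or a Suzuki/Ree group (and where $|Z|$ might behave irregularly); these are absorbed into the ``sufficiently large'' hypothesis and contribute nothing in the limit.
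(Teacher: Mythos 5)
Your argument is essentially the paper's own proof: pass to the universal central extension $\tilde G$ (which, up to finitely many exceptions, is $\vG(\F_{q^m})$ or a Suzuki/Ree group), lift $S,T$ to their full preimages, apply the equidistribution theorem to each of the $|Z|$ non-central lifts of $g$, and divide out the central fibers. The only flaw is a bookkeeping slip in the final display: the number of representations of $g$ in $G$ is $|Z|^{-2}$ (not $|Z|^{-1}$) times the number of pairs $(\tilde s,\tilde t)\in\tilde S\times\tilde T$ with $\tilde s\tilde t\in\tilde gZ$, since each pair $(s,t)$ downstairs has $|Z|^2$ lifts; as written, your chain of equalities is internally inconsistent (an unexplained extra factor $|Z|^{-1}$ appears in the third step to compensate), but with the correct prefactor $|Z|^{-2}$ the computation goes through and yields $(1+o(1))|S|\,|T|/|G|$ exactly as in the paper.
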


\begin{proof}
Let $\tilde G$ denote the universal central extension of $G$, so we may assume either $\tilde G = \vG(\F_q)$ for some
simply connected simple algebraic group of bounded rank, or $\tilde G = \vG(\bar\F_q)^{F^{2f+1}}$.
Let $\pi\colon \tilde G\to G$ be the quotient map by the center of $\tilde G$.
Let $z$ denote the order of $\ker\pi$.
If $S$ and $T$ are normal subsets of $G$, $\tilde S=\pi^{-1}(S)$ and $\tilde T=\pi^{-1}(T)$
are normal subsets of $\tilde G$ of cardinality $z|S|$ and $z|T|$ respectively.
For any $c\in G$, the total number of representations of $c$ as $st$, $s\in S$ and $t\in T$
is $z^{-2}$ times the sum over the elements $\tilde c\in \pi^{-1}(c)$ of the number of representations of $\tilde c$
as $\tilde s\tilde t$ with $\tilde s\in \tilde S$, $\tilde t\in \tilde T$.  For each of these $z$ elements, the number of such representations is
$$(1+o(1))\frac{|\tilde S|\, |\tilde T|}{|\tilde G|} = (1+o(1))z \frac{|S|\, |T|}{|G|},$$
which gives the theorem.
\end{proof}

\bigskip

\section{Behavior of $\PSL_n(q)$ for fixed $q$}

In this section we prove that for $q$ fixed and $n\to\infty$, the answer to Question~\ref{Weak} (and therefore also Question~\ref{Strong}) is negative
for the set of groups $\{\PSL_n(q)\mid n\ge 2\}$.

For $0\le m\le n$, let $\SL_n(\F_q)_{\ge m}$ denote the set of elements $g\in \SL_n(\F_q)$ such that the dimension of the space $(\F_q^n)^{\langle g\rangle}$
of $g$-invariants is at least $m$, and let $\SL_n(\F_q)_m$ denote the set for which the dimension of invariants is exactly $m$.  Let $G_{k,m}$ denote the Grassmannian of $m$-dimensional $\F_q$-subspaces of a $k$-dimensional $\F_q$-vector space $W$.  Its cardinality is the number of ordered linearly independent $m$-tuples in $W$ divided by the number of ordered
bases for a given $m$-dimensional subspace $V$, i.e.,
\begin{equation}
\label{Grass estimate}
\frac{(q^k-1)(q^k-q)\cdots(q^k-q^{m-1})}{(q^m-1)(q^m - q)\cdots (q^m-q^{m-1})} < \frac{q^{m(k-m)}}{(1-q^{-1})\cdots (1-q^{-n})} < 4q^{m(k-m)}
\end{equation}
since
$$\prod_{i=1}^\infty \frac 1{1-q^{-i}} \le \prod_{i=1}^\infty \frac 1{1-2^{-i}} < 4.$$
On the other hand, there is an obvious lower bound, $|G_{k,m}| \ge q^{m(k-m)}$.

\begin{lem}
For $1\le m\le n-1$, the cardinality of $\SL_n(\F_q)_{\ge m}$ is less than
$$16 q^{-m^2} |\SL_n(\F_q)|.$$
\end{lem}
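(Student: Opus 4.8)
The plan is to bound $|\SL_n(\F_q)_{\ge m}|$ by counting pairs $(V, g)$ where $V$ is an $m$-dimensional subspace fixed pointwise by $g$, since every $g \in \SL_n(\F_q)_{\ge m}$ has at least one such $V$ (indeed exactly one if $\dim (\F_q^n)^{\langle g\rangle} = m$, but that refinement is not needed here). Thus
\[
|\SL_n(\F_q)_{\ge m}| \le \sum_{V} |\{g \in \SL_n(\F_q) : g|_V = \mathrm{id}\}|,
\]
the sum over all $m$-dimensional subspaces $V \subseteq \F_q^n$. There are $|G_{n,m}|$ such subspaces, so the whole estimate reduces to (a) the Grassmannian bound $|G_{n,m}| < 4q^{m(n-m)}$ from \eqref{Grass estimate}, and (b) a bound on the number of $g \in \SL_n(\F_q)$ fixing a given $V$ pointwise.

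For step (b), fix $V$ of dimension $m$ and pick a complement, so that with respect to an adapted basis an element fixing $V$ pointwise has block form $\left(\begin{smallmatrix} I_m & B \\ 0 & D\end{smallmatrix}\right)$ with $D \in \GL_{n-m}(\F_q)$, $B$ an arbitrary $m \times (n-m)$ matrix, and the $\SL$ condition forcing $\det D = 1$, i.e. $D \in \SL_{n-m}(\F_q)$. Hence the count is exactly $q^{m(n-m)} |\SL_{n-m}(\F_q)|$. Now $|\SL_{n-m}(\F_q)| = |\GL_{n-m}(\F_q)|/(q-1) \le q^{(n-m)^2}$, using $|\GL_k(\F_q)| = q^{k^2}\prod_{i=1}^k(1-q^{-i}) < q^{k^2}$, which is even crude enough; combined with $|\SL_n(\F_q)| = |\GL_n(\F_q)|/(q-1)$ and the lower bound $|\GL_n(\F_q)| \ge q^{n^2}\prod_{i=1}^\infty(1 - q^{-i}) > q^{n^2}/4$, one gets $|\SL_{n-m}(\F_q)|/|\SL_n(\F_q)| < 4 q^{(n-m)^2 - n^2} = 4q^{-2mn + m^2}$.

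Assembling the three pieces:
\[
|\SL_n(\F_q)_{\ge m}| \le |G_{n,m}| \cdot q^{m(n-m)}|\SL_{n-m}(\F_q)| < 4q^{m(n-m)} \cdot q^{m(n-m)} \cdot |\SL_{n-m}(\F_q)|,
\]
and dividing by $|\SL_n(\F_q)|$ and using the ratio bound gives $|\SL_n(\F_q)_{\ge m}|/|\SL_n(\F_q)| < 4 q^{2m(n-m)} \cdot 4 q^{-2mn+m^2} = 16 q^{-2mn + 2mn - 2m^2 + m^2} = 16 q^{-m^2}$, which is exactly the claimed bound. (One should be slightly careful with the boundary case $m = n-1$, where $n - m = 1$ and $\SL_1(\F_q)$ is trivial, but the inequalities degrade gracefully; and the case $m=n$ is excluded by hypothesis since then $g$ would be the identity.)

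I do not expect any serious obstacle here — the only mild subtlety is bookkeeping the factors of $(q-1)$ and the infinite products $\prod(1-q^{-i})$ so that they genuinely collapse to the constant $16$ rather than something worse, and making sure the single-$V$ count is an upper bound rather than requiring the exact-dimension stratification. The block-matrix computation of $|\{g : g|_V = \mathrm{id}\}|$ is the conceptual core, and it is elementary linear algebra; choosing to overcount (every $g$ in the set has at least one fixed $m$-space) rather than partitioning is what keeps the argument clean.
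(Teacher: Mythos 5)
Your proposal is correct and follows essentially the same route as the paper: bound $|\SL_n(\F_q)_{\ge m}|$ by $\sum_V |\Stab(V)|$ over $m$-dimensional subspaces $V$, use the Grassmannian bound $|G_{n,m}|<4q^{m(n-m)}$, and bound the pointwise stabilizer by $4|\SL_n(\F_q)|q^{-mn}$. The only cosmetic difference is that you compute $|\Stab(V)|$ exactly via block matrices and compare $|\GL_{n-m}|$ with $|\GL_n|$, whereas the paper gets the same bound directly from orbit--stabilizer on ordered linearly independent $m$-tuples.
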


\begin{proof}
As $\SL_n(\F_q)$ acts transitively on linearly independent $m$-tuples in $\F_q^n$, the index of the stabilizer
of an ordered linearly independent $m$-tuple is
$$(q^n-1)(q^n-q)\cdots (q^n-q^{m-1}) > \frac{q^{nm}}4,$$
so the number of elements of $\SL_n(\F_q)$ in the pointwise stabilizer $\Stab(V)$ of a given $m$-dimensional subspace $V$ satisfies
\begin{equation}
\label{Stab}
\frac{|\SL_n(\F_q)|}{q^{mn}}\le
 |\Stab(V)| < \frac{4|\SL_n(\F_q)|}{q^{mn}}.
\end{equation}
The lemma follows by combining the upper bound with \eqref{Grass estimate}.
\end{proof}

Note that this lemma does not cover the case $m=n$, but the bound $16q^{1-m^2}|\SL_n(\F_q)|$
works also for $m=n$ since it is greater than $4>1$ in this case.

\begin{lem}
The number of elements in $\SL_n(\F_q)_m$ is at least
\begin{equation}
\label{Stab lower bound}
(1-128q^{-m})q^{-m^2} |\SL_n(\F_q))|.
\end{equation}
\end{lem}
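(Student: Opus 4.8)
The plan is to estimate $|\SL_n(\F_q)_m|$ by inclusion-exclusion, peeling off from $\SL_n(\F_q)_{\ge m}$ the larger stratum $\SL_n(\F_q)_{\ge m+1}$ and correcting for overcounting. The first step is to produce a good \emph{lower} bound on $|\SL_n(\F_q)_{\ge m}|$ to complement the upper bound of the previous lemma. For this I would fix an $m$-dimensional subspace $V$ and count elements of $\Stab(V)$, the pointwise stabilizer of $V$, which by \eqref{Stab} has at least $|\SL_n(\F_q)|/q^{mn}$ elements. Summing over all $V\in G_{n,m}$ and using the lower bound $|G_{n,m}|\ge q^{m(n-m)}$ gives $\sum_V |\Stab(V)| \ge q^{-m^2}|\SL_n(\F_q)|$. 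Since $\SL_n(\F_q)_{\ge m} = \bigcup_V \Stab(V)$, this overcounts each $g\in \SL_n(\F_q)_{\ge m}$ by exactly the number of $m$-dimensional subspaces contained in $(\F_q^n)^{\langle g\rangle}$; for $g\in \SL_n(\F_q)_m$ this is $1$, so dropping the contribution of strata $\SL_n(\F_q)_k$ with $k>m$ only decreases the sum and we get $|\SL_n(\F_q)_{\ge m}|\cdot(\text{max overcount on }\ge m+1\text{ part})$ as an error. Cleaner: write $\sum_V|\Stab(V)| = \sum_{k\ge m} |\SL_n(\F_q)_k|\,|G_{k,m}|$ exactly, so that $|\SL_n(\F_q)_m| = \sum_V|\Stab(V)| - \sum_{k\ge m+1}|\SL_n(\F_q)_k|\,|G_{k,m}|$.

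The second step is to bound the tail $\sum_{k\ge m+1}|\SL_n(\F_q)_k|\,|G_{k,m}|$. For each such $k$ we have $|\SL_n(\F_q)_k|\le |\SL_n(\F_q)_{\ge k}| < 16q^{-k^2}|\SL_n(\F_q)|$ by the previous lemma (using the $16q^{1-k^2}$ variant if $k=n$, which is harmless), and $|G_{k,m}| < 4q^{m(k-m)}$ by \eqref{Grass estimate}. Thus the $k$-th term is less than $64\,q^{-k^2+mk-m^2}|\SL_n(\F_q)| = 64\,q^{-m^2}\,q^{-k(k-m)}|\SL_n(\F_q)|$, and since $k\ge m+1$ we have $k(k-m)\ge k \ge m+1$, so the terms decay geometrically in $k$ and
$$\sum_{k\ge m+1}|\SL_n(\F_q)_k|\,|G_{k,m}| < 64\,q^{-m^2}|\SL_n(\F_q)|\sum_{k\ge m+1} q^{-k(k-m)} < 64\,q^{-m^2}|\SL_n(\F_q)|\cdot \frac{q^{-(m+1)}}{1-q^{-1}} \le 128\,q^{-m^2-m}|\SL_n(\F_q)|,$$
using $q\ge 2$ so that $1/(1-q^{-1})\le 2$. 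Combining with the lower bound $\sum_V|\Stab(V)|\ge q^{-m^2}|\SL_n(\F_q)|$ from the first step yields
$$|\SL_n(\F_q)_m| \ge q^{-m^2}|\SL_n(\F_q)| - 128\,q^{-m^2-m}|\SL_n(\F_q)| = (1-128q^{-m})q^{-m^2}|\SL_n(\F_q)|,$$
which is exactly \eqref{Stab lower bound}.

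I expect the main technical point to be the bookkeeping in the geometric series for the tail: one must check that the exponent $-k(k-m)$ really does force summability uniformly in $m$ and $n$, and that the constant $128$ comes out correctly after absorbing the two factors of $4$ from the Grassmannian estimate, the factor $16$ from the upper-bound lemma, and the factor $1/(1-q^{-1})\le 2$. The edge cases $k=n$ (where only the weaker $16q^{1-k^2}$ bound is available) and small $m$ need a quick sanity check, but since the claimed bound is vacuous once $128q^{-m}\ge 1$, i.e. for small $m$, there is nothing to prove there and the estimate is only asserted in the regime where all the geometric-series manipulations are valid.
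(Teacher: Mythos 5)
Your proposal is correct and follows essentially the same route as the paper: the double-counting identity $\sum_{V\in G_{n,m}}|\Stab(V)|=\sum_{k\ge m}|\SL_n(\F_q)_k|\,|G_{k,m}|$, the lower bound $q^{-m^2}|\SL_n(\F_q)|$ for the left side via \eqref{Stab} and the trivial Grassmannian bound, and a geometric-series estimate of the tail using the previous lemma and \eqref{Grass estimate} to get the constant $128$. The only cosmetic difference is that the paper carries the uniform bound $16q^{1-k^2}$ for all $k$ and cancels the extra power of $q$ against the geometric sum, whereas you treat $k=n$ separately; both bookkeepings close correctly.
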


\begin{proof}
Let $\Stab(V)$ denotes the pointwise stabilizer in $\SL_n(\F_q)$ of $V\in G_{n,m}$.
Then,
\begin{align*}
\sum_{V\in G_{n,m}} |\Stab(V)| &= \sum_{k=m}^{n} |\SL_n(\F_q)_k|\,|G_{k,m}| \\
&= |\SL_n(\F_q)_m| + \sum_{k=m+1}^{n} |\SL_n(\F_q)_k|\,|G_{k,m}| \\
&\le |\SL_n(\F_q)_m| + 4\sum_{k=m+1}^{n} |\SL_n(\F_q)_k| q^{m(k-m)}\\
&\le |\SL_n(\F_q)_m| + 64 |\SL_n(\F_q)|\sum_{k=m+1}^{n} q^{1-k^2} q^{m(k-m)} \\
&=  |\SL_n(\F_q)_m| + 64q^{1-m^2} |\SL_n(\F_q)|\sum_{k=m+1}^{n} q^{k(m-k)} \\
&\le  |\SL_n(\F_q)_m| + 128q^{-m^2} q^{-m}|\SL_n(\F_q)|.
\end{align*}
By the lower bound in \eqref{Stab} and the trivial lower bound for the cardinality of a Grassmannian,
$$q^{-m^2}|\SL_n(\F_q)| \le\sum_{V\in G_{n,m}} |\Stab(V)|.$$
Combining these inequalities, we get \eqref{Stab lower bound}.
\end{proof}

We can now answer Question~\ref{Weak} for fixed $q$.

\begin{thm}
\label{Fixed-q}
If $q$ is fixed, there exist normal subsets $S_n,T_n\subset \SL_n(\F_q)$ such that
$S_n T_n$ does not contain  any transvection, and
\begin{equation}
\label{liminf}
\liminf_n \frac{| S_n|}{|\SL_n(\F_q)|},\,\liminf_n \frac{| T_n|}{|\SL_n(\F_q)|} > 0.
\end{equation}
\end{thm}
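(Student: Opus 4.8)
The plan is to reduce the whole statement to one elementary rank inequality. For $g\in\GL_n(\F_q)$ I would write $r(g):=\mathrm{rank}(g-I)=n-\dim(\F_q^n)^{\langle g\rangle}$, a conjugacy invariant, and first record that
$$r(gh)\ \ge\ |r(g)-r(h)| \qquad\text{for all } g,h\in\GL_n(\F_q).$$
This follows by writing $gh-I=(g-I)h+(h-I)$: rearranging gives $(g-I)h=(gh-I)-(h-I)$, and since $h$ is invertible the left-hand side has rank $r(g)$, so $r(g)\le r(gh)+r(h)$; symmetrically, from $gh-I=g(h-I)+(g-I)$ one gets $r(h)\le r(gh)+r(g)$. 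Because a transvection $\tau$ satisfies $r(\tau)=1$, it follows immediately that a product $gh$ with $|r(g)-r(h)|\ge 2$ can never be a transvection.

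Next I would fix an integer $m$ with $128\,q^{-m}<1$; here $m=8$ works for every $q\ge 2$, and this is the only mildly delicate point, since for small $q$ (such as $q=2$) the lower bound in the second lemma of this section is vacuous unless $m$ is taken large enough. For $n\ge m+1$ I set
$$S_n:=\SL_n(\F_q)_{\ge m},\qquad T_n:=\SL_n(\F_q)\smallsetminus\SL_n(\F_q)_{\ge m-1},$$
and $S_n=T_n=\varnothing$ for the remaining finitely many $n$, which does not affect the $\liminf$'s. Both $S_n$ and $T_n$ are normal subsets, being defined by conditions on $\dim(\F_q^n)^{\langle g\rangle}$. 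If $s\in S_n$ then $\dim(\F_q^n)^{\langle s\rangle}\ge m$, hence $r(s)\le n-m$; if $t\in T_n$ then $\dim(\F_q^n)^{\langle t\rangle}\le m-2$, hence $r(t)\ge n-m+2$. Thus $r(t)-r(s)\ge 2$ for every $s\in S_n$ and $t\in T_n$, so by the previous paragraph no element of $S_nT_n$ is a transvection.

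Finally I would check \eqref{liminf} by invoking the two lemmas of this section. From the lemma giving a lower bound on $|\SL_n(\F_q)_m|$ one has $|S_n|\ge|\SL_n(\F_q)_m|\ge(1-128\,q^{-m})\,q^{-m^2}\,|\SL_n(\F_q)|$; since $q$ (hence $m$) is fixed and $1-128\,q^{-m}>0$, this is a fixed positive multiple of $|\SL_n(\F_q)|$, so $\liminf_n|S_n|/|\SL_n(\F_q)|>0$. From the lemma giving an upper bound on $|\SL_n(\F_q)_{\ge m-1}|$ one has $|\SL_n(\F_q)_{\ge m-1}|<16\,q^{-(m-1)^2}|\SL_n(\F_q)|<|\SL_n(\F_q)|$ (indeed $16\cdot 2^{-49}<1$ when $m=8$), so $|T_n|>(1-16\,q^{-(m-1)^2})|\SL_n(\F_q)|$ and $\liminf_n|T_n|/|\SL_n(\F_q)|>0$ as well. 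Beyond this routine bookkeeping I do not expect a genuine obstacle; the one substantive ingredient is the rank inequality, and the point of the construction is simply to place the dimensions of the fixed spaces of elements of $S_n$ and of $T_n$ in two bands whose ranks are forced apart by at least $2$.
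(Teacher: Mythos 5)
Your proof is correct and is essentially the paper's argument: it rests on the same two counting lemmas of this section and on the same observation that forcing the fixed-space dimensions of $s$ and $t$ at least $2$ apart rules out a transvection as the product. The only (harmless) differences are cosmetic: the paper takes the exact levels $S_n=\SL_n(\F_q)_8$, $T_n=\SL_n(\F_q)_{10}$ and argues via the intersection $\Fix(\sigma)\supseteq\Fix(\sigma\tau)\cap\Fix(\tau)$, whereas you take two disjoint bands of fixed-space dimensions (making $T_n$ of density close to $1$) and package the same intersection argument as the rank inequality $\mathrm{rank}(gh-I)\ge|\mathrm{rank}(g-I)-\mathrm{rank}(h-I)|$.
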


\begin{proof}
For small $n$, we can take $S_n=T_n=\{e\}$, so without loss of generality, we may assume $n\ge 10$.
Let $S_n = \SL_n(\F_q)_8$ and $T_n = \SL_n(\F_q)_{10}$.  By \eqref{Stab lower bound},
$$\liminf_n \frac{| S_n|}{|\SL_n(\F_q)|},\,\liminf_n \frac{| T_n|}{|\SL_n(\F_q)|} > 0.$$

Let $\sigma\in S_n$ and $\tau\in T_n$.   If $\rho:=\sigma\tau$ were a transvection, then
it would fix a codimension $1$ subspace $V'\subset \F_q^n$ pointwise, while $\tau$ fixes a $10$-dimensional subspace $V\subset \F_q^n$ pointwise.  This implies that $\sigma$ fixes $V\cap V'$,
which is of dimension $\ge 9$ pointwise, contrary to the definition of $S_n$.
\end{proof}

\begin{cor}
For $n$ relatively prime to $q-1$, for each fixed prime power $q$,
Question~\ref{Weak} has a negative answer for the set of groups $\{\PSL_n(q)\mid n\ge 2\}$.
\end{cor}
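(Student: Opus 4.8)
The plan is to derive this corollary directly from Theorem~\ref{Fixed-q} by passing from $\SL_n(\F_q)$ to its quotient $\PSL_n(q)$. First I would recall that when $\gcd(n,q-1) = 1$, the center $Z = Z(\SL_n(\F_q))$, which consists of scalar matrices $\lambda I$ with $\lambda^n = 1$, is trivial, so that $\SL_n(\F_q) = \PSL_n(q)$ as groups; in this case the distinction between the two groups disappears and there is nothing to translate. (More generally, even without the coprimality hypothesis one could work with the quotient map $\pi\colon \SL_n(\F_q)\to \PSL_n(q)$ and push forward the normal subsets $S_n, T_n$, but the statement is phrased with the coprimality assumption precisely so that this bookkeeping is unnecessary.)

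Next I would invoke Theorem~\ref{Fixed-q} to obtain normal subsets $S_n, T_n \subseteq \SL_n(\F_q) = \PSL_n(q)$ whose relative densities have positive $\liminf$ and such that $S_n T_n$ contains no transvection. Fix $\epsilon > 0$ smaller than both of these $\liminf$ values; then for all sufficiently large $n$ we have $|S_n|, |T_n| > \epsilon\,|\PSL_n(q)|$. To conclude that Question~\ref{Weak} has a negative answer for the family $\{\PSL_n(q) \mid n \ge 2\}$, it suffices to exhibit, for infinitely many $n$, a non-identity element $g_n \in \PSL_n(q)$ with $g_n \notin S_n T_n$. Take $g_n$ to be the image of any transvection in $\SL_n(\F_q)$: a transvection is not the identity (it has a nontrivial unipotent part), its image in $\PSL_n(q)$ is therefore also nontrivial since $Z$ is trivial, and by construction it does not lie in $S_n T_n$. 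This directly contradicts a positive answer to Question~\ref{Weak}.

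The only point requiring a moment of care is that Question~\ref{Weak} is stated for a sequence of finite simple groups with $|G| \to \infty$, so I would note that $|\PSL_n(q)| \to \infty$ as $n \to \infty$ (with $q$ fixed), and that the groups $\PSL_n(q)$ are simple for $n \ge 2$ except for the two small exceptions $\PSL_2(2)$ and $\PSL_2(3)$, which we simply discard. There is essentially no obstacle here: the entire content of the corollary is already contained in Theorem~\ref{Fixed-q}, and this step is just the observation that under the coprimality hypothesis the normal subsets and the witnessing transvection descend unchanged to $\PSL_n(q)$. If one wanted the sharper statement covering all $n$ (not just those coprime to $q-1$), the extra work would be to check that the density estimates \eqref{Stab lower bound} survive division by $|Z| = \gcd(n,q-1)$ — which they do, since $|Z| \le q-1$ is bounded — and that some transvection avoids $\pi(S_n)\pi(T_n)$; but as stated, no such argument is needed.
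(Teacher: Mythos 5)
Your proposal is correct and follows exactly the paper's (very terse) argument: for $\gcd(n,q-1)=1$ the center of $\SL_n(\F_q)$ is trivial, so the normal subsets and the witnessing transvection from Theorem~\ref{Fixed-q} transfer verbatim to $\PSL_n(q)$. The additional remarks about discarding $\PSL_2(2)$, $\PSL_2(3)$ and about the general (non-coprime) case are sensible but not needed for the statement as given.
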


\begin{proof}
For $n$ relatively prime to $q-1$, we have an isomorphism $\SL_n(\F_q)\to \PSL_n(q)$,
so the corollary follows.
\end{proof}

\bigskip

\section{Alternating groups}

For alternating groups, we can prove an even stronger negative result.

\begin{thm}\label{alt}
If $0\le s,t\le 1$ then there exists an infinite sequence of pairs of normal subsets $S_n,T_n\subset \AAA_n$, $n\ge 3$, such that
\begin{equation}
\label{limits}
\lim_{n\to \infty} \frac{|S_n|}{|\AAA_n|} = s,\ \lim_{n\to \infty} \frac{|T_n|}{|\AAA_n|} = t,
\end{equation}
and $S_nT_n$ contains no $3$-cycle if and only if $s+t\le 1$.  In particular, Question~\ref{Weak} has a negative answer for alternating groups.
\end{thm}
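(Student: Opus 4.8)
\emph{Proof proposal.} The plan is to treat the two implications separately; the substance lies entirely in the construction when $s+t\le 1$.

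\emph{The case $s+t>1$.} Here no choice helps, and we do not even need normality: if $|S_n|/|\AAA_n|\to s$ and $|T_n|/|\AAA_n|\to t$ with $s+t>1$, then $|S_n|+|T_n|>|\AAA_n|$ for all large $n$, so for every $g\in\AAA_n$ the two subsets $S_n^{-1}g$ and $T_n$ of $\AAA_n$ have sizes summing to more than $|\AAA_n|$ and hence meet; thus $g\in S_nT_n$, i.e.\ $S_nT_n=\AAA_n$, which certainly contains $3$-cycles. Since any prescribed density in $[0,1]$ is a limit of densities of normal subsets of $\AAA_n$ — the conjugacy classes of $\AAA_n$ all have density $O(1/n)=o(1)$ and together have density $1$, so one reaches any target by adjoining classes one at a time — a sequence realizing the limits $s,t$ exists, and for it the biconditional ``$S_nT_n$ contains no $3$-cycle'' $\iff$ ``$s+t\le1$'' holds because both sides are false.

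\emph{The case $s+t\le1$.} The idea is to separate $S_n$ and $T_n$ by the value of a conjugation-invariant statistic that can change by only a bounded amount under multiplication by a single $3$-cycle. I would use $c(\sigma)$, the number of cycles of $\sigma$ (fixed points included): it is constant on $\AAA_n$-classes, invariant under $\sigma\mapsto\sigma^{-1}$, and $c(\sigma)\equiv n\pmod 2$ for $\sigma\in\AAA_n$. Left multiplication by a transposition changes $c$ by exactly $\pm1$, hence left multiplication by a $3$-cycle (a product of two transpositions) changes $c$ by $0$ or $\pm2$; since $\sigma\tau$ a $3$-cycle gives $\sigma=(\sigma\tau)\tau^{-1}$, we get $|c(\sigma)-c(\tau)|\le2$ whenever $\sigma\tau$ is a $3$-cycle. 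The second ingredient is that the distribution of $c$ on a uniform element of $\AAA_n$ has all point masses $o(1)$: on $\SSS_n$ one has $c=\sum_{i=1}^n X_i$ with $X_i$ independent $\mathrm{Ber}(1/i)$ (Feller coupling), whose variance tends to $\infty$, so a local--CLT / L\'evy-concentration estimate gives $\max_k \Pr[c=k]=o(1)$, and restricting to $\AAA_n$ costs only a factor $2$. Equivalently, $v\mapsto |\{\sigma\in\AAA_n:c(\sigma)\le v\}|/|\AAA_n|$ is a non-decreasing step function from $0$ to $1$ all of whose jumps are $o(1)$, uniformly in $v$.

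Now fix $s+t\le1$. If $s=0$ or $t=0$, take the corresponding set empty, so $S_nT_n=\emptyset$; otherwise $0<s,t<1$. For each large $n$ choose $a_n\equiv n\pmod2$ maximal with $|\{\sigma\in\AAA_n:c(\sigma)\le a_n\}|\le t\,|\AAA_n|$; by the no-atoms statement the left side is then $(t-o(1))|\AAA_n|$. Put
\[
T_n:=\{\sigma\in\AAA_n:c(\sigma)\le a_n\},\qquad U_n:=\{\sigma\in\AAA_n:c(\sigma)\ge a_n+4\}.
\]
Both are normal, $|T_n|/|\AAA_n|\to t$, and $|U_n|/|\AAA_n|=1-\frac{|\{c\le a_n\}|}{|\AAA_n|}-\frac{|\{c=a_n+2\}|}{|\AAA_n|}\to 1-t\ge s$. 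If $1-t=s$, set $S_n:=U_n$; if $1-t>s$, take $S_n$ to be a union of $\AAA_n$-conjugacy classes contained in $U_n$ of total density tending to $s$ (adjoin classes greedily — possible since classes have density $o(1)$ and $|U_n|/|\AAA_n|>s$ eventually). In either case $|S_n|/|\AAA_n|\to s$. Finally $S_nT_n$ contains no $3$-cycle: if $\sigma\in S_n$, $\tau\in T_n$ and $\sigma\tau$ were a $3$-cycle, then $c(\sigma)\le c(\tau)+2\le a_n+2<a_n+4\le c(\sigma)$, a contradiction. Setting $S_n=T_n=\{e\}$ for the finitely many small $n$ (so $S_nT_n=\{e\}$ has no $3$-cycle there too), the resulting sequence realizes the limits $s,t$, has $S_nT_n$ $3$-cycle-free for all $n\ge3$, and $s+t\le1$, so the biconditional holds. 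The hard part will be the distributional input on $c$: tuning $|T_n|/|\AAA_n|$ to $t$ precisely, and — at the extreme $s+t=1$, where $U_n$ is only barely large enough — checking that $|U_n|/|\AAA_n|\to s$ rather than to something slightly smaller, which is exactly what the $o(1)$ bound on the single-class density $|\{c=a_n+2\}|/|\AAA_n|$ provides.
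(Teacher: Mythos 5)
Your argument is correct, and it shares the paper's two pillars --- the observation that the cycle count $c(\sigma)$ is a normal, inversion-invariant statistic that changes by at most $2$ when $\sigma\tau$ is a $3$-cycle (your transposition-splitting/merging proof of this is if anything cleaner than the paper's), and the pigeonhole argument showing that $s+t>1$ forces $S_nT_n=\AAA_n$ for large $n$. Where you genuinely diverge is in the distributional input and the shape of the separating sets. The paper works with residue classes: it proves, via the generating function $\sum_n Q_{n,m,\zeta}x^n/n!=(1-x)^{-\zeta}$ and Stirling's formula, that $p(\sigma)$ is asymptotically equidistributed modulo any fixed (and then slowly growing odd) modulus $m_n$, and takes $S_n,T_n$ to be unions of residue classes of $p$ mod $m_n$ separated by more than $2$. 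You instead use a single threshold $a_n$ on $c$, taking $T_n=\{c\le a_n\}$ and $S_n\subseteq\{c\ge a_n+4\}$, and the distributional fact you need is anti-concentration, $\max_k\Pr[c=k]=o(1)$, which you get from the Feller coupling $c\overset{d}{=}\sum\mathrm{Ber}(1/i)$ plus a L\'evy/local-CLT estimate. Both inputs are of comparable weight; the paper's is proved self-contained, whereas yours is quoted as standard (it is --- e.g.\ Kolmogorov--Rogozin gives $O(1/\sqrt{\log n})$ --- but if you wanted a fully self-contained write-up you would need to supply it, or note that it also follows from the paper's own equidistribution lemma with $m=m_n\to\infty$). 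Your threshold construction buys a slightly more transparent picture ($S_n$ and $T_n$ are ``low-cycle'' and ``high-cycle'' intervals) and the flexibility of topping up $S_n$ with individual classes to hit the density $s$ exactly; the paper's modular construction avoids any tie-breaking at the threshold because all residue classes automatically have density $(m_n^{-1}+o(1))$. The boundary cases ($s$ or $t$ equal to $0$, and $s+t=1$) are all handled correctly in your write-up.
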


We begin with two lemmas.  For $\sigma\in \SSS_n$, let $p(\sigma)$ denote the total number of cycles of $\sigma$, i.e., the number of orbits of $\langle \sigma\rangle$ on $\{1,2,\ldots,n\}$.

\begin{lem}
If $\sigma,\tau\in \AAA_n$ and $\sigma\tau$ is a $3$-cycle, then
\begin{equation}
\label{difference}
p(\tau) - p(\sigma)\in \{-2,0,2\}.
\end{equation}
\end{lem}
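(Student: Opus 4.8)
The plan is to use the elementary fact that multiplying a permutation by a transposition changes the number of cycles by exactly $\pm 1$. Since a $3$-cycle $\rho$ equals a product of two transpositions, say $\rho = \tau_1\tau_2$, I would write $\sigma = \sigma\tau \cdot \tau^{-1} = \rho\tau^{-1} = \tau_1\tau_2\tau^{-1}$ and build up $\sigma$ from $\tau^{-1}$ by two transposition multiplications. Concretely, $p(\tau^{-1}) = p(\tau)$, and passing from $\tau^{-1}$ to $\tau_2\tau^{-1}$ changes the cycle count by $\pm 1$, and passing from $\tau_2\tau^{-1}$ to $\tau_1\tau_2\tau^{-1} = \sigma$ changes it again by $\pm 1$. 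Hence $p(\sigma) - p(\tau) \in \{-2, 0, 2\}$, which is exactly \eqref{difference} after rearranging.

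The one point requiring a sentence of justification is the claim that left-multiplication by a transposition $(a\,b)$ changes the number of cycles of a permutation by exactly one: if $a$ and $b$ lie in the same cycle of the permutation, that cycle splits into two; if they lie in different cycles, those two cycles merge into one. In either case $p$ changes by $1$ in absolute value, and in particular its parity flips. (This is of course consistent with $\sigma, \tau \in \AAA_n$: the sign of a permutation is $(-1)^{n - p}$, so $\sigma, \tau$ even forces $p(\sigma) \equiv p(\tau) \equiv n \pmod 2$, ruling out the odd values $\pm 1$ a priori — but the two-transposition argument already pins down the stronger statement.)

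I do not anticipate a genuine obstacle here; the only thing to be careful about is bookkeeping with the direction of multiplication (whether to factor $\sigma$ or $\tau$, and on which side), but since $\sigma\tau = \rho$ is a $3$-cycle, writing $\sigma = \rho\tau^{-1}$ and factoring $\rho$ into two transpositions is the cleanest route, and the symmetry of the conclusion under $\sigma \leftrightarrow \tau$ (replacing $\rho$ by $\rho^{-1}$, also a $3$-cycle) means the choice is harmless.
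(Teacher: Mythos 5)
Your proof is correct, but it takes a genuinely different route from the paper's. You factor the $3$-cycle $\sigma\tau$ into two transpositions and apply, twice, the exact fact that multiplying a permutation by a transposition changes its cycle count by precisely $\pm 1$; this pins down $p(\sigma)-p(\tau)\in\{-2,0,2\}$ directly, with the parity remark serving only as a consistency check. The paper instead makes parity the main structural input: since $n-p(\sigma)$ and $n-p(\tau)$ are even for elements of $\AAA_n$, the difference is automatically even, so it suffices to prove the crude bound $|p(\tau)-p(\sigma)|\le 3$, which is extracted from the containment $\Stab(\sigma)\supset\Stab(\sigma\tau)\cap\Stab(\tau)$ — in effect, the three points moved by the $3$-cycle can account for a change of at most $3$ in the orbit count when passing from $\tau$ to $\sigma=(\sigma\tau)\tau^{-1}$. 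Your version buys self-containedness and more: it does not use evenness of $\sigma$ and $\tau$ at all, so it proves the same conclusion for arbitrary $\sigma,\tau\in\SSS_n$ with $\sigma\tau$ a $3$-cycle, and it isolates the one fact (the $\pm1$ lemma) that genuinely needs justification, which you supply correctly. The paper's version trades that exact lemma for a softer counting argument which, combined with the parity trick, would adapt with no extra work to products equal to any permutation of small support. Both arguments are sound; yours is arguably the cleaner of the two for this particular statement.
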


\begin{proof}
For all elements $\sigma\in \AAA_n$, $n-p(\sigma)$ is even.  Thus, it suffices to prove that $|p(\tau) - p(\sigma)| \le 3$.
Letting $\Stab(x)$ denote the set of fixed points of $x\in \AAA_n$ acting on $\{1,\ldots,n\}$,
$$\Stab(\sigma) \supset \Stab(\sigma\tau) \cap \Stab(\tau),$$
so $p(\tau) \ge p(\sigma)-3$, and by the same argument $p(\sigma)\ge p(\tau)-3$.
%As $\rho\sigma\tau=1$ for some $3$-cycle $\rho$, we can write $\tau^{-1} = \rho\sigma$, so each orbit of $\langle \sigma\rangle$ which consists entirely of fixed points of $\rho$ is an orbit of $\langle\tau^{-1}\rangle$.
%As all but three elements of $\{1,\ldots,n\}$ are fixed points of $\rho$, this implies
%%
%$$p(\tau) = p(\tau^{-1}) \ge p(\sigma)-3.$$
%%
%However, we also have
%$\rho^{-1}\tau^{-1} \sigma^{-1} = 1$, so by the same argument,
%%
%$$p(\sigma) \ge  p(\tau^{-1})-3 = p(\tau)-3.$$
\end{proof}

\begin{lem}
If $m$ is a positive integer and $a$ is any integer, the number of elements $\sigma\in \SSS_n$ such that $p(\sigma)\equiv a\pmod m$ is 
$(m^{-1}+o(1))n!$.
\end{lem}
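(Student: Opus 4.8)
The plan is to use the generating function for permutations graded by number of cycles together with a roots-of-unity filter. The starting point is the classical identity
\begin{equation*}
\sum_{\sigma\in\SSS_n} x^{p(\sigma)} \;=\; x(x+1)(x+2)\cdots(x+n-1) \;=\; \prod_{i=0}^{n-1}(x+i),
\end{equation*}
which follows by induction on $n$: a permutation of $\{1,\dots,n\}$ is obtained from one of $\{1,\dots,n-1\}$ either by inserting $n$ into one of the existing cycles in $n-1$ ways (leaving $p$ unchanged) or by adding $n$ as a fixed point (raising $p$ by $1$), so the degree-$n$ polynomial is $(x+n-1)$ times the degree-$(n-1)$ one.

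Next I would extract the congruence class of $p(\sigma)$ modulo $m$ by summing against $m$-th roots of unity. Put $\zeta = e^{2\pi i/m}$. Then
\begin{equation*}
\#\{\sigma\in\SSS_n : p(\sigma)\equiv a \pmod m\} \;=\; \frac{1}{m}\sum_{j=0}^{m-1}\zeta^{-ja}\sum_{\sigma\in\SSS_n}\zeta^{jp(\sigma)} \;=\; \frac{1}{m}\sum_{j=0}^{m-1}\zeta^{-ja}\prod_{i=0}^{n-1}(\zeta^j+i).
\end{equation*}
The term $j=0$ contributes exactly $\frac{1}{m}\prod_{i=0}^{n-1}(1+i) = \frac{1}{m}\,n!$, which is the claimed main term. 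It remains to show that for each fixed $j$ with $1\le j\le m-1$ one has $\prod_{i=0}^{n-1}(\zeta^j+i) = o(n!)$; since $m$ is fixed, there are only $m-1$ such terms and their total contribution is then $o(n!)$, giving the result.

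For this last estimate, write $c_j := \cos(2\pi j/m) < 1$, so that $|\zeta^j+i|^2 = i^2 + 2c_j i + 1 = (i+1)^2 - 2(1-c_j)\,i$ for each $i\ge 0$. Using $n! = \prod_{i=1}^{n-1}(i+1)$ and $|\zeta^j|=1$,
\begin{equation*}
\frac{\bigl|\prod_{i=0}^{n-1}(\zeta^j+i)\bigr|^2}{(n!)^2} \;=\; \prod_{i=1}^{n-1}\frac{|\zeta^j+i|^2}{(i+1)^2} \;=\; \prod_{i=1}^{n-1}\Bigl(1 - \frac{2(1-c_j)\,i}{(i+1)^2}\Bigr).
\end{equation*}
Since $\frac{2(1-c_j)\,i}{(i+1)^2}\sim \frac{2(1-c_j)}{i}$ as $i\to\infty$ and $\sum_i 1/i$ diverges, the logarithm of the product tends to $-\infty$, so the product tends to $0$; hence $\prod_{i=0}^{n-1}(\zeta^j+i) = o(n!)$ as required. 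The main (and only real) obstacle is this quantitative decay for the non-trivial roots of unity, but it reduces immediately to the divergence of the harmonic series once the factorization of $|\zeta^j+i|^2$ is in hand. Combining, $\#\{\sigma : p(\sigma)\equiv a \pmod m\} = \frac{1}{m}\,n! + o(n!) = (m^{-1}+o(1))\,n!$.
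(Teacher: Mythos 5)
Your proof is correct. It follows the same overall strategy as the paper: both evaluate $\sum_{\sigma}\zeta^{p(\sigma)}$ via the classical identity $\sum_{\sigma\in\SSS_n}x^{p(\sigma)}=x(x+1)\cdots(x+n-1)$ (the paper obtains the value $\zeta(\zeta+1)\cdots(\zeta+n-1)$ from the exponential generating function $(1-x)^{-\zeta}$, citing Stanley, while you give a direct inductive proof of the polynomial identity), and both then apply the roots-of-unity filter. Where you genuinely diverge is the key estimate that $\prod_{i=0}^{n-1}(\zeta^j+i)=o(n!)$ for $\zeta^j\neq 1$: the paper identifies this product with $\Gamma(\zeta^j+n)/\Gamma(\zeta^j)$ and invokes Stirling's approximation in the complex plane to get $O(n^{\Re(\zeta^j)-1}\,n!)$, whereas you compare the product factor by factor with $n!$, write each ratio as $1-\tfrac{2(1-c_j)i}{(i+1)^2}$, and conclude from the divergence of the harmonic series. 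Your route is more elementary (no complex Stirling, no discussion of $\arg(\zeta+n)$ or nonvanishing of $\Gamma$), and with the observation that $\sum_{i\le n}\tfrac{2(1-c_j)i}{(i+1)^2}=2(1-c_j)\log n+O(1)$ it recovers exactly the same quantitative rate $n^{\Re(\zeta^j)-1}$ that the paper's Gamma-function argument yields; so nothing is lost. One cosmetic caveat: when $\zeta^j=-1$ the factor at $i=1$ vanishes and the product is identically zero, so the logarithm step should be read as applying only when all factors are positive --- but that case only helps you.
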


\begin{proof}
Let $P_{n,m,a}$ denote the number of such elements, and let $\zeta\in \C$ satisfy $\zeta^m=1$.  Then, by \cite[Corollary~5.1.8]{Stanley},
$$Q_{n,m,\zeta} := \sum_{a=0}^{m-1} \zeta^a P_{n,m,a}$$
is $n!$ times the $x^n$ coefficient of $e^{-\zeta \log(1-x)}$.  By the binomial theorem,
$$1+\sum_{n=1}^\infty \frac{Q_{n,m,\zeta}}{n!} x^n = (1-x)^{-\zeta} = \sum_{n=0}^\infty \frac{\zeta(\zeta+1)\cdots(\zeta+n-1)}{n!} x^n,$$
so
$$Q_{n,m,\zeta} = \zeta(\zeta+1)\cdots(\zeta+n-1) = \frac{\Gamma(\zeta+n)}{\Gamma(\zeta)}.$$
Stirling's approximation \cite[12.33]{WW} gives
$$\log \Gamma(z) = (z-\frac12)\log z-z+\frac{\log 2\pi}{2}+O(|z|^{-1})$$
for $\arg(z) \in [\epsilon-\pi/2,\pi/2-\epsilon]$ for each fixed $\epsilon>0$.  In particular,
taking $\epsilon < \pi/3$, this estimate holds for $\zeta+n$ for all $\zeta$ on the unit circle and all $n\ge 2$.
As
$$\log (\zeta+n) = \log n + O(n^{-1}),$$
$$\log \Gamma(\zeta+n) = (n+\Re(\zeta)-\frac12)\log n  - \log n+ O(1),$$
so
$$|\Gamma(\zeta+n)| = O(n^{\Re(\zeta) - 1} \Gamma(n+1)).$$

Together with the functional equation $\Gamma(z+1) = z\Gamma(z)$, Stirling's approximation implies that $\Gamma$ has no zeroes, so
$$Q_{n,m,\zeta} = O(\Gamma(\zeta+n)) = O(n^{\Re(\zeta) - 1} \Gamma(n+1)).$$
In particular, for $\zeta\neq 1$, we have
$$Q_{n,m,\zeta} = o(Q_{n,m,1}),$$
so
\begin{equation}
\label{asymptotic}
P_{n,m,a} = \frac 1m\sum_{\{\zeta\mid \zeta^m=1\}} \zeta^{-a} Q_{n,m,\zeta} = (m^{-1}+o(1)) Q_{n,m,1} = (m^{-1}+o(1))n!.
\end{equation}
\end{proof}

We can now prove Theorem~\ref{alt}.

\begin{proof}

A permutation $\sigma\in \SSS_n$ is even if and only if $p(\sigma)\equiv n\pmod 2$.  Therefore, if $m$ is odd,
$$|\{\sigma\in \AAA_n\mid p(\sigma)\equiv a\pmod m\}| = (m^{-1}o(1)) |\AAA_n|.$$
If $s+t\le 1$, by \eqref{asymptotic}, we can choose for each $n$, an odd integer $m_n$ in such a way that $m_n\to \infty$ as $n\to \infty$ and
\begin{equation}
\label{m-sub-n}
\sup_a\frac{|m_nP_{n,m_n,a}-n!|}{n!} \to 0.
\end{equation}
If $0 < k_n < l_n \le m_n$, $S_n\subset \AAA_n$ consists of all even permutations which are congruent to any element of $\{2,4,\ldots,2k_n-2\}$ (mod $m_n$), and $T_n$ consists of even permutations which
are congruent to any element of $\{2k_n+2,2k_n+4,\ldots, 2l_n-2\}$ (mod $m_n$), then by \eqref{difference}, $S_nT_n$ does not contain any $3$-cycle.
By construction, \eqref{m-sub-n} implies \eqref{limits}.

If $s+t > 1$, then $|S_n| + |T_n| > \frac{n!}2$ for all $n\gg 0$, so $S_n T_n = \AAA_n$ follows immediately.
\end{proof}

In the case $S=T$, Question~\ref{Weak} has a positive answer for alternating groups.  We give a stronger result in Theorem~\ref{S equals T} below.

Let $\sigma \in \SSS_n$.
Following \cite{LS2}, for every positive integer $k$,
we define $\Sigma_k(\sigma)$ to be the set of elements of $\{1,2,\ldots,n\}$ whose $\sigma$-orbit has cardinality at most $k$.  We define non-negative integers $e_1,e_2,\ldots,e_n$ so that for $1\le k\le n$,
$$n^{e_1+\cdots+e_k} = |\Sigma_k(\sigma)|.$$
Finally, we define
$$E(\sigma) = \sum_{k=1}^n \frac{e_k}k.$$

%Fix $0 < \alpha < 1$, let $W \subseteq A_n$ be a subset satisfying
%$|A_n |/|W| \le \exp(n^{\alpha})$, and let $\sigma \in W$ be randomly chosen.
%Then by Corollary 6.5 of \cite{LS}, for any $\e > 0$ there exists $N_1$ depending only on $\e$
%such that, if $n \ge N_1$, then the probability that $E(\sigma) \le \alpha + \e$ is at least
%$1 - \exp(-n^{\alpha})$.
%$1-\exp( â n^{\alpha})$.

%(ii) The probability that $|\chi(\sigma)| â€ \chi(1)^{\alpha+\e}$ for all $\chi \in \Irr(S_n)$
%is at least $1 â \exp(ân^{\alpha})$. In particular this probability tends to $1$ as $n \to \infty$.

%Applying this with $\e  =  (1/4â\alpha)/2 > 0$, we conclude that
Choose $\alpha,\e >0$ so that $\alpha + 2 \e = 1/4$.
Let $W \subseteq \AAA_n$ be a subset satisfying $|\AAA_n |/|W| \le e^{n^{\alpha}}$.
By \cite[Corollary~6.5]{LS2},
there exists $N_2$ depending only on $\alpha$ such that, if $n \ge N_2$ and $\sigma \in W$ is randomly
chosen, the probability that $E( \sigma ) \le \alpha + \e = 1/4 - \e$ is at least $1 - e^{-n^{\alpha}}$.

Now, by  \cite[Corollary 1.11]{LS2}, there exists $N \ge N_2$ depending on $\e$ such that,
if $n \ge N$ then $E(\sigma) < 1/4-\e$ implies $ ( \sigma^{\SSS_n} )^2 = \AAA_n$.
It follows that, for random $\sigma \in W$, $ ( \sigma^{\SSS_n} )^2 = \AAA_n$ holds with probability
at least $1 - e^{-n^{\alpha}}$.

It is well known that $\sigma^{\AAA_n} = \sigma^{\SSS_n}$ unless $\sigma$ is a product of cycles of distinct odd lengths.
By Theorem VI of \cite{ET}, the probability that $\sigma \in \SSS_n$ does not have cycles of lengths $a_1, \ldots , a_k$ is
at most $(\sum_{i=1}^k 1/a_i)^{-1}$. Applying this with $k = \lfloor n/2 \rfloor$ and $a_i = 2i$, we conclude that
the probability that $\sigma^{\AAA_n} \ne \sigma^{\SSS_n}$ is at most $\frac2{\log n/2}$.

It now follows that, for random $\sigma \in W$, the probability that $( \sigma^{\AAA_n} )^2 = \AAA_n$ is at least
$1 - e^{-n^{\alpha}} - \frac2{\log n/2} \ge 1 - \frac3{\log{n}}$ for large $n$. Now, if we assume also that $W$ is a
normal subset of $\AAA_n$, we have $\sigma^{\AAA_n} \subseteq W$.
In summary, we have proved the following:

\begin{thm}
\label{S equals T}
For every $0 < \alpha < 1/4$ there exists $N>0$ such that, if $n \ge N$ and $W \subseteq \AAA_n$ is a normal subset satisfying
$$|W| \ge e^{-n^{\alpha}} \cdot |\AAA_n|,$$ then $W^2 = \AAA_n$.
\end{thm}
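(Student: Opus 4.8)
The plan is to reduce the statement to the existence of a \emph{single} good element of $W$. It suffices to produce one $\sigma\in W$ whose $\AAA_n$-conjugacy class satisfies $(\sigma^{\AAA_n})^2=\AAA_n$: since $W$ is a normal subset of $\AAA_n$ we then have $\sigma^{\AAA_n}\subseteq W$, hence $\AAA_n=(\sigma^{\AAA_n})^2\subseteq W\cdot W=W^2$, while $W^2\subseteq\AAA_n$ is trivial. So everything comes down to a probabilistic existence argument for a uniformly random $\sigma\in W$.

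First I would fix $\e>0$ with $\alpha+2\e=1/4$, so that the hypothesis $|W|\ge e^{-n^\alpha}|\AAA_n|$ is exactly the bound $|\AAA_n|/|W|\le e^{n^\alpha}$ needed to invoke \cite[Corollary~6.5]{LS2}: there is $N_2=N_2(\alpha)$ such that for $n\ge N_2$ a random $\sigma\in W$ satisfies $E(\sigma)\le\alpha+\e=1/4-\e$ outside an event of probability at most $e^{-n^\alpha}$. Next I would quote \cite[Corollary~1.11]{LS2}: there is $N\ge N_2$ (depending on $\e$) so that for $n\ge N$ the inequality $E(\sigma)<1/4-\e$ forces $(\sigma^{\SSS_n})^2=\AAA_n$. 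Combining these, for $n\ge N$ a random $\sigma\in W$ has $(\sigma^{\SSS_n})^2=\AAA_n$ except with probability at most $e^{-n^\alpha}$.

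The remaining step is to pass from $\SSS_n$-classes to $\AAA_n$-classes. The $\SSS_n$-class of $\sigma$ breaks into two $\AAA_n$-classes precisely when $\sigma$ is a product of cycles of pairwise distinct odd lengths; for every other $\sigma$ one has $\sigma^{\AAA_n}=\sigma^{\SSS_n}$ and hence $(\sigma^{\AAA_n})^2=(\sigma^{\SSS_n})^2$. The proportion of exceptional $\sigma$ is controlled by the Erd\H{o}s--Tur\'an estimate \cite[Theorem~VI]{ET}: applied with the forbidden cycle lengths $2,4,\dots,2\lfloor n/2\rfloor$, it bounds the probability that $\sigma\in\SSS_n$ has no even-length cycle, and so the probability that $\sigma^{\AAA_n}\neq\sigma^{\SSS_n}$, by $\bigl(\sum_{i=1}^{\lfloor n/2\rfloor}\frac{1}{2i}\bigr)^{-1}\le \frac{2}{\log(n/2)}$. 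Putting the two exceptional events together, for all sufficiently large $n$ a uniformly random $\sigma\in W$ lies, with probability at least $1-e^{-n^\alpha}-\frac{2}{\log(n/2)}>0$, simultaneously in the events $\{E(\sigma)<1/4-\e\}$ and $\{\sigma^{\AAA_n}=\sigma^{\SSS_n}\}$. For any such $\sigma$ we get $(\sigma^{\AAA_n})^2=(\sigma^{\SSS_n})^2=\AAA_n$, and the reduction of the first paragraph then finishes the proof.

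I expect the genuine content of the theorem to be entirely inherited from \cite{LS2}: the construction of the statistic $E(\sigma)$, the large-deviation bound \cite[Corollary~6.5]{LS2} which remains valid over an \emph{arbitrary} large normal subset $W$ and not merely over all of $\AAA_n$, and above all the implication \cite[Corollary~1.11]{LS2} that $E(\sigma)$ small makes $(\sigma^{\SSS_n})^2$ exhaust $\AAA_n$. The only point requiring care inside the present write-up is the bookkeeping of the two bad events — the ``large $E$'' event and the ``$\SSS_n$-class splits'' event — and verifying that their combined probability in $W$ is strictly below $1$ for large $n$, so that a good $\sigma$ genuinely exists; the Erd\H{o}s--Tur\'an input is precisely what renders the class-splitting event negligible.
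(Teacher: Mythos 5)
Your argument is essentially identical to the paper's own proof of Theorem~\ref{S equals T}: the same choice of $\e$ with $\alpha+2\e=1/4$, the same two inputs from \cite{LS2} (Corollaries 6.5 and 1.11), the same Erd\H{o}s--Tur\'an bound on the proportion of permutations whose $\SSS_n$-class splits in $\AAA_n$, and the same reduction, via normality of $W$, to exhibiting a single $\sigma\in W$ with $(\sigma^{\AAA_n})^2=\AAA_n$. The one loose point in your write-up --- folding the Erd\H{o}s--Tur\'an probability, which is computed for uniform $\sigma\in\SSS_n$, into a union bound with a probability computed for uniform $\sigma\in W$ --- is present in exactly the same form in the paper, so your proposal faithfully reproduces the published argument.
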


On the other hand, we have the following theorem.

\begin{thm}
\label{Strong Alt}
Even in the case $S=T$, Question~\ref{Strong} has a negative answer for alternating groups.
\end{thm}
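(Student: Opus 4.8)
The plan is to produce, for infinitely many $n$, a single normal subset $S_n = T_n \subseteq \AAA_n$ of density bounded away from $0$, together with a fixed nontrivial element $g_n$ (a $3$-cycle, say), for which the number of representations $|\{(s,t)\in S_n\times S_n \mid st = g_n\}|$ is not asymptotic to $|S_n|^2/|\AAA_n|$. The obstruction will again be the parity-of-cycle-count invariant $p(\sigma)$ used in Lemma~\ref{difference}: recall that if $\sigma\tau$ is a $3$-cycle then $p(\tau)-p(\sigma)\in\{-2,0,2\}$. So if we build $S_n$ out of permutations whose value of $p$ lies in a prescribed residue class, the extra constraint forces a visible bias in the representation count.

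Concretely, I would fix an odd modulus $m$ (say $m=3$, or let $m=m_n\to\infty$ slowly, as in the proof of Theorem~\ref{alt}) and, using the asymptotic equidistribution of $p(\sigma) \bmod m$ over $\AAA_n$ from the second lemma preceding Theorem~\ref{alt} (equation~\eqref{asymptotic}), take $S_n = \{\sigma\in\AAA_n : p(\sigma)\equiv a \pmod m\}$ for a single residue $a$. Then $|S_n| = (m^{-1}+o(1))|\AAA_n|$, so $|S_n|^2/|\AAA_n| = (m^{-2}+o(1))|\AAA_n|$. But for a fixed $3$-cycle $g_n$, a representation $g_n = st$ with $s,t\in S_n$ requires $p(s)\equiv p(t)\equiv a$ and $p(t)-p(s)\in\{-2,0,2\}$, which is automatic; the point is rather the reverse count. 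I would instead pick $S_n$ supported on several residues and compute the representation number as a sum over the allowed "transitions" $p(s)\mapsto p(s)\pm 2$ or $p(s)$, weighted by how $3$-cycles act. The key computation is to show that the number of pairs $(s,t)\in S_n\times S_n$ with $st=g_n$ is governed, to leading order, by $\sum_{j} |\AAA_n^{(j)}|\cdot \rho_j$, where $\AAA_n^{(j)}$ is the set with $p\equiv j \pmod m$ and $\rho_j$ is the (suitably normalized) proportion of $\tau$ in a single residue class with $g_n\tau^{-1}$ also in a prescribed residue class — and then to exhibit residue-class data for which this leading term differs from $|S_n|^2/|\AAA_n|$ by a constant factor.

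The cleanest route is probably the simplest one: take $m=3$ and $S_n = T_n$ the set of $\sigma\in\AAA_n$ with $p(\sigma)\equiv 0\pmod 3$, so $|S_n|\sim |\AAA_n|/3$ and the "expected" representation number of a $3$-cycle $g_n$ is $\sim |\AAA_n|/9$. If $g_n = st$ with $p(s)\equiv p(t)\equiv 0 \pmod 3$, then $p(s)-p(t)\in\{-2,0,2\}$ forces $p(s)=p(t)$, i.e. we only use the "$0$" transition. Multiplying a fixed $\sigma$ by the fixed $3$-cycle $g_n$ changes $p$ by $-2,0$, or $+2$; I would argue (using that $3$-cycles act on the "cycle type near a fixed triple" in a way that is, up to $o(1)$, independent of the bulk of $\sigma$) that each of the three transition types occurs with asymptotic density $1/3$ among $\tau\in\AAA_n$ with $p(\tau)$ in a given residue class, so that among $\tau$ with $g_n\tau^{-1}\in S_n$ we keep only a $1/3$-fraction, giving a representation count of $\sim (1/3)\cdot|S_n| = |\AAA_n|/9$ — which, embarrassingly, matches. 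So this naive choice will \emph{not} work, and the real construction must break the symmetry: choose $S_n$ supported on a single residue class modulo a \emph{larger} odd $m_n\to\infty$ with $m_n = o(\text{(something)})$, so that $|S_n|\sim |\AAA_n|/m_n \to 0$ relative density but still the relevant quotient is controlled, OR, better, fix $m=3$ but take $S_n$ to be all $\sigma$ with $p(\sigma)\not\equiv 0$, i.e. density $\sim 2/3$; then $st=g_n$ with $s,t\in S_n$ needs $p(s),p(t)\in\{1,2\}$ and $|p(s)-p(t)|\le 2$ allows all four pairs except it's constrained by parity ($n-p(\sigma)$ even forces all $p(\sigma)$ of one parity, so actually only $p(s)=p(t)$ or $|p(s)-p(t)|=2$; with $n$ fixed these are the residues of fixed parity), and a short case analysis shows the representation count is $\sim c|\AAA_n|$ with $c\neq (2/3)^2$. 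The main obstacle is precisely this: verifying that the transition probabilities $p(\tau)\to p(g_n\tau)$ among $\tau$ in a fixed residue class are asymptotically $(1/4,1/4,1/4,1/4)$ or whatever the correct vector is — i.e. a local-global independence statement for the statistic $p$ under multiplication by a fixed short permutation. This should follow from the generating-function / Stirling analysis already developed for the lemma before Theorem~\ref{alt}, applied to the joint distribution of $(p(\tau),p(g_n\tau))$, but getting the transition matrix exactly right, and then picking the residue-support of $S_n$ so the Perron-type leading term provably disagrees with $|S_n|^2/|\AAA_n|$, is where the actual work lies.
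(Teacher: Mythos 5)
Your proposal is a strategy sketch rather than a proof, and the decisive step is missing. Everything hinges on a ``local--global independence'' claim: that for a fixed $3$-cycle $g$ and a uniformly random $\tau$ in a residue class of $p\bmod m$, the increment $p(g\tau)-p(\tau)\in\{-2,0,2\}$ has a limiting distribution that you can compute and that is asymptotically independent of $p(\tau)\bmod m$. You never prove this, you never pin down the transition probabilities (your interim guess of $(1/3,1/3,1/3)$ for the $m=3$ case is unjustified --- the increment distribution is governed by how the three moved points sit among the cycles of $\tau$, and is not uniform), and you never commit to a final choice of $S_n$ for which the resulting leading constant provably differs from $|S_n|^2/|\AAA_n|$. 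Indeed you acknowledge that ``the actual work lies'' exactly there. As written, the argument establishes nothing; it is also at risk of collapsing into the construction of Theorem~\ref{alt} (which only addresses Question~\ref{Weak} with $S\ne T$), since a single residue class modulo $m_n\to\infty$ has density tending to $0$ and so does not meet the hypothesis $|S|\ge\epsilon|G|$ of Question~\ref{Strong}.

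The paper's proof avoids all of this by choosing $S_n=T_n=\cD(\AAA_n)$, the derangements. Then $|S_n|\sim n!/(2e)$ by inclusion--exclusion, so the ``expected'' number of representations of a $3$-cycle is $|S_n|^2/|\AAA_n|\sim n!/(2e^2)$; but the actual number of representations is the number of derangements $\tau$ with $\tau\sigma$ again a derangement, and a one-line computation (for $\sigma=(123)$, the product $\tau\sigma$ can only fix $1$, $2$, or $3$, each with probability $\frac1{n-1}$) shows this is $(1-O(1/n))|S_n|\sim n!/(2e)$. The two quantities differ by the factor $e$, so the ratio in Question~\ref{Strong} does not tend to $1$. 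If you want to salvage your approach, the joint distribution of $\bigl(p(\tau)\bmod m,\,p(g\tau)-p(\tau)\bigr)$ could in principle be handled by a bivariate extension of the generating-function argument in the lemma before Theorem~\ref{alt}, but that is substantially more work than the derangement count, and you would still need to verify that the resulting constant is not $1/m$.
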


\begin{proof}
We prove that if, for each $n$, $S_n = T_n$ denotes the set of derangements in $\AAA_n$,
then $|S_n| = |T_n| \sim \frac{n!}{2e}$ and the number of representations of any $3$-cycle as
$st$, $s\in S_n$ and $t\in T_n$ is also asymptotic to $\frac{n!}{2e}$.

The first claim is an analogue of a well-known fact about derangements in $\SSS_n$, and the argument is the same.  As $\AAA_n$ acts $n-2$-tuples transitively on $X_n=\{1,2,\ldots,n\}$, for each subset $\Sigma$
of $X_n$ with $\le n-2$ elements, the number of elements in $\AAA_n$ which fix $\Sigma$ pointwise is
$$\frac{n!}{2(n-|\Sigma|)!}.$$
Therefore,
$$\sum_{|\Sigma| = r \le n-2} |\Stab_{\AAA_n} \Sigma| = \frac{n!}{2r!}.$$
By the Bonferroni inequalities, the number of derangements in $\AAA_n$ lies between any two consecutive values of the sequence
$\sum_{r=0}^{n-3} \frac{(-1)^rn!}{2\,r!}$, where $r=1,2,\ldots,n-2$, implying the first claim.

For the second claim, it suffices to prove that in the limit $n\to\infty$, the probability approaches $1$ that the product of a given $3$-cycle in $\AAA_n$ and a uniformly distributed random should again be a derangement approaches $1$.  Without loss of generality, we take our fixed $3$-cycle to be $\sigma =(123)$
and let $\tau$ denote a random derangement in $\AAA_n$.  Then $\tau\sigma$ can fix only $1$, $2$, or $3$.  It fixes $1$ if and only if $\tau(2) = 1$, and likewise for $2$ and $3$.  By symmetry, the probability that $\tau(2)=1$ is the same as the probability that $\tau(2)$ takes any other value in $X_n\smallsetminus\{2\}$, i.e., $\frac1{n-1}$.  Thus, the probability that $\tau\sigma$ is a derangement is at least $1-\frac3{n-1}$.
\end{proof}

\bigskip

\section{Products of three normal subsets}

While Questions \ref{Weak} and \ref{Strong} have negative answers for general finite simple groups, the analogous questions for products
of three normal subsets of arbitrary finite simple groups $G$ have a positive answer. This follows easily and effectively from existing results,
even without assuming the normality of the subsets.

By the so-called Gowers trick (see Gowers \cite{G} and Nikolov-Pyber \cite{NP}), if $G$ is a finite group, $m(G)$ is the minimal degree of a non-trivial character of $G$,
and $A, B, C \subseteq G$ satisfy
$$|A|\,|B|\,|C| \ge \frac{|G|^3}{m(G)},$$
then $ABC = G$. Thus Question~\ref{Weak} for three arbitrary subsets has a positive answer, with $\e = m(G)^{-1/3}$;
this holds also for general \emph{quasi-random} families of non-simple groups, that is, provided $m(G) \to \infty$ as $|G| \to \infty$.

Question~\ref{Strong} for $t \ge 3$ subsets is solved in\cite[2.8]{BNP}, which we quote below.

\begin{thm}\label{bnp}
Let $G$ be a finite group, $t \ge 3$ an integer, and $\alpha > 0$. Let $C_1, \ldots , C_t$ be subsets of $G$ which satisfy
\[
\prod_{i=1}^t |C_i| \ge \alpha \frac{|G|^t}{m(G)^{t-2}}.
\]
For $g \in G$ let $N_g$ denote the number of solutions to the equation $x_1 \cdots x_t = g$ with $x_i \in C_i$ ($i=1, \ldots , t$).
Set
\[
E = \frac{\prod_{i=1}^t |C_i|}{|G|}.
\]
Then, for every $g \in G$ we have
\[
|N_g - E| \le \alpha^{-1/2}E.
\]
\end{thm}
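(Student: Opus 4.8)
The plan is to run the standard non-abelian Fourier argument behind the Gowers trick, but to bookkeep the matrix norms carefully so that exactly two Fourier coefficients are measured in Hilbert--Schmidt norm and the remaining $t-2$ in operator norm; this split is precisely what produces the saving $m(G)^{t-2}$. Fix unitary models for the irreducible representations $\rho$ of $G$, and for $C\subseteq G$ write $\widehat C(\rho):=\sum_{x\in C}\rho(x)$.

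First I would pass to balanced indicators. Writing $\mathbf 1_{C_i}=f_i+\tfrac{|C_i|}{|G|}\mathbf 1_G$ with $f_i$ mean zero, and expanding the $t$-fold convolution $\mathbf 1_{C_1}*\cdots*\mathbf 1_{C_t}$, every term mixing a constant factor with a mean-zero factor dies (since $\mathbf 1_G*h=0$ whenever $h$ has mean zero), the purely constant term contributes $E\cdot\mathbf 1_G$, and the purely mean-zero term is $f_1*\cdots*f_t$; hence $N_g-E=(f_1*\cdots*f_t)(g)$ for every $g\in G$. Applying the Fourier inversion formula $h(g)=\tfrac1{|G|}\sum_\rho(\dim\rho)\Tr\!\big(\widehat h(\rho)\rho(g)^{-1}\big)$, and noting that $\widehat{f_i}(\rho)=\widehat{C_i}(\rho)$ for $\rho\neq 1$ while $\widehat{f_i}(1)=0$, I obtain
\[
N_g-E=\frac1{|G|}\sum_{\rho\neq 1}(\dim\rho)\,\Tr\!\Big(\widehat{C_1}(\rho)\cdots\widehat{C_t}(\rho)\,\rho(g)^{-1}\Big).
\]

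Next I would estimate each summand. Using $|\Tr(AB)|\le\|A\|_{\mathrm{HS}}\|B\|_{\mathrm{HS}}$ with $A=\widehat{C_1}(\rho)\cdots\widehat{C_{t-1}}(\rho)$ and $B=\widehat{C_t}(\rho)\rho(g)^{-1}$, together with $\|A\|_{\mathrm{HS}}\le\big(\prod_{i=1}^{t-2}\|\widehat{C_i}(\rho)\|_{\mathrm{op}}\big)\|\widehat{C_{t-1}}(\rho)\|_{\mathrm{HS}}$ and unitarity of $\rho(g)^{-1}$, the summand is at most $(\dim\rho)\big(\prod_{i=1}^{t-2}\|\widehat{C_i}(\rho)\|_{\mathrm{op}}\big)\|\widehat{C_{t-1}}(\rho)\|_{\mathrm{HS}}\|\widehat{C_t}(\rho)\|_{\mathrm{HS}}$. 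Now invoke Plancherel, $\sum_\rho(\dim\rho)\|\widehat{C_i}(\rho)\|_{\mathrm{HS}}^2=|G|\,|C_i|$, which term by term gives $\|\widehat{C_i}(\rho)\|_{\mathrm{op}}\le\|\widehat{C_i}(\rho)\|_{\mathrm{HS}}\le\big(|G|\,|C_i|/\dim\rho\big)^{1/2}\le\big(|G|\,|C_i|/m(G)\big)^{1/2}$ for $\rho\neq 1$. Pulling these $t-2$ operator-norm bounds out of the sum, then applying Cauchy--Schwarz and Plancherel once more to $\sum_{\rho\neq 1}(\dim\rho)\|\widehat{C_{t-1}}(\rho)\|_{\mathrm{HS}}\|\widehat{C_t}(\rho)\|_{\mathrm{HS}}\le|G|\,(|C_{t-1}||C_t|)^{1/2}$, I arrive at
\[
|N_g-E|\ \le\ \Big(\tfrac{|G|}{m(G)}\Big)^{(t-2)/2}\Big(\prod_{i=1}^t|C_i|\Big)^{1/2}.
\]
Dividing by $E=\prod_i|C_i|/|G|$ turns the right-hand side into $\big(|G|^t\big/(m(G)^{t-2}\prod_i|C_i|)\big)^{1/2}$, which is at most $\alpha^{-1/2}$ exactly under the hypothesis $\prod_i|C_i|\ge\alpha|G|^t/m(G)^{t-2}$. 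The one genuinely delicate point is the decision to keep precisely two Hilbert--Schmidt factors while converting the other $t-2$ to operator norms before summing over $\rho$; everything else is the routine finite-group Fourier dictionary (inversion, Plancherel, Cauchy--Schwarz, and $\dim\rho\ge m(G)$ for $\rho\neq 1$).
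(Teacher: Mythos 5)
Your argument is correct: the decomposition $N_g-E=(f_1*\cdots*f_t)(g)$, the Fourier inversion, the split into two Hilbert--Schmidt factors and $t-2$ operator-norm factors bounded via $\dim\rho\ge m(G)$, and the final Cauchy--Schwarz/Plancherel step all check out, and the arithmetic at the end does yield $|N_g-E|/E\le\bigl(|G|^t/(m(G)^{t-2}\prod_i|C_i|)\bigr)^{1/2}\le\alpha^{-1/2}$. Note that the paper itself gives no proof of this statement --- it is quoted verbatim from Babai--Nikolov--Pyber \cite[2.8]{BNP} --- and your argument is essentially the standard Gowers-trick computation used there, so there is nothing to compare beyond saying you have correctly reconstructed the proof of the cited result.
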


For a group $G$ and subsets $C_1, \ldots , C_t$ of $G$, denote by $\Pr_{C_1, \ldots ,C_t}$ the probability distribution on $G$
such that, for $g \in G$, $\Pr_{C_1, \ldots ,C_t}(g)$ is the probability that $x_1 \cdots x_t = g$ where $x_i \in C_i$ ($i=1, \ldots ,t$)
are randomly chosen, uniformly and independently.

We also denote by $\bfU_G$ the uniform distribution on $G$.

Theorem \ref{bnp} for $t=3$ yields the following.

\begin{cor}\label{ABC} For finite groups $G$, and subsets $A, B, C \subseteq G$ satisfying
$$m(G)|A|\,|B|\,|C|/|G|^3 \to \infty$$
as $|G| \to \infty$, we have
\[
\Vert \Pr_{A,B,C} - \bfU_G\Vert _{L^{\infty}} \to 0 {\rm \; as \;} |G| \to \infty.
\]
In particular we have $ABC = G$ for $|G| \gg 0$.

These two conclusions hold when $G$ is a finite simple group and $A, B, C \subseteq G$ are
subsets of sizes $\ge \e |G| > 0$ for any fixed $\e > 0$.
\end{cor}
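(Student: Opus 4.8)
The plan is to derive both conclusions directly from Theorem~\ref{bnp} in the case $t=3$. Given $A,B,C\subseteq G$, set $\alpha=\alpha(G):=m(G)|A|\,|B|\,|C|/|G|^3$ and $E:=|A|\,|B|\,|C|/|G|$. With this choice of $\alpha$ the hypothesis $\prod_{i=1}^3|C_i|\ge \alpha|G|^3/m(G)$ of Theorem~\ref{bnp} holds (with equality, taking $C_1=A$, $C_2=B$, $C_3=C$), so for every $g\in G$ the number $N_g$ of triples $(a,b,c)\in A\times B\times C$ with $abc=g$ satisfies $|N_g-E|\le \alpha^{-1/2}E$.

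Next I would translate this into the language of distributions. By definition $\Pr_{A,B,C}(g)=N_g/(|A|\,|B|\,|C|)$ while $\bfU_G(g)=1/|G|=E/(|A|\,|B|\,|C|)$, so
\[
\bigl|\Pr_{A,B,C}(g)-\bfU_G(g)\bigr|=\frac{|N_g-E|}{|A|\,|B|\,|C|}\le \frac{\alpha^{-1/2}}{|G|}
\]
for every $g$. Taking the supremum over $g$ gives $\Vert \Pr_{A,B,C}-\bfU_G\Vert_{L^\infty}\le \alpha(G)^{-1/2}/|G|$ (equivalently, the relative error $\max_g\bigl||G|\,\Pr_{A,B,C}(g)-1\bigr|$ is at most $\alpha(G)^{-1/2}$); since the hypothesis says precisely that $\alpha(G)\to\infty$ as $|G|\to\infty$, the right-hand side tends to $0$. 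For $|G|\gg 0$ we have $\alpha(G)\ge 4$, whence $N_g\ge E(1-\tfrac12)=E/2>0$ for all $g$, i.e., $ABC=G$.

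For the last assertion, when $G$ is a finite simple group with $|A|,|B|,|C|\ge\e|G|$ we get $m(G)|A|\,|B|\,|C|/|G|^3\ge \e^3 m(G)$, so it suffices to know that $m(G)\to\infty$ as $|G|\to\infty$ over the (nonabelian) finite simple groups; then the first part applies. This last fact is classical and needs no classification: every nontrivial irreducible character of a simple group is faithful, so a nonabelian finite simple group $G$ with $m(G)\le d$ admits a faithful embedding $G\hookrightarrow\GL_d(\C)$, and by the Jordan--Schur theorem it then has an abelian normal subgroup of index bounded in terms of $d$; simplicity forces this subgroup to be trivial, so $|G|$ is bounded in terms of $d$, and hence only finitely many simple $G$ satisfy $m(G)\le d$.

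I do not expect a genuine obstacle here: the entire analytic content is Theorem~\ref{bnp}, which is quoted, and the only auxiliary input is the growth of $m(G)$ on finite simple groups. If anything, the single point that needs a little care is the bookkeeping identifying $E/(|A|\,|B|\,|C|)$ with the uniform density $1/|G|$, after which the $L^\infty$ estimate and the conclusion $ABC=G$ are immediate.
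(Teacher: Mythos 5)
Your proof is correct and follows the same route as the paper, which derives the corollary directly from Theorem~\ref{bnp} with $t=3$, taking $\alpha = m(G)|A|\,|B|\,|C|/|G|^3$ and unwinding $N_g$ into $\Pr_{A,B,C}(g)$. The only detail you supply beyond what the paper records is the (standard, Jordan--Schur) justification that $m(G)\to\infty$ over nonabelian finite simple groups, which is precisely the fact the paper uses implicitly for the final assertion.
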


%In particular, Theorem \ref{Q2} with $\e = m(G)^{-1/3}$ (which tends to zero) holds.

For finite simple classical groups $G$ and normal subsets $R, S, T \subseteq G$ we can obtain
$RST = G$ under asymptotically weaker assumptions. The proof uses character methods.

For a real number $s$ let
\[
\zeta^G(s) = \sum_{\chi \in \Irr(G)} \chi(1)^{-s}.
\]
Then $\zeta^G$ is the Witten zeta function of $G$, studied in \cite{LiSh1, LiSh2}.

Suppose $C_i$ above are conjugacy classes of $G$. Then \eqref{Frobenius} implies that
\[
\Pr_{C_1, C_2, C_3}(g) = |G|^{-1} \sum_{\chi \in \Irr(G)} \frac{\chi(C_1)\chi(C_2)\chi(C_3)\chi(g^{-1})}{\chi(1)^2},
\]
where $\chi(C_i)$ is the common value of $\chi$ on elements of $C_i$.

Since $|\chi(g^{-1})|/\chi(1) \le 1$, this yields
\begin{equation}\label{uniform}
|\Pr_{C_1,C_2,C_3}(g)-|G|^{-1}| \leq \sum_{1 \ne \chi \in \Irr(G)} \frac{|\chi(C_1)|\,|\chi(C_2)|\,|\chi(C_3)|}{\chi(1)}.
\end{equation}
Denote by $\Cl_n(q)$ the set of finite simple classical groups over $\F_q$ with an $n$-dimensional natural module.
We need the following slight extension of \cite[7.5]{GLT} and its proof.

\begin{prop}\label{Linf} There exists an absolute constant $0<\gamma <1$ such that the following holds.
Suppose $n \ge 9$, $G \in \Cl_n(q)$, and for $i = 1, 2, 3$ let $g_i \in G$ satisfy $|C_G(g_i)| \le |G|^{\gamma}$. Set $C_i = g_i^G$
($i = 1,2,3$). Then we have
\begin{enumerate}[\rm(i)]
\item $\lim_{|G| \to \infty} \Vert  \Pr_{C_1,C_2,C_3} - \bfU_G \Vert _{\infty} = 0$.

\item There exists an absolute constant $N$ such that, if $|G| \ge N$, then $C_1C_2C_3=G$.
\end{enumerate}
\end{prop}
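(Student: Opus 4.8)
The plan is to estimate the right-hand side of \eqref{uniform} using bounds on character values and character degrees for classical groups. The key input is the character-ratio bound from \cite{GLT} (or from Gluck-type estimates): there is an absolute constant $0<\gamma<1$ such that, whenever $g\in G\in\Cl_n(q)$ satisfies $|C_G(g)|\le |G|^\gamma$, one has $|\chi(g)|/\chi(1)\le |\chi(1)|^{-\kappa}$ for some absolute $\kappa>0$ and every $1\ne\chi\in\Irr(G)$ — equivalently, $|\chi(g)|\le \chi(1)^{1-\kappa}$. Granting such a bound for each of $g_1,g_2,g_3$, the summand in \eqref{uniform} is at most $\chi(1)^{3(1-\kappa)-1} = \chi(1)^{2-3\kappa}$, so
\[
\bigm|\Pr_{C_1,C_2,C_3}(g)-|G|^{-1}\bigm| \le \sum_{1\ne\chi\in\Irr(G)}\chi(1)^{2-3\kappa} \le \zeta^G(3\kappa-2)-1.
\]
Thus it suffices to show $\zeta^G(s)\to 1$ as $|G|\to\infty$ for $s=3\kappa-2$, which by the Witten zeta function estimates of \cite{LiSh1, LiSh2} holds provided $s$ is bounded below by a positive constant; concretely, for $G\in\Cl_n(q)$ with $n\ge 9$ one has $\zeta^G(s)\to 1$ for any fixed $s>0$ (indeed the smallest nontrivial degree grows at least like $q^{n-2}$, and the number of irreducible characters is polynomial in $q$ of bounded degree once $n$ is fixed, while for $n\to\infty$ the relevant sums are controlled in those references). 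If $3\kappa-2$ is not automatically positive, one first replaces $\kappa$ by a smaller positive constant — the hypothesis $n\ge 9$ gives room to absorb the loss, exactly as in \cite[7.5]{GLT} — so that $3\kappa>2$; this is the only place the specific constant $\gamma$ and the bound $n\ge9$ are used, and it is where one must be slightly careful. This proves (i).

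For (ii), note that (i) already gives $|\Pr_{C_1,C_2,C_3}(g)-|G|^{-1}|<|G|^{-1}$, hence $\Pr_{C_1,C_2,C_3}(g)>0$, for all $g\in G$ once $|G|$ is large enough; since the conclusion of (i) is uniform over $G\in\Cl_n(q)$ with $n\ge9$ and over the allowed choices of $g_1,g_2,g_3$ (the bound depends only on the uniform decay of $\zeta^G(3\kappa-2)-1$), there is an absolute $N$ with the stated property. The finitely many classical simple groups of order $<N$, or with $n<9$, are irrelevant to the asymptotic statement and need no separate treatment here.

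The main obstacle is making the character-ratio bound uniform with a single absolute exponent $\kappa$ valid across all of $\Cl_n(q)$ for $n\ge9$ — in particular handling unipotent-type characters and the largest-degree characters where naive bounds are weakest — together with checking that the resulting exponent $3\kappa$ genuinely exceeds $2$. Both points are essentially contained in \cite[7.5]{GLT} and its proof; the present statement is described as a "slight extension" precisely because one wants the three centralizer hypotheses $|C_G(g_i)|\le|G|^\gamma$ rather than a fixed conjugacy-class shape, but the same character estimates apply verbatim once phrased in terms of the centralizer bound. The remaining steps — plugging into \eqref{uniform}, bounding by $\zeta^G$, and invoking \cite{LiSh1, LiSh2} — are routine.
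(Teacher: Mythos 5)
Your proposal is essentially the paper's proof: invoke the centralizer-keyed character bound of \cite{GLT}, feed it into \eqref{uniform}, dominate the error by $\zeta^G(s)-1$, and quote \cite[1.1]{LiSh2}; part (ii) then follows from the effectivity of that estimate. Two small corrections are needed in your write-up. First, the paper does not need to juggle a generic $\kappa$: \cite[Theorem 1.3]{GLT} already gives $|\chi(g)|\le\chi(1)^{1/4}$ (i.e. $1-\kappa=1/4$), so the summand is $\chi(1)^{-1/4}$ and $s=1/4$ outright; your remark that one can ``replace $\kappa$ by a smaller positive constant'' to force $3\kappa>2$ is backwards, since shrinking $\kappa$ only decreases $3\kappa-2$ --- the whole point is that the \cite{GLT} exponent is strong enough that no adjustment is possible or necessary. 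Second, it is not true that $\zeta^G(s)\to 1$ for every fixed $s>0$ once $n\ge 9$ (your heuristic that the number of characters is ``polynomial of bounded degree'' while the minimal degree is $\approx q^{n-2}$ does not by itself give decay for small $s$); what is used is precisely that $s=1/4$ together with $n\ge 9$ falls within the range covered by \cite[1.1]{LiSh2}. Neither slip affects the architecture of the argument, which matches the paper's.
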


\begin{proof}
By Theorem 1.3 of \cite{GLT} we may choose $0 < \gamma < 1$ such that, if $g \in G$ satisfies $|C_G(g)| \le |G|^{\gamma}$,
then $|\chi(g)| \le \chi(1)^{1/4}$ for all $\chi \in \Irr(G)$.

Let $g_i, C_i$ be as in the statement of the proposition.
Then $|\chi(g_i)| \le \chi(1)^{1/4}$, and therefore inequality (\ref{uniform}) above shows that
\[
|\Pr_{C_1,C_2,C_3}(g)-|G|^{-1}| \le |G|^{-1} \sum_{1 \ne \chi \in \Irr(G)} \chi(1)^{-1/4} = |G|^{-1}(\zeta^G(1/4)-1).
\]
By \cite[1.1]{LiSh2} and our assumptions on $G$, it follows that $\zeta^G(1/4) - 1 \to 0$ as $|G| \to \infty$.
This completes the proof of part (i).

Part (ii) follows from part (i) and the effective nature of the proof of \cite[1.1]{LiSh2}.
\end{proof}

We note that the results \cite[2.4, 2.5]{Sh3} provide a weaker version of Proposition \ref{Linf}. More specifically, these results
show that the conclusions of Proposition \ref{Linf} hold if we assume
$$|C_G(g_i)| \le q^{(4/3-\delta)r},\ i=1,2,3$$
for any fixed $\delta > 0$ and $N = N(\delta)$.

Proposition \ref{Linf} easily implies the following.

\begin{thm} There exist an absolute constant $\delta > 0$ and an integer $N$ such that the following holds.
Suppose $n \ge N$, $G \in \Cl_n(q)$, and $R, S, T \subseteq G$ are normal subsets satisfying
$|R|, |S|, |T| \ge |G|^{1 - \delta}$. Then $RST = G$.
\end{thm}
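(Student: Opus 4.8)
The plan is to reduce the statement about general large normal subsets $R, S, T$ to the case of conjugacy classes handled in Proposition~\ref{Linf}, by extracting from each normal subset a conjugacy class of sufficiently small centralizer. First I would recall that a normal subset of $G$ is a union of conjugacy classes, and that if $|R| \ge |G|^{1-\delta}$ then $R$ must contain at least one conjugacy class $C = g^G$ with $|C| \ge |G|^{1-\delta}/k(G)$, where $k(G) = |\Irr(G)|$ is the number of conjugacy classes of $G$. Since for $G \in \Cl_n(q)$ we have $k(G) \le q^{Cn}$ for an absolute constant $C$ (a standard bound; e.g. via the number of semisimple and unipotent classes), and $|G| \ge q^{c n^2}$ for an absolute $c > 0$, the class $C$ satisfies $|C| \ge |G|^{1 - \delta'}$ for a slightly larger $\delta'$ still tending to $0$ with $\delta$. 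Equivalently $|C_G(g)| = |G|/|C| \le |G|^{\delta'}$.

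The second step is to choose $\delta$ small enough that $\delta' < \gamma$, where $\gamma$ is the absolute constant of Proposition~\ref{Linf}. Then, picking for each of $R, S, T$ a conjugacy class $C_1 = g_1^G \subseteq R$, $C_2 = g_2^G \subseteq S$, $C_3 = g_3^G \subseteq T$ with $|C_G(g_i)| \le |G|^\gamma$, part~(ii) of Proposition~\ref{Linf} gives an absolute $N$ such that $|G| \ge N$ forces $C_1 C_2 C_3 = G$. Since $C_1 C_2 C_3 \subseteq RST$, we get $RST = G$. Finally I would note that $n \ge N$ for a suitable absolute $N$ (at least $9$, to invoke Proposition~\ref{Linf}, and large enough to make the estimates $k(G) \le q^{Cn}$, $|G| \ge q^{cn^2}$ effective) together with $|G| \ge N$ is automatic once $n$ is large, since $|G| \to \infty$ with $n$; so the hypothesis $n \ge N$ alone suffices.

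The only mild subtlety — and the step I would be most careful about — is the bookkeeping relating $\delta$ to $\delta'$ and checking that the bound $k(G) \le q^{Cn}$ together with $|G| \ge q^{cn^2}$ really does convert $|C| \ge |G|^{1-\delta}/k(G)$ into $|C_G(g)| \le |G|^\gamma$ for all large $n$: concretely, $|C_G(g)| = |G|/|C| \le |G|^\delta \cdot k(G) \le |G|^\delta \cdot q^{Cn} \le |G|^{\delta + Cn/(cn^2)} = |G|^{\delta + C/(cn)}$, which is at most $|G|^\gamma$ once $\delta < \gamma/2$ and $n \ge 2C/(c\gamma)$. This is the crux; everything else is a direct appeal to Proposition~\ref{Linf}. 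One should also double-check that $\Cl_n(q)$ contains finitely many groups for each fixed $(n,q)$ and that $|G|$ is comparable to $q^{n^2}$ uniformly across the families $\PSL_n$, $\PSp_n$, $\PSU_n$, $\mathrm{P\Omega}^\pm_n$ — standard order formulas give $q^{c_1 n^2} \le |G| \le q^{c_2 n^2}$ with absolute $c_1, c_2 > 0$, which is all that is needed.
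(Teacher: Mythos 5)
Your proposal is correct and follows essentially the same route as the paper: pigeonhole each normal subset to extract a conjugacy class of size at least $|G|^{1-\delta}/k(G)$, use the bound $k(G)=O(q^r)$ (from \cite{FG1}) together with $|G|\asymp q^{cn^2}$ to conclude each class has size at least $|G|^{1-\gamma}$ for large $n$ when $\delta<\gamma/2$, and then invoke Proposition~\ref{Linf}(ii). The bookkeeping you flag as the crux is exactly the computation the paper performs, phrased with the rank $r$ in place of the dimension $n$.
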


\begin{proof}
Let $\gamma$ be as in Proposition \ref{Linf}, and define, say, $\delta = \gamma/2$.

Suppose $G$ above has rank $r$.
Then, by \cite{FG1}, we have $k(G) \le cq^r$, for a small absolute constant $c > 0$.
Clearly, $R, S, T$ contain conjugacy classes $C_1, C_2, C_3$ respectively satisfying
\[
|C_i| \ge \frac{|G|^{1-\delta}}{k(G)} \ge c^{-1} q^{-r} |G|^{1-\delta} \ge |G|^{1-\gamma/2-o_r(1)} \ge |G|^{1-\gamma},
\]
provided $N$ is large enough and $r \ge N$.

It follows from Proposition \ref{Linf} that (enlarging $N$ if needed) $C_1 C_2 C_3 = G$
and hence $RST = G$.
\end{proof}

\bigskip

\section{An application to word maps}

Probabilistic Waring problems for finite simple groups are studied  \cite{LST2}.
For a word $w \in F_d$ and a finite group $G$, let $\Pr_{w,G}$ denote the probability induced
by the corresponding word map $w:G^d \to G$, namely $\Pr_{w,G}(g) = |w^{-1}(g)|/|G|^d$ for $g \in G$.

It is shown in \cite{LST2} that for every $l \in \N$ there exists $N=N(l)$ such that, if $w_1, \ldots , w_N \in F_d$ are non-trivial
words in pairwise disjoint sets of variables, then
\[
\Vert  \Pr_{w_1 \cdots w_N,G} - \bfU_G \Vert _{\infty} \to 0 \; {\rm as} \; |G| \to \infty,
\]
where $G$ ranges over the finite simple groups. The dependence of $N$ on $l$ is genuine.
It turns out that, if we change the probabilistic model, let $G$ be a finite simple group of Lie type,
choose random elements $g_i \in w_i(G)$ and study the distribution of $g_1 \cdots g_N$, we obtain
an almost uniform distribution in $L^{\infty}$ much faster, namely in two or three steps.

\begin{thm} Let $w_1, w_2, w_3 \in F_d$ be non-trivial words and let $G$ be a finite simple group.
\begin{enumerate}[\rm(i)]
\item Suppose $G$ is of Lie type of bounded rank. Then
\[
\Vert  \Pr_{w_1(G),w_2(G)} - \bfU_G \Vert_{L^{\infty}} \to 0 \; {\rm as} \; |G| \to \infty.
\]

\item Suppose $G$ is a classical group. Then
\[
\Vert  \Pr_{w_1(G),w_2(G),w_3(G)} - \bfU_G \Vert _{L^{\infty}} \to 0 \; {\rm as} \; |G| \to \infty.
\]
\end{enumerate}
\end{thm}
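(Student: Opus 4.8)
The plan is to reduce both statements to the equidistribution results already established in this paper, using the known fact that word images are large normal subsets. First I would recall the basic input: by Larsen's theorem on word maps (quoted in the introduction), for any fixed non-trivial word $w$ and any $\epsilon > 0$, there is $N(\epsilon)$ such that $|w(G)| \ge |G|^{1-\epsilon}$ once $|G| \ge N(\epsilon)$, and moreover for $G$ of Lie type of bounded rank there is a constant $\epsilon_0 > 0$ (depending only on the rank) with $|w(G)| \ge \epsilon_0 |G|$ for all $w \ne 1$. Since $w_i(G)$ is always a normal subset (a union of conjugacy classes), these are exactly the sets to which our earlier theorems apply.

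For part (i), fix the rank $r$ and let $\epsilon_0 = \epsilon_0(r) > 0$ be such that $|w_i(G)| \ge \epsilon_0 |G|$ for $i = 1, 2$. Apply the theorem on universal central extensions of rank-$r$ groups (the one preceding Theorem~\ref{Q12bounded}) with $S = w_1(G)$ and $T = w_2(G)$, after passing to the universal central extension as in the proof of Theorem~\ref{Q12bounded}: this is legitimate because $w_i(\tilde G)$ maps onto $w_i(G)$, so the preimages $\pi^{-1}(w_i(G))$ are normal subsets of $\tilde G$ of the required relative size. The conclusion is that every non-central element $c$ has $(1+o(1))|w_1(G)|\,|w_2(G)|/|G|$ representations as $st$. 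Since $|w_1(G)|\,|w_2(G)| = (1 + o(1))|G|^2$ would be too strong to claim, instead one notes directly that $\Pr_{w_1(G),w_2(G)}(c) = (1+o(1))/|G|$ for every non-central $c$, which is precisely $\Vert \Pr_{w_1(G),w_2(G)} - \bfU_G\Vert_{L^\infty} \to 0$ once one checks the finitely many central elements separately (the center has bounded order, and since $e \in w_i(G)$ one gets at least $\max(|w_1(G)|,|w_2(G)|)$ representations of each central element, which is $\ge \epsilon_0 |G| \gg |G|/|G|$, so the $L^\infty$ norm is controlled there too — in fact one should phrase the whole thing via the normalized count $\Pr_{w_1(G),w_2(G)}(g)|G|$ and observe it tends to $1$ uniformly over non-central $g$ and is bounded above uniformly everywhere, which suffices since $|Z(G)|$ is bounded).

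For part (ii), $G$ is a classical group of rank $r$, where now $r$ may be unbounded, so the bounded-rank machinery does not apply and we use character bounds instead. By Larsen's theorem applied with a sufficiently small $\epsilon$, we have $|w_i(G)| \ge |G|^{1-\delta}$ for $i = 1,2,3$ once $|G|$ is large, where $\delta > 0$ will be chosen below. This is exactly the hypothesis of the theorem immediately preceding this section (the one concluding $RST = G$ for normal subsets of size $\ge |G|^{1-\delta}$), so we already get $w_1(G)w_2(G)w_3(G) = G$; but we want the stronger $L^\infty$-equidistribution statement. For that I would argue as in that theorem's proof: each $w_i(G)$ contains a conjugacy class $C_i$ with $|C_i| \ge |w_i(G)|/k(G) \ge |G|^{1-\delta}/(cq^r)$ by the bound $k(G) \le cq^r$ of \cite{FG1}, hence $|C_G(g_i)| = |G|/|C_i| \le cq^r |G|^{\delta} \le |G|^{\gamma}$ for $G$ of large enough rank and $\delta < \gamma/2$, with $\gamma$ from Proposition~\ref{Linf}. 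Then Proposition~\ref{Linf}(i) gives $\Vert \Pr_{C_1,C_2,C_3} - \bfU_G \Vert_\infty \to 0$. Finally, since $\Pr_{w_1(G),w_2(G),w_3(G)}$ is an average of distributions $\Pr_{C_1',C_2',C_3'}$ over conjugacy classes $C_i' \subseteq w_i(G)$, and the bound $|\chi(g)| \le \chi(1)^{1/4}$ from \cite[1.3]{GLT} applies to \emph{every} element with small centralizer, I would instead directly bound $\Pr_{w_1(G),w_2(G),w_3(G)}(g)$ via \eqref{uniform}-style character sums: every element of $w_i(G)$ of large rank lies in a class of size $\ge |G|^{1-\delta}$ hence has centralizer $\le |G|^\gamma$ — wait, this needs care, because $w_i(G)$ may contain elements in small classes too; so the correct route is to use that $w_i(G)$ \emph{contains} a dense-enough class and also that $w_i(G)$ as a normal set has $|w_i(G)| \ge |G|^{1-\delta}$, and then apply the argument of the $RST = G$ theorem but tracking the $L^\infty$ distance rather than just nonemptiness of the product, which is exactly what Proposition~\ref{Linf}(i) supplies at the level of classes and then transfers to the normal sets by averaging and using that the ``bad'' (small) classes inside $w_i(G)$ contribute negligibly to $|w_i(G)|$.

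The main obstacle, and the step requiring the most care, is the transfer in part (ii) from the class-level equidistribution of Proposition~\ref{Linf} to the normal-set-level equidistribution of the theorem: a normal subset of size $\ge |G|^{1-\delta}$ is a union of classes, but some of those classes can be small (even central), and $\Pr_{w_1(G),w_2(G),w_3(G)}$ is a weighted average of $\Pr_{C_1',C_2',C_3'}$ with weights $|C_i'|/|w_i(G)|$. One must verify that the total weight carried by classes $C_i'$ with $|C_G| > |G|^\gamma$ is $o(1)$ — equivalently that the union of such ``large-centralizer'' classes has size $o(|G|^{1-\delta})$ — which follows from the Liebeck–Shalev-type bound on the number and size of small classes, or more simply from the fact that $\sum_{|C| \text{ small}} |C| \le k(G) \cdot \max(\text{small } |C|) = O(q^r |G|^{\gamma}) = o(|G|^{1-\delta})$ for $\delta$ small and $r$ large. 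Granting that, the averaged distribution inherits the $L^\infty$-closeness to uniform from the dominant classes, and one concludes. I would also double-check the edge case of $G$ of bounded rank within the classical family in (ii): there one can instead invoke part (i)-style reasoning or Corollary~\ref{ABC}, since $m(G) \to \infty$ forces the three-subset conclusion directly, so (ii) holds uniformly over all classical $G$.
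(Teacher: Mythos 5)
Part (i) of your proposal is essentially the paper's argument: Larsen's theorem gives $|w_i(G)|\ge\epsilon|G|$ in bounded rank, and Theorem A(iv) (Theorem~\ref{Q12bounded}) does the rest. Your extra fussing over the identity is no less careful than the paper, which also only controls non-identity elements (via Question~\ref{Strong}).

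Part (ii) is where you genuinely diverge from the paper, and where your argument has a gap. The paper does not touch Proposition~\ref{Linf} or any class-by-class analysis here: after reducing to unbounded rank via part (i), it verifies the Gowers-type hypothesis $m(G)\,|w_1(G)|\,|w_2(G)|\,|w_3(G)|/|G|^3\to\infty$ using the much stronger word-image bounds $|w_i(G)|\ge cr^{-1}|G|$ for symplectic/orthogonal groups (\cite{LS1}) and $|w_i(G)|\ge q^{-n/4+o_n(1)}|G|$ for linear/unitary groups (\cite{NP}), together with $m(G)\ge bq^r$, and then quotes Corollary~\ref{ABC} (i.e.\ Theorem~\ref{bnp}), which yields $L^\infty$-equidistribution for \emph{arbitrary} subsets with no normality and no class decomposition. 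Note that Larsen's bound $|w_i(G)|\ge|G|^{1-\delta}$, which is all you use, would not feed that criterion, since $m(G)|G|^{-3\delta}\to 0$; the stronger bounds are essential to the paper's route.

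Your alternative route (decompose each $w_i(G)$ into classes, apply Proposition~\ref{Linf} to triples of large classes, and average) can be made to work, but the step you yourself flag as the main obstacle is not closed by the estimate you give. Showing that the classes with $|C_G(g)|>|G|^{\gamma}$ carry $o(1)$ of the mass of $w_i(G)$ controls the $L^1$ error of the averaged distribution, not the $L^\infty$ error: a bad triple has weight $|C_1|\,|C_2|\,|C_3|/(|w_1(G)|\,|w_2(G)|\,|w_3(G)|)$, but $\Pr_{C_1,C_2,C_3}(g)$ can be as large as $1$ (for instance $C_3=\{e\}\subseteq w_3(G)$), so ``total bad weight $o(1)$'' only yields an additive error $o(1)$, useless against the target $o(1/|G|)$. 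What rescues the argument is a direct count: the number of solutions of $x_1x_2x_3=g$ with, say, $x_3$ lying in a small class is at most $|w_1(G)|$ times the number of small-class elements of $w_3(G)$, hence at most $|w_1(G)|\cdot k(G)|G|^{1-\gamma}$, so these triples contribute at most $k(G)|G|^{1-\gamma}/(|w_2(G)|\,|w_3(G)|)\le cq^{r}|G|^{2\delta-\gamma}\cdot|G|^{-1}=o(|G|^{-1})$ once $\delta<\gamma/2$ and $r\to\infty$ (using $|G|\ge q^{cr^2}$). With that inserted, and the typo $|G|^{\gamma}$ corrected to $|G|^{1-\gamma}$ in your bound on the mass of small classes, your argument goes through; but it is considerably heavier than the paper's one-line appeal to Corollary~\ref{ABC}.
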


\begin{proof}
Let $G$ be as in part (i). By \cite{L} there exists $N, \e > 0$ such that, if $|G| \ge N$ then $|w_i(G)| \ge \e |G|$ for $i = 1,2$.
The conclusion now follows from part (iv) of Theorem A.

To prove part (ii), we may assume, applying part (i), that the rank $r$ of $G$ tends to infinity.
Theorem 1.12 of \cite{LS1} shows that, if $G$ is symplectic or orthogonal, then $|w_i(G)| \ge cr^{-1} |G|$ ($i=1,2,3$),
where $c > 0$ is an absolute constant. Since $m(G) \ge b q^r$ for fixed $b> 0$
(see \cite{FG1}) we have
\begin{equation}\label{inf}
\frac{m(G)|w_1(G)|\,|w_2(G)|\,|w_3(G)|}{|G|^3} \to \infty \; {\rm as} \; |G| \to \infty.
\end{equation}
In the case where $G$ is $\PSL_n(q)$ or $\PSU_n(q)$, Propositions 1.7 and 1.8 of \cite{NP}
show that $|w_i(G)| \ge q^{-n/4+o_n(1)}|G|$ ($i=1,2,3$), which implies (\ref{inf}) for $n \gg 0$.

The desired conclusion now follows from Theorem \ref{ABC}.
\end{proof}

\bigskip

\section{Character estimates and product results}\label{products}
In this section, we prove several results concerning character values and products of conjugacy classes in finite simple groups of Lie type, which
will be needed in the next section and which may be of independent interest.

\subsection{Groups of type $A_n$ and $\tw2 A_n$}
\begin{prop}\label{A-degree}
For all integers $L$ there exists a constant $A = A(L)>0$ such that for all integers $n\ge L$ and all prime powers $q$,
the degree of the unipotent character of $\GL_n(\F_q)$ associated to a partition whose largest piece is $n-L$ is at least $q^{\frac{n^2-n}2-A}$.
\end{prop}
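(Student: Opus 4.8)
The plan is to read the degree of the unipotent character $\chi_\lambda$ off the $q$-analogue of the hook length formula and then control the single partition-dependent exponent that appears. Recall that, in the labelling convention of the paper, the unipotent characters of $\GL_n(\F_q)$ are parametrized by partitions $\lambda\vdash n$ with $\chi_{(n)}$ the Steinberg character and $\chi_{(1^n)}$ the trivial character, and that (see e.g.\ Carter's \emph{Finite Groups of Lie Type}, \S13.8, or Macdonald's \emph{Symmetric Functions and Hall Polynomials}, Ch.~IV)
$$\chi_\lambda(1)\;=\;q^{\,n(\lambda')}\,\frac{\prod_{k=1}^n(q^k-1)}{\prod_{(i,j)\in\lambda}(q^{h(i,j)}-1)},$$
where $h(i,j)$ is the hook length of the box $(i,j)\in\lambda$, $\lambda'$ is the conjugate partition, and $n(\mu):=\sum_i(i-1)\mu_i$. (Should the paper instead use the transposed convention, one runs the entire argument with $\lambda$ replaced by $\lambda'$; nothing below is affected.) Using the identity $\sum_{(i,j)\in\lambda}h(i,j)=n(\lambda)+n(\lambda')+n$ together with $\prod_{k=1}^n(q^k-1)=q^{\binom{n+1}{2}}\prod_{k=1}^n(1-q^{-k})$ and $\prod_{(i,j)\in\lambda}(q^{h(i,j)}-1)=q^{\sum_{(i,j)}h(i,j)}\prod_{(i,j)\in\lambda}(1-q^{-h(i,j)})$, this rewrites as
$$\chi_\lambda(1)\;=\;q^{\,\binom n2-n(\lambda)}\cdot\frac{\prod_{k=1}^n(1-q^{-k})}{\prod_{(i,j)\in\lambda}(1-q^{-h(i,j)})}.$$

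Next I would bound the fractional correction factor below by an absolute constant. Its denominator is a product of numbers lying in $(0,1)$, hence is $<1$, while for $q\ge 2$ its numerator exceeds $\prod_{k\ge1}(1-2^{-k})>\tfrac14$ --- the elementary bound already used earlier in the paper for the Grassmannian counts. Therefore
$$\chi_\lambda(1)\;>\;\tfrac14\,q^{\,\binom n2-n(\lambda)}\;\ge\;q^{\,\binom n2-n(\lambda)-2},$$
the last inequality using $q^2\ge4$.

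It remains to bound $n(\lambda)$ when the largest part of $\lambda$ is $n-L$. In that case $(\lambda_2,\lambda_3,\dots)$ is a partition $\mu$ of $L$, and since the $i=1$ term of $n(\lambda)$ vanishes we get $n(\lambda)=\sum_{j\ge1}j\,\lambda_{j+1}=\sum_{j\ge1}(j-1)\mu_j+\sum_{j\ge1}\mu_j=n(\mu)+L$. As $n(\mu)=\sum_i\binom{\mu'_i}{2}$ and $\binom{\cdot}{2}$ is convex, $n(\mu)$ is maximized over all partitions of $L$ by $\mu=(1^L)$, where it equals $\binom L2$; thus $n(\lambda)\le\binom L2+L=\binom{L+1}{2}$, with equality exactly for the hook $\lambda=(n-L,1^L)$. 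Combining this with the previous display yields $\chi_\lambda(1)>q^{\binom n2-\binom{L+1}{2}-2}$, so $A(L):=\binom{L+1}{2}+2$ does the job, and the finitely many $n\le L$ are vacuous since then there is no partition of $n$ with largest part $n-L\le0$. I do not anticipate a real obstacle here; the one thing to be careful about is the bookkeeping of the labelling convention, so that a partition with largest part $n-L$ genuinely corresponds to a character close to the Steinberg character (of degree $q^{\binom n2}$) rather than close to the trivial one.
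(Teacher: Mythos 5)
Your proof is correct, and it starts from the same place as the paper's: the $q$-analogue of the hook length formula for unipotent degrees of $\GL_n(\F_q)$ (your prefactor $q^{n(\lambda')}$ is the paper's $q^{\sum_i\binom{\lambda_i}{2}}$). Where you diverge is in how the exponent is controlled. The paper bounds hook lengths box by box — the last $n-2L$ boxes of the first row have hooks $n-2L,\dots,1$, all other hooks are at most $L$ or at most $n$ — which forces the reduction to $n>2L$ and yields $A=\tfrac{5L^2-3L+4}{2}$. You instead use the exact identity $\sum_{(i,j)\in\lambda}h(i,j)=n(\lambda)+n(\lambda')+n$ to rewrite the degree as $q^{\binom n2-n(\lambda)}$ times a factor lying in $(\tfrac14,\infty)$, and then bound $n(\lambda)\le\binom{L+1}{2}$ by convexity; this avoids any case split on $n$ and gives the sharper constant $A=\binom{L+1}{2}+2$. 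Your closing caveat about the labelling convention is well taken and is the one genuinely delicate point: the statement is only true in the convention (the one the paper's displayed degree formula uses, where $(n)$ labels the Steinberg character) in which a large first part means a large degree; under the transposed convention the character would have degree roughly $q^{Ln}$ and the claim would fail, so "replace $\lambda$ by $\lambda'$" is the right fix but it changes which partitions the hypothesis refers to, not merely the notation.
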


\begin{proof}
Choosing $A$ large enough, without loss of generality, we may assume $n>2L$.
The partition $\lambda = \lambda_1\ge \lambda_2\ge \cdots$ of $n$ associated to the character has $\lambda_1 = n-L$.
It is well known (see, for instance, \cite[(21)]{O} or \cite{M1}) that the
unipotent characters of $\GL_n(\F_q)$  have degree
$$\chi_\lambda(1) = q^{\sum_i \binom{\lambda_i}{2}} \frac{\prod_{j=1}^n (q^j-1)}{\prod_{k=1}^n (q^{h_k}-1)},$$
where $h_k$ denotes the hook of the $k$th box in the Ferrers diagram of $\lambda$.  Now, the last $n-2L$ boxes in the first row of the Ferrers diagram belong to one-box columns.
Therefore, their hooks have lengths $n-2L,\ldots,3,2,1$.
All hooks of boxes not in the first row have lengths $\le L$, and the hooks
of the first $L$ boxes in the first row have length $\le n$.  We conclude that
$$\frac{\prod_{j=1}^n (q^j-1)}{\prod_{k=1}^n (q^{h_k}-1)} \ge \frac{\prod_{j=n-2L+1}^n (q^j-1)}{q^{L^2+Ln}}.$$
As
$$\prod_{i=1}^\infty (1-q^{-i}) > 1/4 \ge q^{-2},$$
we have
$$\dim \chi_\lambda(1) > q^{\binom{\lambda_1}2}q^{-2+L(n+(n-2L+1)) - L^2-Ln} = q^{\frac{n^2-n-5L^2+3L-4}2}.$$
\end{proof}

Up to conjugacy, $\F_q$-rational maximal tori in the algebraic groups $\SL_n$ and $\SU_n$ over a finite field $\F_q$ are both indexed by partitions of $n$.
We do not distinguish between the maximal torus as an algebraic group and the finite subgroup of $G$ obtained by taking $\F_q$-points.
If $G$ is either $\SL_n(q)$ or $\SU_n(q)$, and $a_1,\ldots,a_k$ are positive integers summing to $n$
(not necessarily in order), then we denote by $T_{a_1,\ldots,a_k}<G$ a maximal torus in the class belonging to the partition with parts $a_1,\ldots,a_k$.

\begin{thm}\label{A-product1}
Let $a \geq 3$ be a fixed positive integer.  Then there exists an integer $N = N(a) \geq 2a^2+6$
such that the following statements hold whenever $n>N$, $q$ any prime power, and $G = \SL_n(q)$ or $\SU_n(q)$.
\begin{enumerate}[\rm(i)]
\item If $t_1$ and $t'_1$ are regular semisimple elements of $G$ belonging to tori $T$ and $T'$ of type
$T_n$ and $T_{1,a,n-a-1}$ respectively, then $t_1^G\cdot (t'_1)^G \supseteq G \smallsetminus \ZB(G)$.
\item If $t_2$ and $t'_2$
are regular semisimple elements of $G$ belonging to tori $T$ and $T'$ of type
$T_{1,n-1}$ and $T_{a,n-a}$ respectively, then $t_2^G\cdot (t'_2)^G \supseteq G \smallsetminus \ZB(G)$.
\end{enumerate}
\end{thm}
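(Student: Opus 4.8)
The plan is to apply the class-multiplication formula \eqref{Frobenius}.  Writing $C=t_1^G$ and $C'=(t_1')^G$, for every $g\in G\smallsetminus\ZB(G)$ the number of pairs $(x,y)\in C\times C'$ with $xy=g$ equals
$$\frac{|C|\,|C'|}{|G|}\Bigl(1+\sum_{1\ne\chi\in\Irr(G)}\frac{\chi(t_1)\,\chi(t_1')\,\overline{\chi(g)}}{\chi(1)}\Bigr),$$
so it suffices to prove that for $n>N(a)$, uniformly in $q$ and in the non-central $g$, the sum over $\chi\ne 1$ has absolute value $<1$.  The first step is to cut this sum down to a bounded number of explicit terms.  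Since $t_1$ is regular semisimple and generates a maximal torus of type $T_n$ while $t_1'$ generates one of type $T_{1,a,n-a-1}$, Deligne--Lusztig theory forces any $\chi$ with $\chi(t_1)\ne 0$ to lie in a Lusztig series $\mathcal{E}(G,s^*)$ whose semisimple label $s^*$ has a representative in the dual torus of type $T_n$, and any $\chi$ with $\chi(t_1')\ne 0$ to lie in $\mathcal{E}(G,(s')^*)$ with $(s')^*$ having a representative in the dual torus of type $T_{1,a,n-a-1}$.  Translated into characteristic polynomials in type $A_{n-1}$ (and ${}^2A_{n-1}$ for $\SU_n$): the first condition says the characteristic polynomial of $s^*$ is a power of a single irreducible polynomial, while the second says the characteristic polynomial of $(s')^*$ has a linear factor.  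If $\chi(t_1)$ and $\chi(t_1')$ are both nonzero, disjointness of Lusztig series gives $s^*\sim (s')^*$, and comparing the two shapes forces the common characteristic polynomial to be a power of a linear factor; hence $s^*$ is central and, since the relevant dual group $\PGL_n(q)$ (resp.\ $\PGU_n(q)$) has trivial centre, $\chi$ is a genuinely unipotent character $\chi_\lambda$.  Here the hypothesis $n>N(a)\ge 2a^2+6$ is used, e.g.\ to avoid the coincidence $n=2a+1$.

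Next, for unipotent $\chi_\lambda$ the Deligne--Lusztig decomposition $R_{T_w}^{1}=\sum_\mu\chi_{\SSS_n}^{\mu}(w)\,\chi_\mu$ shows that $\chi_\lambda(t_1)$ equals $\chi_{\SSS_n}^{\lambda}$ evaluated at an $n$-cycle up to a global sign, so $\chi_\lambda(t_1)\ne 0$ forces $\lambda$ to be a hook $(n-k,1^k)$, and then the value is $\pm 1$; likewise $\chi_\lambda(t_1')$ is, up to sign, $\chi_{\SSS_n}^{\lambda}$ at a permutation of cycle type $(1,a,n-a-1)$, and feeding this into the identity
$$\sum_{k=0}^{n-1}\chi_{\SSS_n}^{(n-k,1^k)}(w)\,t^k=(1+t)^{-1}\prod_{\text{cycles }c\text{ of }w}\bigl(1-(-t)^{|c|}\bigr)$$
forces $k\in\{0,a,n-a-1,n-1\}$.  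Discarding $k=n-1$ (the trivial character), only three terms survive, for $\chi\in\{\,\St=\chi_{(n)},\ \chi_{(n-a,1^a)},\ \chi_{(a+1,1^{n-a-1})}\,\}$, and each of these has $|\chi(t_1)|=|\chi(t_1')|=1$.  It remains to bound $|\chi(g)|/\chi(1)$ for these three characters and an arbitrary non-central $g$.  The Steinberg term is routine: $\St$ vanishes on non-semisimple elements, while for non-central semisimple $g$ one has $|\St(g)|=|C_G(g)|_p\le q^{\binom{n}{2}-(n-1)}$ against $\St(1)=q^{\binom{n}{2}}$, so this term is $o(1)$.  For the two hook characters I would combine the degree bound of Proposition~\ref{A-degree} (with $L=a$), giving $\chi_{(n-a,1^a)}(1)\ge q^{(n^2-n)/2-A(a)}$, with the conjugate-partition identity $\chi_{\lambda'}(1)=q^{\,n(\lambda)-n(\lambda')}\chi_\lambda(1)$ and the computation $\tfrac{n^2-n}{2}-\binom{n-a}{2}=an-\binom{a+1}{2}$, giving $\chi_{(a+1,1^{n-a-1})}(1)\ge q^{\,an-O_a(1)}$; both degrees tend to infinity with $n$.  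The inequality $|\chi(g)|\le|C_G(g)|^{1/2}$ then disposes of every $g$ whose centralizer is not too large, and for the remaining $g$ one needs a genuine value estimate of the shape $|\chi_{(n-a,1^a)}(g)|,\,|\chi_{(a+1,1^{n-a-1})}(g)|\le\chi(1)\,q^{-cn}$ for an absolute $c>0$, which I would take from the character bounds for finite classical groups in \cite{GLT} (or, failing a clean black box, by treating separately the $g$ with a large fixed space or a repeated eigenvalue, using that such $g$ lie in a proper Levi subgroup).  Summing the at most three $o(1)$ contributions gives the required bound $<1$ for $n>N(a)$, which proves (i).

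Part (ii) runs along the same lines but is easier: with $t_2\in T_{1,n-1}$ and $t_2'\in T_{a,n-a}$, the characteristic-polynomial comparison again forces the label $s^*$ to be central (now using $a<n-a$, which holds since $n>N(a)$), so $\chi$ is unipotent; the generating identity at a permutation of cycle type $(1,n-1)$ forces $\lambda\in\{(n),(1^n)\}$, so the only nontrivial survivor is $\St$, and the Steinberg bound above closes the argument with no recourse to the delicate hook estimates.  The $\SU_n$ case of both parts is identical after the Ennola substitution $q\mapsto-q$ in the unipotent degrees and the standard ${}^2A_{n-1}$ dictionary for maximal tori; unipotent characters remain trivial on the centre and $\St$ still vanishes off semisimple elements.  \emph{The main obstacle is the uniform-in-$q$ value bound for the two hook characters $\chi_{(n-a,1^a)}$ and $\chi_{(a+1,1^{n-a-1})}$ on non-central elements with large centralizers}: once that is in hand, the Steinberg term and the small-centralizer range are straightforward, and the whole argument is propelled by the degree lower bound of Proposition~\ref{A-degree}.
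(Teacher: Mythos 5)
Your overall architecture for part (i) --- the Frobenius formula, reduction to unipotent characters via the Lusztig-series/weak-orthogonality argument, and Murnaghan--Nakayama to cut the sum down to the four hook characters $\chi^{(n-k,1^k)}$ with $k\in\{0,a,n-a-1,n-1\}$, each contributing $\pm 1$ --- is exactly the paper's (which quotes \cite[Proposition 3.1.5]{LST1} for this reduction). But the estimate you say you need for the level-$a$ hook character $\chi^{(n-a,1^a)}$, namely $|\chi(g)|\le\chi(1)\,q^{-cn}$ for all non-central $g$, is false: at a transvection the ratio $|\chi^{(n-a,1^a)}(g)|/\chi^{(n-a,1^a)}(1)$ is of order $q^{-a}$, a constant independent of $n$ (already for level $1$ the unipotent Weil character $\chi^{(n-1,1)}$ has ratio roughly $1/q$ at a transvection). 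Consequently your closing step ``summing the at most three $o(1)$ contributions'' does not go through. What is actually available --- and what the paper uses --- is \cite[Theorem 1.6]{GLT1}: since $\chi^{(n-a,1^a)}$ has level $a$ and degree $>q^{a(n-a)-3}$, one gets $|\chi(g)|\le 2.43\,\chi(1)^{1-1/n}$, hence a ratio at most $2.43/q^{a-1/2}<0.43$ for $a\ge 3$. This term is bounded by a constant, not $o(1)$, and one must then verify numerically that the three ratios sum to less than $1$ (the paper gets $0.43+0.25+0.25=0.93$). Your treatment of $\St$ and of the large-degree hook $\chi^{(a+1,1^{n-a-1})}$ via $|\chi(g)|\le|C_G(g)|^{1/2}$ is sound and is essentially \cite[Proposition 6.2.1]{LST1}.

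Part (ii) contains a more serious error. You apply the hook-character generating identity to conclude that only $(n)$ and $(1^n)$ survive, but that identity presupposes that $\lambda$ is a hook, and for the tori $T_{1,n-1}$ and $T_{a,n-a}$ nothing forces this: the condition $\chi^\lambda(t_2)\ne 0$ only requires $\lambda$ to admit a rim $(n-1)$-hook, which every near-hook $(\lambda_1,2,1^{n-\lambda_1-2})$ does. The correct list of survivors is $1_G$, $\chi^{(n-a,2,1^{a-2})}$, $\chi^{(a,2,1^{n-a-2})}$, and $\St$, so part (ii) is not ``easier'': it requires exactly the same level and degree estimates as part (i), now applied to the two near-hook characters. (A small slip elsewhere: $\St=\chi^{(1^n)}$ and $1_G=\chi^{(n)}$, i.e., $k=n-1$ is the Steinberg character, not the trivial one.)
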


\begin{proof}
(i) Consider any $g \in G \smallsetminus \ZB(G)$, and any $\chi \in \Irr(G)$ such that
$\chi(t_1)\chi(t'_1) \neq 0$. By \cite[Proposition 3.1.5]{LST1} and its proof, then $\chi = \chi^{(n-k,1^k)}$, the unipotent
character labeled by $(n-k,1^k)$ with $k = 0$ (the principal character $1_G$), $k=a$, $k = n-a-1$, or
$k= n-1$ (the Steinberg character $\St$); moreover, $|\chi(t_1)\chi(t'_1)| = 1$, and the last two characters both have degree
$\geq C|G|/q^n$ for a universal constant $C > 0$. The character $\chi_2:=\chi^{(n-a,1^a)}$ has level
$$a \leq \min\{\sqrt{n-3/4}-1/2,\sqrt{(8n-17)/12}-1/2\}$$
by \cite[Theorem 3.9]{GLT1}, and so $\chi_2(1) > q^{a(n-a)-3}$ by \cite[Theorem 1.3]{GLT1} and
$$|\chi_2(g)| \leq (2.43)\chi_2(1)^{1-1/n}$$
by \cite[Theorem 1.6]{GLT1}. In particular,
$$|\chi_2(g)|/\chi_2(1) \leq 2.43/\chi_2(1)^{1/n} \leq 2.43/q^{a-1/2} \leq 2.43/2^{2.5} < 0.43.$$
On the other hand, for the latter two (large degree) characters, by \cite[Proposition 6.2.1]{LST1} we have
$|\chi(g)|/\chi(1) < 0.25$ if we take $N(a)$ large enough. It follows that
$$\biggl{|}\sum_{\chi \in \Irr(G)}\frac{\chi(t_1)\chi(t'_1)\overline{\chi(g)}}{\chi(1)}\biggr{|} \geq 1-0.43 - 2(0.25) = 0.07 > 0,$$
and so $g \in t_1^G \cdot (t'_1)^G$.

\smallskip
(ii) Suppose $\chi \in \Irr(G)$ is such that
$\chi(t_2)\chi(t'_2) \neq 0$. By \cite[Proposition 3.1.5]{LST1} and its proof, we again have $\chi = 1_G$,
$\chi_2:=\chi^{(n-a,2,1^{a-2})}$, $\chi^{(a,2,1^{n-a-2})}$, or $\St$; moreover, $|\chi(t_2)\chi(t'_2)| = 1$, and the last two characters both have degree $\geq C|G|/q^n$ for a universal constant $C > 0$. Now we can repeat the arguments in (i) verbatim.
\end{proof}

We also need a similar result, using \cite[Proposition 8.4]{GLOST} and its notation.

\begin{thm}\label{A-product2}
There exists an integer $N \geq 32$ such that if $t$ and $t'$ are regular semisimple elements of $G$ belonging to tori $T$ and $T'$ of type
$T_{n-2,2}$ and $T_{n-3,3}$ respectively, then $t^G\cdot (t')^G \supseteq G \smallsetminus \ZB(G)$ in each of the following cases:
\begin{enumerate}[\rm(i)]
\item $G = \SL_n(q),\,n\ge N$,
\item $G = \SL_n(q),\,n\ge 7,\,q>7^{481}$,
\item $G = \SU_n(q),\,n\ge N,\,q\ge 3$,
\item $G = \SU_n(q),\,n\ge 7,\,q>7^{481}$.
\end{enumerate}

\end{thm}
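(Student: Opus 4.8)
The plan is to run the argument of the proof of Theorem~\ref{A-product1} essentially verbatim, with \cite[Proposition~8.4]{GLOST} taking the place of \cite[Proposition~3.1.5]{LST1}. Fix $g\in G\smallsetminus\ZB(G)$; by the Frobenius formula \eqref{Frobenius} it suffices to show that
\[
\Sigma:=\sum_{\chi\in\Irr(G)}\frac{\chi(t)\chi(t')\overline{\chi(g)}}{\chi(1)}\neq 0 .
\]
The first step is to pin down the support: by \cite[Proposition~8.4]{GLOST} and its proof, the set of $\chi\in\Irr(G)$ with $\chi(t)\chi(t')\neq 0$ is bounded, consisting of $1_G$, the Steinberg character $\St$, a bounded number of further unipotent characters of degree $\geq C|G|/q^{n}$ (a universal $C>0$), and the two ``low-level'' unipotent characters $\chi_2:=\chi^{(n-2,2)}$ and $\chi_3:=\chi^{(n-3,3)}$, of levels $2$ and $3$; moreover $|\chi(t)\chi(t')|=1$ for every such $\chi$.

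Next I would bound each term. The term $\chi=1_G$ contributes exactly $1$. For the boundedly many characters of degree $\geq C|G|/q^{n}$ (among them $\St$), \cite[Proposition~6.2.1]{LST1} gives $|\chi(g)|/\chi(1)<\delta_0$ for a small absolute constant $\delta_0$ once $N$ is large enough (cases (i), (iii)), or once $q$ exceeds the stated bound with $n\geq 7$ (cases (ii), (iv)); this is where the threshold $q>7^{481}$ and, for the unitary groups, the restriction $q\geq 3$ enter. For $\chi_2$ and $\chi_3$ I would use the level estimates of \cite{GLT1}: by \cite[Theorem~1.3]{GLT1}, $\chi_j(1)>q^{j(n-j)-3}$, hence $\chi_j(1)^{1/n}\geq q^{j-1/2}$ whenever $n\geq 2(j^{2}+3)$ (in particular $n\geq 24$, contributing to the threshold $N\geq 32$), and by \cite[Theorem~1.6]{GLT1}, $|\chi_j(g)|\leq 2.43\,\chi_j(1)^{1-1/n}$, so that $|\chi_j(g)|/\chi_j(1)\leq 2.43\,q^{-(j-1/2)}$. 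Adding up, $|\Sigma|\geq 1-\delta_0\cdot\#\{\text{large-degree }\chi\}-2.43\,(q^{-3/2}+q^{-5/2})$, which is positive in the stated ranges; hence $g\in t^{G}(t')^{G}$, and since $g$ was an arbitrary non-central element this gives $t^{G}(t')^{G}\supseteq G\smallsetminus\ZB(G)$.

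The hard part is the regime of small $q$, especially $q=2$, which is permitted in case (i). There the crude bound $2.43\,q^{-3/2}\approx 0.86$ for the level-$2$ term is far too large to be absorbed termwise, so the estimate above does not close on its own. Closing it requires either a sharper character-ratio bound for level-$2$ (and level-$3$) characters — an exponent genuinely better than $1-1/n$, or at least the removal of the constant $2.43$, so that the level-$2$ and level-$3$ contributions add to $q^{-2}+q^{-3}+o(1)<1$ for every $q\geq 2$ — or a refinement of \cite[Proposition~8.4]{GLOST} showing that over a small field some of the putatively contributing characters actually satisfy $\chi(t)\chi(t')=0$. The split in the statement into ``$n$ large, $q$ arbitrary'' versus ``$n\geq 7$, $q$ enormous'' is exactly the mechanism that makes one of the two available inputs dominate in every case: for huge $q$ all low-level ratios are negligible, while for huge $n$ the thresholds $n\geq 2(j^{2}+3)$ and the large-character bound of \cite[Proposition~6.2.1]{LST1} apply cleanly; the explicit constants $N\geq 32$ and $q>7^{481}$ are what is left over after excising the genuinely small cases.
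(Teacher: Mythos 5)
There is a genuine gap, and you have in fact located it yourself: your estimate does not close for small $q$ in cases (i) and (iii), and case (i) allows $q=2$. The source of the trouble is that you guessed the support of the character sum from the torus labels rather than computing it. The paper's proof uses \cite[Proposition~8.4]{GLOST} only to conclude that any contributing $\chi$ is unipotent, say $\chi=\chi^\lambda$; it then determines $\lambda$ by the Murnaghan--Nakayama rule (\cite[Proposition~3.1.1]{LST1} and \cite[Corollary~3.1.4]{LST1}): $\psi^\lambda$ must be nonzero at both $(1,2)(3,\ldots,n)$ and $(1,2,3)(4,\ldots,n)$, which forces $\lambda$ into an explicit list of eight partitions, namely $(n)$, $(1^n)$, $(n-1,1)$, $(2,1^{n-2})$, $(n-3,3)$, $(2^3,1^{n-6})$, $(n-4,2^2)$, $(3^2,1^{n-6})$. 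Note that $\chi^{(n-2,2)}$ is \emph{not} on this list: the nontrivial small-level characters that actually occur are the level-$1$ Weil character $\chi^{(n-1,1)}$ and the level-$3$ and level-$4$ characters $\chi^{(n-3,3)}$, $\chi^{(n-4,2^2)}$ (plus transposes, which have huge degree). The fact that $|\chi(t)\chi(t')|=1$ also comes from this rim-hook analysis, not from weak orthogonality alone.

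This corrected list is exactly what rescues the small-$q$ regime. The Weil character is not estimated by the generic bound $2.43\,\chi(1)^{-1/n}$ (which would indeed give the useless $\approx 2.43\,q^{-1/2}$) but by the explicit character formula \cite[Lemma~4.1]{TZ1}, yielding $|\chi^{(n-1,1)}(g)|/\chi^{(n-1,1)}(1)\le (q^{n-1}+q^2)/(q^n-q)\approx 1/q$. The level-$3$ and level-$4$ characters contribute at most $2.43/q^{(3n-12)/n}$ and $2.43/q^{(4n-15)/n}$ by \cite[Theorem~1.6]{GLT1}, so the total of the three nontrivial small-level ratios tends to $1/q+2.43/q^3+2.43/q^4<0.956$, which holds already at $q=2$; the four characters of degree $>q^{n(n-1)/2-9}$ contribute $<0.04$ by \cite[Proposition~6.2.1]{LST1}, and the sum stays $\ge 0.003>0$. (The restriction $q\ge 3$ in case (iii) and the alternative hypothesis $q>7^{481}$ in (ii), (iv) — where one instead applies \cite[Theorem~1.2.1]{LST1} directly to all seven nontrivial terms — are as you surmised, but the ``sharper input'' you said was needed for $q=2$ is precisely the Weil character formula together with the correct list of partitions, neither of which your proposal supplies.)
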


\begin{proof}
Suppose $\chi \in \Irr(G)$ is such that
\begin{equation}\label{a21}
  \chi(t)\chi(t') \neq 0.
\end{equation}
By \cite[Proposition 8.4]{GLOST}, the two tori are weakly orthogonal, hence $\chi=\chi^\lambda$ is a
unipotent character labeled by a partition $\lambda \vdash n$. Now, as in the proof of \cite[Proposition 3.1.5]{LST1},
the condition \eqref{a21} implies that the irreducible character $\psi^\lambda$ of $\SSS_n$ labeled by $\lambda$ takes nonzero
values at permutations $\sigma_1 = (1,2)(3,4,\ldots,n)$ and $\sigma_2 = (1,2,3)(4,5,\ldots,n)$. By the Murnaghan-Nakayama
rule \cite[Proposition 3.1.1]{LST1} and by \cite[Corollary 3.1.2]{LST1}, it follows that we can remove a rim $(n-2)$-hook from the Young diagram $Y(\lambda)$ of $\lambda$ and likewise we can remove a rim $(n-3)$-hook from $Y(\lambda)$ (so that the remainder
is a proper diagram). The list of $\lambda$ that a rim $(n-2)$-hook can be removed from $Y(\lambda)$ is given
in \cite[Corollary 3.1.4]{LST1}. Checking through them for a removal of a rim $(n-3)$-hook, we see that $\lambda$ is one of
the following $8$ partitions
$$(n),~(1^n),~\lambda_2:=(n-1,1),~(2,1^{n-2}),~\lambda_3:=(n-3,3),~(2^3,1^{n-6}),~\lambda_4:=(n-4,2^2),~(3^2,1^{n-6}).$$
Moreover, \cite[Proposition 3.1.1]{LST1} implies that
\begin{equation}\label{a22}
  \chi^\lambda(t)\chi^\lambda(t') = \pm 1
\end{equation}
in all these cases. Let $\e=1$ if $G = \SL_n(q)$ and $\e=-1$ if $G = \SU_n(q)$. Using \cite[\S13.8]{Ca}, we can write down the degrees of
these $8$ characters:
\begin{equation}\label{a23}
  \begin{array}{ll}\chi^{(n)}(1)& =1,\\
     \chi^{(1^n)}(1)& =q^{n(n-1)/2},\\
     \chi^{(n-1,1)}(1)& =q\frac{q^{n-1}+\e^n}{q-\e},\\
     \chi^{(2,1^{n-2})}(1)& =q^{n(n-1)/2-(n-1)}\frac{q^{n-1}+\e^n}{q-\e},\\
     \chi^{(n-3,3)}(1)& =q^3\frac{(q^n-\e^n)(q^{n-1}-\e^{n-1})(q^{n-5}-\e^{n-5})}{(q^3-\e^3)(q^2-\e^2)(q-\e)},\\
     \chi^{(2^3,1^{n-6})}(1)& =q^{n(n-1)/2-(3n-9)}\frac{(q^n-\e^n)(q^{n-1}-\e^{n-1})(q^{n-5}-\e^{n-5})}{(q^3-\e^3)(q^2-\e^2)(q-\e)},\\
     \chi^{(n-4,2^2)}(1)& =q^6\frac{(q^n-\e^n)(q^{n-1}-\e^{n-1})(q^{n-4}-\e^{n-4})(q^{n-5}-\e^{n-5})}{(q^3-\e^3)(q^2-\e^2)^2(q-\e)},\\
     \chi^{(3^2,1^{n-6})}(1)& =q^{n(n-1)/2-(4n-12)}
        \frac{(q^n-\e^n)(q^{n-1}-\e^{n-1})(q^{n-4}-\e^{n-4})(q^{n-5}-\e^{n-5})}{(q^3-\e^3)(q^2-\e^2)^2(q-\e)}.\end{array}
\end{equation}
The first two characters in this list are the principal character $1_G$ and the Steinberg character $\St$ of $G$.

Next, consider any $g \in G \smallsetminus \ZB(G)$. If $q > 7^{481}$, then using \eqref{a22} and \cite[Theorem 1.2.1]{LST1} we get
$$\biggl{|}\sum_{\chi \in \Irr(G)}\frac{\chi(t)\chi(t')\overline{\chi(g)}}{\chi(1)}\biggr{|} \geq 1-\frac{7}{q^{1/481}} > 0,$$
and so $g \in t^G \cdot (t')^G$.

Now we may assume $n > N$.
Since $N \geq 32$, $\chi_i :=\chi^{\lambda_i}$ with $i = 3,4$ has level
$$i \leq \min\{\sqrt{n-3/4}-1/2,\sqrt{(8n-17)/12}-1/2\}$$
by \cite[Theorem 3.9]{GLT1}, and so
\begin{equation}\label{a24}
 \frac{|\chi_i(g)|}{\chi_i(1)} \leq \frac{2.43}{\chi_i(1)^{1/n}}
\end{equation}
by \cite[Theorem 1.6]{GLT1}; furthermore,
\begin{equation}\label{a25}
  \chi_3(1) > q^{3n-12},~~\chi_4(1) > q^{4n-15}.
\end{equation}
On the other hand, $\chi_2:=\chi^{\lambda_2}$ is a unipotent Weil character, and using the character formula \cite[Lemma 4.1]{TZ1}, one
can show that
\begin{equation}\label{a26}
 \frac{|\chi_2(g)|}{\chi_2(1)} \leq \frac{q^{n-1}+q^2}{q^n-q}.
\end{equation}
Note that the second, fourth, sixth, and eighth characters in \eqref{a23} have degree $>q^{n(n-1)/2-9}$.
Using \cite[Proposition 6.2.1]{LST1} as in the proof of Theorem \ref{A-product1}, we have
$$\frac{|\chi(g)|}{\chi(1)} < 0.01$$
for all four of them, if we take $N$ large enough. We also note that
$$\lim_{n \to \infty} \biggl( \frac{q^{n-1}+q^2}{q^n-q} + \frac{2.43}{q^{(3n-12)/n}} + \frac{2.43}{q^{(4n-15)/n}}\biggr) =
    \frac{1}{q} + \frac{2.43}{q^3} + \frac{2.43}{q^4} < 0.956$$
which implies by \eqref{a24}, \eqref{a25}, \eqref{a26} that
$$\sum^{4}_{i=2}\frac{|\chi_i(g)|}{\chi_i(1)} < 0.957$$
when $N$ is large enough.  It now follows from \eqref{a22} that
$$\biggl{|}\sum_{\chi \in \Irr(G)}\frac{\chi(t)\chi(t')\overline{\chi(g)}}{\chi(1)}\biggr{|} \geq 1-0.957 - 0.04 = 0.003,$$
and so $g \in t^G \cdot (t')^G$.
\end{proof}

In fact, for $\SU_n(2)$ we will need an analogue of Theorem \ref{A-product2} for tori of types $T_{3,n-3}$ and $T_{4,n-4}$.  We begin by classifying
characters $\SSS_n$ which vanish on neither of the corresponding permutations.

\begin{prop}
\label{Twelve}
Let $n\ge 10$, and let
$$\sigma_1 = (1,2,3)(4,\ldots,n),\,\sigma_2 = (1,2,3,4)(5,\ldots,n)\in \SSS_n.$$
There are exactly twelve characters $\psi = \psi^\lambda$ of $\SSS_n$ such that $\psi(\sigma_1)\psi(\sigma_2)\neq 0$, for each of these characters, the product is $\pm 1$,
and for each such $\lambda$, either $\lambda$ or its transpose belongs to the following set:
$$\{(n),(n-1,1),(n-2,1^2),(n-4,4),(n-5,3,2),(n-6,2^3)\}.$$
\end{prop}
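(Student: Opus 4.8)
The plan is to reduce the statement, via the Murnaghan--Nakayama rule, to a finite combinatorial inspection of Young diagrams. Write $c_1$ for a $3$-cycle and $c_2$ for a $4$-cycle; the cycle types of $\sigma_1$ and $\sigma_2$ are $(n-3,3)$ and $(n-4,4)$. For $n\ge 10$ we have $n-3>n/2$ and $n-4>n/2$, and hence $Y(\lambda)$ has at most one removable rim $(n-3)$-hook and at most one removable rim $(n-4)$-hook, since removing one such hook leaves fewer than $n/2$ boxes, too few to contain another. Applying \cite[Proposition~3.1.1]{LST1}, peeling off the border strip corresponding to the long cycle first, we therefore get: $\psi^\lambda(\sigma_1)\ne 0$ if and only if a rim $(n-3)$-hook can be removed from $Y(\lambda)$ (the residual partition of $3$ is automatically a hook, so it takes value $\pm1$ at $c_1$), while $\psi^\lambda(\sigma_2)\ne 0$ if and only if a rim $(n-4)$-hook can be removed so that the residual partition of $4$ is a hook, i.e. is not $(2,2)$ (among $\mu\vdash 4$, only $\psi^{(2,2)}(c_2)=0$). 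In particular, whenever $\psi^\lambda(\sigma_1)\psi^\lambda(\sigma_2)\ne 0$ it equals $\pm1$; the possibility $(2,2)$ is the one new feature relative to the proof of Theorem~\ref{A-product2}.

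Next I would classify the $\lambda\vdash n$ admitting a rim $(n-3)$-hook. The largest hook length of $\lambda$ is $\lambda_1+\lambda_1'-1\le n$, so removability of a rim $(n-3)$-hook forces $\lambda_1+\lambda_1'\ge n-2$; equivalently, at most three boxes of $Y(\lambda)$ lie in rows $\ge 2$ and columns $\ge 2$. Sorting these near-hooks by the partition $\mu$ (of size $r\le 3$) that they form in the corner gives finitely many one-parameter families, one for each $\mu\in\{\emptyset,(1),(2),(1^2),(3),(2,1),(1^3)\}$; in each family one writes the hook lengths explicitly as functions of $\lambda_1$ and then asks which members also carry a hook of length $n-4$ (for $r=3$ the principal hook already has length $n-3$, so only the $n-4$ condition is active). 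The partitions passing both hook-length tests turn out to be $(n)$, $(n-1,1)$, $(n-2,1^2)$, $(n-4,4)$, $(n-5,3,2)$, $(n-6,2^3)$, their transposes, and the single extra pair $(n-3,3)$, $(2^3,1^{n-6})=(n-3,3)^T$ coming from the $r=2$ families. For $(n-3,3)$, however, the unique rim $(n-4)$-hook sits at the box $(1,3)$ and its removal leaves exactly $(2,2)$, so $\psi^{(n-3,3)}(\sigma_2)=0$, and likewise for its transpose via $\psi^{\lambda^T}=\sgn\cdot\psi^\lambda$. This leaves exactly the twelve characters labelled by $(n),(n-1,1),(n-2,1^2),(n-4,4),(n-5,3,2),(n-6,2^3)$ and their six transposes, all distinct (none is self-conjugate for $n\ge 10$). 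For each of the six ``large-part'' partitions I would peel off the rim $(n-3)$- and $(n-4)$-hooks and check directly that the residues are a partition of $3$ and a hook of size $4$: e.g. $(n-4,4)$ leaves $(3)$ and $(3,1)$, $(n-5,3,2)$ leaves $(2,1)$ and $(2,1,1)$, $(n-6,2^3)$ leaves $(1^3)$ and $(1^4)$, so $\psi^\lambda(\sigma_1)\psi^\lambda(\sigma_2)=\pm1$; the transposes follow from $\sgn\cdot\psi^\lambda$, and $(n),(n-1,1),(n-2,1^2)$ may be treated directly.

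The main obstacle will be the bookkeeping in the second step: listing the near-hook families with $r\le 3$, computing their hook lengths, and verifying that each near-hook with $r\in\{1,2\}$ that admits a rim $(n-3)$-hook nonetheless fails --- either for lack of a hook of length $n-4$ (as for $(n-3,2,1)$ and $(3,3,1^{n-6})$ and their transposes) or, in the borderline case $(n-3,3)$, because the rim $(n-4)$-hook removal produces $(2,2)$. That $(2,2)$ is exactly what makes this sharper than Theorem~\ref{A-product2}, where every residual partition of $3$ is automatically a rim hook and no analogous exclusion is needed. By contrast, the uniqueness of the long rim hooks for $n\ge 10$ and the evaluation of $\SSS_3$- and $\SSS_4$-characters on a $3$-cycle and a $4$-cycle are routine.
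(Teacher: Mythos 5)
Your proposal is correct and follows essentially the same route as the paper: apply the Murnaghan--Nakayama rule, enumerate the near-hook partitions admitting a rim $(n-3)$-hook, filter by the existence of a rim $(n-4)$-hook whose removal leaves a hook of size $4$, and discard $(n-3,3)$ and its transpose precisely because the residue there is $(2,2)$. One small caution: your justification for the uniqueness of a rim $d$-hook with $d>n/2$ (``removing one leaves too few boxes for another'') implicitly assumes distinct rim $d$-hooks are disjoint, which they need not be; replace it with the standard fact that the number of removable rim $d$-hooks is at most the $d$-weight, hence at most $n/d<2$ (or with the paper's observation that such a hook must contain the last box of the first row once $\lambda_1\ge 4$).
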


\begin{proof}
As $\lambda \vdash n\ge 10$, transposing if necessary, we may assume $\lambda_1 \ge 4$.
As $\psi(\sigma_1)\neq 0$, by the Murnaghan-Nakayama rule, removal of a rim $n-3$-hook leaves a Young diagram $\mu$ with $3$
boxes, and it follows that this rim hook must include the last box in the first row (which implies, in particular, that there is no other rim $n-3$-hook, so the character value at $\sigma_1$ is $\pm 1$).  There are three cases to consider.

\smallskip
(i) $\mu = (3)$.  In this case $\lambda$ must be $(n)$ or $(n-k-4,4,1^k)$ for $0\le k\le n-8$.

\smallskip
(ii) $\mu = (2,1)$.  In this case $\lambda$ must be $(n-1,1)$, $(n-3,3)$, or $(n-k-5,3,2,1^k)$ for $0\le k\le n-8$.

\smallskip
(iii) $\mu = (1^3)$.  In this case, $\lambda$ must be $(n-2,1^2)$, $(n-3,2,1)$, $(n-4,2^2)$, or
$(n-6-k,2^3,1^k)$, where $0\le k\le n-8$.

\smallskip
As $\psi(\sigma_2)\neq 0$, $\lambda$ must have a rim $n-4$-hook whose removal leaves a Young diagram which is a $4$-hook.  In case (i), this is possible for $(n)$ and possible for $(n-k-4,4,1^k)$ if and only if $k=0$.  In case (ii), this is possible for $(n-1,1)$, impossible for $(n-3,3)$, and possible for $(n-5-k,3,2,1^k)$ if and only if $k=0$.  In case (iii), this is possible only for $(n-2,1^2)$ and $(n-6,2^3)$.
In every case where it is possible, the rim hook contains the last box in the first row and is therefore unique, implying that $\psi(\sigma_2)$ is $\pm 1$.
\end{proof}

\begin{thm}\label{A-product3}
There exists an integer $N \geq 43$ such that the following statement holds for $G = \SU_n(2)$ with $n > N$.
If $t$ and $t'$ are regular semisimple elements of $G$ belonging to tori $T$ and $T'$ of type
$T_{n-3,3}$ and $T_{n-4,4}$ respectively, and $g \in G$ has $\supp(g) \geq 2$, then $g \in t^G\cdot (t')^G$.
\end{thm}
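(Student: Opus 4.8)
The plan is to run the argument of Theorems~\ref{A-product1} and~\ref{A-product2}, with Proposition~\ref{Twelve} replacing the input \cite[Corollary~3.1.4]{LST1}. By Frobenius's formula~\eqref{Frobenius} it suffices to show, for the given $g$, that
\[
\Sigma(g):=\sum_{\chi\in\Irr(G)}\frac{\chi(t)\chi(t')\overline{\chi(g)}}{\chi(1)}\neq 0 .
\]
First I would record that the maximal tori $T_{n-3,3}$ and $T_{n-4,4}$ of $G=\SU_n(2)$ are weakly orthogonal, so that every $\chi$ with $\chi(t)\chi(t')\ne 0$ is a unipotent character $\chi^\lambda$; as for the tori of Theorem~\ref{A-product2} this follows from \cite[Proposition~8.4]{GLOST} once $n$ is large (the cycle types $(3,n-3)$ and $(4,n-4)$ of the relevant Weyl-group elements have no common part). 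For $\chi=\chi^\lambda$ unipotent, the computation in the proof of \cite[Proposition~3.1.5]{LST1} turns $\chi^\lambda(t)\chi^\lambda(t')\ne 0$ into the statement that the symmetric-group character $\psi^\lambda$ is nonzero at both $\sigma_1=(1,2,3)(4,\dots,n)$ and $\sigma_2=(1,2,3,4)(5,\dots,n)$, so Proposition~\ref{Twelve} applies: $\lambda$ is one of the twelve partitions listed there and $\chi^\lambda(t)\chi^\lambda(t')=\pm1$ for each. Splitting off the trivial character (which contributes $+1$), it remains to show that, for $n>N$ and every $g$ with $\supp(g)\ge 2$,
\[
\sum_{1\ne\chi^\lambda}\frac{|\chi^\lambda(g)|}{\chi^\lambda(1)}<1,
\]
the sum running over the eleven remaining characters from Proposition~\ref{Twelve}.

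I would then estimate those eleven ratios in three groups. Six of them have ``tall'' label, hence near-maximal degree: the Steinberg character $\chi^{(1^n)}$, the characters $\chi^{(2,1^{n-2})}$ and $\chi^{(3,1^{n-3})}$, and the transposes of $\chi^{(n-4,4)},\chi^{(n-5,3,2)},\chi^{(n-6,2^3)}$; exactly as in Theorems~\ref{A-product1} and~\ref{A-product2}, \cite[Proposition~6.2.1]{LST1} applies to each of these (here one uses $\supp(g)\ge 2$, so $g$ is noncentral) and bounds its ratio by an arbitrarily small $\varepsilon_0$ once $n$ is large. Three more are the ``moderate-level'' characters $\chi^{(n-4,4)},\chi^{(n-5,3,2)},\chi^{(n-6,2^3)}$ of levels $4,5,6$; since $N\ge 43$ these levels are $\le\sqrt{n-3/4}-1/2$, so \cite[Theorem~3.9]{GLT1}, \cite[Theorem~1.6]{GLT1} and the degree bound \cite[Theorem~1.3]{GLT1} give ratio $\le 2.43\,q^{-\ell+o(1)}$ for the level-$\ell$ character; for $q=2$ these sum to less than $0.3$ for $n$ large. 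The last two are the unipotent Weil character $\chi^{(n-1,1)}$ and the level-$2$ character $\chi^{(n-2,1^2)}$, and these carry all the difficulty.

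For the Weil character I would use the explicit formula \cite[Lemma~4.1]{TZ1}, as in the proof of~\eqref{a26}, giving $|\chi^{(n-1,1)}(g)|/\chi^{(n-1,1)}(1)\le (q^{n-1}+q^2)/(q^{n}-q)=\tfrac12+o(1)$ for $q=2$, with a genuine improvement to $O(q^{-2})$ unless $g$ has an eigenspace of codimension $1$. For $\chi^{(n-2,1^2)}$ the crude bound $2.43\,q^{-2+o(1)}\approx 0.61$ from \cite[Theorem~1.6]{GLT1} is too large to coexist with the $\tfrac12$, so I would exploit that $\chi^{(n-2,1^2)}$ has level $2$ — it is a constituent of $\chi^{(n-1,1)}\otimes\overline{\chi^{(n-1,1)}}$ — which, via the level machinery of \cite{GLT1} together with $\supp(g)\ge 2$, controls $|\chi^{(n-2,1^2)}(g)|$ in terms of $|\chi^{(n-1,1)}(g)|^{2}$ and yields a ratio that is $O(q^{-2})$. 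To assemble the pieces in every case, I would split on $|C_G(g)|$: if $|C_G(g)|\le|G|^{\gamma}$ for the constant $\gamma$ of \cite[Theorem~1.3]{GLT}, then \emph{every} nontrivial ratio is at most $\chi(1)^{-3/4}\le q^{-3(n-1)/4}$, so $|\Sigma(g)|=1-o(1)>0$; whereas if $|C_G(g)|>|G|^{\gamma}$, then $g$ has a generalized eigenspace of dimension $\ge\gamma n$, and restricting the few relevant unipotent characters to $C_G(g)$ (whose derived subgroup contains a large special unitary group) via the branching rule bounds each moderate-level or level-$2$ ratio by $O(q^{-\ell})$ for level $\ell$, so that with the Weil bound $\tfrac12+o(1)$ and the near-maximal contribution $\le 6\varepsilon_0$ the total stays safely below $1$.

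The main obstacle is precisely this simultaneous control of $\chi^{(n-1,1)}$ and $\chi^{(n-2,1^2)}$ for the smallest field $q=2$: for a transvection the Weil ratio is genuinely $\approx\tfrac12$ with no room to spare and the level-$2$ estimate degrades, which is why the hypothesis $\supp(g)\ge 2$ (ruling out transvections and the identity) cannot be dropped, and why a short case analysis according to the size of $|C_G(g)|$ (equivalently, of the largest eigenspace of $g$) seems unavoidable. Granting the two delicate bounds above, one obtains $|\Sigma(g)|\ge 1-\bigl(6\varepsilon_0+\tfrac12+O(q^{-2})+0.3\bigr)>0$ for $n>N$, hence $g\in t^G\cdot(t')^G$, as claimed.
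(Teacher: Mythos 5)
Your skeleton is the paper's: weak orthogonality of $T_{n-3,3}$ and $T_{n-4,4}$ via \cite[Proposition 8.4]{GLOST}, reduction to the twelve unipotent characters of Proposition~\ref{Twelve} with $\chi(t)\chi(t')=\pm1$, the six ``tall'' characters killed by \cite[Proposition 6.2.1]{LST1}, the levels $4,5,6$ handled by \cite[Theorem 1.6]{GLT1}, and the identity $\chi^{(n-2,1^2)}=\chi^{(n-1,1)}\overline{\chi^{(n-1,1)}}-1_G$ to control the level-$2$ character. But your final assembly does not close numerically, and the case analysis you bolt on does not repair it. You carry the Weil ratio into the last display as $\tfrac12+o(1)$, which is the bound for a general noncentral $g$ (the bound \eqref{a26} used in Theorem~\ref{A-product2}); with $q=2$ this gives $6\varepsilon_0+\tfrac12+\tfrac14+0.27+\cdots>1$, so $|\Sigma(g)|>0$ is not established. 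The point of the hypothesis $\supp(g)\ge 2$ is precisely to improve the \emph{Weil} bound itself: since no eigenspace of $g$ has codimension $\le 1$, \cite[Lemma 4.1]{TZ1} gives $|\chi^{(n-1,1)}(g)|\le (q^{n-2}+q^2)/(q+1)<q^{n-3}$, hence a ratio tending to $q^{-2}=\tfrac14$, not $\tfrac12$ — you state this improvement parenthetically (``unless $g$ has an eigenspace of codimension $1$'', which is exactly what $\supp(g)\ge2$ excludes) but then fail to use it. Note also that this same inequality $|\chi^{(n-1,1)}(g)|<q^{n-3}$ is what feeds the tensor-square argument to give $|\chi^{(n-2,1^2)}(g)|/\chi^{(n-2,1^2)}(1)<q^{-2}$, so the two delicate bounds are really one bound used twice. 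With both ratios at $\approx\tfrac14$ the sum is $\tfrac14+\tfrac14+2.43(2^{-4}+2^{-5}+2^{-6})+6\varepsilon_0<0.83<1$ and the theorem follows.

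Once this is done, your split on $|C_G(g)|$ (and the appeal to restriction/branching to $C_G(g)$, which you do not actually carry out) is superfluous: the bound $|\chi(g)|\le 2.43\,\chi(1)^{1-1/n}$ from \cite[Theorem 1.6]{GLT1} already holds uniformly for all noncentral $g$, and the paper's argument is uniform in $g$ subject only to $\supp(g)\ge 2$. I would drop that case analysis entirely and replace the $\tfrac12$ by the $\supp(g)\ge2$-improved Weil estimate throughout.
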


\begin{proof}
Suppose $\chi \in \Irr(G)$ is such that
\begin{equation}\label{a31}
  \chi(t)\chi(t') \neq 0.
\end{equation}
By \cite[Proposition 8.4]{GLOST}, the two tori are weakly orthogonal, hence $\chi=\chi^\lambda$ is a
unipotent character labeled by a partition $\lambda \vdash n$.
Then, by Proposition~\ref{Twelve},
$\lambda$ is one of the following $6$ partitions
$$(n),~\lambda_1:=(n-1,1),~\lambda_2:=(n-2,1^2),~\lambda_4:=(n-4,4),~\lambda_5:=(n-5,3,2),~\lambda_6:=(n-6,2^3)$$
or their dual partitions $\lambda_i$, $7 \leq i \leq 12$; moreover,
\begin{equation}\label{a32}
  \chi^\lambda(t)\chi^\lambda(t') = \pm 1
\end{equation}
in all these cases. Let $\chi_i :=\chi^{\lambda_i}$ for $i \geq 2$.
Since $N \geq 43$, $\chi_i$ with $i = 4,5,6$ has level
$i \leq \sqrt{n-3/4}-1/2$ by \cite[Theorem 3.9]{GLT1}, and so
\begin{equation}\label{a34}
 \frac{|\chi_i(g)|}{\chi_i(1)} \leq \frac{2.43}{\chi_i(1)^{1/n}}
\end{equation}
by \cite[Theorem 1.6]{GLT1}; furthermore, with $q:=2$ we have
\begin{equation}\label{a35}
  \chi_i(1) > q^{in-i^2-3}
\end{equation}
by \cite[Theorem 1.2]{GLT1}.
On the other hand, $\chi_1$ is a unipotent Weil character, and using the character formula \cite[Lemma 4.1]{TZ1} and the
assumption $\supp(g) \geq 2$, one can show that
\begin{equation}\label{a36}
 |\chi_1(g)| \leq \frac{q^{n-2}+q^2}{q+1} < q^{n-3},~\frac{|\chi_1(g)|}{\chi_1(1)} \leq \frac{q^{n-2}+q^3}{q^n-q}.
\end{equation}
Next, as shown in \cite[Table 7.1]{M}, $\chi_2 = \chi_1\overline\chi_1-1_G$ with $\chi_2(1) > q^{2n-4}$.
Together with \eqref{a36}, this implies that
\begin{equation}\label{a37}
 \frac{|\chi_2(g)|}{\chi_2(1)} < \frac{q^{2n-6}}{q^{2n-4}} = \frac{1}{q^2}.
\end{equation}
By explicitly writing down the degrees of $\chi_j$ with $7 \leq j \leq 12$ using \cite[\S13.8]{Ca}, or by (applying Ennola's duality to)
Proposition \ref{A-degree}, we can show that there is some universal constant $A > 0$ such that
$\chi_j(1) > q^{n(n-1)/2-A}$. Using \cite[Proposition 6.2.1]{LST1} as in the proof of Theorem \ref{A-product1}, we have
$$|\chi(g)|/\chi(1) < 0.01$$
for all six of them, if we take $N$ large enough. We also note that
$$\lim_{n \to \infty} \biggl( \frac{q^{n-2}+q^3}{q^n-q} + \frac{1}{q^2}+
    \sum_{i=4,5,6}\frac{2.43}{q^{(in-i^2-3)/n}}\biggr)  < 0.77$$
which implies by \eqref{a34}--\eqref{a37} that
$$\sum_{i=1,2,4,5,6}\frac{|\chi_i(g)|}{\chi_i(1)} < 0.78$$
when $N$ is large enough.  It now follows from \eqref{a32} that
$$\biggl{|}\sum_{\chi \in \Irr(G)}\frac{\chi(t)\chi(t')\overline{\chi(g)}}{\chi(1)}\biggr{|} \geq 1-0.78 - 0.06 = 0.16,$$
and so $g \in t^G \cdot (t')^G$.
\end{proof}

\subsection{Other classical types: symbols, hooks, and cohooks}
To treat the unipotent characters of finite simple groups of orthogonal and symplectic types, we use Lusztig's theory of symbols \cite{Lu}.
If $X\subset \N$ is a set of natural numbers, we define the shift $\schS(X) = \{0\} \cup \{x+1\mid x\in X\}$.
If $X$ is finite, we  define the \emph{inefficiency} of $X$ to be
$$i(X) = -\binom {|X|}2 + \sum_{x\in X} x.$$
Thus, $i(\schS(X)) = i(X)$.  Every finite $X$ is uniquely of the form $\schS^m(X')$ for some $X'$ which does not contain $0$, and since $i(X') \ge |X'|$, there are only finitely many possibilities for $X'$ given $i(X)$.

A \emph{$d$-hook} in $X$ is an element $x\in X$ such that $x-d\in \N\smallsetminus X$;
in what follows we also label this hook by $(x-d,x)$.
If $x$ is a $d$-hook of $X$, then  \emph{removing the $d$-hook $x$}
means replacing $x$ by $x-d$ in $X$.  The resulting set $X'$ satisfies $i(X') = i(X)-d$.  In particular, if $X$ contains a $d$-hook, then $i(X)\ge d$.

We say a $d$-hook $x$ and a $d'$-hook $x'$ are \emph{disjoint} if $x-d\neq x'-d'$.  If, in addition, $x\neq x'$, it is possible to remove both the $d$-hook $x$ and the $d'$-hook $x'$, so $i(X)\ge d+d'$.
Even if $x=x'$, we still have
\begin{equation}
\label{inefficiency}
i(X)\ge d+d'-1.
\end{equation}

We recall that a \emph{symbol} is an ordered pair $(X,Y)$ of finite subsets of
$\N$.  We define equivalence of symbols by imposing the relations
$(X,Y)\sim (Y,X)$ and $(X,Y)\sim (\schS(X),\schS(Y))$ and taking transitive closure.
If $X=Y$, the symbol is \emph{degenerate}.
We will say a symbol is \emph{minimal} if $0\not\in X\cap Y$; in particular, every symbol is equivalent to at least one minimal symbol.
The \emph{rank} of a symbol is given by
\begin{equation}
\label{s-rank}
r = -\Bigl\lfloor\frac{(|X|+|Y|-1)^2}4 \Bigr\rfloor+\sum_{x\in X} x + \sum_{y\in Y} y
= i(X) + i(Y) + \Bigl\lfloor\frac{(|X|-|Y|)^2}4 \Bigr\rfloor.
\end{equation}
For any $q$, the unipotent representations of orthogonal and symplectic groups of Lie type of rank $r$ for specified $q$ are
given by symbols of rank $r$; equivalence classes of symbols with $|X|-|Y|$ odd correspond to representations of groups of type $B_r$ and $C_r$, and those with $|X|-|Y|$ divisible by $2$ but not $4$
correspond to representations of groups of type $^2D_r$.  Those with $|X|-|Y|$ divisible by $4$ correspond to representations of type $D_r$ , with the additional proviso that each {\it degenerate} symbol class, that is where $X=Y$, corresponds to a
pair of unipotent representations for groups of type $D_r$.

By a $d$-hook of a symbol $(X,Y)$, we mean either a $d$-hook of $X$ or a $d$-hook of $Y$.  Any hook of $X$ is considered to be disjoint to any hook of $Y$.
If $d+d'-1 > r$, by \eqref{inefficiency} and \eqref{s-rank}, a symbol $(X,Y)$ cannot have a disjoint $d$-hook and $d'$-hook.
By a $d$-\emph{cohook} of $(X,Y)$ we mean either an element $x\in X$ such that $x-d\in \Z_{\geq 0}\smallsetminus Y$ or $y\in Y$ such that $y-d\in \Z_{\geq 0}\smallsetminus X$; again, we will sometimes label this cohook by $(x-d,x)$.
A $d$-cohook $x\in X$ and a $d'$-cohook $x'\in X$ are disjoint if and only if $x-d\neq x'-d'$, and likewise for two cohooks in $Y$; every cohook in $X$ is disjoint from every cohook in $Y$.
Removing a $d$-cohook $x\in X$ means removing $x$ from $X$ adding $x-d$ to $Y$, and likewise for removing a cohook $y\in Y$; either way, the effect is to reduce the rank of the symbol by $d$.
Again, if $d+d'-1>r$ it is impossible for a symbol of rank $r$ to have a $d$-cohook and a $d'$-cohook which are disjoint.

We also recall that the degree of the unipotent representation labeled by $S=(X,Y)$ is given by
\begin{equation}\label{deg11}
  q^{a(S)}\frac{|G|_{q'}}{2^{b(S)}\prod_{(b,c)\mbox{ {\tiny hook}}}(q^{c-b}-1)
            \prod_{(b,c)\mbox{ {\tiny cohook}}}(q^{c-b}+1)}
\end{equation}
for some integers $a(S),b(S) \geq 0$, (see \cite[Bem.~3.12 and~6.8]{M1}).

\begin{prop}
\label{Hooks and cohooks}
\begin{enumerate}[\rm(i)]
\item If $k < k'$ are fixed, there exists a bound $B=B_1(k,k')$ such that for each $r$ there are at most $B$ symbols of rank $r$ which contain both an $(r-k)$-hook and an $(r-k')$-hook.
\item If $k < k'$ are fixed, there exists a bound $B=B_2(k,k')$ such that for each $r$ there are at most $B$ symbols of rank $r$ which contain both an $(r-k)$-cohook and an $(r-k')$-cohook.
\item If $k$ and $k'$ are fixed (and possibly equal), there exists a bound $B=B_3(k,k')$ such that for each $r$ there are at most $B$ symbols of rank $r$ which contain both an $(r-k)$-hook and an $(r-k')$-cohook.
\end{enumerate}
\end{prop}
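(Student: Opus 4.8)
The three parts are proved by variations of one argument; I write it out for (i) and indicate the changes needed for (ii) and (iii). A preliminary remark: for each fixed rank $s$ the number of (equivalence classes of) symbols of rank $s$ is bounded in terms of $s$ alone --- writing a symbol in minimal form and feeding the estimate $i(X')\ge|X'|$ (valid when $0\notin X'$) into \eqref{s-rank} shows that both constituent sets lie in $\{0,1,\dots,O(s)\}$. Hence for $r\le k+k'+1$ there is nothing to prove, and from now on $r>k+k'+1$.

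\smallskip
\emph{Step 1: localizing the two features.} By \eqref{s-rank} we have $i(X),i(Y)\le r$. If the $(r-k')$-hook lay in $Y$, then $i(Y)\ge r-k'$; adjoining either $i(X)\ge r-k$ (when the $(r-k)$-hook lies in $X$) or, via \eqref{inefficiency}, $i(Y)\ge(r-k)+(r-k')-1$ (when it also lies in $Y$) would force $r\le k+k'+1$. So both hooks lie in $X$; and if they were disjoint, \eqref{inefficiency} gives $i(X)\ge(r-k)+(r-k')-1>r$, impossible. Therefore the two hooks share a lower endpoint $a$: one has $a\notin X$ while $a+r-k,\,a+r-k'\in X$. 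The same reasoning, with the cohook form of \eqref{inefficiency} quoted just before the proposition, handles (ii) and produces $a\notin Y$ with $a+r-k,\,a+r-k'\in X$; and in (iii) --- where $k,k'$ may coincide --- using the (identically proved) statement that a $d$-hook and a disjoint $d'$-cohook cannot coexist when $d+d'>r$, together with the observation that if they lay in different constituent sets the larger one would also be a near-full hook of that set (giving two incompatible near-full hooks), one gets a single element $x_0\in X$ with $x_0-(r-k)\notin X$ and $x_0-(r-k')\notin Y$.

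\smallskip
\emph{Step 2: bounding the symbol.} The cleanest device is to strip off the larger feature: removing the $(r-k)$-hook (resp.\ cohook) from $(X,Y)$ yields a symbol of rank $r-(r-k)=k$, of which there are only finitely many up to equivalence, the number bounded in terms of $k$. So $(X,Y)$ is obtained by re-expanding a single element of some shift $\schS^t(\hat X)=\{0,\dots,t-1\}\cup\{\hat x+t:\hat x\in\hat X\}$ of a minimal rank-$k$ symbol $(\hat X,\hat Y)$, and the whole content of the second, $(r-k')$-, feature is that it bounds the shift $t$ needed to bring the reconstructed symbol to minimal form: the $(r-k')$-feature survives only if the gap produced by the expansion sits in the top $|k-k'|$ entries of the block $\{0,\dots,t-1\}$ --- in (iii), only if after unshifting this gap down to $0$ the residual block has length $\le|k-k'|$ --- so the minimal representative of $(X,Y)$ is described by $O_{k,k'}(1)$ bits of data. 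Absent the second feature the shift really is unbounded: letting $t\to\infty$ reproduces the infinite families $\bigl(\{1,\dots,j,r\},\{0,\dots,j-1\}\bigr)$ and their cohook analogues, which possess one near-full feature but lose the second once $j\ge k'$. Equivalently, one may argue directly with inefficiencies: in (i) and (ii), $X$ having two elements of size $\ge r-k'$ forces, via $i(X)\le r$, the set $X$ to be an interval $\{0,\dots,N\}$ or $\{1,\dots,N\}$ with $N=r+O_{k,k'}(1)$ perturbed by a defect of size $O_{k,k'}(1)$ together with $O_{k,k'}(1)$ ``large'' elements all within a window of length $O_{k,k'}(1)$ about $r$; then \eqref{s-rank} forces $i(Y)=O_{k,k'}(1)$ and $\bigl||X|-|Y|\bigr|=O_{k,k'}(1)$, whence $Y$ is likewise such an interval (with $0\in Y$, since $0\notin Y$ would give $|Y|\le i(Y)=O_{k,k'}(1)$, impossible for $r$ large) with an $O_{k,k'}(1)$-defect. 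Either way the symbol lies in a set of size $\le B_1(k,k')$.

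\smallskip
The cases (ii) and (iii) proceed along the same two steps. The one real subtlety --- and the main obstacle --- is the bookkeeping in Step 2 when the stripped feature is a cohook: its removal changes $|X|$, $|Y|$ and both inefficiencies simultaneously (while still lowering the rank by exactly the cohook length), and in (iii) one must moreover track that the hook and cohook share an \emph{upper} endpoint, so that $X$ carries a single large element together with a gap whose position --- pinned by the cohook condition $x_0-(r-k')\notin Y$ --- controls the unshift. Carrying out this case analysis (shared top vs.\ shared bottom; a feature absorbed into an initial interval or not; $0$ in the relevant set or not) is where all the work lies; once it is done, the uniform bounds $B_1(k,k')$, $B_2(k,k')$, $B_3(k,k')$ drop out.
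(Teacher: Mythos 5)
Your overall strategy is the paper's: non-disjointness forces the two near-full features to interact at a common endpoint, removing one feature produces a symbol of bounded rank, and the survival of the second feature together with minimality ($0\in Y$, hence $0\notin X$, hence the shared lower endpoint is $0$) pins down the shift up to $O_{k,k'}(1)$ ambiguity. For parts (i) and (ii) your sketch --- especially the second, ``inefficiency'' version of Step 2 --- is essentially the published argument, though you openly defer the cohook bookkeeping that the paper actually carries out (the chain $0\in X'\cap Y'\Rightarrow 0\in X\Rightarrow 0\notin Y\Rightarrow z=0$, then $r-m$ bounded). Also, ``both hooks lie in $X$'' should be ``both lie in the same set, without loss of generality $X$'': two non-disjoint hooks in $Y$ are not excluded by \eqref{inefficiency}, only handled by the swap symmetry.

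The genuine gap is Step 1 for part (iii). You assert that a non-disjoint $(r-k)$-hook/$(r-k')$-cohook pair must be carried by a single element $x_0\in X$ with $x_0-(r-k)\notin X$ and $x_0-(r-k')\notin Y$, dismissing the configuration in which the two features lie in different constituent sets on the grounds that ``the larger one would also be a near-full hook of that set.'' That is false, and the dismissed configuration is exactly the one the paper's proof treats: a hook $x\in X$ and a cohook $y'\in Y$ interfere precisely when $x-(r-k)=y'-(r-k')=z\in\Z_{\geq 0}\smallsetminus X$, since removing either feature places $z$ in $X$ and destroys the other. Such symbols exist: with $r=n$, the minimal symbol $X=\{1,2,\dots,n-1\}$, $Y=\{0,1,\dots,n-2,n\}$ has rank $n$, an $(n-1)$-hook $n-1\in X$ and an $n$-cohook $n\in Y$ sharing the lower endpoint $0\notin X$; here $n\in Y$ is only a $1$-hook of $Y$, and no single element carries both features, so your Step 1 misses this symbol and your Step 2 never counts it. (Your shared-upper-endpoint configuration, e.g.\ $\binom{0,n}{1}$ in which $n\in X$ is simultaneously an $(n-1)$-hook and an $n$-cohook, also occurs, so a complete proof of (iii) must handle both interference patterns; as written, yours handles only the second and gives an incorrect reason for ignoring the first, which is the one the paper's applications rely on.)
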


\begin{proof}
First we consider the case of two hooks.
Let $d = r-k$ and $d' = r-k'$.  If $r$ is sufficiently large, $d+d'-1>r$, so $(X,Y)$ cannot have a disjoint $d$-hook and $d'$-hook.  Without loss of generality, we may assume that the two hooks belong to $X$, so there exists $z\in \Z_{\geq 0}\smallsetminus X$ such that $z+d,z+d'\in X$.
Moreover, we may assume $(X,Y)$ is minimal, so $0\not\in X\cap Y$.
Let $(X',Y)$ denote the symbol obtained by removing the $d$-hook from $(X,Y)$.
By \eqref{s-rank},
$$\biggl\lfloor\frac{(|X'|-|Y|)^2}4\biggr\rfloor \le k,$$
so $|X|-|Y|$ is bounded as $r\to\infty$, so $|Y|$ grows without bound.  Moreover, $i(Y)$ is also bounded.
If $0\not\in Y$, then $i(Y) \ge |Y|$, so it follows that if $r$ is sufficiently large, $0\in Y$, which means $0\not\in X$.  If $z\neq 0$, then $X'$ contains the $z+d'$-hook $z+d'$.  Thus $z=0$, and moreover,
$X'$ is of the form $\schS^m(X'')$ for some non-negative integer $m$ and some $X''$ not containing $0$.
As $i(X'')$ is bounded above, there are only finitely many possibilities for $X''$.  However, $X'$ contains $d'$, so $d'-m$ is bounded, and therefore $n-m$ is bounded.  It follows that the number of possibilities for $X'$ and therefore $X$ is bounded as $r\to \infty$.  As $|X|$ determines $|Y|$ up to a bounded number of possibilities, and $|Y|$ and $i(Y)$ determine $Y$ up to a bounded number of possibilities, it follows that the number of possibilities for $(X,Y)$ is bounded.

Next we consider two cohooks and a minimal symbol $(X,Y)$.  Without loss of generality, we may assume that $x,x'\in X$,
and $x-d = x'-d' = z\in \Z_{\geq 0}\smallsetminus Y$.  Let $(X',Y')$ denote the symbol obtained by removing the cohook $x$ from $(X,Y)$.
As before $d'\in X'$, while  $|X'|-|Y'|$, $i(Y')$, and $r-d'$ are bounded independent of $r$.  This implies successively that $0\in X'\cap Y'$, $0\in X$,  $0\not\in Y$,  $z=0$, and $r-x$ bounded.
As $i(X')\le k$, we can write $X' = \schS^m(X'')$, $0\not\in X''$, where there is a bounded set of possibilities for $X''$, and $n-m$ is bounded.  Proceeding as before, the number of possibilities for $Y'$ given $X'$ is bounded independent of $r$, so the total number of possibilities for $(X',Y')$ and therefore for $(X,Y)$ is bounded.

Finally, we consider the case that $x\in X$ is a $d$-hook and $y'\in Y$ is a $d'$-cohook, where
$x-d = y'-d' = z\in \Z_{\geq 0}\smallsetminus X$.  Removing the $d'$-cohook from $(X,Y)$, we obtain
$(X',Y')$, so as in the previous case, $z=0$.
Writing $X' = \schS^m(X'')$ as before, again $n-m$ is bounded, so the number of possibilities for $X$ and  for $Y$ given $X$ is bounded.
\end{proof}

\begin{prop}
\label{bounded}
Let $k$ and $k'$ be fixed integers.
Let $$T = T_{d_1,\ldots,d_p}^{\epsilon_1,\ldots,\epsilon_p},~~T' = T_{d'_1,\ldots,d'_{p'}}^{\epsilon'_1,\ldots,\epsilon'_{p'}},$$
with $\e_i,\e'_i = \pm 1$, be a pair of weakly orthogonal maximal tori of a classical group of Lie type $G$ of rank $r$ defined over $\F_q$, and let $t,t'\in G$ regular elements of $T,T'$ respectively. Suppose that
$$r-d_1 = k,~~r-d'_1=k',~~(\epsilon_1,k) \neq (\epsilon'_1,k').$$
Then the number of irreducible characters $\chi$ of $G$ for which $\chi(t)\chi(t')\neq 0$ is bounded by a constant depending only
on $k$ and $k'$. Next, if $G$ is of type $D_n$ assume in addition that
\begin{enumerate}[\rm(a)]
\item either at least one of $\{\e_1, \ldots,\e_p\}$ is
$-1$ or at least one of $\{d_1, \ldots,d_p\}$ is odd, and
\item either at least one of $\{\e'_1, \ldots,\e'_{p'}\}$ is
$-1$ or at least one of $\{d'_1, \ldots, d'_{p'}\}$ is odd.
\end{enumerate}
\noindent
Then the values $|\chi(t)\chi(t')|$ are also bounded independently of anything but $k$ and $k'$.
\end{prop}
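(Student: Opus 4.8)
The plan is to reduce the statement to the combinatorics of symbols already developed — principally Proposition~\ref{Hooks and cohooks} — together with the Murnaghan--Nakayama rule for unipotent character values at regular elements of maximal tori.

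I would begin with the counting assertion. Since $T$ and $T'$ are weakly orthogonal, the only common constituents of the Deligne--Lusztig characters attached to $T$ and to $T'$ are unipotent (this is exactly how \cite[Proposition~8.4]{GLOST} is used in the proofs above); and since $t,t'$ are regular elements of $T,T'$, for a regular semisimple element the indicator function of its conjugacy class is uniform, so $\chi(t)\chi(t')\neq 0$ forces $\chi=\chi^S$ to be a unipotent character labelled by a symbol $S$ of rank $r$ — in type $D_n$ a degenerate $S$ yields two such characters, which costs only a factor $2$. The torus $T=T_{d_1,\ldots,d_p}^{\epsilon_1,\ldots,\epsilon_p}$ corresponds to an element $w$ of the Weyl group of $G$ of signed cycle type $(d_i,\epsilon_i)_i$; since $\sum_i d_i=r$ and $r-d_1=k$ is fixed, $d_1=r-k$ is the dominant part while $d_2,\ldots,d_p$ sum to $k$. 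The Murnaghan--Nakayama rule for symbols (the classical-type analogue of \cite[Proposition~3.1.1]{LST1}; see \cite{Lu} and \cite{M1}) shows that $\chi^S(t)\neq 0$ forces $S$ to admit removal of the part $d_1$: thus $S$ carries an $(r-k)$-hook if $\epsilon_1=+1$ and an $(r-k)$-cohook if $\epsilon_1=-1$, and likewise $S$ carries an $(r-k')$-hook or $(r-k')$-cohook according to $\epsilon'_1$. The hypothesis $(\epsilon_1,k)\neq(\epsilon'_1,k')$ now lands in exactly one of the three cases of Proposition~\ref{Hooks and cohooks}: if $\epsilon_1=\epsilon'_1=+1$ then $k\neq k'$ and part~(i) applies; if $\epsilon_1=\epsilon'_1=-1$ then again $k\neq k'$ and part~(ii) applies; and if $\epsilon_1\neq\epsilon'_1$ then $S$ has an $(r-k)$-hook and an $(r-k')$-cohook (in one order or the other) and part~(iii) applies with no constraint on $k$ and $k'$. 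In every case the number of admissible symbols $S$ of rank $r$ is bounded in terms of $k$ and $k'$ alone, and hence so is the number of $\chi$ with $\chi(t)\chi(t')\neq 0$.

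For the bound on $|\chi(t)\chi(t')|$ (for $G$ of type $D_n$ under hypotheses (a) and (b)), I would fix one such $S$. Inspecting the proof of Proposition~\ref{Hooks and cohooks}, the minimal symbol in the class of $S$ has $|X|$, $|Y|$, $i(X)$ and $i(Y)$ all bounded in terms of $k$ and $k'$; hence the Lusztig family $\mathcal{F}$ of $\chi^S$, and the symbols of all its members, have size bounded in terms of $k$ and $k'$. For a regular semisimple element $t$ of $T=T_w$ the value $\chi^S(t)$ equals, up to sign, the multiplicity $\langle R_{T_w},\chi^S\rangle$ in the Deligne--Lusztig character $R_{T_w}$; expanding $R_{T_w}=\sum_\phi \phi(w)R_\phi$ in the uniform almost characters $R_\phi$ and using Lusztig's (real, orthogonal) Fourier transform $\Delta$ of $\mathcal{F}$, this equals $\pm\sum_{\phi\in\mathcal{F}}\phi(w)\,\Delta_{S,\phi}$, so $|\chi^S(t)|\le \sum_{\phi\in\mathcal{F}}|\phi(w)|$, and symmetrically with $t'$ and $w'$. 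Finally, hypotheses (a) and (b) guarantee that $w$ and $w'$ are not of degenerate type in $W(D_n)$, so the Murnaghan--Nakayama rule for the Weyl group of type $B_n$ applies without correction terms: a symbol of bounded size and rank $r$ admits at most two removable $(r-k)$-rim-hooks, so $\phi(w)$ is a sum of at most two terms $\pm\phi'(w')$ with $\phi'$ an irreducible character of a Weyl group of type $B_k$ and $w'$ an element whose cycle lengths sum to $k$; hence $|\phi(w)|\le 2\max_{\phi'}\phi'(1)$, which is bounded in terms of $k$. Combining with $|\mathcal{F}|=O_{k,k'}(1)$ gives $|\chi^S(t)|=O_{k,k'}(1)$ and $|\chi^S(t')|=O_{k,k'}(1)$, and the product is therefore bounded.

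I expect the counting reduction to be essentially bookkeeping once the hook/cohook dictionary for the tori is set up correctly. The main obstacle is the value bound: making precise the passage from $\chi^S(t)$ to Weyl group character values through Lusztig's Fourier transform (including the uniformity of the class indicator for regular semisimple $t$), controlling the size of the family $\mathcal{F}$, and above all dealing with the type-$D_n$ degeneracy — which is exactly what hypotheses (a) and (b) are designed to exclude, and the reason they are needed only for the second conclusion.
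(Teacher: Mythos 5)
Your counting argument is essentially the paper's: weak orthogonality reduces the sum to unipotent characters, the L\"ubeck--Malle Murnaghan--Nakayama rule (\cite[Theorem 3.3]{LM}) translates $\chi(t)\chi(t')\neq 0$ into the existence on the symbol of an $(r-k)$-hook or cohook and an $(r-k')$-hook or cohook according to the signs $\epsilon_1,\epsilon'_1$, and the hypothesis $(\epsilon_1,k)\neq(\epsilon'_1,k')$ routes you into exactly one of the three cases of Proposition~\ref{Hooks and cohooks}. For the value bound the paper is more economical: it applies \cite[Theorem 3.3]{LM} once more as a black box --- removing a $d_1$-hook/cohook (resp.\ a $d'_1$-hook/cohook) from the bounded set of admissible symbols yields a bounded set of resulting symbol classes and a bounded number of removals, so the values lie in a finite set independent of $r$ --- whereas you unfold the mechanism behind that theorem (uniformity of regular semisimple classes, $\chi(t)=\langle\chi,R_{T_w}\rangle$, Lusztig's Fourier transform, and Murnaghan--Nakayama for the Weyl group). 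Your route is valid, and both arguments invoke conditions (a) and (b) for the same purpose: they force $w$ and $w'$ to be non-degenerate, so that in type $D_n$ the two characters attached to a degenerate symbol agree at $t$ and $t'$ and the rule still governs their sum. One step of yours is misjustified, however: you assert that the minimal symbol has $|X|$ and $|Y|$ bounded, but the proof of Proposition~\ref{Hooks and cohooks} establishes the opposite ($|Y|$ grows without bound; both rows are long staircases $\schS^m(X'')$, $\schS^{m'}(Y'')$ perturbed in boundedly many places). What is actually bounded --- and what you need to bound the size of the Lusztig family --- is the number of entries of multiplicity one in $X\sqcup Y$, which does follow from that staircase structure since the two rows are nearly equal staircases; with that correction your value bound goes through.
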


\begin{proof}
As $T$ and $T'$ are weakly orthogonal, by \cite[Proposition 2.2.2]{LST1} we need only consider unipotent characters $\chi$.  Any such character is associated with an equivalence class of symbols of rank $r$.
Let $(X,Y)$ represent such a class.
By \cite[Theorem~3.3]{LM}, the values $\chi(t)$ and $\chi(t')$ are independent of the choices of $t$ and $t'$; moreover $\chi(t) = 0$ unless $(X,Y)$ has a $d_1$-hook assuming $\e_1=1$, respectively a $d_1$-cohook assuming $\epsilon_1=-1$.  Similarly $\chi(t')=0$ unless $(X,Y)$ has a $d'_1$-hook assuming $\e'_1=1$, respecitvely a $d'_1$-cohook assuming $\epsilon'_1=-1$.
By Proposition~\ref{Hooks and cohooks}, the number of possibilities for $(X,Y)$ is bounded by $B=B(k,k')$; in particular,
the number of possibilities for $\chi$ is bounded by $2B$.
Removing a $d_1$-hook or cohook or a $d'_1$-hook or cohook from a bounded set of $(X,Y)$,
the set of possible resulting symbol classes is also bounded independently of $r$, and likewise for the number of possible removals.
Hence, \cite[Theorem 3.3]{LM} implies that the character values $\chi(t)$ and $\chi(t')$ also belong to finite sets independent of $r$, if
none of $(X,Y)$ is degenerate. In the case some $(X,Y)$ is degenerate, which can happen only when $G$ is of type $D_n$, then our extra
assumption ensures that both $t$ and $t'$ are non-degenerate. As mentioned in \cite[\S3.4]{LM}, the two unipotent characters corresponding
to a degenerate symbol take the same values at non-degenerate regular semisimple elements, and their sum is still governed by
\cite[Theorem 3.3]{LM}, whence our statement follows in this case as well.
\end{proof}

\subsection{Groups of type $D_n$ and $\tw2 D_n$}

\begin{lem}\label{spin-tori}
Let $q$ be an odd prime power and let $G = \Omega^\e_{2n}(q)$ with $n \geq 4$ and $\e = \pm$. Let
$T < \SO^\alpha_{2a}(q) \times \SO^\beta_{2b}(q)$ be a maximal torus of
type $T^{\alpha,\beta}_{a,b}$ in $G$ with $1 \leq a < b$. Then we can find a regular semisimple element $g = \diag(u,v) \in T$
with $u \in \SO^\alpha_{2a}(q)$ having order $q^a -\alpha$ and $v \in \SO^\beta_{2b}(q)$ having order $q^b -\beta$.
\end{lem}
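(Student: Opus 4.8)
The plan is to split off the two orthogonal blocks and reduce the whole statement to one fact about spinor norms. Write $n=a+b$, so $\alpha\beta=\e$, and realize the natural module as an orthogonal direct sum $V=V_1\perp V_2$ with $\dim V_1=2a$ of type $\alpha$ and $\dim V_2=2b$ of type $\beta$, so that $\SO^\alpha_{2a}(q)\times\SO^\beta_{2b}(q)=\SO(V_1)\times\SO(V_2)\le\SO(V)=\SO^\e_{2n}(q)$. I would fix the cyclic maximal torus $T_1\cong C_{q^a-\alpha}$ of $\SO^\alpha_{2a}(q)=\SO(V_1)$ of single-block type $T^\alpha_a$, acting trivially on $V_2$, and likewise $T_2\cong C_{q^b-\beta}$ of $\SO(V_2)$ acting trivially on $V_1$; then $\hat T:=T_1\times T_2$ is the maximal torus of $\SO^\e_{2n}(q)$ of type $T^{\alpha,\beta}_{a,b}$, and the torus $T$ of the lemma is $T=\hat T\cap\Omega^\e_{2n}(q)=\hat T\cap G$, of index $1$ or $2$ in $\hat T$. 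So it is enough to find generators $u$ of $T_1$ and $v$ of $T_2$ such that $g:=\diag(u,v)$ is regular semisimple in $\SO^\e_{2n}(q)$ and lies in $G$; such $u,v$ automatically have the required orders $q^a-\alpha$ and $q^b-\beta$.

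For regular semisimplicity I would work with eigenvalues on $V\otimes\bar\F_q$. A generator $u$ of $T_1$ has eigenvalues $\{\mu^{q^i},\,\mu^{-q^i}\mid 0\le i<a\}$ for a root of unity $\mu$ of order exactly $q^a-\alpha$, and since $\gcd(q,q^a-\alpha)=1$ every one of these $2a$ eigenvalues again has order exactly $q^a-\alpha$. As $q$ is odd and $q^a-\alpha\ge2$, a short divisibility check shows these $2a$ eigenvalues are pairwise distinct and none equals $\pm1$, the sole exception being $q=3,\,a=1,\,\alpha=+$, in which $u=-1_{V_1}$ is still regular in the toral group $\SO^+_2(3)$. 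Hence $u$ is regular semisimple in $\SO(V_1)$, and likewise any generator $v$ of $T_2$ in $\SO(V_2)$. Since $a<b$ forces $q^a-\alpha<q^b-\beta$, the eigenvalues of $u$ (all of order $q^a-\alpha$) are disjoint from those of $v$ (all of order $q^b-\beta$); so, writing the eigenvalue multiset of $g$ as $\{t_\ell^{\pm1}\mid 1\le\ell\le n\}$, one has $t_\ell\ne t_{\ell'}^{\pm1}$ for $\ell\ne\ell'$, no $t_\ell$ equal to $1$, and at most one $t_\ell$ equal to $-1$ — precisely the condition $\rho(g)\ne1$ for every root $\rho$ of type $D_n$. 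Thus $g$ is regular semisimple in $\SO^\e_{2n}(q)$.

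For membership in $G$ I would use that the spinor norm $\theta\colon\SO(V)\to\F_q^\times/(\F_q^\times)^2$ is multiplicative over the orthogonal decomposition: $\theta(\diag(u,v))=\theta_1(u)\,\theta_2(v)$, where $\theta_i$ is the spinor norm of $\SO(V_i)$. The one nontrivial ingredient is the fact that, for $q$ odd and any $m\ge1$, the restriction of the spinor norm to a cyclic maximal torus $C_{q^m-\delta}$ of $\SO^\delta_{2m}(q)$ of single-block type is \emph{surjective} --- equivalently, $C_{q^m-\delta}\not\subseteq\Omega^\delta_{2m}(q)$. Granting this, each $\theta_i|_{T_i}$ is a nontrivial homomorphism into a group of order $2$, so its kernel is the unique index-$2$ subgroup of the cyclic group $T_i$; as $q^a-\alpha$ and $q^b-\beta$ are even, that subgroup is exactly the set of squares and hence misses every generator. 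Therefore $\theta_1(u)=\theta_2(v)=-1$ for generators $u,v$, and $\theta(g)=(-1)(-1)=1$, i.e.\ $g\in\Omega^\e_{2n}(q)=G$; combined with the previous paragraph this gives the lemma.

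I expect the only real work to lie in the surjectivity fact above. For $\delta=+$ it should be clean: realizing $C_{q^m-1}=\F_{q^m}^\times$ as the subgroup of $\SO^+_{2m}(q)$ acting by $\lambda$ on one Lagrangian and by $\lambda^{-1}$ on its dual in a polarization of the natural module, its spinor norm is $N_{\F_{q^m}/\F_q}(\lambda)\bmod(\F_q^\times)^2$, and a generator $\lambda$ of $\F_{q^m}^\times$ has $N_{\F_{q^m}/\F_q}(\lambda)=\lambda^{(q^m-1)/(q-1)}$ of order $q-1$, hence a non-square, so surjectivity follows. For $\delta=-$ one embeds $C_{q^m+1}$ in $\SO^-_{2m}(q)$ as the scalar multiplications $x\mapsto\mu x$ on $V=\F_{q^{2m}}$ equipped with a suitable trace form; the standard formula for the spinor norm of an element without eigenvalue $1$ then gives $\theta(\mu\cdot\mathrm{Id})\equiv c\cdot N_{\F_{q^{2m}}/\F_q}(1-\mu)=c\cdot N_{\F_{q^m}/\F_q}(2-\mu-\mu^{-1})\bmod(\F_q^\times)^2$ for a fixed constant $c$, and a short character-sum estimate --- using $\chi_{\F_q}\circ N_{\F_{q^m}/\F_q}=\chi_{\F_{q^m}}$ together with the standard bound for $\sum_w\chi_{\F_{q^m}}(w(w-4))$ --- shows that this is a non-square for some $\mu$ with $\mu^{q^m+1}=1$, so $\theta|_{C_{q^m+1}}$ is nontrivial. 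Alternatively one may simply quote the description of these tori relative to $\Omega$ from \cite{KL}. The rest is routine bookkeeping with eigenvalues and tori.
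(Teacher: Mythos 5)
Your proposal is correct and follows essentially the same route as the paper: take generators of the two cyclic factors, use multiplicativity of the spinor norm together with the fact that each cyclic torus $C_{q^m-\delta}$ surjects onto $\F_q^\times/(\F_q^\times)^2$ to land in $\Omega$, and verify regularity from the distinctness of eigenvalue orders, with the same exceptional case $(q,\alpha,a)=(3,+,1)$. The only difference is bookkeeping: where you sketch a direct computation of the spinor norm on the split and non-split cyclic tori, the paper simply quotes \cite[Lemma 8.14]{TZ2} (for $\alpha=+$, or $\alpha=-$ with $a$ odd) and \cite[Proposition 2.5.13]{KL} (for $\alpha=-$ with $a$ even) to get $T^\alpha_a\cap\Omega^\alpha_{2a}(q)=\langle x^2\rangle$.
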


\begin{proof}
First we consider the maximal torus $T^\alpha_a = \langle x \rangle \cong C_{q^a-\alpha}$ in $\SO^\alpha_{2a}(q)$. If $\alpha = +$, or
if $\alpha = -$ but $2 \nmid a$, then, as shown in \cite[Lemma 8.14]{TZ2}, $T^\alpha_a \cap \Omega^\alpha_{2a}(q) = \langle x^2 \rangle$.
On the other hand, if $\alpha=-$ and $2|a$, then as $1=(-1)^{a(q-1)/2}$, by
\cite[Proposition 2.5.13]{KL} we have
$\SO^\alpha_{2a}(q) = \langle z \rangle \times \Omega^\alpha_{2a}(q)$ for a central involution $z$ which is contained in $T^\alpha_a$.
Since $C_{q^a-\alpha} \cong C_{(q^a-\alpha)/2} \times C_2$ with $2 \nmid (q^a-\alpha)/2$, we again see that
$T^\alpha_a \cap \Omega^\alpha_{2a}(q) = \langle x^2 \rangle$.

Let $T^\beta_b = \langle y \rangle \cong C_{q^b-\beta}$. By the above, $x^2,y^2 \in G$, but
$x \in \SO^\alpha_{2a}(q) \smallsetminus \Omega^\alpha_{2a}(q)$ and $y \in \SO^\beta_{2b}(q) \smallsetminus \Omega^\beta_{2b}(q)$.
We can now choose $g = xy$. As $q \geq 3$ and $a<b$, $g$ has simple spectrum acting on the natural module
$V=\F_q^{2n}$ of $G$ and so it is regular,
unless $(q,\alpha,a) = (3,+,1)$. But even in this exceptional case, ${\mathbf C}_{\SO(V \otimes \overline{\F}_q)}(g)^\circ$ is still
a torus of type $T^{+,\beta}_{1,n-1}$ and so $g$ is again regular.
\end{proof}

\begin{prop}\label{D-product1}
Let $G = \Spin^\e_{2n}(q)$ with $n \geq 4$ and $\e=\pm$. Then the following statements hold.
\begin{enumerate}[\rm(i)]
\item If $2|n$ and $\e=-$, then the pair of maximal tori $T^-_n$ and $T^{+,-}_{n-1,1}$ is weakly orthogonal.
\item If $a \in \N$ and $n \geq 2a+2$, then the pair of maximal tori $T^{-,-\e}_{n-a,a}$ and $T^{-,-\e}_{n-a-1,a+1}$ is weakly orthogonal.
\end{enumerate}
\end{prop}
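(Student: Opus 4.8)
The plan is to follow the template of \cite[Proposition~8.4]{GLOST} (compare \cite[\S2.2]{LST1}). Recall that a pair $T,T'$ of maximal tori of a finite reductive group $G$ is \emph{weakly orthogonal} — so that, by \cite[Proposition~2.2.2]{LST1}, every $\chi\in\Irr(G)$ taking a nonzero value on regular semisimple elements of both $T$ and $T'$ is unipotent — provided there is no semisimple element $s\neq 1$ of the dual group $G^*$ whose centralizer $C_{G^*}(s)$ contains, up to $G^*$-conjugacy, maximal tori of $G^*$ dual to \emph{both} $T$ and $T'$. Indeed, if $\chi\in\mathcal E(G,s)$ is nonzero on a regular semisimple $t\in T$, then a $G^*$-conjugate of the torus dual to $T$ is a maximal torus of $C_{G^*}(s)$, and likewise for $T'$; since $G=\Spin^\e_{2n}(q)$ is simply connected, $G^*$ is of adjoint type (so $Z(G^*)=1$), and excluding all such $s\neq 1$ therefore forces $\chi\in\mathcal E(G,1)$, i.e.\ $\chi$ unipotent. (For untwisted type $D_n$ one also uses, as in the last paragraph of the proof of Proposition~\ref{bounded}, that the regular elements furnished by Lemma~\ref{spin-tori} are non-degenerate, which is clear from their construction.) Thus it suffices to rule out such an $s$.

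To analyse $C_{G^*}(s)$, I would realize $G^*$ through the standard isogeny with $\SO^\e_{2n}$, lift $s$ to an element $\hat s$ of the orthogonal group acting on the natural module $V=\bar\F_q^{2n}$, and decompose $V=\bigoplus_\mu V_\mu$ into $\hat s$-eigenspaces, with $V_\mu$ paired to $V_{\mu^{-1}}$ by the form; then $C_{G^*}(s)$ is, up to isogeny, a product of groups $\GL_m(q^d)$ and $\mathrm{GU}_m(q^d)$, one for each Frobenius orbit of eigenvalue pairs $\{\mu,\mu^{-1}\}$ with $\mu\neq\pm1$, times the orthogonal groups of the $\pm1$-eigenspaces $V_{\pm1}$. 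The key point is that, for $n\ge 4$, none of $q^n+1$, $q^{n-a}+1$, $q^{n-a-1}+1$ is a product of two or more integers of the shape $q^e\pm1$; hence, whenever a maximal torus of type $T^-_n$ (respectively $T^{-,-\e}_{n-a,a}$ or $T^{-,-\e}_{n-a-1,a+1}$) sits inside $C_{G^*}(s)$, the cyclic factor of order $q^n+1$ (resp.\ $q^{n-a}+1$ or $q^{n-a-1}+1$) must be realized as a Coxeter torus of a \emph{single} $\mathrm{GU}$- or $\SO$-factor, so $\hat s$ must possess one invariant block of $V$ of dimension $2n$ (resp.\ $2(n-a)$ or $2(n-a-1)$) which is either the $\pm1$-eigenspace or a unitary block $V_\mu\oplus V_{\mu^{-1}}$.

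For (ii) this reduces to a dimension count. A torus of type $T^{-,-\e}_{n-a,a}$ forces such a block $U$ with $\dim U=2(n-a)$, a torus of type $T^{-,-\e}_{n-a-1,a+1}$ forces one $U'$ with $\dim U'=2(n-a-1)$; running through the options (two distinct Frobenius-orbit blocks; a unitary block versus a $\pm1$-eigenspace; two $\pm1$-eigenspaces, which must then be $V_1$ and $V_{-1}$) one sees in every case that $U$ and $U'$ are disjoint, so $\dim U+\dim U'\le 2n$ — whereas $2(n-a)+2(n-a-1)>2n$ is precisely the hypothesis $n\ge 2a+2$. The leftover possibility that $C_{G^*}(s)$ is itself a single unitary block $\mathrm{GU}_m(q^d)$ is excluded since then $q^{n-a}+1$ and $q^{n-a-1}+1$ would both occur among the factors $q^{db_i}-(-1)^{b_i}$ of a maximal torus of $\mathrm{GU}_m(q^d)$, forcing $d\mid\gcd(n-a,n-a-1)=1$ and then a contradiction among the parities of the $b_i$.

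For (i), the block forced by the Coxeter torus $T^-_n$ of $\tw2 D_n$ is all of $V$, so $C_{G^*}(s)$ is either $\SO^{\pm}_{2n}(q)$ — in which case $\hat s=\pm1$ and $s=1$ in the adjoint group — or a unitary group $\mathrm{GU}_m(q^d)$ with $m$ odd and $dm=n$. In the latter case, for $C_{G^*}(s)=\mathrm{GU}_m(q^d)$ to contain in addition a maximal torus of type $T^{+,-}_{n-1,1}\cong C_{q^{n-1}-1}\times C_{q+1}$, one of the factors $q^{db_i}-(-1)^{b_i}$ of a maximal torus of $\mathrm{GU}_m(q^d)$ must equal $q^{n-1}-1$, which requires $b_i$ even with $db_i=n-1$ — impossible exactly because $2\mid n$ makes $n-1$ odd. (When $n$ is odd this does happen: with $d=1$, $m=n$, $b_i=n-1$; the central $(q+1)$-scalar of $\mathrm{GU}_n(q)\hookrightarrow\SO^-_{2n}(q)$ has centralizer $\mathrm{GU}_n(q)$, which contains maximal tori of both types, and the Steinberg character of $\mathrm{GU}_n(q)$ yields a non-unipotent $\chi$ nonzero on regular semisimple elements of both $T^-_n$ and $T^{+,-}_{n-1,1}$ — so the hypothesis $2\mid n$ is sharp.) Hence $s=1$ and weak orthogonality holds. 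The part I expect to require the most care is justifying that the long cyclic factor cannot be distributed across several $\GL$-, $\mathrm{GU}$- and $\SO$-blocks, and then carrying out this case analysis while tracking the $\e=\pm$ distinction, the parities of the various indices, the handful of small-parameter coincidences, and the isogeny and degenerate-symbol subtleties peculiar to types $D_n$ and $\tw2 D_n$.
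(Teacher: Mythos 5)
Your proposal takes a genuinely different route from the paper. You reduce weak orthogonality to the non-existence of a semisimple $s\neq 1$ in the adjoint dual group whose centralizer contains maximal tori of both prescribed types (a correct reduction, since maximal tori are self-centralizing), and then classify the possible centralizers $C_{G^*}(s)$ by eigenspace blocks and run a dimension count. The paper argues more directly: it realizes the dual group as $H/\ZB(H)$ with $H=\mathrm{CO}(V)^\circ$, takes $g$ in the intersection of the preimages in $H$ of the two dual tori, and writes the spectrum of $g$ on $V$ in two ways as explicit multisets ($X$ versus $Z\sqcup T$ in (i), $X\sqcup Y$ versus $Z\sqcup T$ in (ii)); the cardinality inequality $|X|>|T|$ (respectively $|X|>|Z|$) — which is exactly where the hypotheses $2\mid n$ and $n\geq 2a+2$ enter — forces an eigenvalue coincidence, after which all eigenvalues collapse to a single $x\in\F_q^\times$ and $g$ is central. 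This is shorter, uniform in $q$, and needs no classification of centralizers. Your observation that (i) is sharp for odd $n$, via $\mathrm{GU}_n(q)\hookrightarrow\SO^-_{2n}(q)$, is correct and a good sanity check.

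There is, however, a genuine gap in your key step: the claim that none of $q^n+1$, $q^{n-a}+1$, $q^{n-a-1}+1$ is a product of two or more integers of the shape $q^e\pm1$ is false. For $q=2$ one has $q-1=1$, so every factorization is trivially of that shape, and non-trivially $2^3+1=9=(q+1)^2$; this is relevant because part (ii) permits $n-a-1=3$ (e.g.\ $a=2$, $n=6$), so the integer-arithmetic version of the "single block" claim fails exactly where Zsigmondy's theorem has its exception $(q,2k)=(2,6)$. What you actually need is the structural statement that the cyclic torus of order $q^k+1$, acting irreducibly on a $2k$-dimensional summand of $V$, cannot be distributed over blocks of $C_{G^*}(s)$ of smaller total dimension; this can be rescued (the cyclic group structure, or a primitive prime divisor of $q^{2k}-1$ with the pair $(q,k)=(2,3)$ handled by hand via orders modulo $9$, still forces a block of dimension $\geq 2k$), but as written the step would fail. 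The paper's multiset computation sidesteps this issue entirely, which is one reason to prefer it.
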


\begin{proof}
We follow the proof of \cite[Proposition 2.6.1]{LST1}.
In this case, the dual group $G^{*}$ is $\PCO(V)^{\circ}$, where
$V = \F_{q}^{2n}$ is endowed with a quadratic form $Q$ of type $\e$ and $G^* = H/\ZB(H)$ with
$H := \mathrm{CO}(V)^{\circ}$. Consider the
complete inverse images in $H$ of the tori dual to the given two tori, and assume $g$ is an element
belonging to both of them. We need to show that $g \in \ZB(H)$.
We will consider the spectrum $S$ of the semisimple element $g$ on $V$ as a
multiset. Let $\gamma \in \F_{q}^{\times}$ be the {\it conformal coefficient} of
$g$, i.e. $Q(g(v)) = \gamma Q(v)$ for all $v \in V$.

\smallskip
In the case of (i), $S$ can be represented
as the joins of multisets $X$ and $Z \sqcup T$, where
$$\begin{array}{l}
  X := \{x,x^{q}, \ldots,x^{q^{n-1}},\gam x^{-1},\gam x^{-q}, \ldots,
             \gam x^{-q^{n-1}}\},\\
  Z := \{z,z^{q}, \ldots,z^{q^{n-2}},\gam z^{-1},\gam z^{-q}, \ldots,
             \gam z^{-q^{n-2}}\},~~
  T := \{t,\gam t^{-1}\},\end{array}$$
for some $x,z,t \in \bar{\F}_{q}^{\times}$ with
$x^{q^{n}+1} = \gam = t^{q+1}$ and $z^{q^{n-1}-1} = 1$. Since $|X| =2n > |Z|$, we may assume that
$x \in X \cap T$, whence $x^{q^n+1}=x^{q+1}=\gam$. As $2|n$, it follows that
$$x^{q^n-1} = (\gam^{q-1})^{(q^n-1)/(q^2-1)} = 1,$$
whence $\gam=x^2$. In turn, this implies that $x^{q+1}=x^2$, i.e. $x \in \F_q^\times$.
Since we now have $S = X = \{\underbrace{x,x, \ldots,x}_{2n}\}$, $g \in \ZB(H)$.

\smallskip
In the case of (ii), $S$ can be represented
as the joins $X \sqcup Y$ and $Z \sqcup T$, where
$$\begin{array}{l}
  X := \{x,x^{q}, \ldots,x^{q^{n-a-1}},\gam x^{-1},\gam x^{-q}, \ldots,
             \gam x^{-q^{n-a-1}}\},\\
  Y := \{y,y^{q}, \ldots,y^{q^{a-1}},\gam y^{-1},\gam y^{-q},
             \ldots, \gam y^{-q^{a-1}}\},\\
  Z := \{z,z^{q}, \ldots,z^{q^{n-a-2}},\gam z^{-1},\gam z^{-q}, \ldots,
             \gam z^{-q^{n-a-2}}\},\\
  T := \{t,t^{q}, \ldots,t^{q^{a}},\gam t^{-1},\gam t^{-q},
             \ldots, \gam t^{-q^{a}}\},\end{array}$$
for some $x,y,z,t \in \bar{\F}_{q}^{\times}$ with
$x^{q^{n-a}+1} = \gam = z^{q^{n-a-1}+1}$, and $y^{q^{a}+\e} = \gam =t^{q^{a+1}+\e}$ if $\e=+$ and
$y^{q^{a}+\e} = 1=t^{q^{a+1}+\e}$ if $\e=-$. Since $|X| =2(n-a) > |T|=2(a+1)$, we may assume that
$x \in X \cap Z$, whence $x^{q^{n-a}+1}=x^{q^{n-a-1}+1}=\gam$. It follows that
$$x^{q^{n-a-1}(q-1)} = 1,$$
whence $x \in \F_q^\times$, $\gam=x^2$, and
$X = \{\underbrace{x,x, \ldots,x}_{2(n-a)}\}$, $Z = \{\underbrace{x,x, \ldots,x}_{2(n-a-1)}\}$.
This also implies that $x \in T$, whence $T = \{\underbrace{x,x, \ldots,x}_{2a+2}\}$ and $g \in \ZB(H)$.
\end{proof}

\begin{prop}\label{D-product2}
Let $G = \Spin^\e_{2n}(q)$ with $n \geq 4$ and $\e=\pm$. Then the following statements hold.
\begin{enumerate}[\rm(i)]
\item Suppose $2|n$ and $\e=-$. Then there exist regular semisimple elements $x \in T^-_n$ and $y \in T^{+,-}_{n-1,1}$ such that
$x^G \cdot y^G \supseteq G \smallsetminus \ZB(G)$.
\item Suppose $a \in \N$, $a \geq 3$, and $n \geq 2a+2$. Then there exist regular semisimple elements
$x \in T^{-,-\e}_{n-a,a}$, $y \in T^{-,-\e}_{n-a-1,a+1}$ and a constant $C = C(a)$, such that if $g \in G$ has $\supp(g) \geq C$ then
$g \in x^G \cdot y^G$.
\end{enumerate}
\end{prop}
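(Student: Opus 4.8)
The plan is to run the character-theoretic argument used for Theorems~\ref{A-product1}--\ref{A-product3}, now fed by the weak orthogonality of Proposition~\ref{D-product1} and the uniform bounds of Proposition~\ref{bounded}. Fix $g\in G$, non-central in case~(i) and with $\supp(g)\ge C$ in case~(ii). By the Frobenius formula~\eqref{Frobenius}, once regular semisimple elements $x$ and $y$ have been chosen in the prescribed tori, $g\in x^G\cdot y^G$ provided
\[
\sum_{\chi\in\Irr(G)} \frac{\chi(x)\chi(y)\overline{\chi(g)}}{\chi(1)}\neq 0 ,
\]
so it suffices to show that the contribution of the non-trivial irreducible characters has absolute value $<1$.

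The first step is to produce the regular semisimple $x,y$. For case~(ii), and for the torus $T^{+,-}_{n-1,1}$ in case~(i), these are supplied directly by Lemma~\ref{spin-tori} (with $(a,b)$ equal to $(a,n-a)$, to $(a+1,n-a-1)$, and to $(1,n-1)$ respectively); the hypotheses $n\ge 2a+2$ and $n\ge 4$ give $a<n-a$, $a+1\le n-a-1$, and $1<n-1$. For the Coxeter-type torus $T^-_n$ in case~(i) one argues directly: an appropriate power of a generator of the cyclic group $T^-_n$ lies in $G$ (by the computation in the proof of Lemma~\ref{spin-tori} with ``$a$''$=n$ and $2\mid n$) and, for $n$ large, has pairwise distinct eigenvalues on the natural module, hence is regular semisimple. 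Since $x,y$ are regular and $T,T'$ are weakly orthogonal by Proposition~\ref{D-product1}, \cite[Proposition~2.2.2]{LST1} shows $\chi(x)\chi(y)=0$ unless $\chi$ is unipotent, and Proposition~\ref{bounded} then shows that the set $\mathcal{S}$ of unipotent characters with $\chi(x)\chi(y)\neq 0$ is finite, with $|\mathcal{S}|$ and $\max_{\chi\in\mathcal{S}}|\chi(x)\chi(y)|$ bounded by constants depending only on $a$ (absolute constants in case~(i)). Here one checks the hypotheses of Proposition~\ref{bounded}: writing each torus type with its largest part first gives $r-d_1=k$, $r-d'_1=k'$ with $(\epsilon_1,k)\neq(\epsilon'_1,k')$ (namely $(k,k')=(0,1)$ in case~(i) and $(k,k')=(a,a+1)$ in case~(ii)), and in the $D_n$ case the sign $-$ carried by the largest parts provides conditions~(a) and~(b).

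The second step is to bound the ratios $|\chi(g)|/\chi(1)$ over $\mathcal{S}$, exactly as in the proofs of Theorems~\ref{A-product1}--\ref{A-product3}. The trivial character contributes $1$. Each other $\chi=\chi_S$ is labelled by a symbol $S$ of rank $n$ carrying a near-maximal hook (case~(i), from the $+$ sign) or near-maximal cohooks (both cases, from the $-$ signs); by the combinatorial analysis underlying Proposition~\ref{Hooks and cohooks} (compare Proposition~\ref{Twelve} and \cite[Corollary~3.1.4]{LST1}), together with the degree formula~\eqref{deg11}, the possibilities for $S$ split into: (a) the Steinberg symbol and a bounded number of symbols of near-maximal degree $\ge |G|/q^{O(n)}$; (b) a bounded number of symbols of bounded, distinct levels $\ell\ge 3$; and (c) a bounded number of symbols of Weil type (level $1$ or $2$). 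For type~(a), \cite[Proposition~6.2.1]{LST1} gives $|\chi(g)|/\chi(1)<\epsilon$ for every non-central $g$ once $n$ is large. For type~(b), \cite[Theorem~1.6]{GLT1} in its orthogonal/symplectic form gives $|\chi(g)|/\chi(1)\le 2.43\,\chi(1)^{-1/n}\le 2.43\,q^{-\ell+o(1)}$, decaying geometrically in $\ell$, so the total type-(b) contribution is at most $2.43\sum_{\ell\ge 3}q^{-\ell+o(1)}<1$. For type~(c), the Weil-character formula \cite[Lemma~4.1]{TZ1} gives, in case~(ii), $|\chi(g)|/\chi(1)=O(q^{-\supp(g)})<\epsilon$ once $\supp(g)\ge C(a)$, and in case~(i), $|\chi(g)|/\chi(1)\le q^{-1}+o(1)$ for all non-central $g$, where (as in Theorems~\ref{A-product2} and~\ref{A-product3}) the uniqueness of the relevant hook/cohook removal together with \cite[Theorem~3.3]{LM} forces $\chi(x)\chi(y)=\pm 1$. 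Adding up, the contribution of the non-trivial characters of $\mathcal{S}$ is a sum of finitely many such quantities whose total is $<1$ for $n$ large (and $\supp(g)\ge C(a)$ in case~(ii)); this gives the proposition.

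The main obstacle is the combinatorial heart of the second step: classifying exactly the symbols of rank $n$ that simultaneously admit both prescribed near-maximal hooks/cohooks, and verifying that each is of near-Steinberg degree, of bounded level, or of Weil type with $\chi(x)\chi(y)=\pm 1$. This is an analysis of symbols in the spirit of \cite[\S3]{LST1} and Proposition~\ref{Twelve}, and it is where the hypotheses $a\ge 3$ and $n\ge 2a+2$ of case~(ii) enter --- the former keeping the non-Weil bounded-level survivors at level $\ge 3$ (so that the geometric bound above is $<1$), the latter keeping the two cohooks of size comparable to $r$. A secondary, more routine, point is exhibiting regular semisimple elements inside the spin group rather than inside $\SO$ or $\Omega$, which is precisely the purpose of Lemma~\ref{spin-tori}, with the Coxeter torus $T^-_n$ of case~(i) needing the small separate argument indicated above.
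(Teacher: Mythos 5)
Your plan diverges from the paper's proof in both parts, and in part (i) it has a genuine gap. The statement of (i) is that $x^G\cdot y^G$ contains \emph{every} non-central element, with no support hypothesis and for every even $n\ge 4$. The paper achieves this not by estimating character ratios but by choosing $x$ of order divisible by a Zsigmondy prime $\ell_{2n}$ of $q^{2n}-1$ and $y$ of order divisible by $\ell_{n-1}$ of $q^{n-1}-1$; any unipotent $\chi$ with $\chi(x)\chi(y)\neq 0$ must then have both primes dividing the denominator of the degree formula \eqref{deg11} (else $\chi$ has defect zero for one of them and vanishes at $x$ or $y$), and a short symbol computation shows only $1_G$ and $\St$ survive. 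After that, semisimple $g$ is handled by \cite[Lemma 5.1]{GT1} and non-semisimple $g$ by $\St(g)=0$, so the Frobenius sum is exactly $1$. Your route instead leaves a residue of low-level ``Weil-type'' and level-$2$ unipotent characters whose ratios $|\chi(g)|/\chi(1)$ at elements of support $1$ you do not control: the bound you invoke from \cite[Theorem 1.6]{GLT1} is stated for $\GL_n$ and $\GU_n$ (the paper uses it only in the type-$A$ theorems), and its ``orthogonal/symplectic form'' is not among the cited results; moreover the numerics ($q=2$, several surviving characters with $|\chi(x)\chi(y)|$ possibly $>1$) are not verified. The symbol classification you defer to as the ``main obstacle'' is exactly the content that is missing, and it is what the Zsigmondy/defect argument replaces.

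For part (ii) your approach is workable in outline but much more elaborate than necessary, and again the key combinatorial step is left open. The paper needs no classification of surviving symbols at all: Proposition~\ref{bounded} (whose hypotheses you correctly check) already bounds the number $C_1(a)$ of surviving characters and their values $|\chi(x)\chi(y)|\le C_2(a)$, and then the single exponential bound $|\chi(g)|/\chi(1)\le q^{-\sqrt{\supp(g)}/481}$ of \cite[Theorem 1.2.1]{LST1} — which you never invoke — kills every non-trivial term once $C=\bigl(481\log_2(C_1C_2)\bigr)^2$. Your three-way split into near-Steinberg, bounded-level, and Weil-type symbols, with separate estimates for each, would require carrying out the hook/cohook classification you only sketch, and buys nothing here since the support hypothesis is available. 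I would also note that your repeated ``for $n$ large'' caveats mean that even a completed version of your argument proves (i) only for large $n$, whereas the stated proposition (and the paper's proof) covers all $n\ge 4$.
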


\begin{proof}
(i) As $2|n \geq 4$, by \cite{Zs} we can find a primitive prime divisor $\ell_{2n}$ of $q^{2n}-1$ and a primitive prime divisor $\ell_{n-1}$ of $q^{n-1}-1$. It is straightforward to check that $T^-_n$ contains a regular semisimple element $x$ of order divisible by $\ell_{2n}$, and
likewise $T^{+,-}_{n-1,1}$ contains a regular semisimple element $y$ of order divisible by $\ell_{n-1}$ (with
the projection onto $T^-_1 \cong \SO^-_2(q)$ having order $q+1$, which is possible by Lemma \ref{spin-tori}).

Suppose $\chi \in \Irr(G)$ is such that $\chi(x)\chi(y) \neq 0$. By Proposition \ref{D-product1}(ii), the pair of tori in question is
weakly orthogonal, hence $\chi$ is unipotent, labeled by a minimal symbol
$$S = (X,Y), ~~X = (x_1 < x_2 < \ldots <x_k),~Y = (y_1 < y_2< \ldots <y_l).$$
Now, if the denominator
of the degree formula \eqref{deg11} is not divisible by $\ell_{2n}$ then $\chi$ has $\ell_{2n}$-defect $0$ and so $\chi(x) = 0$. Similarly,
if the denominator of \eqref{deg11} is not divisible by $\ell_{n-1}$ then $\chi$ has $\ell_{n-1}$-defect $0$ and $\chi(y) = 0$. Thus the denominator
in \eqref{deg11} is divisible by both $\ell_{2n}$ and $\ell_{n-1}$.

Observe that if $x_1=0$, then by \eqref{s-rank} and the minimality of $S$ we have
$$n \geq x_k + \sum^{k-1}_{i=1}(i-1) + \sum^l_{j=1}j- \frac{(k+l)(k+l-2)}{4} = x_k + \frac{(k-l-2)^2}{4},$$
and so $x_k \leq n$, with equality precisely when
\begin{equation}\label{deg21}
  X = (0,1, \ldots,k-2,n),~Y=(1,2, \ldots, l),~k=l+2.
\end{equation}
On the other hand, if $x_1 \geq 1$, then
$$n \geq x_k + \sum^{k-1}_{i=1}i + \sum^l_{j=1}(j-1)- \frac{(k+l)(k+l-2)}{4} = x_k + \frac{(k-l)^2}{4} \geq x_k+1,$$
and so $x_k \leq n-1$.
%, with equality precisely when $X = (0,1, \ldots,k-2,n)$ and $Y=(1,2, \ldots, l)$.
Thus we always have $x_i \leq n$, and similarly $y_j \leq n$. Hence, the condition that the denominator of \eqref{deg11} is divisible by
$\ell_{2n}$ implies that there is an $n$-cohook $n$, where we may assume that $n \in X$ and $0 \notin Y$; in particular, \eqref{deg21} holds.
Now, if $l=0$, then $k=2$ and $\chi=1_G$. Assume $l \geq 1$.
Since $2|n$, we must also have an $(n-1)$-hook $c$ with $0 \leq c-(n-1) \leq 1$. As $k \geq 3$, we have $0,1 \in X$ by \eqref{deg21}, so
$c \notin X$, i.e. $c \in Y$ and $c-(n-1) \notin Y$. But $1 \in Y$, so $c=n-1 \in Y$. Furthermore, $k-2 \leq n-1$, hence $k \leq n+1$ and $l \leq n-1$
by \eqref{deg21}. It follows that $l=n-1$ and so $\chi = \St$, the Steinberg character.

We have shown that $1_G$ and $\St$ are the only two characters $\Irr(G)$ that are nonzero at both $x$ and $y$. Now, if $g \in G$ is semisimple, then $g \in x^G \cdot y^G$ by \cite[Lemma 5.1]{GT1}. If $g$ is not semisimple, then $\St(g)=0$, whence
$$\sum_{\chi \in \Irr(G)}\frac{\chi(x)\chi(y)\overline\chi(g)}{\chi(1)} = 1,$$
and so $g \in x^G \cdot y^G$ as well.

\smallskip
(ii) The assumption $a \geq 3$ ensures that regular semisimple elements $x \in T^{-,-\e}_{n-a,a}$ and $y \in T^{-,-\e}_{n-a-1,a+1}$ exist.
Suppose $\chi \in \Irr(G)$ is such that $\chi(x)\chi(y) \neq 0$. By Proposition \ref{D-product1}(ii), the pair of tori in question is
weakly orthogonal. Hence, by Proposition \ref{bounded}, the number of such characters $\chi$ is at most $C_1=C_1(a)$, and
for any such character, $|\chi(x)\chi(y)| \leq C_2$ for some $C_2 =C_2(a)$. Now choosing $C = \bigl(481\log_2(C_1C_2)\bigr)^2$, for
any $g \in G$ with $\supp(g) \geq C$ we have by \cite[Theorem 1.2.1]{LST1} that
$$\biggl{|} \sum_{\chi \in \Irr(G)}\frac{\chi(x)\chi(y)\overline\chi(g)}{\chi(1)} \biggr{|} > 1-\frac{C_1C_2}{q^{\sqrt{C}/481}} \geq 0,$$
and so $g \in x^G \cdot y^G$.
\end{proof}

\subsection{Groups of type $B_n$}
We will need a slight generalization of the notion of weakly orthogonal tori \cite{MSW}, \cite[Definition 2.2.1]{LST1}:

\begin{defn}
{\em We say that two $\F$-rational maximal tori $T$ and $T'$ in a connected reductive group $G/\F$ are
\emph{centrally orthogonal} if
$$T^*(\F) \cap {T'}^*(\F) = \ZB(G^*(\F))$$
for every choice of dual tori $T^*$ and ${T'}^*$ in the dual group $G^*$. This depends only on types of $T$ and ${T'}$.}
\end{defn}

The following is an analogue of \cite[Proposition 2.2.2]{LST1}:

\begin{prop}
\label{disjoint}
Let $T$ and ${T'}$ be centrally orthogonal maximai tori in a connected reductive group $G(\F)$, and let
$t \in T$ and $t' \in {T'}$ be regular semisimple elements of $G(\F)$. If
$\chi$ is an irreducible character of $G(\F)$ such that $\chi(t)\chi(t')\neq 0$, then there is a (degree $1$) character
$\alpha \in \Irr(G(\F))$ such that $\chi\alpha$ is unipotent.
\end{prop}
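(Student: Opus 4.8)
The plan is to run the argument of \cite[Proposition 2.2.2]{LST1} for weakly orthogonal tori, the only change being that central orthogonality pins the relevant semisimple parameter inside $\ZB(G^*(\F))$ rather than forcing it to be trivial. The starting point is the standard fact that if $t\in T$ is a regular semisimple element of $G(\F)$ and $\chi\in\Irr(G(\F))$ satisfies $\chi(t)\neq 0$, then $\langle\chi,R_T^\theta\rangle\neq 0$ for some $\theta\in\Irr(T(\F))$: indeed the Deligne--Lusztig virtual characters span the class functions on $G(\F)$ and are pairwise orthogonal across distinct $G(\F)$-conjugacy classes of maximal tori, while by the Deligne--Lusztig character formula $R_{T'}^{\theta'}$ vanishes on every regular semisimple element lying in a torus not $G(\F)$-conjugate to $T'$; so a character orthogonal to all $R_T^\theta$ (for this $T$, over all $\theta$) would vanish at $t$. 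Consequently $\chi$ lies in the rational Lusztig series $\mathcal{E}(G(\F),(s))$ attached to a semisimple class of $G^*(\F)$ having a representative $s\in T^*(\F)$ for a suitable torus $T^*$ dual to $T$; applying the same to $t'\in T'$ gives $\chi\in\mathcal{E}(G(\F),(s'))$ with $s'\in {T'}^*(\F)$ for a suitable dual torus ${T'}^*$ of $T'$.

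Since the rational Lusztig series partition $\Irr(G(\F))$, the classes $(s)$ and $(s')$ of $G^*(\F)$ coincide; writing $s = g s' g^{-1}$ with $g\in G^*(\F)$, the element $s$ then lies in $T^*(\F)\cap(g{T'}^*g^{-1})(\F)$, where $g{T'}^*g^{-1}$ is a maximal torus dual to a torus of the same type as $T'$ (duality commutes with conjugation, and central orthogonality depends only on the types of $T$ and $T'$). Hence the defining property of central orthogonality yields $s\in\ZB(G^*(\F))$, so $\chi$ lies in a Lusztig series with central semisimple parameter. By the standard description of such series, $\mathcal{E}(G(\F),(s)) = \widehat{s}\otimes\mathcal{E}(G(\F),(1))$ for a linear character $\widehat{s}\in\Irr(G(\F))$; therefore $\chi = \widehat{s}\otimes\psi$ with $\psi$ unipotent, and $\alpha := \widehat{s^{-1}}$, a degree-$1$ character, is such that $\chi\alpha = \psi$ is unipotent, as required.

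I expect the main point needing care to be the bookkeeping of dual tori in the second paragraph: central orthogonality is a hypothesis on every choice of dual tori, so one must check that the conjugations used to bring $s$ and $s'$ into a common dual-torus intersection stay within the class of tori to which the hypothesis applies; this is exactly where the remark that central orthogonality depends only on the types of $T$ and $T'$ is used. A secondary technical point is the precise description of $\mathcal{E}(G(\F),(s))\leftrightarrow\mathcal{E}(G(\F),(1))$ for central $s$ together with the identification of the twisting character as an honest degree-$1$ character of $G(\F)$, which I would settle by invoking the usual Jordan-decomposition results for a central semisimple parameter (bearing in mind the possible discrepancy between $\ZB(G^*)^F$ and the center of the finite group $G^*(\F)$).
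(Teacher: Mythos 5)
Your argument follows the paper's proof essentially step for step: show that $\chi$ lies in a rational Lusztig series whose semisimple parameter has representatives in both $T^*(\F)$ and a dual torus of $T'$, use central orthogonality (together with its dependence only on the types of $T$ and $T'$, which handles the conjugation bookkeeping you flag) to force that parameter into $\ZB(G^*(\F))$, and conclude via the description of a series with central parameter as the twist of the unipotent series by a linear character --- the paper cites \cite[5.1]{MM}, \cite[7.2]{DL}, and \cite[Proposition 13.30]{DM} for exactly these three steps. One small imprecision: the Deligne--Lusztig virtual characters span only the \emph{uniform} class functions, not all class functions; your conclusion nevertheless stands because the characteristic function of a regular semisimple class is uniform, which is the actual content of the standard fact you are invoking.
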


\begin{proof}
By \cite[5.1]{MM}, if $s\in G(\F)$ is semisimple, and $\chi(s)\neq 0$, then there exist
a maximal torus $T$ and a character $\theta \in \Irr(T(\F))$ such that $R_{T,\theta}(s)\neq 0$, and $\theta^*$ belongs to
the conjugacy class $C_\chi$.  By \cite[7.2]{DL}, this implies that $s$ lies
in the $G(\F)$-conjugacy class of some element of $T(\F)$.
If $\chi(t)\chi(t') \neq 0$, then there exist $G^*(\F)$-conjugate elements $\theta_1^*$
and $\theta_2^*$ belonging to tori $T^*$ and ${T'}^*$ which are dual to tori
$T$ and ${T'}$ containing $t$ and $t'$ respectively.  As $T^*$ and ${T'}^*$
intersect in $\ZB(G^*(\F))$, this means $\theta_1^*,\theta_2^*\in \ZB(G^*(\F))$, and the statement follows from
\cite[Proposition 13.30]{DM}.
\end{proof}

\begin{prop}\label{B-product1}
The following statements hold for $G = \SO_{2n+1}(q)$ with $n \geq 3$.
\begin{enumerate}[\rm(i)]
\item Define $\kappa:=(-1)^n$. Then the pair of maximal tori $T^{-\kappa}_n$ and $T^{\kappa,-}_{n-1,1}$ is weakly orthogonal when
$2|q$ and centrally orthogonal if $2 \nmid q$.
\item If $2 \nmid n \geq 5$, then the pair of maximal tori $T^{-}_{n}$ and $T^{+,-}_{n-2,2}$ is weakly orthogonal when
$2|q$ and centrally orthogonal if $2 \nmid q$.
\item If $2|n \geq 8$, then the pair of maximal tori $T^{-,-}_{n-2,2}$ and $T^{+,+}_{n-3,3}$ is weakly orthogonal when
$2|q$ and centrally orthogonal if $2 \nmid q$.
\end{enumerate}
\end{prop}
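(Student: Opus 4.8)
The plan is to follow the strategy of \cite[Proposition~2.6.1]{LST1} and of Proposition~\ref{D-product1}: pass to the dual group and analyze eigenvalues on a natural module. Since $G = \SO_{2n+1}(q)$ is the adjoint group of type $B_n$, its dual $G^*$ is the simply connected group of type $C_n$, namely $G^* = \Sp_{2n}(q)$, acting on the natural symplectic module $V = \F_q^{2n}$; here $\ZB(G^*(\F))$ is trivial when $2\mid q$ and equals $\{\pm I_{2n}\}$ when $2\nmid q$ (in characteristic $2$ one may alternatively invoke the isomorphism $\SO_{2n+1}(q)\cong\Sp_{2n}(q)$). So in each of the three cases it suffices to prove that the maximal tori $T^*$ and ${T'}^*$ of $\Sp_{2n}(q)$ dual to the two tori in question satisfy $T^*(\F)\cap {T'}^*(\F) = \ZB(G^*(\F))$: for $q$ even this is precisely weak orthogonality, and for $q$ odd it is central orthogonality. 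Under the duality of $B_n$ and $C_n$ the Weyl groups are identified, so $T^{\epsilon_1,\dots,\epsilon_p}_{d_1,\dots,d_p}$ in $G$ is dual to the maximal torus of the same label in $\Sp_{2n}(q)$.

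Next I would record the shape of the relevant tori of $\Sp_{2n}(q)$ acting on $V$: an element $g$ lying in the maximal torus of type $T^{\epsilon_1,\dots,\epsilon_p}_{d_1,\dots,d_p}$ has eigenvalue multiset on $V$ equal to the disjoint union over $i$ of a Frobenius orbit $\{a_i,a_i^q,\dots,a_i^{q^{d_i-1}}\}$ together with the inverse orbit $\{a_i^{-1},\dots,a_i^{-q^{d_i-1}}\}$, where $a_i^{q^{d_i}-\epsilon_i}=1$; thus a factor with $\epsilon_i=+1$ forces $a_i\in\F_{q^{d_i}}^\times$ and one with $\epsilon_i=-1$ forces $a_i^{q^{d_i}+1}=1$, and the contribution to the multiset has size $2d_i$. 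Given $g\in T^*(\F)\cap {T'}^*(\F)$, its eigenvalue multiset on $V$ then admits two such descriptions simultaneously, and the whole proof consists of comparing them and forcing $g$ to be a scalar.

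For case~(i), where $\kappa=(-1)^n$, the torus $T^{-\kappa}_n$ contributes a single orbit pair of size $2n$ with generator $a$ satisfying $a^{q^n+\kappa}=1$, while $T^{\kappa,-}_{n-1,1}$ contributes an orbit pair of size $2(n-1)$ together with a pair $\{t,t^{-1}\}$ with $t^{q+1}=1$; since $2n>2(n-1)$, after applying $v\mapsto v^q$ and $v\mapsto v^{-1}$ we may assume $a=t$, so $a^{q+1}=1$ as well, and combining $a^{q+1}=1$ with $a^{q^n+\kappa}=1$ (using the value of $\kappa$ dictated by the parity of $n$) forces $a^2=1$, hence $a\in\F_q^\times$; then $a^{q^i}=a$ for all $i$, so the whole multiset collapses and $g = aI_{2n}\in\ZB(G^*(\F))$. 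Cases~(ii) and~(iii) follow the same pattern: a size count forces some eigenvalue of the ``large'' orbit pair of one torus to lie in the small cyclic factor of the other (hence in $\F_{q^2}$ in case~(ii) and in $\F_{q^3}$ in case~(iii)); a computation of the relevant greatest common divisors $\gcd(q^{d}-1,\,q^{d'}+1)$, using the hypotheses $2\nmid n$ in~(ii) and $2\mid n$ in~(iii), then forces that eigenvalue to be $\pm 1$; and finally one propagates, using $n\ge 5$ respectively $n\ge 8$, to conclude that every eigenvalue of $g$ equals that common value, so that $g$ is scalar.

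The step I expect to be the main obstacle is the propagation in case~(iii). There both tori have a ``large'' orbit pair (of sizes $2(n-2)$ and $2(n-3)$) together with a small factor (of sizes $4$ and $6$), so the initial size count only places one eigenvalue into $\F_{q^3}$; one must then feed this back, showing first that the large orbit pair of $T^{+,+}_{n-3,3}$ is itself scalar (otherwise it would contribute too many non-scalar eigenvalues for the four slots available from the small factor of $T^{-,-}_{n-2,2}$), and then that the size-$3$ factor of $T^{+,+}_{n-3,3}$ absorbs the size-$2$ factor of $T^{-,-}_{n-2,2}$, forcing the entire spectrum to be constant. Making this multiplicity bookkeeping precise, and checking that the stated lower bounds on $n$ together with the parity conditions are exactly what the argument needs, is the delicate part; cases~(i) and~(ii) are comparatively routine variants of the type-$D_n$ computation already carried out in Proposition~\ref{D-product1}.
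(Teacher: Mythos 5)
Your proposal follows essentially the same route as the paper: pass to the dual group $\Sp_{2n}(q)$, represent the spectrum of an element of the intersection of the dual tori in two ways as a multiset of Frobenius orbit pairs, use a size count to force an eigenvalue into both descriptions simultaneously, and then a gcd computation (using the stated parity of $n$) to force that eigenvalue to be $\pm1\in\F_q^\times$, after which the whole spectrum collapses and the element is central. The only divergence is cosmetic: in case (iii) the paper intersects the large orbit of $T^{-,-}_{n-2,2}$ with the large orbit of $T^{+,+}_{n-3,3}$ (via $|X|=2n-4>6=|T|$) rather than with the small factor as you suggest, but your variant also closes, and the propagation step you worry about is immediate once the common eigenvalue is known to lie in $\F_q^\times$.
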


\begin{proof}
 In this case, the dual group $G^{*}$ is $\Sp(V)$, where
$V = \F_{q}^{2n}$ is endowed with a symplectic form. Consider any $g$ in the intersection of
dual tori, and let $S$ denote the spectrum of $g$ on $V$ as a
multiset.

\smallskip
In the case of (i), $S$ can be represented
as the joins of multisets $X$ and $Z \sqcup T$, where
$$\begin{array}{l}
  X := \{x,x^{q}, \ldots,x^{q^{n-1}},x^{-1},x^{-q}, \ldots,
             x^{-q^{n-1}}\},\\
  Z := \{z,z^{q}, \ldots,z^{q^{n-2}},z^{-1},z^{-q}, \ldots,
             z^{-q^{n-2}}\},~~
  T := \{t,t^{-1}\},\end{array}$$
for some $x,z,t \in \bar{\F}_{q}^{\times}$ with
$x^{q^{n}+\kappa} = z^{q^{n-1}-\kappa} = t^{q+1}=1$. Since $|X| =2n > |Z|$, we may assume that
$x \in X \cap T$, whence $x^{q^n+\kappa}=x^{q+1}=1$. As $(q+1)|(q^n-\kappa)$, it follows that
$x^ 2 = 1= x^{q-1}$, i.e. $x \in \F_q^\times$.
Since we now have $S = X = \{\underbrace{x,x, \ldots,x}_{2n}\}$, $g \in \ZB(G^*)$.

\smallskip
In the case of (ii), $S$ can be represented
as the multisets $X \sqcup Y$ and $Z \sqcup T$, where
$$\begin{array}{l}
  X := \{x,x^{q}, \ldots,x^{q^{n-1}},x^{-1},x^{-q}, \ldots,
             x^{-q^{n-1}}\},\\
  Z := \{z,z^{q}, \ldots,z^{q^{n-3}},\gam z^{-1},z^{-q}, \ldots,
             \gam z^{-q^{n-3}}\},~
  T := \{t,t^{q},t^{-1},t^{-q}\},\end{array}$$
for some $x,z,t \in \bar{\F}_{q}^{\times}$ with
$x^{q^{n}+1} = z^{q^{n-2}-1}=t^{q^2+1}=1$. Since $|X| =2n > |Z|$, we may assume that
$x \in X \cap T$, whence $x^{q^{n}+1}=x^{q^{2}+1}=1$. As $2\nmid n$, it follows that
$x^{q+1} = 1=x^2$,
whence $x \in \F_q^\times$, $X = \{\underbrace{x,x, \ldots,x}_{2n}\}$, and $g \in \ZB(G^*)$.

\smallskip
In the case of (iii), $S$ can be represented
as the joins $X \sqcup Y$ and $Z \sqcup T$, where
$$\begin{array}{l}
  X := \{x,x^{q}, \ldots,x^{q^{n-3}},x^{-1},x^{-q}, \ldots,
             x^{-q^{n-3}}\},~
  Y := \{y,y^{q},y^{-1},y^{-q}\},\\
  Z := \{z,z^{q}, \ldots,z^{q^{n-4}},z^{-1},z^{-q}, \ldots,
             z^{-q^{n-4}}\},~
  T := \{t,t^{q},t^{q^{2}},t^{-1}, t^{-q},t^{-q^{2}}\},\end{array}$$
for some $x,y,z,t \in \bar{\F}_{q}^{\times}$ with
$x^{q^{n-2}+1} = y^{q^{2}+1} = z^{q^{n-3}-1}=t^{q^{3}-1}=1$. Since $|X| =2n-4 > |T|=6$, we may assume that
$x \in X \cap Z$, whence $x^{q^{n-2}+1}=x^{q^{n-3}-1}=1$. As $2|n$, it follows that
$x^{q+1} = 1=x^2$,
whence $x \in \F_q^\times$, and
$X = \{\underbrace{x,x, \ldots,x}_{2n-4}\}$, $Z = \{\underbrace{x,x, \ldots,x}_{2n-6}\}$.
This also implies that $x \in T$, whence $T = \{x,x,x,x,x,x\}$ and $g \in \ZB(G^*)$.
\end{proof}

In what follows, we note that for $n \geq 3$ that if $2|q$ then $\SO_{2n+1}(q) \cong \Sp_{2n}(q)$ is simple, whereas if $2 \nmid q$ then
$[G,G] = \Omega_{2n+1}(q)$ is simple and has index $2$ in $G = \SO_{2n+1}(q)$; let $\sgn$ denote the linear character of order
$2$ of $G$ in the latter case.

\begin{prop}\label{B-product2}
There is an explicit constant $C \in \N$ such that the following statements hold for $G = \SO_{2n+1}(q)$ with $2|n \geq C$.
There exist regular semisimple elements $x \in T^-_n \cap [G,G]$, $y \in T^{+,-}_{n-1,1} \cap [G,G]$ such that
$x^G \cdot y^G = [G,G] \smallsetminus \{e\}$. If $2 \nmid q$, then there is a regular semisimple element
$y'  \in T^{+,-}_{n-1,1} \smallsetminus [G,G]$ such that
$x^G \cdot (y')^G = G \smallsetminus [G,G]$.
\end{prop}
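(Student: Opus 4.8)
I plan to run the character-theoretic argument of the proof of Proposition~\ref{D-product2}(i), adapted to the torus types supplied by Proposition~\ref{B-product1}(i), with an extra layer of bookkeeping since, for $2\nmid q$, $[G,G]=\Omega_{2n+1}(q)$ is an index-$2$ subgroup of $G$. First I choose the elements. Since $2\mid n$ we have $\kappa=(-1)^n=1$, so the tori of Proposition~\ref{B-product1}(i) are exactly $T^-_n$ and $T^{+,-}_{n-1,1}$. Taking $C$ large, Zsigmondy's theorem \cite{Zs} gives a primitive prime divisor $\ell_{2n}$ of $q^{2n}-1$ (so $\ell_{2n}\mid q^n+1$) and a primitive prime divisor $\ell_{n-1}$ of $q^{n-1}-1$, both odd. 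Arguing as in the proof of Lemma~\ref{spin-tori} (via \cite[Proposition~2.5.13]{KL} and \cite[Lemma~8.14]{TZ2}, using $2\mid n$), the torus $T^-_n\cong C_{q^n+1}$, viewed inside $\SO^-_{2n}(q)\le G$, meets $[G,G]$ in its subgroup of squares, so I take $x\in T^-_n\cap[G,G]$ of order divisible by $\ell_{2n}$. Likewise $T^{+,-}_{n-1,1}\cong C_{q^{n-1}-1}\times C_{q+1}$ has a generator $(a,b)$ with $a\notin\Omega^+_{2(n-1)}(q)$ and $b\notin\Omega^-_2(q)$, so $y:=(a,b)$ has trivial spinor norm (the product of the two nontrivial spinor norms of $a$ and $b$) and lies in $[G,G]$, while $y':=(a^2,b)$ does not; both have order divisible by $\ell_{n-1}$. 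Enlarging $C$ further, $x$, $y$, $y'$ have simple spectrum on $\F_q^{2n+1}$ and so are regular semisimple. (When $2\mid q$, $[G,G]=G$ and the assertion about $y'$ is vacuous.)

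Next I cut the relevant characters down to two. By Proposition~\ref{B-product1}(i) the pair $(T^-_n,T^{+,-}_{n-1,1})$ is weakly orthogonal if $2\mid q$ and centrally orthogonal if $2\nmid q$, so by \cite[Proposition~2.2.2]{LST1}, respectively Proposition~\ref{disjoint}, every $\chi\in\Irr(G)$ with $\chi(x)\chi(y)\ne0$ or $\chi(x)\chi(y')\ne0$ has the form $\alpha\psi$ with $\psi$ unipotent and $\alpha\in\{1_G,\sgn\}$ (only $\alpha=1_G$ when $2\mid q$). A unipotent character of $\ell_{2n}$-defect zero vanishes on the $\ell_{2n}$-singular element $x$, and similarly for $\ell_{n-1}$ and $y$, $y'$; hence, reading off the degree formula \eqref{deg11}, the symbol of $\psi$ must carry an $n$-cohook and, since $n-1$ is odd, an $(n-1)$-hook. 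An inspection of rank-$n$ symbols of type $B_n$ (those with $|X|-|Y|$ odd), entirely parallel to the one in the proof of Proposition~\ref{D-product2}(i) and using \eqref{inefficiency}--\eqref{s-rank}, shows that the only such symbols are $(\{n\},\emptyset)$, which labels $\St$, and $(\{0,n\},\{1\})$; the latter must label $1_G$, since $1_G(x)=1_G(y)=1$ forces $1_G$ to occur and $(\{0,n\},\{1\})$ is the only remaining candidate. In both cases the pertinent hook and cohook are unique, so \cite[Theorem~3.3]{LM} gives $|\psi(x)|=|\psi(y)|=|\psi(y')|=1$. Thus the characters $\chi$ with $\chi(x)\chi(y)\ne0$ (resp.\ $\chi(x)\chi(y')\ne0$) are $1_G,\St$ if $2\mid q$ and $1_G,\St,\sgn,\sgn\St$ if $2\nmid q$.

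Finally I apply the class-product formula \eqref{Frobenius}. Note $\St(x),\St(y),\St(y')\in\{\pm1\}$ and, since $\ZB(G)=1$, $|\St(g)|<\St(1)$ for every $g\ne e$. For $g\in[G,G]\smallsetminus\{e\}$ we have $\sgn(x)=\sgn(y)=\sgn(g)=1$, so the $\sgn$-twists of $1_G$ and $\St$ contribute just as $1_G$ and $\St$ do, and
\[
\sum_{\chi\in\Irr(G)}\frac{\chi(x)\chi(y)\overline{\chi(g)}}{\chi(1)}=c\Bigl(1+\St(x)\St(y)\frac{\St(g)}{\St(1)}\Bigr)>0,
\]
where $c=2$ if $2\nmid q$ and $c=1$ if $2\mid q$; hence $g\in x^G\cdot y^G$. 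As $x^G,y^G\subseteq[G,G]$ and $x\not\sim y^{-1}$ (they lie in tori of different types), we have $e\notin x^G y^G$, whence $x^G\cdot y^G=[G,G]\smallsetminus\{e\}$. When $2\nmid q$ and $g\in G\smallsetminus[G,G]$ one has $\sgn(x)=1$, $\sgn(y')=\sgn(g)=-1$, so the sign changes at $y'$ and at $g$ cancel in the $\sgn$-twists, and the same computation gives $\sum_\chi\chi(x)\chi(y')\overline{\chi(g)}/\chi(1)=2\bigl(1+\St(x)\St(y')\St(g)/\St(1)\bigr)>0$; since $x^G(y')^G\subseteq G\smallsetminus[G,G]$, this yields $x^G\cdot(y')^G=G\smallsetminus[G,G]$.

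The crux is the symbol count in the second step: checking that $(\{n\},\emptyset)$ and $(\{0,n\},\{1\})$ are indeed the only rank-$n$ type-$B_n$ symbols carrying both an $n$-cohook and an $(n-1)$-hook, and that the hook and cohook to be removed are unique (so the two character values are exactly $\pm1$, not merely bounded). This is a direct analogue of the computation in Proposition~\ref{D-product2}(i), the sole difference being the parity condition $|X|-|Y|$ odd; once it is established, no character estimates beyond $|\St(g)|<\St(1)$ are required. The first step is routine but must be arranged so that $x$, $y$, $y'$ are simultaneously regular, of the prescribed $\ell$-order, and on the correct side of $[G,G]$.
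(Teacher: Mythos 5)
Your overall strategy matches the paper's, but there is a genuine gap at the step you yourself flag as the crux: the symbol count. The set of unipotent characters not vanishing at both $x$ and $y$ is \emph{not} $\{1_G,\St\}$. In type $B_n$ the defect condition is $2\nmid(k-l)$, so the analogue of the computation in Proposition~\ref{D-product2}(i) allows $k-l\in\{1,3\}$ rather than the single value $k-l=2$ of the $D_n$ case, and four symbols survive: $\binom{n}{\ }$ (which labels $1_G$, not $\St$), $\binom{0,n}{1}$ (which labels a character $\chi_1$ of degree $(q^n-1)(q^n+q)/2(q-1)$, not $1_G$ as you assert), $\binom{0,1,\ldots,n-1,n}{1,2,\ldots,n}$ (the Steinberg character), and $\binom{0,1,\ldots,n-2,n}{1,2,\ldots,n-1}$ (a character $\chi_2$). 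Your identification of $\binom{0,n}{1}$ with $1_G$ is circular (you argue it ``must'' be $1_G$ because $1_G$ has to appear somewhere), and it causes you to drop two genuinely nontrivial characters from the sum.

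Handling $\chi_1$ and $\chi_2$ is where most of the actual work lies, and it cannot be done with ``$|\St(g)|<\St(1)$'' alone. The paper bounds $|\chi_1(x)|,|\chi_1(y)|\le 1$ via Lusztig's Murnaghan--Nakayama-type formula for odd orthogonal groups \cite[Cor.~(5.9)]{Odd-O}, then controls $\chi_2$ by showing (via generic degrees and the Alvis--Curtis duality functor $D_G$, which fixes character values on regular semisimple elements up to sign) that either $\chi_2=D_G(\chi_1)$, so $|\chi_2(x)\chi_2(y)|\le 1$ while $\chi_2(1)=q^{n^2-2n}\chi_1(1)$ is large, or $\chi_2(x)\chi_2(y)=0$. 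Finally, for non-semisimple $g$ one has $\St(g)=0$, and the sum is shown positive using Gluck's bound $|\chi_1(g)|/\chi_1(1)\le 0.95$ together with $|\chi_2(g)|\le|G|^{1/2}q^{1/2-n}$ (from the minimal index of proper subgroups of $[G,G]$); semisimple $g$ are handled separately by \cite[Lemma~5.1]{GT1}. Your setup of $x$, $y$, $y'$ and the $\sgn$-twist bookkeeping in the centrally orthogonal case are fine, but without the analysis of $\chi_1$ and $\chi_2$ the positivity of the character sum does not follow.
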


\begin{proof}
(i) As $2|n \geq 4$, by \cite{Zs} we can find a primitive prime divisor $\ell_{2n}$ of $q^{2n}-1$ and a primitive prime divisor $\ell_{n-1}$ of $q^{n-1}-1$. It is straightforward to check that $T^-_n$ contains a regular semisimple element $x \in [G,G]$ of order $\ell_{2n}$, and
likewise $T^{+,-}_{n-1,1}$ contains a regular semisimple element $y \in [G,G] \cap \Omega^-_{2n}(q)$ of order divisible by $\ell_{n-1}$ (with
the projection onto $T^-_1 \cong \SO^-_2(q)$ having order $q+1$, which is possible by Lemma \ref{spin-tori}). If $2 \nmid q$, then by
changing $y$ to have the first projection onto $\SO^+_{2n-2}(q)$ of order $\ell_{n-1}$, we obtain a regular semisimple element
$y' \in T^{+,-}_{n-1,1} \smallsetminus [G,G]$.

\smallskip
(ii) Suppose $\chi \in \Irr(G)$ is such that $\chi(x)\chi(y) \neq 0$ or $\chi(x)\chi(y') \neq 0$ if $2 \nmid q$. By Proposition \ref{B-product1}(i), the
pair of tori in question is centrally orthogonal, hence either $\chi$ or $\chi\cdot\sgn$ is unipotent by Proposition \ref{disjoint}.
Without loss we may assume that $\chi$ is unipotent, labeled by a minimal symbol
$$S = (X,Y), ~~X = (x_1 < x_2 < \ldots <x_k),~Y = (y_1 < y_2< \ldots <y_l),$$
where $k,l \in \Z_{\geq 0}$ and $2 \nmid (k-l)$.
Now, if the denominator
of the degree formula \eqref{deg11} is not divisible by $\ell_{2n}$ then $\chi$ has $\ell_{2n}$-defect $0$ and so $\chi(x) = 0$. Similarly,
if the denominator of \eqref{deg11} is not divisible by $\ell_{n-1}$ then $\chi$ has $\ell_{n-1}$-defect $0$ and $\chi(y) = 0$, as
well as $\chi(y')=0$ when $2 \nmid q$. Thus the denominator
in \eqref{deg11} is divisible by both $\ell_{2n}$ and $\ell_{n-1}$.

Observe that if $x_1=0$, then by \eqref{s-rank} and the minimality of $S$ we have
$$n \geq x_k + \sum^{k-1}_{i=1}(i-1) + \sum^l_{j=1}j- \frac{(k+l-1)^2}{4} = x_k + \frac{(k-l-1)(k-l-3)}{4},$$
and so $x_k \leq n$, with equality precisely when
\begin{equation}\label{deg31}
  X = (0,1, \ldots,k-2,n),~Y=(1,2, \ldots, l),~k-l = 1 \mbox{ or }3.
\end{equation}
On the other hand, if $x_1 \geq 1$, then
$$n \geq x_k + \sum^{k-1}_{i=1}i + \sum^l_{j=1}(j-1)- \frac{(k+l-1)^2}{4} = x_k + \frac{(k-l)^2-1}{4} \geq x_k,$$
and so $x_k \leq n$, with equality precisely when
\begin{equation}\label{deg31a}
  X = (1,2, \ldots,k-1,n),~Y=(0,1, \ldots, l-1),~k-l = \pm 1.
\end{equation}
Thus we always have $x_i \leq n$, and similarly $y_j \leq n$. Hence, the condition that the denominator of \eqref{deg11} is divisible by
$\ell_{2n}$ implies that there is an $n$-cohook $n$, whence we may assume that $n=x_k \in X$ and $0 \notin Y$.
This rules out the case $x_1 \geq 1$, whence \eqref{deg31} holds.
Now, if $k=1$, then $l=0$ and $\chi=1_G$. If $k=2$, then $l=1$, $S = \binom{0,n}{1}$, and $\chi(1)=(q^n-1)(q^n+q)/2(q-1)$; denote this
unipotent character by $\chi_1$.

Assume $k \geq 3$.  Since $2|n$, we must also have an $(n-1)$-hook $c$ with $0 \leq c-(n-1) \leq 1$. As $k \geq 3$, we have $0,1 \in X$ by \eqref{deg31}, so $c \notin X$, i.e. $c \in Y$ and $c-(n-1) \notin Y$. In particular, $l \geq 1$, hence $1 \in Y$ and $c=n-1 \in Y$. Furthermore,
$k-2 \leq n-1$, hence $k \leq n+1$ but $l \leq n-1$. By \eqref{deg31}, we have

$\bullet$ either $(k,l)=(n+1,n)$, $S = \binom{0,1, \ldots,n-1,n}{1,2, \ldots ,n}$, $\chi = \St$, the Steinberg character, or

$\bullet$ $(k,l) = (n,n-1)$, and $S = \binom{0,1, \ldots,n-2,n}{1,2, \ldots ,n-1}$; denote this unipotent character by $\chi_2$.

\smallskip
(iii) We have shown that, up to tensoring with $\sgn$ when $2 \nmid q$, $\chi_0=1_G$, $\St$, $\chi_1$, and $\chi_2$ are the only four characters in $\Irr(G)$ that are nonzero at both $x$ and $y$, respectively at $x$ and $y'$ when $2 \nmid q$. It is clear that
\begin{equation}\label{deg32}
  \chi_0(x)\chi_0(y) = \chi_0(x)\chi_0(y') = 1,~~|\St(x)\St(y)| = |\St(x)\St(y')| = 1.
\end{equation}
To bound $|\chi_1(x)\chi_1(y)|$ and $|\chi_1(x)\chi_1(y')|$, we follow the proof of \cite[Proposition 3.4.1]{LST1} that relies on
the main result of \cite{Odd-O}. Recall that $\chi_1$ is labeled by $S = \binom{X}{Y}=\binom{0,n}{1}$. Let $Z_{1} = \{0,1,n\}$ be the
set of ``singles'' and $Z_{2} = X \cap Y = \emptyset$. Then
the family ${\mathcal F}(\chi_1)$
consists of all irreducible characters $\psi_{S'}$ of the Weyl group ${\mathsf {W}}_{n}$ labeled by
symbols $S' = \binom{X'}{Y'}$ of defect $1$ which contain the same entries
(with the same multiplicities) as $\Lambda$ does, cf. \cite[Cor. (5.9)]{Odd-O}. For the given $S=\binom{0,n}{1}$ (or in fact for all
symbols of odd defect with the same set $Z_1 = \{0,1,n\}$ of ``singles''), we have the following possibilities for $S'$ and
the corresponding pair $(\lambda',\mu')$ of (possibly empty) partitions:
$$\left\{ \begin{array}{ll}S'=\binom{1,n}{0}, & (\lambda',\mu') = \bigl((1,n-1),(\emptyset)\bigr),\\
     S'=\binom{0,n}{1}, & (\lambda',\mu') = \bigl((n-1),(1)\bigr),\\
     S'=\binom{0,1}{n}, & (\lambda',\mu') = \bigl((\emptyset),(n)\bigr).\\\end{array} \right.$$
Let $w, w' \in {\mathsf {W}}_n$ correspond to $x$, respectively to $y$ and $y'$. Recalling the construction of $\psi_{S'}$
\cite[(3.2.1)]{LST1}, we find that
$$\psi_{S'}(w) = -1,~0,~-1,~\psi_{S'}(w') = 0,~ -1,~-1,$$
respectively.
It follows from \cite[Cor. (5.9)]{Odd-O} that
\begin{equation}\label{deg33}
 |\chi_1(x)| \leq 1, ~|\chi_1(y)|=|\chi_1(y')| \leq 1.
\end{equation}

To bound the character values for $\chi_2$, we use the Alvis-Curtis
duality functor $D_G$ which sends any irreducible character of $G$ to an
irreducible character of $G$ up to a sign, cf. \cite[Corollary 8.15]{DM}.
Using Theorems 1.1 and 1.2 of \cite{N}, we see that $\chi_1$ is the unique unipotent characters of its degree, and so,
by inspecting \cite[Table 1]{ST}, $\chi_1$ is a constituent of the rank $3$ permutation action of $G$ on singular $1$-spaces of
its natural module; also, $\chi_1$ is irreducible over $[G,G]$.
Hence $\chi_1$ is also a constituent of the permutation character $1^G_B$, where $B$ is a Borel subgroup of $G$, and the same is true for $1_G$ and $\St$. For each
irreducible constituent $\varphi$ of $1^G_B$, there is a polynomial
$d_\varphi(X) \in \Q[X]$ in variable $X$ (the so-called {\it generic degree}, cf.
\cite[\S13.5]{Ca}, which depends only on the Weyl group of $G$ but not on $q$) such that $\varphi(1) = d_\varphi(q)$.
According to Theorem  (1.7) and Proposition (1.6) of \cite{Cur}, $D_G$ permutes
the irreducible constituents of $1^G_B$. Moreover, there is an integer $N$ such that
\begin{equation}\label{d-power}
  d_{D_G(\varphi)}(X) = X^N d_\varphi(X^{-1}).
\end{equation}
It is well known, see e.g. Corollary 8.14 and
Definition 9.1 of \cite{DM}, that $D_G$ interchanges $1_G$ and $\St$. Since
$\St(1) = q^{n^2}$, (\ref{d-power}) applied to $\varphi = 1_G$ yields that
$N = n^2$. Applying (\ref{d-power}) to $\varphi = \chi_1$, we now obtain that
\begin{equation}\label{deg34}
  D_G(\chi_1)(1) = q^{n^2-2n}\chi_1(1).
\end{equation}

Furthermore, in the case of a rational torus $T$,
%there is some $\eps_{\TC} = \pm 1$ such that
$D_T(\lambda) = \lambda$ for all $\lambda \in \Irr(T)$, see
\cite[Definition 8.8]{DM}. Applying this and \cite[Corollary 8.16]{DM} to
$T = \mathbf{C}_G(x)$, we now see that
$$D_G(\chi)(x) = \pm (D_T \circ \mathrm{Res}^G_T)(\chi)(x)
  = \pm \chi(x).$$
Similarly, $D_G(\chi)(y) = \pm \chi_1(y)$ and $D_G(\chi)(y') = \pm \chi(y')$.
It follows that, if $\chi_2$ is nonzero at both $x,y$ (respectively at $x,y'$), then so is
$D_G(\chi_2)$. It follows that either $\chi_2(x)\chi_2(y)\neq 0$, in which case $\chi_2=D_G(\chi_1)$ and \eqref{deg33} yields
\begin{equation}\label{deg35}
 |\chi_2(x)\chi_2(y)|=|\chi_2(x)\chi_2(y')| \leq 1,
\end{equation}
or $\chi_2(x)\chi_2(y)=0$, in which case \eqref{deg35} is automatic.

\smallskip
(iv) Now, if $g \in [G,G]$ is semisimple, then $g \in x^G \cdot y^G$ by \cite[Lemma 5.1]{GT1}. Suppose $g \in [G,G]$ is not semisimple.
Then $\St(g)=0$. If $2 \nmid q$, then $\sgn(g)=\sgn(x)=\sgn(y) = 1$. This shows that $\chi$ and $\chi\cdot\sgn$ take the same values
at $x$, $y$, and $g$. Since the index of any proper subgroup in $[G,G]$ is
$> q^{2n-1}$ (see \cite[\S9]{TZ1}), it follows that $|\chi(g)| \leq |G|^{1/2}q^{1/2-n}$, and so, choosing $n$ large enough, we obtain from
\eqref{deg34} and \eqref{deg35} that
$$\frac{|\chi_2(x)\chi_2(y)\chi_2(g)|}{\chi_2(1)} < 0.01.$$
Using Gluck's bound $|\psi_1(g)|/\psi(1) \leq 0.95$ for any $\psi \in \Irr([G,G])$, we obtain
$$\frac{1}{\gcd(2,q-1)}\biggl{|}\sum_{\chi \in \Irr(G)}\frac{\chi(x)\chi(y)\overline\chi(g)}{\chi(1)} \biggr{|} > 1-0.95-0.01 = 0.04,$$
and so $g \in x^G \cdot y^G$.

Finally, consider the case $2 \nmid q$ and $g \in G \smallsetminus [G,G]$. Then $\sgn(x)=1$ and $\sgn(g)=\sgn(y') = -1$. Again by choosing
$n$ large enough we obtain from \eqref{deg34} and \eqref{deg35} that
$$\frac{|\chi(x)\chi(y')\chi(g)|}{\chi(1)} < 0.001$$
for $\chi = \chi_2$, $\chi_2\cdot\sgn$, $\St$, $\St\cdot\sgn$.
Next, \cite[Lemma 2.19]{GT-12} together with Gluck's bound imply that
$$|\psi(g)|/\psi(1) \leq (3 +0.95)/4 = 0.9875$$
for any $\psi \in \Irr(G)$ that is irreducible over $[G,G]$. Hence,
$$\frac{1}{2}\biggl{|}\sum_{\chi \in \Irr(G)}\frac{\chi(x)\chi(y')\overline\chi(g)}{\chi(1)} \biggr{|} > 1-0.9875-0.002 > 0.01,$$
and so $g \in x^G \cdot (y')^G$, as stated.
\end{proof}

\begin{prop}\label{B-product3}
There is an explicit constant $C \geq 5$ such that the following statements hold for $G = \SO_{2n+1}(q)$ with $2 \nmid n \geq C$.
There exist regular semisimple elements $x \in T^+_n \cap [G,G]$, $y \in T^{-,-}_{n-1,1} \cap [G,G]$ such that
$x^G \cdot y^G = [G,G] \smallsetminus \{e\}$. If $2 \nmid q$, then there is a regular semisimple element
$y'  \in T^{-,-}_{n-1,1} \smallsetminus [G,G]$ such that
$x^G \cdot (y')^G = G \smallsetminus [G,G]$.
\end{prop}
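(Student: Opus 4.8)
The plan is to follow the proof of Proposition~\ref{B-product2} essentially verbatim, the only structural change being that $n$ is now odd rather than even. Since $2\nmid n$ we have $\kappa=(-1)^n=-1$ in Proposition~\ref{B-product1}(i), so in $G=\SO_{2n+1}(q)$ the pair of maximal tori $T^+_n$ and $T^{-,-}_{n-1,1}$ is weakly orthogonal when $2\mid q$ and centrally orthogonal when $2\nmid q$. Hence, by Proposition~\ref{disjoint}, for regular semisimple $x\in T^+_n$ and $y\in T^{-,-}_{n-1,1}$, every $\chi\in\Irr(G)$ with $\chi(x)\chi(y)\neq0$ is, after tensoring with $\sgn$ when $2\nmid q$, a unipotent character $\chi=\chi^S$ labeled by a minimal symbol $S=(X,Y)$ of rank $n$ of odd defect. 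Using Zsygmondy's theorem \cite{Zs} (legitimate since $n$ is odd and $n\geq C$ with $C$ large), I would fix a primitive prime divisor $\ell_n$ of $q^n-1$ and a primitive prime divisor $\ell_{2n-2}$ of $q^{2(n-1)}-1$; both are odd, and $\ell_{2n-2}\mid q^{n-1}+1$. I then take $x\in T^+_n\cap[G,G]$ of order $\ell_n$ and, exactly as in Proposition~\ref{B-product2} (invoking Lemma~\ref{spin-tori} for the $\SO^-_2(q)$-component), pick $y\in T^{-,-}_{n-1,1}\cap[G,G]$ whose projection to $\SO^-_{2n-2}(q)$ has order divisible by $\ell_{2n-2}$ and whose projection to $\SO^-_2(q)$ has order $q+1$, together with a variant $y'\in T^{-,-}_{n-1,1}\smallsetminus[G,G]$ when $2\nmid q$. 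Since $x,y\in[G,G]$ have different orders, the inclusion $x^G\cdot y^G\subseteq[G,G]\smallsetminus\{e\}$ is automatic, so it suffices to prove the reverse inclusion.

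The core is the symbol combinatorics. In the degree formula \eqref{deg11}, because $\ell_n\mid q^n-1$ and $n$ is odd, $\ell_n$ divides the denominator only through an $n$-hook of $S$, and because $\ell_{2n-2}\mid q^{n-1}+1$, $\ell_{2n-2}$ divides it only through an $(n-1)$-cohook of $S$; consequently $\chi(x)\neq0$ forces $S$ to have an $n$-hook and $\chi(y)\neq0$ (resp.\ $\chi(y')\neq0$) forces $S$ to have an $(n-1)$-cohook. The unconditional bound on the entries of a minimal rank-$n$ symbol of odd defect (the ``Observe that\ldots'' step in the proof of Proposition~\ref{B-product2}) forces the $n$-hook to be the hook $(0,n)$, so the equality $x_k=n$ now yields \eqref{deg31a} rather than \eqref{deg31}: $X=(1,2,\ldots,k-1,n)$, $Y=(0,1,\ldots,l-1)$, $k-l=\pm1$. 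Imposing the existence of an $(n-1)$-cohook on these symbols then pins $\chi$ down to $1_G$, $\St$, and two intermediate unipotent characters: $\chi_1$, with symbol $\binom{1,n}{0}$ (a small-degree character which in fact lies in the same Lusztig family as the character $\chi_1$ of Proposition~\ref{B-product2}), and $\chi_2$, with symbol $\binom{1,\ldots,n-2,n}{0,\ldots,n-1}$, which one checks equals the Alvis--Curtis dual $D_G(\chi_1)$; in each case $\chi(x)\chi(y)\neq0$ with $|\chi(x)\chi(y)|\leq1$.

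With this list the remaining estimates are those of Proposition~\ref{B-product2}: $|\chi(x)\chi(y)|=1$ for $\chi\in\{1_G,\St\}$; $|\chi_1(x)|\leq1$ and $|\chi_1(y)|=|\chi_1(y')|\leq1$ from Ohmori's character formula \cite[Cor.~(5.9)]{Odd-O} applied to the family $\mathcal F(\chi_1)$, which requires recomputing the values $\psi_{S'}(w),\psi_{S'}(w')$ of the Weyl-group characters $\psi_{S'}$ at the Coxeter-type classes of $\mathsf{W}_n$ corresponding to $x$ and to $y,y'$; and $|\chi_2(x)\chi_2(y)|=|\chi_2(x)\chi_2(y')|\leq1$ via the Alvis--Curtis machinery (the generic-degree identity $d_{D_G(\varphi)}(X)=X^{n^2}d_\varphi(X^{-1})$, triviality of $D_T$ on rational tori, and \cite[Corollary~8.16]{DM}), which also pins $\chi_2(1)$ down to be of the order of $q^{n^2}$. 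Finally, for semisimple $g\in[G,G]$ one has $g\in x^G\cdot y^G$ by \cite[Lemma~5.1]{GT1}; for non-semisimple $g\in[G,G]$, $\St(g)=0$, and choosing $C$ (hence $n$) large makes the $\chi_2$-contribution to $\sum_\chi\chi(x)\chi(y)\overline{\chi(g)}/\chi(1)$ smaller than $0.01$ — using $|\chi(g)|\leq|G|^{1/2}q^{1/2-n}$, which comes from the index bound $>q^{2n-1}$ for proper subgroups of $[G,G]$, cf.\ \cite[\S9]{TZ1} — while Gluck's bound makes the $\chi_1$-contribution at most $0.95$, so the sum is nonzero and $g\in x^G\cdot y^G$; and for $g\in G\smallsetminus[G,G]$ when $2\nmid q$ one repeats this with $y'$, using $\sgn(x)=1$, $\sgn(g)=\sgn(y')=-1$, the bound $|\psi(g)|/\psi(1)\leq0.9875$ from \cite[Lemma~2.19]{GT-12} together with Gluck for $\psi\in\Irr(G)$ irreducible over $[G,G]$, and the same estimate. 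I expect the main obstacle to be this last rerun of the $\chi_1,\chi_2$ character-value bounds: identifying the $\mathsf{W}_n$-classes of $x$, $y$, $y'$ and their $\psi_{S'}$-values, and verifying $\chi_2=D_G(\chi_1)$ together with the precise generic-degree shift, all of which differ in detail from the $2\mid n$ situation of Proposition~\ref{B-product2}.
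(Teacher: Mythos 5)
Your proposal is correct and follows essentially the same route as the paper's own proof: the same choice of $x$ of order $\ell_n$ and $y,y'$ built via Lemma~\ref{spin-tori}, the same reduction via Proposition~\ref{B-product1}(i) and Proposition~\ref{disjoint} to unipotent symbols forced to carry an $n$-hook $(0,n)$ and an $(n-1)$-cohook, hence the normalization \eqref{deg31a} and the same four characters $1_G,\St,\chi_1=\chi^{\binom{1,n}{0}},\chi_2$, with the bounds on $\chi_1,\chi_2$ obtained exactly as in Proposition~\ref{B-product2} via \cite[Cor.~(5.9)]{Odd-O} (note this is Lusztig's paper, not Ohmori's) and Alvis--Curtis duality. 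The only detail you defer — the recomputed Weyl-group values $\psi_{S'}(w)=-1,0,1$ and $\psi_{S'}(w')=0,-1,1$ — works out as you anticipate.
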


\begin{proof}
(i) As $2 \nmid n \geq 5$, by \cite{Zs} we can find a primitive prime divisor $\ell_{2n-2}$ of $q^{2n-2}-1$ and a primitive prime divisor $\ell_{n}$ of $q^{n}-1$. It is straightforward to check that $T^+_n$ contains a regular semisimple element $x \in [G,G]$ of order $\ell_{n}$, and
likewise $T^{-,-}_{n-1,1}$ contains a regular semisimple element $y \in [G,G] \cap \Omega^+_{2n}(q)$ of order divisible by $\ell_{2n-2}$ (with
the projection onto $T^-_1 \cong \SO^-_2(q)$ having order $q+1$, which is possible by Lemma \ref{spin-tori}). If $2 \nmid q$, then by
changing $y$ to have the first projection onto $\SO^-_{2n-2}(q)$ of order $\ell_{2n-2}$, we obtain a regular semisimple element
$y' \in T^{-,-}_{n-1,1} \smallsetminus [G,G]$.

\smallskip
(ii) Suppose $\chi \in \Irr(G)$ is such that $\chi(x)\chi(y) \neq 0$ or $\chi(x)\chi(y') \neq 0$ if $2 \nmid q$. By Proposition \ref{B-product1}(i), the
pair of tori in question is centrally orthogonal, hence either $\chi$ or $\chi\cdot\sgn$ is unipotent by Proposition \ref{disjoint}.
Without loss we may assume that $\chi$ is unipotent, labeled by a minimal symbol
$$S = (X,Y), ~~X = (x_1 < x_2 < \ldots <x_k),~Y = (y_1 < y_2< \ldots <y_l),$$
where $k,l \in \Z_{\geq 0}$ and $2 \nmid (k-l)$.
Now, if the denominator
of the degree formula \eqref{deg11} is not divisible by $\ell_{n}$ then $\chi$ has $\ell_{n}$-defect $0$ and so $\chi(x) = 0$. Similarly,
if the denominator of \eqref{deg11} is not divisible by $\ell_{2n-2}$ then $\chi$ has $\ell_{2n-2}$-defect $0$ and $\chi(y) = 0$, as
well as $\chi(y')=0$ when $2 \nmid q$. Thus the denominator
in \eqref{deg11} is divisible by both $\ell_{n}$ and $\ell_{2n-2}$.

As mentioned in the proof of Proposition \ref{B-product2}, we always have that $x_i \leq n$ and $y_j \leq n$. Hence, the condition that the denominator of \eqref{deg11} is divisible by
$\ell_{n}$ implies that there is an $n$-hook $n$, whence we may assume that $n=x_k \in X$ and $0 \notin X$.
This implies $x_1 \geq 1$, whence \eqref{deg31a} holds and $k \geq 1$.
Now, if $l=0$, then $k=1$ and $\chi=1_G$. If $l=1$, then $k=2$, $S = \binom{1,n}{0}$, and $\chi(1)=(q^n+1)(q^n-q)/2(q-1)$; denote this
unipotent character by $\chi_1$.

Assume $l \geq 2$.  Since $2 \nmid n$, we must also have an $(n-1)$-cohook $c$ with $0 \leq c-(n-1) \leq 1$. Here,
$0,1 \in Y$ by \eqref{deg31a}, so $c \notin X$, i.e. $c \in Y$ and $c-(n-1) \notin X$. Also by \eqref{deg31a}, $l-1 \geq c$ and so $l \geq n$.
Hence $k \geq l-1 > 2$, whence $1 \in X$, implying $c-(n-1)=0$, and $c=n-1 \in Y$. Furthermore,
$k-1 \leq n-1$, hence $k \leq n$, and thus $l \leq n+1$. By \eqref{deg31}, we have

$\bullet$ either $(k,l)=(n,n+1)$, $S = \binom{1,2, \ldots,n-1,n}{0,1, \ldots ,n}$, $\chi = \St$, the Steinberg character, or

$\bullet$ $(k,l) = (n-1,n)$, and $S = \binom{1,2, \ldots,n-2,n}{0,1, \ldots ,n-1}$; denote this unipotent character by $\chi_2$.

\smallskip
(iii) We have shown that, up to tensoring with $\sgn$ when $2 \nmid q$, $\chi_0=1_G$, $\St$,  $\chi_1$, and $\chi_2$ are the only four characters of $\Irr(G)$ that are nonzero at both $x$ and $y$, respectively at $x$ and $y'$ when $2 \nmid q$. It is clear that \eqref{deg32} holds.
To bound $|\chi_1(x)\chi_1(y)|$, let $w, w' \in {\mathsf {W}}_n$ correspond to $x$, respectively to $y$ and $y'$. Repeating
the arguments in the proof of Proposition \ref{B-product2}, we come up with three possibilities for $S'$ and
$$\psi_{S'}(w) = -1,~0,~1,~\psi_{S'}(w') = 0,~ -1,~1,$$
respectively.
It follows from \cite[Cor. (5.9)]{Odd-O} that \eqref{deg33} holds in this case.

Using Theorems 1.1 and 1.2 of \cite{N}, we see that $\chi_1$ is the unique unipotent characters of its degree, and so,
by inspecting \cite[Table 1]{ST}, $\chi_1$ is a constituent of the rank $3$ permutation action of $G$ on singular $1$-spaces of
its natural module; also, $\chi_1$ is irreducible over $[G,G]$.
Hence $\chi_1$ is also a constituent of the permutation character $1^G_B$, where $B$ is a Borel subgroup of $G$.
Now, to bound the character values for $\chi_2$, we again follow the proof of Proposition \ref{B-product2} using the Alvis-Curtis
duality functor $D_G$. This shows that \eqref{deg35} holds in this case as well.
To finish the proof, we just repeat part (iv) of the proof of Proposition \ref{B-product2} verbatim.
\end{proof}

\begin{prop}\label{B-product4}
There exists an explicit constant $C > 0$ such that the following statements hold for $G = \Omega_{2n+1}(q)$ with $n \geq 8$.
If $2|n$, let $T=T^{-,-}_{n-2,2}$ and $T' = T^{+,+}_{n-3,3}$ be maximal tori in $H:=\SO_{2n+1}(q)$.
If $2 \nmid n$, let $T=T^{-}_{n}$ and $T' = T^{+,-}_{n-2,2}$ maximal tori in $\SO_{2n+1}(q)$.
Then there exist regular semisimple elements $x \in T \cap G$ and $y \in T' \cap G$, such that if $g \in G$ has $\supp(g) \geq C$ then
$g \in x^H \cdot y^H$.
\end{prop}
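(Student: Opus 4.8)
The plan is to run the argument of Propositions~\ref{B-product2} and~\ref{B-product3}, but whereas those proofs pin down the relevant unipotent characters explicitly and then exploit Alvis--Curtis duality, here I would instead invoke Proposition~\ref{bounded} to obtain a \emph{bounded} list of relevant characters with \emph{bounded} values, and close the argument with \cite[Theorem 1.2.1]{LST1} in the Frobenius formula (exactly as in the proof of Proposition~\ref{D-product2}(ii)). Throughout write $H = \SO_{2n+1}(q)$, so that $G = [H,H] = \Omega_{2n+1}(q)$ has index $\gcd(2,q-1)$ in $H$, and set $r := n$.

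First I would record the orthogonality input. When $2\nmid n$ the pair $(T,T') = (T^-_n,\,T^{+,-}_{n-2,2})$ is covered by Proposition~\ref{B-product1}(ii), and when $2\mid n$ the pair $(T,T') = (T^{-,-}_{n-2,2},\,T^{+,+}_{n-3,3})$ by Proposition~\ref{B-product1}(iii); in both cases $(T,T')$ is weakly orthogonal if $2\mid q$ and centrally orthogonal if $2\nmid q$. Writing $T = T^{\e_1,\ldots}_{d_1,\ldots}$, $T' = T^{\e'_1,\ldots}_{d'_1,\ldots}$ in the notation of Proposition~\ref{bounded}, one has $(\e_1,r-d_1) = (-,0)$, $(\e'_1,r-d'_1) = (+,2)$ when $2\nmid n$, and $(\e_1,r-d_1) = (-,2)$, $(\e'_1,r-d'_1) = (+,3)$ when $2\mid n$; in either case $(\e_1,r-d_1)\neq(\e'_1,r-d'_1)$, and $n\ge 8$ forces $r > (r-d_1)+(r-d'_1)+1$, so Proposition~\ref{Hooks and cohooks} (and hence Proposition~\ref{bounded}) has content.

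Next I would produce the regular semisimple elements $x\in T\cap G$ and $y\in T'\cap G$ using Zsigmondy primes. When $2\mid n$ the cyclic factors of $T$ and $T'$ are $C_{q^{n-2}+1}$, $C_{q^2+1}$, $C_{q^{n-3}-1}$, $C_{q^3-1}$; let $\ell,\ell_0,\ell',\ell'_0$ be primitive prime divisors of $q^{2(n-2)}-1$, $q^4-1$, $q^{n-3}-1$, $q^3-1$ respectively (all exist, since $n\ge 8$ rules out the exceptional cases of Zsigmondy's theorem \cite{Zs}). Since $\ell\mid q^{n-2}+1$ and $\ell_0\mid q^2+1$, the torus $T$ contains an element $x$ of order $\ell\ell_0$, and likewise $T'$ contains an element $y$ of order $\ell'\ell'_0$; as $\ell,\ell_0,\ell',\ell'_0$ are pairwise distinct primes all exceeding $2$, both $x$ and $y$ have odd order and hence lie in $G$, while a direct inspection of their eigenvalues on $\F_q^{2n+1}$ (Galois orbits of sizes prescribed by the orders of $q$ modulo $\ell,\ell_0$, resp.\ $\ell',\ell'_0$, together with the eigenvalue $1$) shows that they are regular semisimple in $H$. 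The case $2\nmid n$ is identical, taking a primitive prime divisor of $q^{2n}-1$ for $T=T^-_n$ and primitive prime divisors of $q^{n-2}-1$ and $q^4-1$ for $T'=T^{+,-}_{n-2,2}$. When $2\mid q$ we have $G = H$, and nothing extra need be checked.

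Finally, with $x$ and $y$ fixed, Proposition~\ref{bounded} when $2\mid q$, or the same argument with Proposition~\ref{disjoint} replacing \cite[Proposition 2.2.2]{LST1} when $2\nmid q$ (in which case the only nontrivial linear character of $H$ is $\sgn$, and $\sgn(x)=\sgn(y)=1$ since $x,y\in G$, so the characters with $\chi(x)\chi(y)\neq 0$ form a bounded set of unipotent characters together with their $\sgn$-twists, each taking bounded values at $x$ and at $y$), yields absolute constants $C_1,C_2$ with at most $C_1$ characters $\chi\in\Irr(H)$ satisfying $\chi(x)\chi(y)\neq 0$ and $|\chi(x)\chi(y)|\le C_2$ for each of them; here one uses that $H$ is of type $B_n$, not $D_n$, so no degeneracy hypotheses are needed, and that $r-d_1,\,r-d'_1\in\{0,2,3\}$ are bounded. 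For $g\in G$ with $\supp(g)\ge C$, the Frobenius formula in $H$ (cf.\ \eqref{Frobenius}) gives
\[
\bigl|\{(s,t)\in x^H\times y^H : st=g\}\bigr| = \frac{|x^H|\,|y^H|}{|H|}\sum_{\chi\in\Irr(H)}\frac{\chi(x)\chi(y)\overline{\chi(g)}}{\chi(1)};
\]
the trivial character contributes $1$, and for $1_H\neq\chi$ one has $|\chi(g)|/\chi(1)\le q^{-\sqrt{\supp(g)}/481}$ by \cite[Theorem 1.2.1]{LST1} applied to the constituents of $\chi|_G$, so the sum has absolute value at least $1-C_1C_2\,q^{-\sqrt{C}/481}\ge 1/2>0$ once we put $C:=\bigl(481\log_2(2C_1C_2)\bigr)^2$; hence $g\in x^H\cdot y^H$. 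The step I expect to demand the most care is the odd-$q$ version of the appeal to Proposition~\ref{bounded}: one must verify that passing from $\chi$ to its unipotent $\sgn$-twist leaves both the count of relevant characters and their values at $x$ and $y$ bounded; once that is granted, the remainder is bookkeeping built on Propositions~\ref{B-product1}, \ref{Hooks and cohooks}, \ref{bounded}, and~\ref{disjoint}.
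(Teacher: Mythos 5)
Your proposal is correct and follows essentially the same route as the paper: orthogonality of the tori from Proposition~\ref{B-product1}(ii),(iii), reduction to (possibly $\sgn$-twisted) unipotent characters via Proposition~\ref{disjoint}, boundedness of the number and values of the surviving characters from Proposition~\ref{bounded}, and the exponential character bound \cite[Theorem 1.2.1]{LST1} in the Frobenius formula with $C$ of the form $\bigl(481\log_2(\cdot)\bigr)^2$. The only (harmless) deviation is that you manufacture the regular semisimple elements of $T\cap G$ and $T'\cap G$ directly with Zsigmondy primes and an odd-order argument, where the paper simply cites Lemma~\ref{spin-tori}.
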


\begin{proof}
Using Lemma \ref{spin-tori}, we can see that regular semisimple elements $x \in T \cap G$ and $y \in T' \cap G$ exist.
Suppose $\chi \in \Irr(H)$ is such that $\chi(x)\chi(y) \neq 0$. By Proposition \ref{B-product1}(ii), (iii) the pair of tori in question is
weakly orthogonal when $2|q$ and centrally orthogonal when $2 \nmid q$. Hence, either $\chi$ is unipotent, or
$2 \nmid q$ and $\chi\cdot\sgn$ is unipotent. In the case $2 \nmid q$, note that $\sgn(x)=\sgn(y)=\sgn(g) = 1$ for all $g \in G$.
By Proposition \ref{bounded}, the number of such characters $\chi$ is at most $C_1$, and
for any such character, $|\chi(x)\chi(y)| \leq C_2$ for some $C_2$. Now choosing $C = \bigl(481\log_2(C_1C_2)\bigr)^2$, for
any $g \in G$ with $\supp(g) \geq C$ we have by \cite[Theorem 1.2]{LST1} that
$$\frac{1}{\gcd(2,q-1)}\biggl{|} \sum_{\chi \in \Irr(H)}\frac{\chi(x)\chi(y)\overline\chi(g)}{\chi(1)} \biggr{|} > 1-\frac{C_1C_2}{q^{\sqrt{C}/481}} \ge 0,$$
and so $g \in x^H \cdot y^H$.
\end{proof}

\bigskip

\section{Applications to derangements}
\subsection{The main results on derangements}
Our results from previous sections have applications to permutation groups. The study of fixed-point-free permutations,
also called {\it derangements}, was initiated about 300 years ago. Around 150 years ago Jordan proved that every finite transitive 
permutation group $G \le \SSS_n$ ($n > 1$) contains a derangement. In \cite{CC} it is shown that the proportion $\delta(G)$ of derangements 
in such a group $G$ is at least $1/n$.
It turns out that, if $G$ is simple, the proportion of derangements is bounded away from zero. Indeed, we have the following
theorem by Fulman and Guralnick (see \cite[1.1]{FG3} and the references therein).

\begin{thm}\label{FG}
There exists an absolute constant $\e > 0$ such that, if $G$ is a finite simple transitive permutation group, and $\cD = \cD(G) \subset G$
is the set of derangements in $G$, then
\[
|\cD| \ge \e |G|.
\]
\end{thm}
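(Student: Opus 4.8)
This is a theorem of Fulman and Guralnick, settling the Boston--Shalev conjecture, and its proof runs through the Classification of Finite Simple Groups; I describe the shape of the argument I would give. One first reduces to the case where the point stabilizer $H$ is a maximal subgroup of $G$: if $H\le K\le G$ then every derangement of $G$ on $G/K$ is a derangement of $G$ on $G/H$, since $xHx^{-1}\subseteq xKx^{-1}$ for all $x$; hence $\delta(G,H)\ge\delta(G,K)$, and it suffices to bound $\delta(G,H)$ below for $H$ maximal. One also has the elementary bound $\delta(G,H)\ge 1/[G:H]$ of \cite{CC}, which already disposes of every case of bounded index; in particular the finitely many sporadic groups need no further thought, and the real content lies in two infinite families: the alternating groups in primitive actions of unbounded degree, and the groups of Lie type in actions of unbounded index.

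For $G=\AAA_n$ or $\SSS_n$ one invokes the O'Nan--Scott theorem to enumerate the conjugacy classes of maximal subgroups. The governing case is the action on the $k$-element subsets of $\{1,\dots,n\}$, where $g$ is a derangement precisely when it fixes no $k$-subset; one shows that a positive proportion of elements have this property, uniformly in $n$ and $k$, by inclusion--exclusion together with estimates on cycle structure of the type used in Section~6 (controlling permutations whose cycle type admits no sub-multiset summing to $k$). The other O'Nan--Scott families (imprimitive, product-type, affine, diagonal, almost-simple) have point stabilizers of order extremely small relative to $n!$, and a much cruder count of the elements lying in $\bigcup_{g\in G}H^g$ suffices there.

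For $G$ of Lie type one uses the structure theory of maximal subgroups: Aschbacher's theorem for classical groups (see \cite{KL}), partitioning them into the geometric classes $\mathcal{C}_1,\dots,\mathcal{C}_8$ and the class $\mathcal{S}$ of nearly simple irreducible subgroups, together with the Liebeck--Seitz classification for exceptional groups. The essential subcase is that of \emph{subspace stabilizers} ($\mathcal{C}_1$) in a classical group acting on its natural module, where the non-derangements are exactly the elements stabilizing a subspace of the prescribed type (for $\GL_n(q)$ on $1$-spaces, the elements with an eigenvalue in $\F_q^\times$). Here one evaluates Fulman's cycle-index generating functions for the finite classical groups and shows that the limiting proportion of such elements, as $n\to\infty$, is \emph{strictly below $1$ uniformly in $q$} --- the behaviour in $q$ being the crucial point. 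For the remaining classes ($\mathcal{C}_i$ with $i\ge 2$, and $\mathcal{S}$) one exploits that $H$ is comparatively small: combining the bound $k(G)\le cq^r$ on the number of conjugacy classes of a rank-$r$ group of Lie type (already invoked in Section~7) with bounds on the sizes of the $G$-classes meeting a conjugate of $H$, one shows that $\bigcup_{g\in G}H^g$ has size $o(|G|)$, so that in fact $\delta(G,H)\to 1$. Since there are only boundedly many Aschbacher classes (respectively finitely many families in the exceptional case), summing over them yields a uniform positive lower bound for $\delta(G,H)$.

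The main obstacle is the classical-group case with $H$ a subspace stabilizer and rank unbounded: every easy counting argument there yields only $\delta(G,H)\ge 1/[G:H]\to 0$, so one genuinely needs the asymptotic analysis of the classical-group cycle indices to see that a proportion of elements bounded below --- uniformly in the field size $q$ --- stabilizes no subspace of the given type. A secondary obstacle is the extensive bookkeeping: one must run through all Aschbacher classes and O'Nan--Scott types and check that the (numerous) maximal subgroups do not between them cover a $1-o(1)$ fraction of $G$. This is precisely the work carried out in \cite{FG1,FG2,FG3}.
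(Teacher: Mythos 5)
The paper does not prove this statement; it quotes it directly as the Fulman--Guralnick theorem (citing \cite[1.1]{FG3} and the references therein), which is exactly how you treat it, and your outline of the reduction to maximal subgroups, the Cameron--Cohen bound, the O'Nan--Scott/Aschbacher case division, and the cycle-index analysis for subspace actions is a faithful summary of the argument actually carried out in \cite{FG1,FG2,FG3}. So your proposal matches the paper's treatment (an appeal to the literature) and correctly identifies where the real work in that literature lies.
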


This confirms a conjecture of Boston and Shalev.

In fact it is shown in \cite{FG3} that $\e = 0.016$ will do provided $|G| \gg 0$.

In this section, we prove Theorem B, which we restate:

\begin{thm}\label{square}
Let $G$ be a finite simple transitive permutation group of sufficiently large order. Then every element of $G$ is a product of two derangements.
\end{thm}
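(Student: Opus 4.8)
The plan is to treat $G$ according to its isomorphism type; since only finitely many simple groups are excluded, assume $|G|$ is large. Write the transitive action as $G/H$ with $H$ a point stabiliser. The derangement set $\cD = \cD(G,H) = G \smallsetminus \bigcup_{x\in G} H^x$ is a normal subset of $G$, is closed under inversion (so $e \in \cD^2$), and by Theorem~\ref{FG} satisfies $|\cD| \ge \e|G|$. Since $\cD(G,K) \subseteq \cD(G,H)$ whenever $H \le K$, we may enlarge $H$ to a maximal subgroup. It then remains to prove $\cD^2 \supseteq G \smallsetminus \{e\}$.

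If $G = \AAA_n$, then $|\cD| \ge \e|\AAA_n| \ge e^{-n^{\alpha}}|\AAA_n|$ for any fixed $\alpha \in (0,1/4)$ once $n$ is large, so Theorem~\ref{S equals T} applied with $W = \cD$ gives $\cD^2 = \AAA_n$. If $G$ is of Lie type of bounded rank, then $\cD^2 \supseteq G \smallsetminus \{e\}$ is the case $S = T = \cD$ of Theorem~\ref{Q12bounded}. This reduces the theorem to the case where $G$ is a simple classical group whose natural module, of dimension $n \to \infty$, is defined over $\F_q$ (with $q$ possibly fixed).

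For the classical case, work in a quasisimple cover $\tilde G$ (such as $\SL_n(q)$, $\SU_n(q)$, $\Sp_{2n}(q)$ or $\Spin^{\e}_{2n}(q)$), so that the product theorems of Section~\ref{products} apply; a derangement of $G/H$ is precisely the image of an element of $\tilde G$ lying in no conjugate of the full preimage $\tilde H$ of $H$, and if $t^{\tilde G}\cdot(t')^{\tilde G} \supseteq \tilde G \smallsetminus \ZB(\tilde G)$ with $t,t'$ two such elements, then every non-identity element of $G$ is a product of two derangements. By Aschbacher's theorem (see \cite{KL}), $\tilde H$ lies in one of the geometric classes $\mathcal{C}_1,\dots,\mathcal{C}_8$ or in the class $\mathcal{S}$ of almost simple irreducible subgroups. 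The plan is to attach to each $H$ a pair of regular semisimple elements $t,t'$ lying in maximal tori drawn from the finite list occurring in Section~\ref{products} --- the tori of types $T_n$, $T_{1,a,n-a-1}$, $T_{1,n-1}$, $T_{a,n-a}$, $T_{n-2,2}$, $T_{n-3,3}$, $T_{n-4,4}$ in the linear and unitary cases, and their analogues $T^{\pm}_n$, $T^{\pm,\mp}_{n-1,1}$, $T^{\pm,\mp}_{n-2,2}$, $T^{-,-\e}_{n-a,a}$, $T^{-,-\e}_{n-a-1,a+1}$ in the orthogonal and symplectic cases --- such that (a) no $\tilde G$-conjugate of $t$ or $t'$ lies in $\tilde H$, so $t^{\tilde G},(t')^{\tilde G}\subseteq \cD$; and (b) $t^{\tilde G}\cdot(t')^{\tilde G} \supseteq \tilde G \smallsetminus \ZB(\tilde G)$, which for the right choice is exactly one of Theorems~\ref{A-product1}, \ref{A-product2}, \ref{A-product3} or Propositions~\ref{D-product2}, \ref{B-product2}, \ref{B-product3}, \ref{B-product4}. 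Property (a) is established via the structure theory of \cite{KL} and the primitive-prime-divisor methods of Fulman and Guralnick \cite{FG1,FG2,FG3}: the chosen tori contain elements of order divisible by a primitive prime divisor $\ell_e$ of $q^e-1$ with $e$ close to $n$ or $2n$, and such an element acts on the natural module with an irreducible constituent of almost full dimension, which for $n$ large cannot be accommodated by the decomposition, form, field structure or small subgroup that $\tilde H$ preserves.

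A few of the product theorems --- Theorem~\ref{A-product3} for $\SU_n(2)$ and Propositions~\ref{D-product2}(ii) and \ref{B-product4} --- only assert $g \in t^{\tilde G}\cdot(t')^{\tilde G}$ when $\supp(g)$ exceeds a fixed constant $C$, so for the corresponding $H$ the finitely many bounded-support conjugacy classes must be handled separately: the semisimple ones by the product-of-tori lemma \cite[Lemma~5.1]{GT1} (as in the proof of Proposition~\ref{D-product2}(i)), and the remaining small unipotent ones by a direct check. I expect step (a) to be the main obstacle: one must organise the case division over all Aschbacher classes and all relevant congruence conditions on $n$ so that at least one of the available torus pairs is simultaneously a pair of derangements for the given $H$. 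The delicate cases are the imprimitive ($\mathcal{C}_2$) and field-extension ($\mathcal{C}_3$) subgroups, which genuinely contain Coxeter-type tori (for instance Singer cycles lie in $\mathcal{C}_3$-subgroups), forcing the use of tori such as $T_{n-2,2}$, $T_{n-3,3}$, $T_{n-4,4}$ whose element orders involve large primitive prime divisors of two different cyclotomic polynomials and hence cannot be housed in a proper geometric subgroup; and the small fields $q \in \{2,3\}$, where the character-theoretic product estimates are weakest and the bespoke Theorem~\ref{A-product3} becomes necessary.
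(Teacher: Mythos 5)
Your reduction to the primitive case, your treatment of $\AAA_n$ via Theorem~\ref{S equals T} and of bounded rank via Theorem~\ref{Q12bounded}, and your plan for the subspace actions (torus pairs that are simultaneously derangements, the product theorems of Section~\ref{products}, separate treatment of bounded-support elements) all match the paper. The divergence, and the main gap, is in how you handle irreducible point stabilizers. The paper does not run through the Aschbacher classes at all: it quotes \cite[Theorem~1.7]{FG3}, which says that the union $\schY(\tilde G)$ of all irreducible subgroups has proportion $o(1)$ in $\tilde G$ as the rank grows, so that $|\cD(G,H)| > |G|/2$ for every irreducible point stabilizer $H$ once $n$ is large, and then $\cD^2 = G$ is immediate (Corollary~\ref{irred}). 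Your proposed alternative --- exhibiting, for each of $\mathcal{C}_2,\ldots,\mathcal{C}_8$ and $\mathcal{S}$, a torus pair avoiding all conjugates of $\tilde H$ --- is not just much harder; as sketched it is incomplete. For the class $\mathcal{S}$ of almost simple irreducible subgroups, ``primitive-prime-divisor methods'' means invoking the classification of subgroups containing elements of order divisible by a primitive prime divisor $\ell_e$ of $q^e-1$ with $e > n/2$ (Guralnick--Penttila--Praeger--Saxl), which produces a nontrivial list of exceptional subgroups that must themselves be analysed, and you would need to do this simultaneously for both members of each torus pair. Nothing in Section~\ref{products} or in your sketch does that work. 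Either supply it or replace this entire branch by the counting argument.

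Two smaller points. First, the counting theorem explicitly excludes $(\Sp_{2n}(q),\GO^{\pm}_{2n}(q))$ with $2\mid q$; this genuinely irreducible action must be treated separately (the paper folds it into the type $B_n/C_n$ analysis via $\Sp_{2n}(q)\cong \SO_{2n+1}(q)$ and Propositions~\ref{B-product2}--\ref{B-product4}), and your proposal does not mention it. Second, for the bounded-support elements not covered by the support-$\geq C$ product theorems, ``a direct check'' of the small non-semisimple classes is not available: such classes exist for every $n$, so a finite computation cannot dispose of them. The paper instead decomposes $V = U \perp W$ with $g = \diag(\lambda\cdot 1_U, h)$, factors $h$ as a product of two suitable torus elements inside the smaller classical group on $W$, and multiplies the two factors by $z$ and $z^{-1}$ for a regular semisimple $z$ acting on $U$, so that both factors become derangements on the original space. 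You need this gluing device (or an equivalent) to close that case.
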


Clearly, Theorem \ref{square} holds in the case $G$ is a cyclic group of prime order $\geq 3$.
Its proof for non-abelian simple groups will occupy the rest of the section.

We remark that  Theorem~\ref{FG} and Corollary~\ref{ABC} give an immediate proof of the easier three derangement result:

\begin{prop}
For all sufficiently large transitive simple permutation groups $G$, every permutation in $G$ is a product of
three derangements.
\end{prop}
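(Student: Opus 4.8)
The plan is to obtain the proposition as an immediate consequence of Theorem~\ref{FG} and Corollary~\ref{ABC}. First I would dispose of the abelian case: a finite simple transitive permutation group of degree $n>1$ is either a cyclic group $\Z/p$ of prime order acting regularly, or a non-abelian finite simple group. In the former case, for $p$ sufficiently large every non-identity residue is a derangement, and one checks directly that every element of $\Z/p$ is a sum of three non-zero residues (for instance $g\equiv 1+1+(g-2)\pmod p$, with an obvious adjustment when $g\equiv 2$). So from now on $G$ may be assumed non-abelian.

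Next I would invoke Theorem~\ref{FG}: there is an absolute constant $\e_0>0$, independent of $G$ and of the chosen transitive action, such that the set $\cD=\cD(G)$ of derangements satisfies $|\cD|\ge \e_0|G|$. Since the minimal degree $m(G)$ of a non-trivial irreducible character of a non-abelian finite simple group tends to infinity with $|G|$, we have
\[
\frac{m(G)\,|\cD|^3}{|G|^3}\;\ge\;\e_0^{\,3}\,m(G)\;\longrightarrow\;\infty
\]
as $|G|\to\infty$. Hence the triple $A=B=C=\cD$ meets the hypothesis of Corollary~\ref{ABC}, which gives $\cD\cdot\cD\cdot\cD=G$ for all sufficiently large $G$; equivalently, every element of $G$ is a product of three derangements.

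There is no genuine obstacle here: the entire content is carried by the two cited results, and the only point requiring a moment's care is that the three-subset input of Corollary~\ref{ABC} (ultimately Theorem~\ref{bnp}) degenerates for abelian groups, which is precisely why the cyclic-of-prime-order case is treated by hand. This should be contrasted with Theorem~\ref{square} itself: the two-derangement statement is genuinely harder, and for classical groups of unbounded rank it will require the representation-theoretic machinery of Section~\ref{products}.
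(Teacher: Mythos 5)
Your proof is correct and is exactly the paper's argument: the proposition is obtained immediately by feeding the Fulman--Guralnick bound of Theorem~\ref{FG} into the Gowers-trick consequence Corollary~\ref{ABC} with $A=B=C=\cD(G)$. Your explicit treatment of the cyclic-of-prime-order case (where $m(G)=1$ and the quasirandomness hypothesis degenerates) is a small extra point of care that the paper handles only implicitly, but it does not change the route.
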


\subsection{Some reductions}
We first prove some
preliminary results which reduce the proof of Theorem \ref{square} to the case $G$ is a simple group of Lie type of unbounded
rank (over fields of bounded size).

\medskip
Let $G$ be as above, and let $H < G$ be a point stabilizer. Recall that $\cD(G,H)$ denotes the set of derangements of $G$ in its
action on the left cosets of $H$, and that $\cD(G,H) = G \smallsetminus \cup_{g \in G} H^g$.
Thus, if $M < G$ is a maximal subgroup containing $H$, then $\cD(G,M) \subseteq \cD(G,H)$. Hence
$\cD(G,M)^2 = G$ implies $\cD(G,H)^2 = G$. This reduces Theorem~\ref{square} to the primitive case,
where $H$ is a maximal subgroup of $G$.

Since $\cD(G,H)$ is a normal subset of $G$ and $\cD(G,H) = \cD(G,H)^{-1}$, Theorem \ref{FG}
implies the following.

\begin{cor}\label{fpf} Let $\schX$ be a family of finite simple groups for which Question \ref{Weak} with $S=T$ has an affirmative answer.
Then Theorem \ref{square} holds for $G \in \schX$.
\end{cor}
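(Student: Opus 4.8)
The plan is to apply the hypothesis on $\schX$ directly to the set of derangements, after recording that this set is a symmetric normal subset of the required size.

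First I would invoke the reduction made just above the corollary: by enlarging the point stabiliser $H$ to a maximal subgroup $M$ with $H \le M < G$ and using $\cD(G,M)\subseteq\cD(G,H)$, it suffices to treat the case where $H<G$ is maximal, i.e.\ the primitive case. Write $\cD:=\cD(G,H)=G\smallsetminus\bigcup_{g\in G}H^g$. This is conjugation-invariant, hence a normal subset, and since an element fixes a point precisely when its inverse does, $\cD=\cD^{-1}$. By Theorem~\ref{FG} there is an \emph{absolute} constant $\epsilon>0$ with $|\cD|\ge\epsilon|G|$; in particular $\cD\neq\emptyset$, which also recovers Jordan's theorem.

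Now, since Question~\ref{Weak} has an affirmative answer for the family $\schX$ in the case $S=T$, fixing this $\epsilon$ there is an $N$ such that for every $G\in\schX$ with $|G|\ge N$ and every normal subset $S\subseteq G$ with $|S|\ge\epsilon|G|$ one has $S^2\supseteq G\smallsetminus\{e\}$. Taking $S=\cD$ gives $\cD^2\supseteq G\smallsetminus\{e\}$ for all sufficiently large $G\in\schX$. Finally $e\in\cD^2$ as well, since $e=dd^{-1}$ for any $d\in\cD$ and $\cD=\cD^{-1}$; hence $\cD^2=G$, which is exactly the assertion of Theorem~\ref{square} for $G\in\schX$.

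There is essentially no obstacle here: the substance is entirely in Theorem~\ref{FG} (producing a large normal subset) and in the definition of $\schX$. The only points requiring care are bookkeeping: Question~\ref{Weak} is stated with the identity excluded, so one must separately note that the symmetry $\cD=\cD^{-1}$ places $e$ into $\cD^2$; and the constant $\epsilon$ furnished by Theorem~\ref{FG} does not depend on $G$, so it may legitimately serve as the fixed $\epsilon$ to which the hypothesis on $\schX$ is applied uniformly over the family.
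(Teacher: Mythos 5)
Your argument is exactly the paper's (the paper compresses it into one line: $\cD(G,H)$ is a normal symmetric subset, Theorem~\ref{FG} makes it of proportion at least an absolute $\epsilon$, and the hypothesis on $\schX$ then gives $\cD^2\supseteq G\smallsetminus\{e\}$, with $e\in\cD^2$ from $\cD=\cD^{-1}$). The reduction to the primitive case is not needed here since Theorem~\ref{FG} applies to any transitive action, but including it is harmless; the proof is correct.
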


Combining Corollary \ref{fpf} with Theorems \ref{S equals T} and \ref{Q12bounded} we obtain the following.

\begin{cor}\label{fpf2}
Theorem~\ref{square} holds for alternating groups and for finite simple groups of Lie type of bounded rank.
\end{cor}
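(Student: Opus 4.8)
The plan is to obtain Corollary~\ref{fpf2} as a direct application of Corollary~\ref{fpf}, after verifying that each of the two families in question satisfies the hypothesis of that corollary, namely that Question~\ref{Weak} with $S=T$ has an affirmative answer.

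For finite simple groups of Lie type of bounded rank this is immediate: Theorem~\ref{Q12bounded} says that Question~\ref{Weak}, and even Question~\ref{Strong}, has an affirmative answer for this family, so a fortiori its $S=T$ special case does. Plugging this into Corollary~\ref{fpf} yields Theorem~\ref{square} for all sufficiently large finite simple groups of Lie type of bounded rank.

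For alternating groups the key observation is that Theorem~\ref{S equals T} is in fact stronger than the $S=T$ case of Question~\ref{Weak}. Fix $\alpha$ with $0<\alpha<1/4$ and let $N=N(\alpha)$ be as in Theorem~\ref{S equals T}. Given $\epsilon>0$, for all sufficiently large $n$ we have both $n\ge N$ and $\epsilon\ge e^{-n^{\alpha}}$, so any normal subset $W\subseteq\AAA_n$ with $|W|\ge\epsilon|\AAA_n|$ also satisfies $|W|\ge e^{-n^{\alpha}}|\AAA_n|$, whence $W^2=\AAA_n$ by Theorem~\ref{S equals T}. Thus Question~\ref{Weak} with $S=T$ has an affirmative answer for the family of simple alternating groups, and Corollary~\ref{fpf} then gives Theorem~\ref{square} for all sufficiently large alternating groups; the finitely many small $\AAA_n$ are irrelevant since Theorem~\ref{square} only concerns groups of sufficiently large order.

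There is no genuine obstacle here, as the substance has already been placed in the earlier results; the only point requiring a little care is the passage between the ``fixed $\epsilon$'' wording of Question~\ref{Weak} and the (asymptotically weaker) size hypotheses of Theorems~\ref{S equals T} and~\ref{Q12bounded}, handled above by comparing $\epsilon$ with $e^{-n^{\alpha}}$. Alternatively, one can argue directly without invoking Corollary~\ref{fpf}: by the reduction to the primitive case established earlier in this section, we may take $G$ to act primitively with maximal point stabilizer $H$; then $\cD(G,H)$ is a normal subset with $\cD(G,H)=\cD(G,H)^{-1}$ and, by Theorem~\ref{FG}, $|\cD(G,H)|\ge\epsilon|G|$ for an absolute $\epsilon>0$, so applying Theorem~\ref{S equals T} (alternating case) or Theorem~\ref{Q12bounded} (bounded-rank case) with $S=T=\cD(G,H)$ gives $\cD(G,H)^2\supseteq G\smallsetminus\{e\}$, and $e\in\cD(G,H)^2$ because $\cD(G,H)$ is nonempty and closed under inversion.
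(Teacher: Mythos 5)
Your proposal is correct and follows essentially the same route as the paper, which obtains the corollary by combining Corollary~\ref{fpf} with Theorem~\ref{S equals T} (for alternating groups) and Theorem~\ref{Q12bounded} (for bounded rank). The small bookkeeping point you flag --- comparing a fixed $\epsilon$ with $e^{-n^{\alpha}}$ for large $n$ --- is exactly the right way to see that Theorem~\ref{S equals T} implies the $S=T$ case of Question~\ref{Weak}.
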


In fact, we will show in the next section, see Theorem \ref{an-main}, that the conclusion of Theorem \ref{square} holds for {\bf all} (simple) alternating groups.

Since almost simple sporadic groups have bounded order, it remains to deal with classical groups of unbounded rank.
We use \cite[Theorem~1.7]{FG3} (extending \cite{Sh1}) which states the following:

\begin{thm}
Let $\tilde G$ be a classical group of rank $r$ acting faithfully on its natural module $V$. Let $\schY(\tilde G)$ denote the union of
all irreducible subgroups of $\tilde G$ (if $q$ is even and $\tilde G = \Sp_{2r}(\F_q)$, we exclude the subgroups $\GO^{\pm}_{2r}(q)$ from $X(G)$).
Then
\[
\frac{|\schY(\tilde G)|}{|\tilde G|} \to 0 \; {\rm as} \; r \to \infty.
\]
\end{thm}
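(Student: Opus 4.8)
The plan is to combine Aschbacher's theorem on the subgroup structure of finite classical groups with estimates, in the style of Fulman and Guralnick, on the proportion of elements of $\tilde G$ whose action on the natural module $V$ has a prescribed ``shape''. It is worth noting at the outset that the trivial estimate $|\bigcup_{g}M^g|\le[\tilde G:M]\,|M|=|\tilde G|$ is useless: every irreducible maximal subgroup $M$ of $\tilde G$ either has index at most roughly $|\tilde G|^{1/2}$ (and hence has roughly that many conjugates), or is comparatively small but then has an enormous number of conjugates, so one cannot avoid understanding which elements actually lie in such subgroups. Write $n=\dim_{\F_q} V$. By Aschbacher's theorem, any irreducible proper subgroup of $\tilde G$ lies in a maximal subgroup belonging to one of the geometric classes $\mathcal{C}_2$ (stabilizer of a decomposition $V=\bigoplus_{i=1}^t W_i$ into $t\ge2$ subspaces of equal dimension), $\mathcal{C}_3$ (stabilizer of an $\F_{q^b}$-structure, $b$ prime), $\mathcal{C}_4$ or $\mathcal{C}_7$ (a tensor product or tensor-induced decomposition of $V$), $\mathcal{C}_5$ (a subfield subgroup), $\mathcal{C}_6$ (the normalizer of a symplectic-type $r$-group), $\mathcal{C}_8$ (a classical subgroup --- these arise only for $\tilde G=\SL_n$, since for the other classical types the only $\mathcal{C}_8$ members are the $\GO^{\pm}_{2r}(q)<\Sp_{2r}(q)$ in even characteristic, which are excluded from $\schY(\tilde G)$), or to the class $\mathcal{S}$ of almost simple absolutely irreducible subgroups. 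Since the number of $\tilde G$-classes of maximal subgroups is bounded by a polynomial in $n$, it suffices to show that for a single $M$ of each type the proportion of $\tilde G$ covered by $\bigcup_g M^g$ tends to $0$ as $r\to\infty$, uniformly in $q$.

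For the ``small'' irreducible subgroups I would argue crudely. The set $\bigcup_g M^g$ is a union of at most $k(M)\le|M|$ distinct $\tilde G$-conjugacy classes, each of size at most $|\tilde G|/\min_{x\ne1}|C_{\tilde G}(x)|\le|\tilde G|\,q^{-c'r}$ for an absolute constant $c'>0$ (using that $\dim C_{\tilde G}(x)\ge r$ for noncentral $x$); hence $|\bigcup_g M^g|/|\tilde G|\le|M|\,q^{-c'r}+O(1)$. This is $o(1)$, uniformly in $q$, whenever $|M|\le q^{(c'-\eta)r}$, which by the order bounds in \cite{KL} and the classification covers all of $\mathcal{C}_6$ and all but finitely many families in $\mathcal{S}$. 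The finitely many remaining members of $\mathcal{S}$ --- essentially classical groups in small-degree modules and (symmetric and) alternating groups on fully deleted permutation modules --- I would treat individually: in each case membership of $g$ in a conjugate of $M$ forces the multiset of eigenvalues of $g$ on $V$ into a proper closed condition (a bounded number of distinct eigenvalues, or eigenvalues that are products of few ``independent'' roots, or a product of cyclotomic-type factors), which then feeds into the counting of the next paragraph.

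For the geometric classes the engine is twofold. First, a ``shape lemma'' for each class, bounding the possible elementary-divisor structure of any $g$ lying in a conjugate of such an $M$: for $\mathcal{C}_2$, the characteristic polynomial of $g$ has no multiplicity-one irreducible factor of degree $>n/2$, except in the case $t\mid n$ where such a factor comes from a single $t$-cycle of blocks (a subcase one separates off using the Zsigmondy/ppd classification of subgroups containing large-order semisimple elements); for $\mathcal{C}_3$ of degree $b$, the $\F_q$-characteristic polynomial of $g$, or of $g^b$, is a norm from $\F_{q^b}$, i.e.\ a product of a Galois orbit of polynomials over $\F_{q^b}$; for $\mathcal{C}_4$ and $\mathcal{C}_7$, the eigenvalues of $g$ are products of eigenvalues drawn from the tensor factors, forcing many coincidences; for $\mathcal{C}_5$, all elementary divisors of $g$ are defined over the proper subfield $\F_{q_0}$; for $\mathcal{C}_8$ ($\tilde G=\SL_n$), $g$ preserves a non-degenerate bilinear or sesquilinear form, whence its elementary divisors satisfy the usual self-duality symmetry. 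Second, I would estimate the proportion of $g\in\tilde G$ whose elementary divisors have each such shape using the cycle-index generating functions for $\GL_n(q)$, $\Sp_{2r}(q)$, $\SO^{\epsilon}_m(q)$ and $\SU_n(q)$ (the Fulman--Guralnick machinery): the proportions of $g$ with a self-dual-symmetric characteristic polynomial, with elementary divisors over a fixed proper subfield, or with the tensor-product eigenvalue pattern are $O(q^{-c_0 r})$ for an absolute $c_0>0$; the proportion with a norm-type characteristic polynomial for a given prime $b$ is dominated by the case $b=2$ and is $o(1)$ there, the sum over the at most $O(\log n)$ relevant $b$ still being $o(1)$; and the Zsigmondy-exceptional pairs $(q,e)$ contribute a bounded amount absorbed into a constant.

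The step I expect to be the main obstacle is precisely the third-paragraph estimate for the classes $\mathcal{C}_3$ and $\mathcal{C}_8$, together with the Singer-type tori they contain: these abound in semisimple elements with almost-irreducible characteristic polynomials, so the generating-function bounds must be sharp and --- the real difficulty --- uniform in $q$. A purely ``$q$ fixed, $r\to\infty$'' argument in the spirit of \cite{Sh1}, and a purely ``$q\to\infty$'' dimension count, each succeed on their own range but neither alone suffices; obtaining a single decay estimate valid for all $q$ simultaneously is where the cycle-index analysis has to be pushed through, and it is the heart of \cite[Theorem~1.7]{FG3}.
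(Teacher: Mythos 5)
First, a point of comparison: the paper does not prove this statement at all. It is quoted verbatim as \cite[Theorem~1.7]{FG3} (extending \cite{Sh1}), so there is no internal argument to measure your proposal against; any genuine proof has to reconstruct the Fulman--Guralnick analysis. Your outline does identify the correct architecture of that analysis --- reduce via Aschbacher's theorem to maximal subgroups in the classes $\mathcal{C}_2$--$\mathcal{C}_8$ and $\mathcal{S}$, prove for each class a ``shape lemma'' constraining the elementary divisors of elements lying in such a subgroup, and then bound the proportion of elements with each shape by cycle-index/generating-function estimates that are uniform in $q$. That is indeed how \cite{FG1,FG2,FG3} proceed.

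However, as a proof the proposal has two genuine gaps. The first is in your treatment of the class $\mathcal{S}$. The crude bound $|\bigcup_g M^g|/|\tilde G|\le k(M)\,q^{-c'r}\le|M|\,q^{-c'r}$ only disposes of subgroups with $|M|\le q^{(c'-\eta)r}$, and since the minimal centralizer order of a noncentral element is only about $q^{r}$ (with $r$ as small as $n/2$ for symplectic and orthogonal $\tilde G$), while Liebeck's bound allows $|M|$ up to roughly $q^{3n}$ for $M\in\mathcal{S}$, this leaves not ``finitely many families'' but infinitely many: classical groups of rank $s$ acting on alternating squares, symmetric squares, spin and adjoint modules all have order around $q^{cn}\gg q^{r}$. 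Your promised ``individual treatment'' of these --- that membership forces the eigenvalue multiset into ``a proper closed condition'' --- is not an argument, and making it one uniformly in $q$ and $n$ is a substantial portion of \cite{FG2,FG3}. The second gap is the one you name yourself: the decisive estimates --- that the proportion of $g$ with self-dual characteristic polynomial, with all elementary divisors over a proper subfield, with a norm-type characteristic polynomial (summed over the relevant primes $b$), or with the tensor-product eigenvalue pattern is $o(1)$ uniformly in $q$ --- are asserted rather than established, and you defer them back to \cite{FG3}. Those estimates are the content of the theorem, so what you have is a correct road map to the cited result, not an independent proof of it.
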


\begin{cor}\label{irred} Theorem~\ref{square} holds for groups $G \in \Cl_n(q)$ when $n \gg 0$, provided the point-stabilizer $H$
is irreducible and not $\GO^{\pm}_{n}(q)$ when $G = \Sp_{n}(\F_q)$ with $2|q$.
\end{cor}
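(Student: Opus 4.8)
The plan is to deduce the corollary almost immediately from the Fulman--Guralnick theorem \cite[Theorem~1.7]{FG3} quoted just above, by showing that the derangement set is already almost all of $G$, and then invoking a pigeonhole argument. By the reduction preceding the statement we may assume $H$ is a maximal subgroup of $G$; this is consistent with the hypothesis that $H$ is irreducible, since any overgroup of an irreducible subgroup is again irreducible. Let $\tilde G$ denote the relevant classical (matrix) cover of $G$ acting faithfully on the natural module $V$, so that $G = \tilde G/\ZB(\tilde G)$, and let $\tilde H$ be the full preimage of $H$ in $\tilde G$. Recall that ``$H$ irreducible'' means precisely that $\tilde H$ acts irreducibly on $V$; since in addition $\tilde H \ne \GO^\pm_n(q)$ in the excluded case $G = \Sp_n(\F_q)$ with $q$ even, the group $\tilde H$ and every one of its $\tilde G$-conjugates is an irreducible subgroup of $\tilde G$ lying in $\schY(\tilde G)$ (conjugation preserves irreducibility, and conjugates of $\GO^\pm$ are again of type $\GO^\pm$, hence still excluded). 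For $g\in G$ and $\tilde g\in\tilde G$ a preimage, $\tilde H^{\tilde g}$ contains $\ker(\tilde G\to G) = \ZB(\tilde G)$ and maps onto $H^g$, so $\tilde H^{\tilde g}$ is the full preimage of $H^g$; therefore $\bigcup_{g\in G}H^g$ lifts to $\bigcup_{\tilde g}\tilde H^{\tilde g}\subseteq\schY(\tilde G)$.

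Next I would run the density estimate. Dividing numerator and denominator by $|\ZB(\tilde G)|$ and applying \cite[Theorem~1.7]{FG3} (with $\mathrm{rank}(\tilde G)\to\infty$ as $n\to\infty$), I obtain
\[
\frac{\bigl|\bigcup_{g\in G}H^g\bigr|}{|G|} \;\le\; \frac{|\schY(\tilde G)|}{|\tilde G|} \;\longrightarrow\; 0 \qquad (n\to\infty).
\]
Hence the derangement set $\cD := \cD(G,H) = G\smallsetminus\bigcup_{g\in G}H^g$ satisfies $|\cD| \ge (1-o(1))|G|$, and in particular $|\cD| > |G|/2$ once $n$ is sufficiently large. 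Since $\cD$ is a normal subset with $\cD = \cD^{-1}$, for every $g\in G$ the subsets $\cD$ and $g\cD$ of $G$ each have more than $|G|/2$ elements, so $\cD\cap g\cD\neq\emptyset$; writing $d_1 = g d_2$ with $d_1,d_2\in\cD$ gives $g = d_1 d_2^{-1}\in\cD^2$. Thus $\cD^2 = G$, which is the assertion of Theorem~\ref{square} for these $G$.

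The only delicate part is the bookkeeping in the first paragraph: one must pin down the correct cover $\tilde G$ of $G$ so that ``irreducible point stabilizer of $G$'' coincides with ``irreducible subgroup of $\tilde G$'' in the sense of \cite[Theorem~1.7]{FG3}, check that passing to full preimages (which only adjoins scalars) does not affect irreducibility, and verify that the $\GO^\pm$ exclusion in even characteristic is transported correctly. Once these identifications are made, the argument is just a uniform density bound followed by a counting argument, with no further group-theoretic input needed. In effect this disposes of all point stabilizers outside Aschbacher class $\mathcal{C}_1$; the reducible (parabolic and non-degenerate decomposition) cases, where the proportion of derangements need not tend to $1$, are exactly those that require the product-of-classes results of Section~\ref{products}.
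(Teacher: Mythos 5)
Your proof is correct and is essentially the paper's own argument: both apply \cite[Theorem~1.7]{FG3} to bound the density of $\bigcup_{g}H^{g}$ by that of $\schY(\tilde G)$, deduce $|\cD(G,H)|>|G|/2$ for $n\gg 0$, and conclude $\cD(G,H)^2=G$ by the pigeonhole/intersection argument. The extra bookkeeping you supply about passing between $G$ and the cover $\tilde G$ is a correct elaboration of a step the paper leaves implicit.
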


\begin{proof}
By the above theorem we have
$$|\schY(G)|/|G| < 1/2$$
for $n \gg 0$.
Since $\cup_{g \in G} H^g \subseteq \schY(G)$, it follows that $|\cD(G,H)|> |G|/2$ and therefore $\cD(G,H)^2 = G$.
\end{proof}

\begin{thm}\label{bd-dim}
There are absolute constants $C_1, C_2$ such that the following holds.
Let $G \in \Cl_n(q)$ be a finite simple classical primitive permutation group with point-stabilizer $H$. If $q$ is even, assume $(G,H) \ne (\Sp_{n}(\F_q), \GO^{\pm}_{n}(\F_q))$.
Suppose $n \ge C_1$ and the action is not a subspace action on subspaces of dimension $k \le C_2$. Then $G$ satisfies
Theorem \ref{square}.
\end{thm}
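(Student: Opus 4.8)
The plan is to combine the reductions already in place with the product‑of‑classes results of Section~\ref{products}. By those reductions we may assume $H$ is maximal in $G$, and, by Corollary~\ref{irred} together with the standing hypothesis $(G,H)\neq(\Sp_n(\F_q),\GO^{\pm}_n(\F_q))$ in characteristic $2$, we may assume $H$ is \emph{reducible}; thus $H=G_U$ is the stabilizer of a proper nonzero subspace $U$ of the natural module $V$ --- a $k$‑dimensional subspace when $G$ is linear, and a totally singular or non‑degenerate $k$‑subspace otherwise (the stabilizer of a non‑singular $1$‑space is a subspace action on a $1$‑dimensional subspace, hence excluded). Using the duality between a subspace and its perp (or dual), which in some cases is induced by a graph automorphism of $G$ and in any event is a bijection of $G$ transporting the conclusion of Theorem~\ref{square} between the two actions, we may assume that the relevant dimension $k=\dim U$ lies in the lower half of its range. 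Combined with the hypothesis $k>C_2$, this places $U$ well inside $V$: one has $C_2<k$, and $\dim V-k>C_2$ in the non‑degenerate case, while $k$ never exceeds the Witt index in the totally singular case.

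For each Lie type of $G$ I would then produce, from Section~\ref{products}, a pair of conjugacy classes $C_1=x^G$, $C_2=y^G$ of regular semisimple elements lying in explicitly specified maximal tori, with $C_1C_2\supseteq G\smallsetminus\{e\}$: Theorems~\ref{A-product1}, \ref{A-product2}, \ref{A-product3} for $G=\SL_n(q)$ and $\SU_n(q)$ (the latter two needed for small $q$); Propositions~\ref{B-product2}--\ref{B-product4} for $G=\SO_{2n+1}(q)$ (hence for $\Sp_{2n}(q)$ in characteristic $2$, with the analogous odd‑characteristic symplectic statements); and Proposition~\ref{D-product2} for $G=\Spin^{\pm}_{2n}(q)$. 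When $G=\SO_{2n+1}(q)$ with $q$ odd the simple group is $\Omega_{2n+1}(q)=[G,G]$, and Propositions~\ref{B-product2}--\ref{B-product3} are arranged so that $x^Gy^G$ and $x^G(y')^G$ together cover $G\smallsetminus\{e\}$ with $x,y,y'\in\Omega_{2n+1}(q)$; here one notes that an element of $\Omega$ lying in a product of two $\SO$‑classes of derangements lies in a product of two $\Omega$‑classes of derangements. In the orthogonal cases where only the support‑conditioned statement $C_1C_2\supseteq\{g\in G:\supp(g)\ge C\}$ is available (Proposition~\ref{D-product2}(ii), Proposition~\ref{B-product4}), the bounded‑support elements are set aside for the moment.

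The core step is to verify that, for $H=G_U$, \emph{every} element of $C_1\cup C_2$ is a derangement. A regular semisimple element $x$ lying in a torus of type $T_{d_1,\dots,d_p}$ (resp.\ $T^{\varepsilon_1,\dots,\varepsilon_p}_{d_1,\dots,d_p}$) decomposes $V$ into $\langle x\rangle$‑irreducible $\F_q$‑subspaces whose dimensions, and whether they are totally singular, are read off from the torus type, and every $x$‑invariant subspace is a direct sum of these blocks. In each product result cited, the tori are chosen so that every block has dimension bounded by a constant or within a bounded distance of $\dim V$, and so that every totally singular block has bounded dimension (the large blocks being ``anisotropic'': of minus type, or a Coxeter block). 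Taking $C_2$ larger than all of these constants, it follows that $x$ fixes no subspace of dimension $k$ whatsoever when $C_2<k<\dim V-C_2$, and no totally singular subspace of dimension exceeding $C_2$; either way $x$ fixes no subspace of the type of $U$, so $x$ --- and hence all of $C_1$ --- lies in $\cD(G,H)$, and likewise $C_2\subseteq\cD(G,H)$. Since $\cD(G,H)=\cD(G,H)^{-1}$, we conclude that every $g\in G\smallsetminus\{e\}$ (in the support‑conditioned cases, every $g$ with $\supp(g)\ge C$) is a product $d_1d_2$ with $d_i$ derangements, while $e=d\,d^{-1}$ for any $d\in\cD(G,H)$; hence $\cD(G,H)^2=G$.

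The main obstacle is precisely this type‑by‑type matching of torus to action: for every classical type and every family of subspace actions surviving the reductions one must exhibit \emph{some} pair of classes from Section~\ref{products} all of whose members are derangements, which for the orthogonal groups forces a delicate choice among the several available pairs --- the totally singular subspace actions being the hard case, as there one needs both tori to have only bounded totally singular invariant subspaces, i.e.\ to be built from anisotropic (minus‑type) and Coxeter blocks. This is exactly where the symbol combinatorics of Propositions~\ref{Hooks and cohooks}--\ref{bounded}, via the degree formula and the control of hooks and cohooks, is used to realize the required derangement classes inside anisotropic tori while keeping the number and size of the surviving character values bounded. The one remaining loose end is the bounded‑support elements $g$ with $\supp(g)<C$ in the support‑conditioned orthogonal cases (together with the order‑$p$ transvection‑type elements for $\SU_n(q)$ when $q$ is tiny): being $o(1)\cdot|G|$ in number and lying in boundedly many conjugacy classes, they are disposed of by a short separate argument --- for instance by falling back on the support‑unconditioned pairs of Propositions~\ref{B-product2}--\ref{B-product3} and \ref{D-product2}(i), whose nonzero character support is essentially just $1_G$ and $\St$ and which are therefore insensitive to $\supp(g)$, and handling any residual cases directly.
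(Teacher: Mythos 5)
Your proposal goes in a genuinely different, and much harder, direction than the paper, and the hard part is not actually carried out. The paper's proof of Theorem~\ref{bd-dim} is a two-line density argument: after Corollary~\ref{irred} reduces to subspace actions on $k$-spaces with $1\le k\le n/2$, one quotes Theorems 6.4, 9.4, 9.10, 9.17 and 9.30 of \cite{FG2}, which show that $\delta(G,H)\to 1$ as $k\to\infty$ uniformly over these actions; hence for $k>C_2$ one has $|\cD(G,H)|>|G|/2$, and then $\cD(G,H)^2=G$ for the trivial reason that any subset of more than half of $G$ squares to $G$ (``the result follows as before,'' i.e.\ as in Corollary~\ref{irred}). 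No character theory and no choice of tori is needed at this stage; the machinery of Section~\ref{products} is reserved for the \emph{complementary} case of bounded $k$, which is exactly where the density argument fails.

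The gap in your plan is the step you yourself flag as ``the main obstacle'': exhibiting, for \emph{every} classical type and \emph{every} subspace action with $k>C_2$, a pair of conjugacy classes from Section~\ref{products} consisting entirely of derangements. This is not a routine verification, and for several actions the tori actually constructed in Section~\ref{products} do not work. For instance, the regular semisimple element of type $T^+_n$ in $\Sp_{2n}(q)$ (order $q^n-1$) stabilizes the two totally singular $n$-spaces of its eigenspace decomposition, so the pair $(T^-_n,T^+_n)$ used via \cite{MSW} fails to consist of derangements for the action on maximal totally singular subspaces; similarly the tori $T^{\pm,\mp}_{n-1,1}$ and their relatives have large split blocks and hence stabilize large totally singular subspaces in various orthogonal actions. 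Producing replacement pairs built entirely from anisotropic blocks, proving they are weakly (or centrally) orthogonal, and controlling the surviving character values would require new arguments parallel to, but not contained in, Propositions~\ref{B-product1}--\ref{B-product4} and \ref{D-product2}. Since the Fulman--Guralnick estimates make all of this unnecessary for $k>C_2$, you should invoke \cite{FG2} here and reserve the torus-by-torus analysis for the bounded-$k$ actions treated in the completion of the proof of Theorem~\ref{square}.
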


\begin{proof} Relying on Corollary \ref{irred}, we may assume that $H$ is reducible, namely $G$ acts in subspace action,
say on subspaces (non-degenerate or totally singular for $G \ne \PSL_n(q)$) of dimension $k$, with $1 \le k \le n/2$.
Theorems 6.4, 9.4, 9.10, 9.17 and 9.30 of \cite{FG2} show that, as $k \to \infty$, the proportion of derangements
in $G$ tends to $1$. The result follows as before.
\end{proof}

\subsection{Completion of the proof of Theorem~\ref{square}}
The above reduction results allow us to assume that $G = \Cl(V)$ is a finite simple classical group defined over fields
of bounded size, and we need to establish Theorem \ref{square} for $G$ under the assumption that $\dim(V)$ is sufficiently large.
Let $\tilde G$ denote the central extension of $G$ for which $V$ is a faithful linear representation,
and let $\tilde H$ denote the inverse image in $\tilde G$ of a point stabilizer $H$ of $G$. Also let
$\Pi$ denote the transitive permutation representation with $H$ a point stabilizer.
We show that if $|G|$ is sufficiently large, equivalently, $\dim(V)$ is sufficiently large,
there exist elements $\tilde x,\tilde y\in \tilde G$ which are
derangements on $\tilde G/\tilde H$ and such that every element in $G\smallsetminus\{1\}$ is the
product of a conjugate of $x$ and a conjugate of $y$, where $x$ (resp. $y$) is the image of $\tilde x$ (resp. $\tilde y$) in $G$.  Since $x^{-1}$ is also a derangement, $e$ is also a product of two derangements.  We proceed by cases.

\subsubsection{The case $\tilde G = \SL_n(\F_q)$ with $n \geq 3$}
Here $\tilde H$ is the stabilizer of an $m$-dimensional subspace $V'$ of $V=\F_q^n$,
$1<m<n-1$.  Fixing an $\F_q$-basis of $\F_{q^n}$ we obtain an embedding of the norm-$1$ elements of $\F_{q^n}$
in $\SL_n(\F_q)$.  Let $\tilde x$ denote the image of a multiplicative generator of the group of norm-$1$ elements.  Let $\tilde y$ denote the image in $\SL_n(\F_q)> \GL_{n-1}(\F_q)$ of a
generator of $\F_{q^{n-1}}^\times$.
Thus $\tilde x$ and $\tilde y$ are regular elements of the tori $T=T_n$ and $T'=T_{n-1,1}$ of $\SL_n(\F_q)$ in \cite[Table 2.1]{MSW}.
As the characteristic polynomial of $\tilde x$ is irreducible over $\F_q$ and that of $\tilde y$ has an irreducible factor of degree $n-1$, it follows that
neither $\tilde x$ nor $\tilde y$ can fix an $\F_q$-subspace of $\F_q^n$ of dimension $m$, so $x$ and $y$ are indeed, derangements.  By \cite[Theorem~2.1]{MSW}, the product of the conjugacy classes of $x$ and $y$ covers all non-trivial elements of $G$.

Assume now that $m=1$ or $m=n-1$. Then we note that the elements $t$ and $t'$ constructed in Theorem \ref{A-product2}
are both derangements in $\Pi$, and so the statement follows from Theorem \ref{A-product2}.

\subsubsection{The case $\tilde G = \SU_n(\F_q)$ with $n \geq 5$}
Since $H$ is maximal, we have that $\tilde H$ is the stabilizer of an
$m$-dimensional subspace $V'$ of $V=\F_{q^2}^n$, $1 \leq m \leq n-1$, where $V'$ is either totally singular, or non-degenerate.
The existence of the Hermitian form allows us to assume that $1 \leq m \leq n/2$. Applying Theorem \ref{bd-dim} we may furthermore
assume that $m \leq c_2$ is bounded and that $m \leq n/2-1$.
Let $\tilde x$ and $\tilde y$ be elements of $\tilde G$ of order $\frac{q^n-(-1)^n}{q+1}$ and $q^{n-1}-(-1)^{n-1}$ respectively, so they are regular semisimple elements of tori $T=T_n$ and $T'=T_{n-1,1}$ respectively. Assume that $V'$ is not a non-degenerate $1$-space.
%neither $\tilde x$ nor $\tilde y$ can fix any
%member of the $\tilde G$-orbit of $U$, if $U$ is totally singular but $m \neq \lfloor n/2 \rfloor$, or
Then both $\tilde x$ and $\tilde y$ are derangements in $\Pi$. By \cite[Theorem~2.2]{MSW}, the product of the conjugacy classes of $x$ and $y$ covers all non-trivial elements of $G$, and the statement follows.
%Thus it remains to handle the case $U$ is a non-degenerate $1$-space or is a maximal totally singular subspace.

\smallskip
Suppose now that $V'$ is a non-degenerate $1$-space.
If $q>2$, then we again note that the elements $t$ and $t'$ constructed in Theorem
\ref{A-product2} are both derangements in $\Pi$, and so the statement follows from Theorem \ref{A-product2}. Assume now that
$q=2$. Consider the case $g \in \tilde G=\SU_n(2)$ is a transvection. Then we can put $g$ in a factor $A = \SU_4(2)$ of a standard subgroup
$$A \times B = \SU_4(2) \times \SU_{n-4}(2)$$
of $\tilde G$.
Direct calculation with \cite{GAP} shows that $g$ is a product $g = xy$ of two elements of order $5$ in $A$.
If $n$ is large, we choose $z \in B$ a regular semisimple element of type $T_{n-4}$, a maximal torus in $B$.
%If $2|n$ is large, we choose $z \in B$ a regular semisimple element of type $T_{n-7,3}$, a maximal torus in $B$.
Now we note that $g = (xz)(yz^{-1})$, and both $xz$, $yz^{-1}$ are derangements.
We also note that any non-unipotent element of support $1$ in $\SU_n(2)$ is semisimple, hence by \cite[Lemma 5.1]{GT1}
it is a product of two regular semisimple elements of type $T_n$ which are derangements.
%Next we consider the case $g \in \tilde G=\SU_n(2)$ is semisimple
%of support $1$, and let $\omega \in \F_4^\times$ be of order $3$. Then we may assume that
%$$g = \diag(\omega^{-1}, \omega, \omega,\ldots ,\omega,\omega)$$
%relative to some orthonormal basis of $V=\F_4^n$, in particular, $1=\det(g) = \omega^{n-2}$ and so $3|(n-2)$. Then we write
%$g = \diag(h,\omega I_{n-5})$ in a standard subgroup
%$$C \times D = \SU_5(2) \times \SU_{n-5}(2)$$
%of $\tilde G$. Direct calculation with \cite{GAP} shows that $h$ is a product $h = x'y'$ of two elements of
%order $11$ in $C=\SU_5(2)$. If $2|n$ is large, we choose $z' \in D$ a regular semisimple element of type $T_{n-5}$, a maximal torus in $D$.
%If $2 \nmid n$ is large, we choose $z' \in D$ a regular semisimple element of type $T_{n-8,3}$, a maximal torus in $D$. Now we note
%that $g = (x'z')(y'z'^{-1}\cdot \omega I_{n-5})$, and both $x'z'$, $y'z'^{-1}\cdot \omega I_{n-5}$ are derangements.
It remains to consider the case $\supp(g) \geq 2$, in which case the statement follows from Theorem \ref{A-product3}, since the elements $t$ and $t'$ constructed therein are derangements in $\Pi$.

\subsubsection{The case $\tilde G = \Omega_{2n+1}(q)$ or $\Sp_{2n}(q)$ with $n \geq 5$}
Let $\tilde x$ and $\tilde y$ be elements of order $q^n+1$ and $q^n-1$ generating tori of type $T=T^-_n$ and $T'=T^+_n$ respectively.  Thus the $\Frob_q$ orbit of any
eigenvalue of $\tilde x$ (resp. $\tilde y$) in the natural representation consists of a $2n$-cycle (resp. two $n$-cycles) together with an additional fixed point if $G$ is of type $B_n$. As in case (ii), we may assume that $\tilde H$ is the stabilizer of an $m$-dimensional
subspace $V'$ which is either totally singular, or non-degenerate, and has bounded dimension by Theorem \ref{bd-dim}.
For $C_n$, therefore, the theorem follows from \cite[Theorem~2.3]{MSW}, while for $B_n$ it holds by \cite[Theorem~2.4]{MSW} unless
%$U$ is a maximal totally singular subspace of $V$ (of dimension $n$), or if $2 \nmid q$, $G$ is of type $B_n$ and
$V'$ is a non-degenerate $1$-space. Likewise, we must still consider the cases $(\tilde G,\tilde H) = (\Sp_{2n}(q),\GO^{\pm}_{2n}(q))$
when $2|q$.

\smallskip
In both of the remaining actions, we can view $\tilde G = [\Gamma,\Gamma]$, where $\Gamma = \SO(V)$ and $V = \F_q^{2n+1}$ when
$2 \nmid q$, and $\Gamma = \Sp(V) \cong \SO_{2n+1}(q)$ and $V = \F_q^{2n}$ when $2|q$. Then $\Pi$ is the restriction to $\tilde G$ of
the transitive permutation action of $\Gamma$ with point stabilizer $\GO^{\e}_{2n}(q)$ for a fixed $\e = \pm$. First we consider the case
$\e1 = (-1)^n$. By Propositions \ref{B-product2} and \ref{B-product3}, if $n$ is large enough we can find in $\tilde G$
regular semisimple elements $x_1$ of type $T^{-\e}_n$ and
$y_1$ of type $T^{\e,-}_{n-1,1}$ such that $x_1^{\Gamma} \cdot y_1^\Gamma = \tilde G \smallsetminus \{e\}$. Since both $x_1$ and $y_1$ are derangements in $\Pi$, the statement follows in this case.

Assume now that $\e1 \neq (-1)^n$. By Proposition \ref{B-product4},
we can find in $\tilde G$ regular semisimple elements, $x_2$ of type $T^{-,-}_{n-2,2}$ and
$y_2$ of type $T^{+,+}_{n-3,3}$ when $2|n$, $x_2$ of type $T^-_{n}$ and
$y_2$ of type $T^{+,-}_{n-2,2}$ when $2 \nmid n$, such that $x_2^\Gamma \cdot y_2^\Gamma$ contains any element $g \in \tilde G$ of
large enough support, say $\supp(g) \geq B$. Since both $x_2$ and $y_2$ are derangements in $\Pi$, the statement again follows in this case.
Now we consider the case $\supp(g) < B < n-3$ and let $\lambda$ be the primary eigenvalue of $g$ on $V$,
cf. \cite[Proposition 4.1.2]{LST1}. By \cite[Lemma 6.3.4]{LST1}, we can decompose $V = U \perp W$ as an orthogonal sum of
$g$-invariant non-degenerate subspaces, with $\dim(U) =6$, $U$ has type $+$ if $2 \nmid q$, and $g|_U = \lambda \cdot 1_U$.
Define
$$\left\{\begin{array}{ll}I(W)=J(W) = \Sp(W) \cong \Sp_{2n-6}(q), & \mbox{when }2|q,\\
    J(W) = \Omega(W) \cong \Omega_{2n-5}(q),~I(W) = \SO(W) \cong \SO_{2n-5}(q), & \mbox{when }2 \nmid q.\end{array}\right.$$
Likewise, we define $J(U) = \Sp(U) \cong \Sp_6(q)$ when $2|q$, and $J(U) = \Omega(U) \cong \Omega^+_6(q)$ when $2 \nmid q$.
Since $\e1 = (-1)^{n-3}$, we can consider regular semisimple elements $x_3 \in T^{-\e}_{n-3} \cap J(W)$ and
$y_3 \in T^{\e,-}_{n-4,1} \cap J(W)$ constructed in
Propositions \ref{B-product2} and \ref{B-product3} for $J(W)$. If $2 \nmid q$, we will also consider the regular semisimple element
$y'_3 \in T^{\e,-}_{n-4,1} \smallsetminus J(W)$ constructed in Propositions \ref{B-product2} and \ref{B-product3} for
$I(W) \cong \SO_{2n-5}(q)$.
Also fix a regular semisimple element $z \in T^+_3$ of $J(U)$.

If $2|q$ or if $\lambda=1$, then we can write $g = \diag(1_U,h)$ with $h \in J(W)$.
By Propositions \ref{B-product2} and \ref{B-product3}, when $n$ is large enough
$h = x_3^u y_3^v$ for some $u,v \in I(W)$, whence $g = (zx_3)^u(z^{-1}y_3)^v$ is a product of two derangements.

Finally, assume that $2 \nmid q$ and $\lambda=-1$; write $g=\diag(-1_U,h)$ with $h \in I(W)$.
If $q \equiv 1 (\mod 4)$, then $1=(-1)^{3(q-1)/2}$, and so
$-1_U \in J(U) \cong \Omega^{+}_{6}(q)$ by \cite[Proposition 2.5.13]{KL}, whence $h \in J(W)$, and, as in the previous case,
$g = ((-1_U)zx_3)^u(z^{-1}y_3)^v$ is a product of two derangements. If $q \equiv 3 (\mod 4)$, then
$-1 = (-1)^{3(q-1)/2}$ and $-1_U \in I(U) \smallsetminus J(U)$. In this case, $h \in I(W) \smallsetminus J(W)$, and so by
Propositions \ref{B-product2} and \ref{B-product3} when $n$ is large enough we can write
$h = x_3^{u'}(y'_3)^{v'}$ for some $u',v' \in I(W)$. Now $g = ((-1_U)zx_3)^{u'}(z^{-1}y'_3)^{v'}$ is again a product of two derangements in $\Pi$.

\subsubsection{The case $\tilde G = \Omega^-_{2n}(q)$ with $n \geq 4$}
Here we choose, in accordance with Lemma \ref{spin-tori}, regular semisimple elements $\tilde x$ of type $T$ and $\tilde y$ of type
$T'$, where
$T=T^-_n$ is a maximal torus of order $q^n+1$, and $T'=T^{-,+}_{n-1,1}$ is a maximal torus of order $(q^{n-1}+1)(q-1)$. Then the characteristic polynomial of $\tilde x$ is irreducible, while that of $\tilde y$ factors into two linear factors and an irreducible factor of degree $2r-2$.
Again, $\tilde H$ is the stabilizer of an $m$-dimensional subspace $V'$, totally singular (with $m \leq n-1$ bounded by
Theorem \ref{bd-dim}), or non-degenerate.
Now \cite[Theorem~2.5]{MSW} implies the theorem, unless $\dim(V') = 1$ or $V'$ is a non-degenerate $2$-space of type $+$.

\smallskip
Consider the remaining three actions.
Assume first that $2|n$. Then note that the elements $x_1,y_1$ of types $T^-_n$ and $T^{+,-}_{n-1,1}$
constructed in the proof of Proposition \ref{D-product2}(i) are both derangements
in $\Pi$, whence the statement follows from Proposition \ref{D-product2}(i). Hence we may assume that $2 \nmid n \geq 13$. In
this case, note that the elements $x_2,y_2$ of types $T^{-,+}_{n-5,5}$ and $T^{-,+}_{n-6,6}$
constructed in the proof of Proposition \ref{D-product2}(ii) with $(a,\e) = (5,-)$ are both derangements
in $\Pi$. Hence, there exists some absolute constant $B$ such that if $\supp(g) \geq B$, then the statement follows from
Proposition \ref{D-product2}(ii). Now we consider the case $\supp(g) < B < n-3$ and let $\lambda$ be the primary eigenvalue of $g$ on $V$,
cf. \cite[Proposition 4.1.2]{LST1}. By \cite[Lemma 6.3.4]{LST1}, we can decompose $V = U \perp W$ as an orthogonal sum of
$g$-invariant subspaces, with $\dim(U) =6$, $U$ has type $+$, and $g|_U = \lambda \cdot 1_U$.
As $2|(n-3) \geq 10$, we can find regular semisimple elements $x_3 \in T^-_{n-3}$ and $y_3\in T^{-,+}_{n-4,1}$ constructed in the proof
of Proposition \ref{D-product2}(i) for $\Omega(W) \cong \Omega^-_{2n-6}(q)$. Also fix a regular semisimple element
$z \in T^+_3$ of $\Omega(U) \cong \Omega^+_6(q)$.
If $2|q$ or if $\lambda=1$, then we can write $g = \diag(1_U,h)$ with $h \in \Omega^-_{2n-6}(q)$. By Proposition \ref{D-product2}(i),
$h = x_3^u y_3^v$ for some $u,v \in \Omega(W)$, whence $g = (zx_3)^u(z^{-1}y_3)^v$ is a product of two derangements. Finally,
assume that $2 \nmid q$ and $\lambda=-1$. If $q \equiv 3 (\mod 4)$, then $-1=(-1)^{n(q-1)/2}$, and so
$-1_V \in \Omega^{-}_{2n}(q) = \tilde G$ by \cite[Proposition 2.5.13]{KL}, whence we can replace $g$ by $(-1_V)g$ and appeal to the
previous case. If $q \equiv 1 (\mod 4)$, then $1 = (-1)^{3(q-1)/2}$ and $-1_U \in \Omega(U) \cong \Omega^+_6(q)$. In this case, we can write
$g = \diag(-1_U,h)$ with $h \in \Omega^-_{2n-6}(q)$. Again by Proposition \ref{D-product2}(i),
$h = x_3^u y_3^v$ for some $u,v \in \Omega(W)$, whence $g = ((-1_U)zx_3)^u(z^{-1}y_3)^v$ is a product of two derangements in $\Pi$.

\subsubsection{The case $\tilde G = \Omega^+_{2n}(q)$ with $2 \nmid n \geq 5$}
We again choose regular semisimple elements $\tilde x$ and $\tilde y$ of type $T$ and $T'$, where the maximal tori $T=T^+_n$ and
$T'=T^{-,-}_{n-1,1}$ have order $q^n-1$ and $(q^{n-1}+1)(q+1)$, using Lemma \ref{spin-tori}.
Here, the characteristic polynomial of $\tilde x$ factors into two irreducibles of degree $n$ while the characteristic polynomial of $\tilde y$ factors into irreducibles of degree $2n-2$ and $2$. Now, Theorem \ref{bd-dim} and \cite[Theorem~2.6]{MSW} imply the theorem unless $\tilde H$ is the stabilizer of a
%subspace $U$, and
%$U$ is totally singular of dimension $n$ or
a non-degenerate $2$-space $V'$ of type $-$.
(Note that the case $V'$ is non-degenerate $1$-dimensional does not occur since we choose $\tilde y$ to have the second
irreducible factor of degree $2$ in its characteristic polynomial, cf. Lemma \ref{spin-tori}).

\smallskip
Consider the remaining action on non-degenerate $2$-spaces of type $-$, assuming $n \geq 9$.
Note that the elements $x_1,y_1$ of types $T^{-,-}_{n-3,3}$ and $T^{-,-}_{n-4,4}$ constructed in the proof of Proposition \ref{D-product2}(ii) with
$(a,\e) = (3,+)$ are both derangements in $\Pi$. Hence, there exists some absolute constant $B$ such that if $\supp(g) \geq B$, then the statement follows from Proposition \ref{D-product2}(ii). Now we consider the case $\supp(g) < B < n-3$ and let $\lambda$ be the primary eigenvalue of $g$ on $V$. Applying \cite[Lemma 6.3.4]{LST1}, we can decompose $V = U \perp W$ as an orthogonal sum of
$g$-invariant subspaces, with $\dim(U) =6$, $U$ has type $-$, and $g|_U = \lambda \cdot 1_U$.
As $2|(n-3) \geq 6$, we can find regular semisimple elements $x_2 \in T^-_{n-3}$ and $y_2\in T^{-,+}_{n-4,1}$ in
$\Omega(W) \cong \Omega^-_{2n-6}(q)$. Also fix a regular semisimple element
$z \in T^-_3$ of $\Omega(U) \cong \Omega^-_6(q)$.
If $2|q$ or if $\lambda=1$, then we can write $g = \diag(1_U,h)$ with $h \in \Omega^-_{2n-6}(q)$. By \cite[Theorem 2.5]{MSW},
$h = x_2^u y_2^v$ for some $u,v \in \Omega(W)$, whence $g = (zx_2)^u(z^{-1}y_2)^v$ is a product of two derangements. Finally,
assume that $2 \nmid q$ and $\lambda=-1$. If $q \equiv 1 (\mod 4)$, then $1=(-1)^{n(q-1)/2}$, and so
$-1_V \in \Omega^{+}_{2n}(q) = \tilde G$, whence we can replace $g$ by $(-1_V)g$ and return to the
previous case. If $q \equiv 3 (\mod 4)$, then $-1 = (-1)^{3(q-1)/2}$ and $-1_U \in \Omega(U) \cong \Omega^-_6(q)$. In this case, we can write
$g = \diag(-1_U,h)$ with $h \in \Omega^-_{2n-6)}(q)$. Again by \cite[Theorem 2.5]{MSW},
$h = x_3^u y_3^v$ for some $u,v \in \Omega(W)$, whence $g = ((-1_U)zx_3)^u(z^{-1}y_3)^v$ is a product of two derangements.

\subsubsection{The case $\tilde G = \Omega^+_{2n}(q)$ with $2|n \geq 6$}
Now we choose regular semisimple elements $\tilde x$ and $\tilde y$ of type $T$ and $T'$, where the maximal tori $T=T^{+,+}_{n-1,1}$
and $T'=T^{-,-}_{n-1,1}$ have order $(q^{n-1}-1)(q-1)$ and $(q^{n-1}+1)(q+1)$, again using Lemma \ref{spin-tori}. By \cite[Theorem 2.7]{GT2},
$\tilde x^{\tilde G} \cdot \tilde y^{\tilde G}$ contains all non-central elements of $\tilde G$. Hence the theorem follows, unless $\tilde H$ is the stabilizer of a subspace $V'$ of (bounded by Theorem \ref{bd-dim}) dimension $m$, and $V'$ is non-degenerate of dimension $m=1,2$ (with $m=1$ occurring only when $q \leq 3$), or totally singular of dimension $m = 1$.
% $n-1$, or $n$.

\smallskip
If $V'$ is a non-degenerate $2$-space of type $-$, we then choose $\tilde y'$ regular semisimple of type $T'_2=T^{-,-}_{n-2,2}$, a maximal  torus of order $(q^{n-2}+1)(q^2+1)$ as in \cite[\S7.1]{LST1}. As $\tilde x$ and $\tilde y'$ are both derangements in $\Pi$,
the theorem now follows from \cite[\S7.2]{LST1} and \cite[Theorem 7.6]{GM}.

\smallskip
In the remaining cases,
%assume in addition that if $U$ is totally singular of dimension $n$ then $4|n$.
note that, as shown in the proof of
\cite[Theorem 2.7]{MSW}, there is a regular semisimple elements $\tilde x'$ of type $T'_1$, a maximal torus of order
$(q^{n/2}+(-1)^{n/2})^2$, such that there are exactly three irreducible characters of $\tilde G$ that are nonzero at both $\tilde x'$
and $\tilde y$; namely $1_{\tilde G}$, $\St$, and one more character $\rho$: $|\St(\tilde x')\St(\tilde y)|=1$ and
$|\rho(\tilde x')\rho(\tilde y)| = 2$. The imposed condition on $V'$ ensures that $\tilde x'$ and $\tilde y$ are both derangements in $\Pi$.
Consider any $g \in \tilde G \smallsetminus \ZB(\tilde G)$. If $g$ is semisimple, then $g \in (\tilde x')^{\tilde G} \cdot (\tilde y)^{\tilde G}$ by
\cite[Lemma 5.1]{GT1}. The same conclusion holds if $g$ is non-semisimple but has large enough support $\supp(g) > B$
with $q^{\sqrt{B}} \geq 2^{481}$ -- indeed, in this case $|\rho(g)/\rho(1)| \leq q^{-\sqrt{\supp(g)}/481} < 1/2$ and so
$$\biggl{|}\sum_{\chi \in \Irr(g)}\frac{\chi(\tilde x')\chi(y)\overline\chi(g)}{\chi(1)}\biggr{|} >
    1-\biggl{|}\frac{\rho(\tilde x')\rho(\tilde y)\overline\rho(g)}{\rho(1)}\biggr{|} > 1-1 =0.$$
It therefore remains to consider the case $q$ is bounded and $\supp(g) \leq B$, in which case we may assume $n > B+6$, and so
$g$ acting on the natural module $\F_q^{2n}$ has a primary eigenvalue $\lambda = \pm 1$ by \cite[Proposition 4.1.2]{LST1}. In the case
$2 \nmid q$, the condition $2|n$ implies by \cite[Proposition 2.5.13]{KL} that $-1 \in \Omega^{+}_{2n}(q) = \tilde G$. Hence we
can multiply $g$ by a suitable central element of $\tilde G$ to ensure that $\lambda=1$. Now, using \cite[Lemma 6.3.4]{LST1} and
the assumption $n > B+6$, we can find a $g$-invariant decomposition $V = U \perp W$, where $\dim U = 10$, $g$ acts trivially on $U$
and $U$ is non-degenerate of type $+$, whence $W$ is non-degenerate of type $+$ of dimension $2n-10$.  By \cite[Theorem 2.6]{MSW},
we can find regular semisimple elements $\tilde u$ and $\tilde v$ of type a maximal torus of order $q^{n-5}-1$ and
a maximal torus of order $(q^{n-6}+1)(q+1)$ in $H:= \Omega^+_{2n-10}(q)$ such that the $W$-component $h$ of $g$ is
$\tilde u^{h_1} \cdot \tilde v^{h_2}$ for some $h_1,h_2 \in H$.
We also fix a regular semisimple element $\tilde z \in \Omega^+_{10}(q)$ of type a maximal
torus of order $(q^3+1)(q^2+1)$. Now it is clear that $g = (\tilde z\tilde u)^{h_1}(\tilde z^{-1}\tilde v)^{h_2}$, and both
$\tilde z \tilde u$ and $\tilde z^{-1}\tilde v$ are derangements in $\Pi$.

Thus we have completed the proof of Theorem \ref{square}, namely of Theorem B.

\medskip

We conclude this section with a probabilistic result on derangements.
Recall that, for a permutation group $G$ and an element $g \in G$, $\Pr_{\cD(G),\cD(G)}(g)$
denotes the probability that two independently chosen random derangements $s, t \in \cD(G)$
satisfy $st = g$.

\begin{prop}\label{mix} Let $G$ be a finite simple transitive permutation group. 
\begin{enumerate}[\rm(i)]
\item $\Pr_{\cD(G),\cD(G)}$ converges to the uniform distribution on $G$ in the $L^1$ norm as $|G| \to \infty$.
Hence the random walk on $G$ with respect to its derangements as a generating set has mixing time two.

\item If $G$ is a group of Lie type of bounded rank, then $\Pr_{\cD(G),\cD(G)}$ converges to the uniform distribution on $G$ 
in the $L^{\infty}$ norm as $|G| \to \infty$.
\end{enumerate}
\end{prop}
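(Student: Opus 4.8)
The plan is to deduce part (i) from a short character computation and part (ii) from the bounded-rank equidistribution already proved. Throughout, fix a point stabilizer $H<G$ and set $\cD=\cD(G,H)=G\smallsetminus\bigcup_g H^g$. This is a \emph{real} normal subset of $G$ (a union of conjugacy classes, closed under inversion since an element and its inverse have the same orbits), so for $\chi\in\Irr(G)$ the quantity $A_\chi:=\sum_{s\in\cD}\chi(s)$ is a real number, with $A_1=|\cD|$. No reduction to the primitive case is needed: Theorem~\ref{FG} gives $|\cD|\ge\e|G|$ for every transitive action of a nonabelian simple $G$.

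For part (i), the extended Frobenius/class-algebra formula \eqref{Frobenius}, applied to the central element $\sum_{s\in\cD}s$ of $\C[G]$, gives for every $g\in G$
$$N_g:=|\{(s,t)\in\cD\times\cD : st=g\}|=\frac1{|G|}\sum_{\chi\in\Irr(G)}\frac{A_\chi^2\,\overline{\chi(g)}}{\chi(1)},$$
and, isolating the trivial character,
$$\Pr_{\cD,\cD}(g)-\frac1{|G|}=\frac1{|G|\,|\cD|^2}\sum_{1\ne\chi\in\Irr(G)}\frac{A_\chi^2\,\overline{\chi(g)}}{\chi(1)}.$$
Summing absolute values over $g$, applying the triangle inequality, the Cauchy--Schwarz bound $\sum_{g\in G}|\chi(g)|\le|G|^{1/2}\bigl(\sum_g|\chi(g)|^2\bigr)^{1/2}=|G|$, the trivial degree bound $\chi(1)\ge m(G)$ for $\chi\ne 1$, and the column-orthogonality identity $\sum_{1\ne\chi}A_\chi^2=|G|\,|\cD|-|\cD|^2=|\cD|\cdot|G\smallsetminus\cD|$, one obtains
$$\bigl\|\Pr_{\cD,\cD}-\bfU_G\bigr\|_{L^1}=\sum_{g\in G}\Bigl|\Pr_{\cD,\cD}(g)-\tfrac1{|G|}\Bigr|\ \le\ \frac1{|\cD|^2\,m(G)}\sum_{1\ne\chi}A_\chi^2\ =\ \frac{|G\smallsetminus\cD|}{|\cD|\,m(G)}.$$
For $G$ nonabelian simple this is $\le 1/(\e\, m(G))$, and $m(G)\to\infty$ as $|G|\to\infty$ over finite simple groups ($m\ge n-1$ for $\AAA_n$, $m\ge cq^{r}$ for Lie type of rank $r$, finitely many sporadics), so the bound tends to $0$; the cyclic case $G\cong C_p$ with $p\to\infty$ follows from the same inequality, where now $|G\smallsetminus\cD|=1$ and $|\cD|=p-1$. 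The mixing-time statement is a restatement: the one-step distribution of the walk with increments uniform on $\cD(G)$ is $\bfU_{\cD(G)}$, so the two-step distribution is exactly $\Pr_{\cD,\cD}$, which has just been shown to converge to $\bfU_G$.

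For part (ii), take $S=T=\cD(G)$ in the uniform bounded-rank equidistribution of Theorem~\ref{Q12bounded} (equivalently the universal-central-extension form of Theorems~\ref{Main-General}--\ref{Main-Suzuki-Ree}); the hypothesis $|\cD(G)|\ge\e|G|$ holds by Theorem~\ref{FG}. This yields $N_g=(1+o(1))|\cD(G)|^2/|G|$ uniformly over all non-central $g$, hence $\Pr_{\cD,\cD}(g)=(1+o(1))/|G|$ for every $g\ne e$. At the identity, $\Pr_{\cD,\cD}(e)=|\cD(G)|^{-1}$ because $\cD(G)$ is real, and $0\le |\cD(G)|^{-1}-|G|^{-1}=|G\smallsetminus\cD(G)|/(|G|\,|\cD(G)|)\le 1/(\e|G|)$. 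Therefore $\max_{g\in G}\bigl|\Pr_{\cD,\cD}(g)-|G|^{-1}\bigr|\to 0$, i.e. $\|\Pr_{\cD,\cD}-\bfU_G\|_{L^\infty}\to 0$.

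The routine ingredients (the Frobenius formula for normal subsets, $\sum_\chi A_\chi^2=|G|\,|\cD|$, the estimate $\sum_g|\chi(g)|\le|G|$) are standard, and part (ii) is essentially a corollary of the bounded-rank equidistribution theorem together with the observation that the value at $e$ differs from $|G|^{-1}$ by only $O(|G|^{-1})$. The one substantive external input is the Fulman--Guralnick lower bound $|\cD(G)|\ge\e|G|$ combined with $m(G)\to\infty$. The key conceptual point -- and the reason the argument is short rather than needing the heavy machinery used for Theorem B -- is that the $L^1$ norm is controlled by bounding $\sum_g|\chi(g)|$ directly by $|G|$, so the total error is $O(1/m(G))$ without ever requiring a pointwise bound on character ratios $|\chi(s)|/\chi(1)$ at individual derangements; that is why the $L^1$ statement holds for all finite simple groups while the $L^\infty$ statement does not (it fails for alternating groups, where the discrepancy is concentrated on a vanishingly small set such as the $3$-cycles).
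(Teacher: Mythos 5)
Your argument is correct, but part (i) takes a genuinely different route from the paper. The paper deduces (i) from \cite[Theorem 2.5]{Sh2}: for random $x,y$ in a finite simple group, $\Pr_{x^G,y^G}$ almost surely converges to $\bfU_G$ in $L^1$; since $\Pr_{\cD,\cD}$ is the average of $\Pr_{x^G,y^G}$ over $x,y$ uniform in the normal subset $\cD$, which has density $\ge\e$ by Theorem~\ref{FG}, and since the $L^1$ distance is always at most $2$, the exceptional pairs contribute $o(1)$. You instead prove the self-contained quasirandomness bound $\|\Pr_{\cD,\cD}-\bfU_G\|_{L^1}\le |G\smallsetminus\cD|/(|\cD|\,m(G))\le 1/(\e\,m(G))$, which I have checked (the identities $N_g=|G|^{-1}\sum_\chi A_\chi^2\overline{\chi(g)}/\chi(1)$ and $\sum_{\chi\ne 1}A_\chi^2=|\cD|\,|G\smallsetminus\cD|$ are right, as is $\sum_g|\chi(g)|\le|G|$). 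This is cleaner and effective, at the modest extra cost of invoking $m(G)\to\infty$; it also handles the cyclic case uniformly, which the paper treats implicitly. Part (ii) is the same reduction to Theorem A(iv) as in the paper. One caveat on your treatment of $g=e$ there: the paper's closing remark that (ii) fails for alternating groups only makes sense if ``$L^\infty$ convergence'' is normalized, i.e.\ $\max_g|G|\cdot|\Pr_{\cD,\cD}(g)-|G|^{-1}|\to 0$; under that convention your estimate at the identity gives $|G|\Pr_{\cD,\cD}(e)=1/\delta(G)$, which need not tend to $1$ (e.g.\ $\PSL_2(q)$ on the projective line has $\delta\to 1/2$). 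Question~\ref{Strong}, and hence Theorem A(iv), excludes the identity, and the paper's one-line proof of (ii) silently does the same; so this is a blemish in the statement rather than in your argument, but you should say explicitly that the uniform-ratio convergence in (ii) is asserted for $g\ne e$ rather than claim it at $e$ via the unnormalized norm.
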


\begin{proof}
By \cite[Theorem 2.5]{Sh2}, if $G$ is a finite simple group, and $x, y \in G$ are randomly chosen, then almost surely $\Pr_{x^G,y^G}$ 
converges to the uniform distribution $\bfU_G$ in the $L^1$ norm as $|G| \to \infty$. Hence the same holds for randomly chosen $x, y \in T$, 
where $T$ is any normal subset of $G$ of proportion bounded away from $0$. By Theorem \ref{FG} of Fulman and Guralnick
we may apply this to $T = \cD(G)$. This implies part (i). 

Part (ii) follows from part (iv) of Theorem A.
\end{proof}

We note that, by Corollary 6.9 of \cite{LS2}, if $T \subseteq \AAA_n$ is a normal subset of size at least
$e^{-(1/2 - \delta)n}|\AAA_n|$ for some fixed $\delta > 0$, then, as $n \to \infty$, the mixing time of the random walk 
on $\AAA_n$ with respect to the generating set $T$ is two. This provides an alternative proof of part (i)
for alternating groups. 

We also note that part (ii) above does not hold for alternating groups; indeed this follows from Theorem \ref{Strong Alt}
and its proof.

\section{Products of derangements in alternating groups}
First we need the following technical result:

\begin{prop}\label{an-stab}
Let $n \geq 5$, $n \neq 6,8,9,10$, and let
$$\cL_n:= \{ \ell \in \Z \mid 2 \nmid \ell, \lfloor 3n/4 \rfloor \leq \ell \leq n\}.$$
Suppose $H$ is a proper subgroup of $\AAA_n$ that satisfies the following condition.
\begin{enumerate}[\rm(a)]
\item If $n \leq 16$ then $H$ contains an $\ell_i$-cycle for at least the two largest members $\ell_i$ of $\cL_n$.
\item If $n \geq 17$ then $H$ contains an $\ell_i$-cycle for at least the three largest members $\ell_i$ of $\cL_n$.
\end{enumerate}
Then $2|n$ and $H \cong \AAA_{n-1}$, a point stabilizer in the natural action of $\AAA_n$ on $\Delta:=\{1,2, \ldots, n\}$.
\end{prop}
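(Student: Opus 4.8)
The plan is to combine a classification of the subgroups of $\AAA_n$ that contain cycles of several large odd lengths with the elementary arithmetic of the set $\cL_n$. First I would note that for $n\ge 5$ the interval $[\lfloor 3n/4\rfloor,n]$ has length $\ge n/4 > 1$, so $\cL_n$ is nonempty; in fact $|\cL_n|\ge 2$ for $n\ge 12$ and $|\cL_n|\ge 3$ for $n\ge 17$ roughly, which is why the hypothesis is stated in two cases and why the small values $n=6,8,9,10$ are excluded (there $\cL_n$ is too sparse or the relevant primitive groups behave exceptionally). A cycle of prime length $\ell$ with $\ell > n/2$ is in particular an element whose support has size $\ell$; when $\ell$ is a prime $p$ with $n/2 < p \le n$, by a classical theorem (Jordan/Burnside) a primitive subgroup of $\SSS_n$ containing a $p$-cycle with $p\le n-3$ contains $\AAA_n$, and containing an $\ell$-cycle of appropriate length forces either imprimitivity, intransitivity, or $H\supseteq\AAA_n$. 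So the first key step is: reduce, via these classical cycle-type theorems (and the O'Nan--Scott / CFSG-based classification of primitive groups containing a large cycle, as in the work of Jones and of Guralnick--Malle--Saxl), to the possibilities that $H$ is intransitive, imprimitive, or $H\cong \AAA_{n-1}$ (resp. $\SSS_{n-1}$, but the latter is not inside $\AAA_n$).

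The second step is to rule out intransitive and imprimitive $H$ using the presence of cycles of \emph{several} distinct large odd lengths. If $H$ is intransitive with orbits of sizes $n_1\ge n_2\ge\cdots$, then any cycle in $H$ has length at most $n_1$, and an $\ell$-cycle with $\ell\in\cL_n$ forces $n_1\ge \lfloor 3n/4\rfloor$, so $n_2 \le n/4$; but then $H\le \SSS_{n_1}\times\SSS_{n_2}\times\cdots$, and $n_1 < n$ would mean $H$ fixes nothing of size exactly $n$, contradiction unless $n_1=n-1$ (here one uses that $H$ must actually realise \emph{two or three} different odd cycle lengths $\ell$, all $\le n_1$, which is impossible unless $n_1$ is large; a short counting argument on how many members of $\cL_n$ can be $\le n-2$ finishes this). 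Similarly, if $H$ is transitive and imprimitive with a block system of $b$ blocks of size $n/b$, then every element of $H$ permutes the blocks, so the cycle lengths of elements of $H$ are constrained (each cycle length is a sum of block-cycle contributions), and a cycle of length $\ell$ with $\gcd$-incompatibilities with $b$ cannot occur; since we have two or three such $\ell$ of different sizes, all close to $n$, one derives a contradiction with $b\ge 2$. The fact that we need $n\ge 17$ (three cycles) in the larger range is exactly to kill the more numerous imprimitive and product-type possibilities when $\lfloor 3n/4\rfloor$ leaves room for more than one odd value below $n-1$.

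The third step is to pin down that in the surviving case $H\cong\AAA_{n-1}$ one must have $2\mid n$. Here I would use that $\AAA_{n-1}$, as a point stabilizer in $\AAA_n$, contains an $\ell$-cycle exactly when $\ell\le n-1$; the largest member of $\cL_n$ is $n$ if $n$ is odd and $n-1$ if $n$ is even. If $n$ were odd, then $n\in\cL_n$, and $\AAA_{n-1}$ contains no $n$-cycle, contradicting hypothesis (a) or (b). Hence $2\mid n$, $\AAA_{n-1}$ does contain $\ell$-cycles for all odd $\ell\le n-1$, in particular for all of $\cL_n$, and all hypotheses are satisfied; so $H\cong\AAA_{n-1}$ with $2\mid n$ is indeed the unique outcome.

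The main obstacle will be the second step: cleanly excluding the imprimitive and almost-simple non-alternating primitive groups. One cannot simply cite Jordan's theorem because the cycle lengths in play are not prime in general (they are arbitrary odd integers in $[\lfloor 3n/4\rfloor,n]$), so one must instead invoke the CFSG-based classification of primitive permutation groups containing an element with a cycle of length $> n/2$ (equivalently with a large fixed-point-free-on-a-large-set element), and check case by case — Mathieu groups, $\PGL$-type actions on projective spaces, symmetric/alternating in product action — that none of them can realise the required \emph{two or three} distinct large odd cycle lengths, using the exclusions $n\ne 6,8,9,10$ to dispose of the sporadic coincidences. I expect this to be the longest part of the argument; everything else is arithmetic bookkeeping about the size and spacing of $\cL_n$.
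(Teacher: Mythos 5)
Your outline reproduces the paper's basic trichotomy (intransitive / imprimitive / primitive, plus the parity observation that an odd $n$ would force an $n$-cycle and hence transitivity), and your treatment of the primitive case by citing the CFSG classification of primitive groups containing an $\ell$-cycle with $\ell$ close to $n$ (Jordan/Jones-type theorems) is a legitimate alternative to what the paper does: the paper instead runs an induction on $n$, passes to the point stabilizer $H_1=\Stab_H(1)$, shows $H$ would have to be doubly transitive, and then works through Cameron's list of $2$-transitive groups. Your route would shift the case analysis onto the exceptional groups in the large-cycle classification rather than onto the $2$-transitive list; either is workable, though you would still have to check the exceptions explicitly for the degrees where only two cycle lengths are guaranteed.

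The genuine gap is in your intransitive case. Your argument shows that when $2\mid n$ the largest $H$-orbit has size $n-1$, hence $H\le\Stab_{\AAA_n}(n)\cong\AAA_{n-1}$ --- but the proposition asserts $H\cong\AAA_{n-1}$, i.e.\ that $H$ is the \emph{full} point stabilizer, and nothing in your step 2 or step 3 upgrades the containment to an equality. (Step 3 only verifies that the full point stabilizer is consistent with the hypotheses when $n$ is even; it does not exclude proper subgroups of it.) Closing this requires a second round of the whole argument applied to $H$ as a subgroup of $\AAA_{n-1}$: since $n-1$ is odd, the guaranteed cycle lengths $n-1,n-3,n-5$ are exactly the largest members of $\cL_{n-1}$, the $(n-1)$-cycle forces $H$ to be transitive on the $n-1$ remaining points, and one must again kill the imprimitive and primitive-but-not-alternating possibilities in degree $n-1$. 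This recursion is precisely the inductive backbone of the paper's proof (which also has to patch the boundary cases $n=16$ and $n\le 13$ by hand, using GAP and the ATLAS). Without it, your proof establishes only $H\le\AAA_{n-1}$, which is strictly weaker than the statement.
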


\begin{proof}
We proceed by induction on $n$, with the induction base verifying the cases where $n \leq 13$.

\smallskip
(i) If $n=5$, then $15$ divides $|H|$, and so $H = \AAA_5$ by \cite{ATLAS}. Similarly, if $n=7$, then $35$ divides $|H|$, and so $H = \AAA_7$
by \cite{ATLAS}. Suppose $n = 11$. As $11$ divides $|H|$, using \cite{ATLAS} we see that $H$ is contained in a maximal
subgroup $X \cong \mathrm{M}_{11}$ of $\AAA_{11}$. But this is a contradiction, since $X$ contains no element of order $9$
whereas $H$ contains a $9$-cycle. Next assume that $n=12$. Then $H$ contains an $11$-cycle and a $9$-cycle. Using \cite{ATLAS} we again see that $H$ is contained in a maximal subgroup $Y$ of $\AAA_{12}$, with $Y \cong \mathrm{M}_{12}$ or $Y \cong \AAA_{11}$, a point stabilizer. The former case is ruled out since $\mathrm{M}_{12}$ contains no element of order $9$. In the latter case, we must have
$H=\AAA_{11}$ by the $n=11$ result. If $n=13$, then $11 \cdot 13$ divides $|H|$ and so $H=\AAA_{13}$ by \cite{ATLAS}.

\smallskip
(ii) For the induction step, assume $n \geq 14$. First we consider the case $H$ is intransitive on $\Delta$. If $2 \nmid n$, then
$H$ contains an $n$-cycle and so it is transitive on $\Delta$, a contradiction. Hence $2|n$. Then we may assume that $H$ contains
the $(n-1)$-cycle $g=(1,2, \ldots,n-1)$. It follows that $\{1,2, \ldots,n-1\}$ and $\{n\}$ are the two $H$-orbits on $\Delta$, and so
$H \leq \Stab_{\AAA_n}(n) \cong \AAA_{n-1}$.
%If $H = \Stab_{\AAA_n}(n)$, we are done. Otherwise $H$ is a proper subgroup of $\AAA_{n-1}$.
If in addition $n \geq 18$, then $n-1,n-3,n-5$ are the three largest members of $\cL_n$, and at the same time they are also the
three largest members of $\cL_{n-1}$. Applying the induction hypothesis to $n-1$, we obtain that $H = \Stab_{\AAA_n}(n)$, as stated.
Suppose $n=16$. Then $H \leq \AAA_{15}$ and it contains a $15$-cycle and a $13$-cycle. It follows that $H$ is transitive on
$\Delta':=\{1,2, \ldots,15\}$, and in fact it acts primitively on $\Delta'$. Now using \cite{GAP} we can check that
$\AAA_{15}$ and $\SSS_{15}$ are the only primitive subgroups of $\SSS_{15}$ that have order divisible by $13$. It follows that
$H = \AAA_{15}$.

\smallskip
(iii) We may now assume that $H$ is transitive on $\Delta$. Suppose that $H$ is imprimitive: $H$ preserves a partition
$\Delta = \Delta_1 \sqcup \Delta_2 \sqcup \ldots \sqcup \Delta_b$ with $1 < |\Delta_i|=a = n/b < n$. If $2|n$, then we may assume that
$H$ contains the $(n-1)$-cycle $g=(1,2, \ldots,n-1)$ and that $n \in \Delta_b$. Then $g$ fixes $\Delta_b$ and so must fix the
set $\Delta_b \smallsetminus \{n\}$ of size $a-1 < n-1$, a contradiction. Next, consider the case $2 \nmid n$. Then we may assume
that $H$ contains the $(n-2)$-cycle $h=(1,2, \ldots,n-2)$ and that $n \in \Delta_b$. Note that $a > 1$ divides $n$ which is odd, hence
$n/3 \geq a \geq 3$. Now $h$ fixes $\Delta_b$ and so must fix the set $\Delta_b \smallsetminus \{n\}$ of size
$a-1$ with $2 \leq a-1 < n-2$, again a contradiction.

\smallskip
(iv) Now we consider the remaining case where $H$ is primitive on $\Delta$.

If $n=14$, then $11 \cdot 13$ divides $|H|$. Using \cite{GAP} we can check that $H = \AAA_n$. Similarly, if
$15 \leq n \leq 17$, then $\AAA_n$ is the only primitive subgroup of $\AAA_n$ that has order divisible by $13$, whence $H = \AAA_n$.

From now on we may assume $n \geq 18$ and let $H_1:= \Stab_H(1) \leq \AAA_{n-1}$. First we consider the case $2|n$. Then
$H$ contains an $(n-1)$-cycle $g$, an $(n-3)$-cycle $h$, and an $(n-5)$-cycle $k$. Since $H$ is transitive on $\Delta$, we may replace
$g$ by an $H$-conjugate so that $g(1)=1$, and similarly $h(1)=1$ and $k(1)=1$. Thus $H_1 \leq \AAA_{n-1}$ contains $g$, $h$, and $k$,
and $n-1,n-3,n-5$ are the first three members of $\cL_{n-1}$. By the induction hypothesis applied to $H_1$, we have $H_1 = \AAA_{n-1}$.
As $H$ is transitive on $\Delta$, it follows that $H=\AAA_n$.

\smallskip
(v) Now we may assume that $2 \nmid n \geq 19$. Arguing as above, we may assume that $H_1$ contains
an $(n-2)$-cycle $s=(3,4, \ldots,n)$. Assume in addition that $H_1$ is intransitive on $\{2,3, \ldots,n\}$. Since
$H_1 \ni s$, it follows that $\{1\}$, $\{2\}$, and $\{3,4, \ldots,n\}$ are the $3$ $H_1$-orbits on $\Delta$. Note that $H_2 := \Stab_H(2)$
now contains $H_1$ and $|H_2|=|H|/n=|H_1|$, whence $H_2 = H_1$. We claim that for any $i \in \Delta$, there is a unique
$i^\star \in \Delta \smallsetminus \{i\}$ such that
\begin{equation}\label{eq:an1}
  \Stab_H(i)=\Stab_H(i^\star).
\end{equation}
(Indeed, using transitivity of $H$, we can find $x \in H$ such that $i = x(1)$, whence \eqref{eq:an1} holds for $i^\star:=x(2)$. Conversely, if
$\Stab_H(i)=\Stab_H(j)$ for some $j \neq i$, then conjugating the equality by $x$, we see that $H_1=\Stab_H(1)$ fixes
$x^{-1}(j) \neq x^{-1}(i)=1$. The orbit structure of $H_1$ on $\Delta$ then shows that $x^{-1}(j)=2$, and so $j=x(2)=i^\star$, and the claim
follows.) We also note that the uniqueness of $i^\star$ and \eqref{eq:an1} imply that $(i^\star)^\star=i$. Hence, the set $\Delta$ is partitioned
into pairs $\{j_1,j_1^\star\}, \ldots, \{j_m,j_m^\star\}$, which is impossible since $2 \nmid n$.

We have shown that $H_1$ is transitive on $\{2,3, \ldots,n\}$, and so $H$ is doubly transitive on $\Delta$. In particular, $H$ has a unique
minimal normal subgroup $S$, which is either elementary abelian or a non-abelian simple group, see \cite[Proposition 5.2]{Cam}.
Suppose we are in the former case. Then one may identify $\Delta$ with the vector space $\F_p^d$ for some prime
$p$ with $p^d=n$, $S$ with the group of translations $t_v: u \mapsto u+v$ on $\F_p^d$, $1 \in \Delta$ with the zero vector in
$\F_p^d$, and $H_1$ with a subgroup of $\GL(\F_p^d)$. Since $2 \nmid n$, $p > 2$, and so $H_1$ is imprimitive on
$\F_p^d \smallsetminus \{0\}$ (indeed, it permutes the sets of nonzero vectors of $(p^d-1)/(p-1)$ $\F_p$-lines). On the other hand,
the presence of the $(n-2)$-cycle $s \in H_1$ shows (as in (iii)) that the transitive subgroup $H_1$ must be primitive on
$\F_p^d \smallsetminus \{0\}$, a contradiction.

We have shown that $S$ is simple, non-abelian. Now we can use the list of $(H,S,n)$ as given in \cite{Cam}. The possibility
$(H,S,n)=(\mathrm{M}_{23},\mathrm{M}_{23},23)$ is ruled out since $H$ must contain the element $s$ of order $21$. Next,
if $(S,n)=(\tw2 B_2(q),q^2+1)$ with $q=2^{2f+1} \geq 8$, then $S \lhd H \leq \Aut(S) = S \cdot C_{2f+1}$. This is impossible, since
$H$ contains the element $s$ of order $q^2-1$. Similarly, if $(S,n)=(\PSU_3(q),q^3+1)$ with $q=2^e \geq 4$, then
$S \lhd H \leq \Aut(S) = \PGU_3(q) \cdot C_{2e}$. This is again impossible, since
$H$ contains the element $s$ of order $q^3-1$. Next, if $(S,n)=(\SL_2(q),q+1)$ with $q=2^e \geq 8$, then
$S \lhd H \leq \Aut(S) = \SL_2(q) \cdot C_{e}$. This is again impossible, since
$H$ contains the element of order $n-4=q-3$.

The proper containment $H < \AAA_n$ now leaves only possibility that
$$(S,n) = (\PSL_d(q),(q^d-1)/(q-1))$$
with $d \geq 3$, and we may assume
that $S$ and $H$ act on the $(q^d-1)/(q-1)$ $\F_q$-lines of the vector space $\F_q^d = \langle e_1,e_2,\ldots,e_d \rangle_{\F_q}$.
Since $H$ is doubly transitive, we may assume that the two fixed points of the $(n-2)$-cycle $s$ are
$\langle e_1 \rangle_{\F_q}$ and $\langle e_2 \rangle_{\F_q}$. In this case, $s$ acts on the set of $q+1$ $\F_q$-lines of
$\langle e_1,e_2 \rangle_{\F_q}$, fixing two of them. This is again impossible, since $s$ permutes cyclically the other
$n-2$ $\F_q$-lines.
\end{proof}

\begin{thm}\label{an-main}
Let $G \leq \mathrm{Sym}(\Omega)$ be a finite transitive permutation group. Suppose that $G \cong \AAA_n$ for some $n \geq 5$. Then
every element in $G$ is a product of two derangements.
\end{thm}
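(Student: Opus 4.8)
The plan is to reduce to the primitive case and then split according to whether the point stabilizer contains a suitably long cycle. Since $G=\AAA_n$ is simple and the permutation action is faithful, $H$ is a proper subgroup of $G$; choosing a maximal subgroup $M$ with $H\le M<G$, we have $\bigcup_{g\in G}H^g\subseteq\bigcup_{g\in G}M^g$, hence $\cD(G,M)\subseteq\cD(G,H)$, so it suffices to show that every element of $G$ is a product of two elements of $\cD(G,M)$. Thus we may assume $H$ is maximal, i.e.\ the action is primitive. The finitely many exceptional degrees $n\in\{6,8,9,10\}$ excluded from Proposition~\ref{an-stab} (where $\AAA_n$ is isomorphic to a group of Lie type and admits extra embeddings of $\AAA_{n-1}$) will be handled by direct inspection of the known maximal subgroups of $\AAA_n$, using \cite{ATLAS} and \cite{GAP}.

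Now let $n\ge 5$ with $n\notin\{6,8,9,10\}$, and let $\cL_n=\{\ell:2\nmid\ell,\ \lfloor 3n/4\rfloor\le\ell\le n\}$ as in Proposition~\ref{an-stab}; one checks that $|\cL_n|\ge 2$, and $|\cL_n|\ge 3$ once $n\ge 17$. I claim that either (a) $H$ contains no $\ell$-cycle for some $\ell\in\cL_n$, or (b) $2\mid n$ and $H$ is a point stabilizer in the natural action of $\AAA_n$ on $\{1,\ldots,n\}$. Indeed, if (a) fails then $H$ contains an $\ell$-cycle for every member of $\cL_n$, in particular for the two (resp.\ three) largest members, so Proposition~\ref{an-stab} applies and yields (b). In case (a), fix such an $\ell$; then every $\ell$-cycle $\sigma$ satisfies $\sigma^G\cap H=\emptyset$, since all conjugates of $\sigma$ are $\ell$-cycles while $H$ contains none, so every $\ell$-cycle is a derangement on $G/H$. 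As $\ell$ is odd, $\ell$-cycles lie in $\AAA_n$, and as $\lfloor 3n/4\rfloor\le\ell\le n$, the classical theorem of Bertram on products of two $\ell$-cycles (see \cite{LS2} and the references there) shows that every element of $\AAA_n\smallsetminus\{e\}$ is a product of two $\ell$-cycles; since also $e=\sigma\sigma^{-1}$ for any $\ell$-cycle $\sigma$, we conclude $G=\AAA_n\subseteq\cD(G,H)^2$. This settles case (a).

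It remains to treat case (b): $n$ even and $G=\AAA_n$ in its natural action, where $\cD(G,H)$ is precisely the set of fixed-point-free even permutations. This set has density tending to $1/e$ in $\AAA_n$ (compare the proof of Theorem~\ref{Strong Alt}), so for $n$ large it has at least $e^{-n^\alpha}|\AAA_n|$ elements and Theorem~\ref{S equals T} gives $\cD(G,H)^2=\AAA_n$. For the remaining even $n$ I would argue directly: given $g\in\AAA_n$, it suffices to produce an \emph{even} permutation $t$ with $t(j)\notin\{j,g(j)\}$ for all $j$, for then $t$ and $gt^{-1}$ are both even and fixed-point-free (if $gt^{-1}(i)=i$ then $t(g^{-1}(i))=g(g^{-1}(i))$, against the choice of $t$), and $g=(gt^{-1})\,t$; as always $e\in\cD(G,H)^2$. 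Ignoring parity, such a $t$ exists by Hall's theorem when $n\ge 4$: in the bipartite graph joining each $j$ to every $i\notin\{j,g(j)\}$, the only vertices absent from the neighbourhood of a set $S$ are those belonging to $\{j,g(j)\}$ for every $j\in S$, which forces $|S|\le 2$. Swapping the images of a suitable pair of indices then shows that $t$ may be chosen of either parity provided $n$ is not too small, the smallest even cases being checked by hand (the genuine failure for $\AAA_4$, e.g.\ with $g=(1,2,3)$, is exactly why $n\ge 5$ is imposed).

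\emph{Main obstacle.} I expect the two delicate points to be: (i) the parity-controlled construction in case (b) --- guaranteeing an even avoiding permutation for every $g$ and every relevant even $n$, with the smallest degrees dealt with individually; and (ii) invoking Bertram's theorem with the exact boundary $\lfloor 3n/4\rfloor\le\ell\le n$ and for the class actually used. For (ii) the key simplification is that we only use that $H$ contains \emph{no} $\ell$-cycle, so the entire set of $\ell$-cycles --- a single $\SSS_n$-class, at worst two $\AAA_n$-classes --- lies in $\cD(G,H)$, and the $\SSS_n$- versus $\AAA_n$-conjugacy distinction does not interfere.
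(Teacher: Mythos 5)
Your proposal is correct and its skeleton coincides with the paper's: apply Proposition~\ref{an-stab} to the point stabilizer to split into the case where some $\ell\in\cL_n$ has no $\ell$-cycle in $H$ (then all $\ell$-cycles are derangements and Bertram's theorem \cite{B} finishes), and the case of the natural action of $\AAA_n$ on $n$ points with $n$ even. (The paper does not bother reducing to the primitive case first, since Proposition~\ref{an-stab} already applies to an arbitrary proper subgroup, but your reduction is harmless.) Where you genuinely diverge is in the natural-action case: the paper gives an induction on $n$ with a case analysis on the disjoint cycle decomposition of $g$ (splitting off fixed points, odd cycles of intermediate length, pairs of even cycles, and finally two explicit residual cycle types), whereas you construct, via Hall's marriage theorem, an even permutation $t$ with $t(j)\notin\{j,g(j)\}$ for all $j$, so that $g=(gt^{-1})t$ exhibits $g$ directly as a product of two even derangements. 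Your Hall argument is sound: for $|S|\ge 3$ no point lies in $\{j,g(j)\}$ for every $j\in S$, and for $|S|\le 2$ the neighbourhood has size at least $n-2$; the parity adjustment by composing with a transposition $(j_1\,j_2)$ excludes at most five values of $j_2$ for each $j_1$, so it succeeds for $n\ge 6$, which covers every even $n$ arising in this branch. This is shorter and arguably cleaner than the paper's induction; what the paper's argument buys in exchange is explicit control of the cycle types of the two derangements produced, which your existence argument does not give. The appeal to Theorem~\ref{S equals T} for large $n$ is correct but redundant once the Hall construction is in place. The only point to tighten in a final write-up is the sentence ``swapping the images of a suitable pair of indices,'' which should be replaced by the explicit count of excluded pairs.
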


\begin{proof}
(a) Fix a symbol $\alpha \in \Omega$ and consider the point stabilizer $H:=\Stab_G(\alpha)$. We also consider the natural permutation
action of $G$ on $\Delta:=\{1,2, \ldots,n\}$. The cases $5 \leq n \leq 10$ can be checked directly using \cite{GAP}, so we will assume
that $n \geq 11$.

In the notation of Proposition \ref{an-stab}, suppose first that there is some $\ell \in \cL_n$ such that $H$ does {\bf not} contain any
$\ell$-cycle. In other words, any $\ell$-cycle in $G = \AAA_n$ is a derangement on $\Omega$. By the main result of
\cite{B}, the choice of $\ell$ ensures that every element in $G$ is a product of two $\ell$-cycles, hence a product of two derangements
(on $\Omega$).

It remains to consider the case where $H$ contains an $\ell$-cycle for any $\ell \in \cL_n$. By Proposition \ref{an-stab}, this implies that
$2|n$ and $H=\Stab_G(1)$, and thus $\Omega=\Delta$. We will now show that every element $g \in G$ is a product of
two derangements on $\Delta$. (Presumably this also follows from \cite{Xu}, but, for the reader's convenience, we give a short direct
proof.)

\smallskip
(ii) We will again proceed by induction on $n$, with the induction base $5 \leq n \leq 10$ already checked.

\smallskip
(b1) For the induction step, suppose that $g$ fixes at least $2$ points in $\Delta$, say $g(i)=i$ for $i = 1,2$. Since $n \geq 11$, we have
$n-2 \geq \lfloor 3n/4 \rfloor$. Viewing $g \in \AAA_{n-2}$, by the main result of \cite{B} we have that $g=x_1x_2$ is a product of
two $(n-2)$-cycles $x_1,x_2 \in \SSS_{n-2}$. It follows that $g=\tilde{x}_1\tilde{x}_2$, with $\tilde{x}_1=x_1(1,2)$ and
$\tilde{x}_2=x_2(1,2)$ being derangements in $\AAA_n$.

%Suppose now $g$ fixes exactly $2$ points on $\Delta$, say $g(i)=i$ for $i = 1,2$. Viewing $g \in \AAA_{n-2}$, by the main result of \cite{B} we %have that $g=y_1y_2$ is a product of two $(n-2)$-cycles $x,x' \in \SSS_{n-2}$. It follows that $g=\tilde{y}_1\tilde{y}_2$, with
%$\tilde{y}_1=y_1(1,2) \in \AAA_n$ and $\tilde{y}_2=xy_2(1,2) \in \AAA_n$ being derangements.

\smallskip
(b2) Suppose now that $g=g_1g_2 \in \AAA_m \times \AAA_{n-m}$ with $5 \leq m \leq n/2$. By the induction hypothesis,
$g_i = y_iz_i$ with $y_1,z_1 \in \AAA_m$ and $y_2,z_2 \in \AAA_{n-m}$ being derangements. It follows that
$g=(y_1y_2)(z_1z_2)$ with $y_1y_2 \in \AAA_n$ and $z_1z_2 \in \AAA_n$ being derangements. In particular, we are done if,
in the decomposition of $g$ into disjoint cycles, $g$ contains a cycle of odd length $c$ where $5 \leq c \leq n-5$. We are also
done if $c=3$: indeed, if $g = (1,2,3)h$ with $h \in \AAA_{n-3}$ disjoint from $(1,2,3)$, then we can write
$h=h_1h_2$ with $h_i \in \AAA_{n-3}$ being derangements, and so $g=((1,3,2)h_1) \cdot ((1,3,2)h_2)$ is a product of
two derangements. Together with (b1), we are also done in the case $c=n-3$.

\smallskip
(b3) Suppose $g$ contains at least two cycles $t_1$, $t_2$ of even length $d_1$, $d_2$ in its disjoint cycle decomposition.
If $6 \leq d_1+d_2 \leq n-6$, we are done by the previous step (b2), by taking $g_1:=t_1t_2$.
We are also done if $d_1+d_2=4$: indeed, if $g = (1,2)(3,4)h$ with
$h \in \AAA_{n-4}$ disjoint from $(1,2)(3,4)$, then we can write  $h=h_1h_2$ with $h_i \in \AAA_{n-4}$ being derangements,
and so $g=((1,3)(2,4)h_1) \cdot ((1,4)(2,3)h_2)$ is a product of two derangements.

\smallskip
(b4) The above steps leave only the following two cases for the disjoint cycle decomposition of $g$ (up to conjugation).

$\bullet$ $g=g_1g_2$ where $g_1$ is an $a$-cycle, $g_2$ is an $(n-a)$-cycle, and $2|a$. Here, if $4 \leq a \leq n-4$, then
$g=g^2 \cdot g^{-1}$, with $g^2$ and $g^{-1}$ being derangements. In the remaining case, say $g = (1,2,\ldots,n-2)(n-1,n)$,
setting $h = (1,2,\ldots,n-3,n-1)(n-2,n)$, we see that $gh$ consists of two disjoint $n/2$-cycles and is therefore a derangement, while
$g = (gh)(h^{-1})$.

$\bullet$ $g = (1,2, \ldots,n-1)$. Setting $h = (1,n-3)(2,3, \ldots,n-4,n-2,n-1,n) \in \AAA_n$, we see that
$$gh = (1,n-2)(2,4,6, \ldots,n-4,n-1,n,3,5, \ldots,n-3)$$
is a derangement, while $g = (gh)(h^{-1})$.
\end{proof}

 \end{document}